\newcommand{\subref}[1]{\dosubref\cref#1\relax}
\def\dosubref#1#2:#3\relax{#1{#2}\ref{#2:#3}}
\definecolor{linkblue}{RGB}{1,1,190}
\definecolor{citegreen}{RGB}{1,190,1}
\crefname{equation}{Equation}{Equations}  
\Crefname{equation}{Equation}{Equations}  
\theoremstyle{plain}
\newtheorem{theorem}{Theorem}[section]
\newtheorem{corollary}[theorem]{Corollary}
\newtheorem{lemma}[theorem]{Lemma}
\newtheorem{proposition}[theorem]{Proposition}
\theoremstyle{definition}
\newtheorem{definition}[theorem]{Definition}
\newtheorem{construction}[theorem]{Construction}
\newtheorem{remark}[theorem]{Remark}
\newtheorem{example}[theorem]{Example}
\numberwithin{equation}{section} 
\setlist[enumerate,1]{label=\textup{(\arabic*)},ref=\textup{(\arabic*)}}
\setlist[enumerate,2]{label=\textup{(\roman*)},ref=\textup{(\roman*)}}
\newlist{enumerateequiv}{enumerate}{1}  
\setlist[enumerateequiv,1]{label=\textup{(\alph*)},ref=\textup{(\alph*)}}
\newcommand{\envnewline}{%
  \mbox{}%
  \setenumerate{beginpenalty=52}%
}
\newcommand{\lbar}{\overline}
\newcommand{\bC}{\mathbb C}
\newcommand{\bG}{\mathbb G}
\newcommand{\bN}{\mathbb N}
\newcommand{\bQ}{\mathbb Q}
\newcommand{\bR}{\mathbb R}
\newcommand{\bZ}{\mathbb Z}
\newcommand{\cA}{\mathcal A}
\newcommand{\cB}{\mathcal B}
\newcommand{\cC}{\mathcal C}
\newcommand{\cF}{\mathcal F}
\newcommand{\cG}{\mathcal G}
\newcommand{\cH}{\mathcal H}
\newcommand{\cI}{\mathcal I}
\newcommand{\cL}{\mathcal L}
\newcommand{\cO}{\mathcal O}
\newcommand{\cP}{\mathcal P}
\newcommand{\cQ}{\mathcal Q}
\newcommand{\cR}{\mathcal R}
\newcommand{\cX}{\mathcal X}
\newcommand{\cY}{\mathcal Y}
\newcommand{\fp}{\mathfrak p}
\newcommand{\fq}{\mathfrak q}
\newcommand{\fS}{\mathfrak S}
\renewcommand{\sc}{\mathsf c}   
\newcommand{\sd}{\mathsf d}
\newcommand{\st}{\mathsf t}
\newcommand{\sC}{\mathsf C}
\newcommand{\sD}{\mathsf D}
\newcommand{\sL}{\mathsf L}
\newcommand{\sV}{\mathsf V}
\newcommand{\sZ}{\mathsf Z}
\newcommand{\fmon}{\mathcal F^*}
\newcommand{\fpath}{\mathcal F^*}
\newcommand{\cgrp}{C}
\newcommand{\quo}{ \mathbf{q} }
\newcommand{\isomto}{\overset{\sim}{\rightarrow}}
\DeclareMathOperator{\ann}{ann}
\DeclareMathOperator{\spec}{spec}
\DeclareMathOperator{\nr}{nr}
\DeclareMathOperator{\Cl}{\mathcal C}
\DeclareMathOperator{\id}{id}
\DeclareMathOperator{\Norm}{N}
\DeclareMathOperator{\Mono}{Mono}
\newcommand{\nwr}{\not\wr\;}
\newcommand{\ab}[1]{{#1}_{\textup{ab}}}
\newcommand{\rab}[1]{{#1}_{\textup{rab}}}
\newcommand{\red}[1]{{#1}_{\textup{red}}}
\newcommand{\abrel}{\;\ab\equiv\;}
\newcommand{\rabrel}{\;\rab\equiv\;}
\newcommand{\ceq}{\textup{eq}}
\newcommand{\cmon}{\textup{mon}}
\newcommand{\cadj}{\textup{adj}}
\newcommand{\dsim}{\textup{sim}}  
\newcommand{\dsubsim}{\textup{subsim}}  
\renewcommand{\vec}[1]{\mathbf{#1}}
\providecommand{\length}[1]{\lvert#1\rvert}
\providecommand{\babs}[1]{\big\lvert#1\big\rvert}
\providecommand{\Bigabs}[1]{\Big\lvert#1\Big\rvert}
\providecommand{\card}[1]{\lvert#1\rvert}
\providecommand{\meet}{\wedge}
\providecommand{\join}{\vee}
\def\rfop{*}
\newcommand\rigidfactorization[2][]{%
  \def\rf@delim{\rfop}
  \newif\ifrf@notfirst
  #1
  \@for\next:=#2\do{%
    \ifrf@notfirst
      \rf@delim
    \fi
    \rf@notfirsttrue
    \next
  }%
}
\newcommand\rf\rigidfactorization
\newcommand\permutablefactorization[2][]{[\rf[#1]{#2}]_p}
\newcommand\pf\permutablefactorization
\begin{document}

\hyphenation{iso-atom-ic}

\title{Factorization theory: From commutative to noncommutative settings}

\author[N.~Baeth]{Nicholas R.~Baeth}
\author[D.~Smertnig]{Daniel Smertnig}

\address{Department of Mathematics and Computer Science \\
         University of Central Missouri \\
         Warrensburg, MO 64093, USA}
\email{baeth@ucmo.edu}
\address{Institut f\"ur Mathematik und Wissenschaftliches Rechnen \\
         Karl-Franzens-Universit\"at Graz \\
         Heinrichstra\ss e 36\\
         8010 Graz, Austria}
\email{daniel.smertnig@uni-graz.at}
\thanks{The first author was a Fulbright-NAWI Graz Visiting Professor in the Natural Sciences and supported by the Austrian-American Education Commission.
The second author was supported by the Austrian Science Fund (FWF) projects P26036-N26 and W1230, Doctoral Program ``Discrete Mathematics''.}

\keywords{noncommutative rings, noncommutative semigroups, Krull monoids, maximal orders, distances, non-unique factorization}

\subjclass[2010]{Primary 20M13; Secondary 16H10, 16U30, 20L05, 20M25}

\begin{abstract}
  We study the non-uniqueness of factorizations of non zero-divisors into atoms (irreducibles) in noncommutative rings.
  To do so, we extend concepts from the commutative theory of non-unique factorizations to a noncommutative setting.
  Several notions of \emph{factorizations} as well as \emph{distances} between them are introduced. In addition, arithmetical invariants characterizing the non-uniqueness of factorizations such as the \emph{catenary degree}, the \emph{$\omega$-invariant}, and the \emph{tame degree}, are extended from commutative to noncommutative settings.
  We introduce the concept of a cancellative semigroup being \emph{permutably factorial}, and characterize this property by means of corresponding catenary and tame degrees.
  Also, we give necessary and sufficient conditions for there to be a weak transfer homomorphism from a cancellative semigroup to its reduced abelianization.
  Applying the abstract machinery we develop, we determine various catenary degrees for classical maximal orders in central simple algebras over global fields by using a natural transfer homomorphism to a monoid of zero-sum sequences over a ray class group. We also determine catenary degrees and the permutable tame degree for the semigroup of non zero-divisors of the ring of $n \times n$ upper triangular matrices over a commutative domain using a weak transfer homomorphism to a commutative semigroup.
\end{abstract}

\maketitle

\section{Introduction}

The study of factorizations in commutative rings and semigroups has a long and rich history.
Beginning with attempts to understand the factorizations of elements in rings of algebraic integers into irreducibles, this field has grown to include the investigation of non-unique factorizations in Mori domains, Krull domains and Krull monoids, including the study of direct-sum decompositions of modules (see \cite{Baeth-Geroldinger}).
These investigations have used tools from multiplicative ideal theory, algebraic and analytic number theory, combinatorics, and additive group theory.
A thorough overview of the various aspects of commutative factorization theory can be found in \cite{Anderson,Baeth-Wiegand,Chapman,Fontana-Houston-Lucas,Geroldinger09,GHK06}.

On the other hand, the study of unique and non-unique factorization in noncommutative rings and semigroups has received limited attention.  In fact, for many years the study of factorizations in noncommutative settings had been restricted to characterizing and studying noncommutative rings with properties analogous to that of commutative unique factorization domains or to studying factorizations of certain (symmetric) polynomials over noncommutative (e.g. matrix) rings (see \cite{GRW01, HR95, LL04, LO04, GRSW05, GGRW05, LLO08, DL07, R10, L12}).
From the beginning it was clear that each (noncommutative) PID intrinsically has certain unique factorization properties (see, for instance, \cite[Chapter 3.4]{Jacobson}, \cite[Chapter VI.9]{Deuring} and \cite[page 230]{Reiner}).
More recently, such phenomena have been studied; for semifirs and in particular $2$-firs by P.~M.~Cohn \cite{Cohn85,Cohn06}, for the ring of Hurwitz and Lipschitz quaternions by Conway and Smith \cite{Conway-Smith} and by H.~Cohn and Kumar \cite{Cohn-Kumar}, for quaternion orders by Estes and Nipp \cite{Estes-Nipp,Estes}, and in a more general setting by Brungs \cite{Brungs}. Somewhat different notions of unique factorization domains and unique factorization rings were introduced by Chatters and Jordan \cite{Chatters,Chatters-Jordan,Jordan}, and have found applications in \cite{Jespers-Wang, Launois-Lenagan-Rigal, Goodearl-Yakimov}.

Recently, techniques from the factorization theory of commutative rings and monoids have been used to investigate non-unique factorizations in a noncommutative setting.
For example, in \cite{Baeth-Ponomarenko} factorizations within some natural subsemigroups of matrices with integer coefficients are considered, and in \cite{Bachman-Baeth-Gossell} factorizations within the subsemigroup of non zero-divisors of the ring of $n \times n$ upper triangular matrices $T_n(D)^\bullet$ over an arbitrary atomic commutative domain $D$ are studied.
In \cite{Geroldinger}, noncommutative Krull monoids are investigated. Through the study of the divisorial two-sided ideals of $S$ that closely parallels the techniques that have been used fruitfully for commutative Krull monoids, it is shown that in the normalizing case ($aS=Sa$ holds for all $a$ in the Krull monoid $S$) many results from the commutative setting generalize.
In \cite{Smertnig} this approach is, by means of divisorial one-sided ideal theory, extended to a class of semigroups that includes commutative and normalizing Krull monoids as special cases.
In particular, this is applied to investigate factorizations in the semigroup of non zero-divisors of classical maximal orders in central simple algebras over global fields.
In this way, results on some basic invariants of non-unique factorization theory, namely sets of lengths, are obtained.

In \cite{Baeth-Ponomarenko}, \cite{Bachman-Baeth-Gossell}, \cite{Geroldinger}, and \cite{Smertnig}, the focus on noncommutative factorizations was solely on the sets of lengths of factorizations of a given element; that is, sets of the form
\[
  \sL(a)=\{\,n \in \bN_0 : a=u_1\cdots u_n \text{ with each $u_i$ an atom in $S$}\,\}
\]
and associated invariants.
Much of this work was done through the use of various generalizations of transfer homomorphisms to the noncommutative settings, each of which preserves sets of lengths.
While sets of lengths are amongst the most classical of arithmetical invariants describing the non-uniqueness of factorizations, their usefulness is limited by the fact that they can only measure how far a ring or semigroup is away from \emph{half-factoriality}, that is, the property that all factorizations of a non-unit element into atoms have the same length.

The purpose of this paper is to study more refined invariants that describe the non-uniqueness of factorizations in a noncommutative setting, with considerations not just of sets of lengths, but also of distinct factorizations of elements and divisibility properties.
Our main objects of interest are certain classes of rings, but, as is done in the commutative case, we develop everything in the setting of cancellative semigroups (and often more generally in the setting of cancellative small categories), for essentially three reasons:
First, we wish to emphasize that the theory of factorizations is a purely multiplicatively one; secondly, many of the auxiliary objects that appear in studying the factorization theory of rings (e.g. the monoid of zero sum sequences) are not themselves rings, yet we need to be able to apply the language of factorization theory to these objects; and thirdly, sometimes the object one is interested in studying itself is not a ring, but a semigroup.
Moreover, throughout we will restrict to the cancellative case because even in the commutative setting the introduction of non-cancellative elements significantly increases the complexity of studying factorizations.
For rings, this means that we will consider factorizations within the semigroup of non zero-divisors.

While it is completely clear how sets of lengths should be defined in the noncommutative setting, any attempt to introduce more refined invariants such as the catenary degree or the tame degree in a noncommutative setting immediately leads one to the following question.
When are two representations of a non-unit as products of atoms to be considered the same, and when are they distinct?
In the commutative setting, one typically considers factorizations up to permutation and associativity, but this seems less fitting for many natural noncommutative objects.
Further, in order to be able to describe how distinct different factorizations of an element are, one needs to define a reasonable \emph{distance} between two factorizations.

The choice of a notion of a distance and that of a factorization are closely linked, but there does not seem to be a canonical choice that is entirely satisfactory.
For example, based on investigations by P.~M.~Cohn and Brungs in \cite{Brungs, Cohn85, Cohn06}, one can introduce two different notions of factorizations and corresponding distances, both coinciding with the usual one when considered in the commutative setting.
A third notion, that of permutable factorizations, turns out to be particularly well suited to other examples, for instance the semigroup of non zero-divisors of the ring of $n \times n$ upper triangular matrices over a commutative atomic domain.
For this reason, in \cref{sec:factorizations}, we first recall a rigorous notion of \emph{rigid factorizations}.
Based on this, we introduce an axiomatic notion of a distance $\sd$, and derive from it the notion of \emph{$\sd$-factorizations}.

Each such distance gives rise to a corresponding catenary degree and monotone catenary degree which we define and study in \cref{sec:catenary}.
As in the commutative setting, the (monotone) catenary degree associated to a distance $\sd$ provides a measure of how far away a cancellative small category is from being $\sd$-factorial.
In \cref{cf,strong-weak-distance} we show that catenary degrees can be studied using (weak) transfer homomorphisms.

In \cref{sec:divisibility} we approach the study of factorization from the viewpoint of divisibility, introducing \emph{almost prime-like elements} and \emph{prime-like elements} that generalize prime elements from the commutative setting. We then introduce corresponding \emph{tame degrees} and \emph{$\omega$-invariants} based on permutable factorizations that measure how far a given element is from being almost prime-like.
With these notions we are able to give characterizations of \emph{permutable factoriality} in \cref{p-factorial-acd,pf-omega,perm-tame}.

In \cref{sec:abelianization} we consider the notion of weak transfer homomorphisms as introduced in \cite{Bachman-Baeth-Gossell} and give criteria for when there is such a weak transfer homomorphism from a cancellative semigroup to its reduced abelianization (if the abelianization is itself cancellative).
Any weak transfer homomorphism preserves sets of lengths, and by constructing weak transfer homomorphisms from noncommutative cancellative semigroups to commutative cancellative semigroups, we illustrate that sometimes it is possible to reduce the study of sets of lengths in a noncommutative ring or semigroup to a corresponding commutative semigroup, where sets of lengths may have been investigated before.
Of course, such noncommutative semigroups then necessarily have systems of sets of lengths which also occur as systems of sets of lengths in the commutative setting.
At the end of the section we revisit some known examples of weak transfer homomorphisms: In particular, we study various distances for $T_n(D)^\bullet$, and determine the corresponding catenary degrees, tame degrees and $\omega_p$-invariants in \cref{cor:tri-fact}.
Also, in \cref{prop:krull-finite-omega-invariant}, we show that for a normalizing Krull monoid $S$ (as studied in \cite{Geroldinger}), $\omega_p(S,a)$ is always finite as is the case in the commutative setting.

Finally, in \cref{sec:maxord}, we investigate catenary degrees in saturated subcategories of arithmetical groupoids and arithmetical maximal orders in quotient semigroups (as studied in \cite{Smertnig}).
This treatment also includes normalizing Krull monoids considered in \cite{Geroldinger}.
Under suitable conditions, these subcategories, respectively maximal orders, possess a transfer homomorphism to a monoid of zero-sum sequences over a subset of an abelian group.
The factorization theory of monoids of zero-sum sequences over finite abelian groups has been intensively studied (see \cite{Geroldinger09,Grynkiewicz}), due to its applications to commutative Krull monoids arising from rings of algebraic integers and holomorphy rings in function fields over finite fields.
It is therefore desirable to show that catenary degrees in our setting can be studied by means of this transfer homomorphism, as the known results from the commutative setting then immediately carry over.
Indeed, under the expected conditions, we are able to obtain satisfactory results about the catenary degree (cf. \cref{thm:cat-transfer,cor:cat-transfer-semigroup})
that mirror results about commutative Krull monoids. These results, in fact, do not depend very strongly on the particular distance chosen.
We then apply these results to classical maximal orders in central simple algebras over global fields (as long as we have the additional property that every stable free left ideal is free), and obtain \cref{thm:cat-transfer-csa,equfact}, showing that the catenary degree in this case is controlled by the catenary degree of a monoid of zero-sum sequences over a certain ray class group.
Thus, for instance, the results on catenary degrees in commutative Krull monoids obtained in \cite{GGS11} hold in our noncommutative setting.
We summarize some of the consequences in \cref{cor:cat-transfer}.

Throughout, we illustrate the limits of extending the commutative theory to the noncommutative setting by way of simple examples of semigroups given by a presentation via the generators and relations.
While such semigroups will only serve as isolated examples for us, we note that the study of the interplay of arithmetical invariants and presentations of a commutative semigroup was initiated by P.\,A.~Garc\'ia S\'anchez and further investigated by various authors (see \cite{Blanco-GarciaSanchez-Geroldinger,Chapman-GarciaSanchez-Llena-Ponomarenko-Rosales,Philipp}).
We have concentrated the discussion of the main objects of our interest at the end of \cref{sec:abelianization}, where we discuss $T_n(D)^\bullet$, matrix rings over PIDs, and almost commutative semigroups (which include normalizing Krull monoids), and \cref{sec:maxord}, where we discuss arithmetical maximal orders (which, again, include matrix rings over PIDs and normalizing Krull monoids), and in particular classical maximal orders in central simple algebras over global fields.
The relationship between arithmetical maximal orders, Krull monoids, Krull rings, and UF-monoids in the sense of P.~M.~Cohn is discussed in the preliminaries.

\section{Preliminaries}\label{sec:preliminaries}

\subsection*{Notation} We denote by $\bN = \{ 1,2,3, \ldots \}$ the set of natural numbers, and by $\bN_0 = \bN \cup \{ 0 \}$ the set of non-negative integers.
If $a$,~$b \in \bR$, we write $[a,b] = \{\, x \in \bZ : a \le x \le b \,\}$ for the discrete interval from $a$ to $b$.
For $n \in \bN_0$, we write $\fS_n$ for the group of permutations on $[1,n]$ (with $\fS_0 = \fS_1$ the trivial group).
We write $\sC_n$ for a cyclic group of order $n \in \bN$.

We will often introduce an arithmetical invariant as a supremum of a subset $X \subset \bN_0 \cup \{ \infty \}$ and,
by convention, we set $\sup \emptyset = 0$.

\subsection*{Semigroups and small categories}
By a semigroup we always mean a semigroup with a neutral element. A homomorphism of semigroups is always assumed to preserve the neutral element,
and an empty product in a semigroup is defined to be equal to the neutral element.
All rings are assumed to have an identity element, and all ring homomorphisms preserve the identity.
A \emph{domain} is a ring in which zero is the only zero-divisor, and a \emph{principal ideal domain} (PID) is a domain in which every left ideal is generated by a single element and every right ideal is generated by a single element.

In studying the divisorial one-sided ideal theory of noncommutative semigroups (as will be necessary in \cref{sec:maxord}), we are naturally led to consider not only semigroups, but the more general notion of a small category.
Hence we will introduce the necessary notions in this setting.

A small category is a category for which both the class of objects and the class of morphisms are sets.
To a semigroup $S$ we may associate a category with a single object with set of morphisms $S$ and composition of morphisms given by the operation of the semigroup.
Conversely, to a small category with a single object we can associate the semigroup of endomorphisms on that object.
In this way we obtain an equivalence of the category of semigroups and the category of small categories with a single object.
We view small categories as generalizations of semigroups with a partial operation, and set up our notation for small categories in a way that facilitates this point of view: In particular, we emphasize the role of the morphisms, while deemphasizing the role of the objects.

Let $H$ be a small category.
We identify the set of objects of $H$ with the corresponding identity morphisms, and denote the set of identity morphisms of the objects of $H$ by $H_0$.
To be consistent with the language used for semigroups, we shall refer to morphisms of the category $H$ simply as elements of $H$, writing $a \in H$ for a morphism of $H$, and call functors between small categories homomorphisms.
For each $a \in H$ we denote by $s(a) \in H_0$ its source (domain), and by $t(a) \in H_0$ its target (codomain).
Writing composition left to right, contrary to the usual convention for categories, but in line with the conventions for groupoids, we write $ab$ for the composition of $a$ and $b$ in $H$ if $t(a) = s(b)$.
The set of all isomorphisms (which we shall call \emph{units}) in $H$ will be denoted by $H^\times$.
We say that $H$ is \emph{reduced} if $H^\times=H_0$, that is, the only isomorphisms are the identity morphisms, and we say that $H$ is a \emph{groupoid} if $H = H^\times$.
For $e$,~$f \in H_0$ we set $H(e,f) = \{\, a \in H : s(a) = e \text{ and } t(a) = f \,\}$, and further $H(e)=H(e,e)$, $H(e,\cdot) = \bigcup_{f' \in H_0} H(e,f')$, and $H(\cdot,f) = \bigcup_{e' \in H_0} H(e',f)$.
For $A$,~$B \subset H$ we define $AB = \{\, ab \in H : a \in A, b \in B, t(a)=s(b) \,\}$, and if $a \in H$, we define $aB = \{a\}B$ and $Ba = B\{a\}$.
We say that $b \in H$ \emph{left divides} $a \in H$, and write $b \mid_l a$, if $a \in bH$.
Two elements $a$, $b \in H$ are \emph{left coprime} if, for all $c \in H$, $c \mid_l a$ and $c \mid_l b$ implies $c \in H^\times$.
We define $b \mid_r a$ and the notion \emph{right coprime} analogously.

A congruence relation $\sim$ on a category $H$ is, for each pair $e$, $f \in H_0$, a reflexive, symmetric and transitive relation on $H(e,f)$ (we tacitly denote all of these relations by $\sim$ again) satisfying the condition that for all $x$,~$x'$, $y$, $y' \in H$ with $s(x)=s(x')$, $t(x)=t(x')=s(y)=s(y')$, $t(y)=t(y')$ and $x \sim x'$, $y \sim y'$, we have $xy \sim x'y'$.
Given a congruence relation $\sim$ on $H$, we may define a quotient category $H/\!\sim$ with $(H/\!\sim)_0 = H_0$ and $(H/\!\sim)(e,f) = H(e,f)/\!\sim$ for all $e$,~$f \in H_0$.

We shall often not explicitly specify the source and target of elements, but tacitly assume that the necessary conditions for certain products to be defined are fulfilled: For example, if we write ``Let $a$,~$b \in H$ such that $ab=\ldots$'', then we shall implicitly assume that $a$ and $b$ are such that $t(a)=s(b)$.

We say that $H$ is \emph{normalizing} if $Ha= aH$ for all $a \in H$.
In this case, for all $a \in H$, we have $s(a)=t(a)$. Indeed, $a=s(a) a= b s(a)$ for some $b \in H$ with $t(b)=s(a)$, and thus $b=a$ and $s(a)=t(a)$.
Thus $H(e,f) = \emptyset$ whenever $e$,~$f \in H_0$ are distinct, and $H$ is a union of disjoint semigroups.
Clearly, every commutative semigroup is normalizing.

If $H$ is a small category and $a \in H$, then $a$ is \emph{cancellative} if for all $b$,~$c \in H$, $ab=ac$ implies $b=c$ and $ba=ca$ implies $b=c$ (that is, $a$ is both a monomorphism and an epimorphism).
The category $H$ itself is called \emph{cancellative} if each $a \in H$ is cancellative.
We write $H^\bullet$ for the cancellative subcategory of cancellative elements of $H$.

Let $H$ be a cancellative small category.
If $a$,~$b \in H$ with $ab = t(b) = s(a)$, then $bab = b$, and hence $ba = s(b) = t(a)$.
Similarly, $aba=a$ implies $ab = s(a) = t(b)$.
Thus every left (right) invertible element is invertible.
If $m \in \bN$ and $a_1$, $\ldots\,$,~$a_m \in H$ are such that  $a_1 \cdots a_m \in H^\times$, then $a_i \in H^\times$ for each $i \in [1,m]$.

We call two elements $a$,~$b \in H$ \emph{associated}, and write $a \simeq b$, if there exist $\varepsilon$,~$\eta \in H^\times$ such that $b = \varepsilon a \eta$.
Clearly $\simeq$ is an equivalence relation and we denote the equivalence class of $a \in H$ by $[a]_\simeq$.
In general, associativity may not be a congruence relation. In the case of small categories this is partially due to the fact that our notion of a congruence relation is very restrictive. However, we will only care about this relation in the case of semigroups, and will not introduce a more general notion of congruences for small categories.
If $H^\times a = a H^\times$ for all $a \in H$, then, $H^\times(e,f) = \emptyset$ for all $e$, $f \in H_0$ which are distinct.
Moreover, in this case, $\simeq$ is a congruence relation and $\red H$ is again cancellative.
These conditions are satisfied if $H$ is normalizing.
Indeed, suppose $H$ is normalizing, $a \in H$, and $\varepsilon \in H^\times$ with $t(\varepsilon)=s(a)$.
We have already observed that in this case $t(\varepsilon)=s(\varepsilon)=t(\varepsilon^{-1})$.
Therefore, since $H$ is normalizing, there exist $b$,~$c \in H$ such that $\varepsilon a = ab$ and $\varepsilon^{-1}a = ac$.
Then $a = (\varepsilon \varepsilon^{-1})a = abc$, and by cancellativity $bc=s(b)$.
Thus $b$,~$c \in H^\times$, and hence $H^\times a \subset a H^\times$.
The other inclusion follows similarly.

If $H$ is a small category such that $\simeq$ is a congruence relation, we define the \emph{reduced small category} associated to $H$ as $\red H = H/\!\simeq$.
Note that $\red H$ is indeed reduced with $\red H^\times = \{\, [e]_\simeq : e \in H_0 \,\}$.
If $\pi\colon H \to \red H$ denotes the canonical homomorphism, then $\pi^{-1}(\red H^\times) = H^\times$.
If $S$ is a semigroup, we will call $\red S$ the \emph{reduced semigroup} associated to $S$.

Let $Q$ be a quiver, that is, a directed graph which may contain multiple arrows between each pair of vertices as well as loops.
If $a$ is an arrow of $Q$, we write $s(a)$ for its starting vertex and $t(a)$ for its target vertex.
A \emph{path} from a vertex $e$ of $Q$ to a vertex $f$ of $Q$ is a tuple $(e,a_1,\ldots,a_k,f)$ with $k \in \bN_0$ and $a_1$, $\ldots\,$,~$a_k$ arrows of $Q$ such that either $k > 0$ and $e=s(a_1)$, $t(a_i)=s(a_{i+1})$ for all $i \in [1,k-1]$, and $t(a_k)=f$, or $k=0$ and $e=f$.
To a quiver $Q$ we associate the \emph{path category} $\fpath(Q)$ with objects the vertices of $Q$ and morphisms from a vertex $e$ to a vertex $f$ consisting of all paths from $e$ to $f$ in $Q$.
The composition is given by the natural concatenation of paths.
This construction yields a morphism of quivers $j\colon Q \to \fpath(Q)$ and the pair $(\fpath(Q), j)$ is characterized by the universal property that any morphism of quivers $f\colon Q \to H$ to a small category $H$ factors through $j$ in a unique way.

If $X$ is a set, we may associate to $X$ the quiver consisting of a single vertex and the set of loops $X$ on that vertex.
In this special case we recover the notion of a \emph{free monoid}, the elements of which we may view as words on the alphabet $X$, and which we shall denote by $\fmon(X)$.
As is usual, we shall write elements of $\fmon(X)$ as formal products on the alphabet $X$, instead of adopting the tuple notation that we use for path categories.
We write $\langle X \mid R \rangle$ for the semigroup with generators $X$ and relations $R$, that is, $\langle X \mid R \rangle$ is the quotient of $\cF^*(X)$ by the congruence relation generated by $\{\, (u,v) \in \cF^*(X) \times \cF^*(X) : u=v \in R \,\}$.

By $\cF(X)$ we denote the (multiplicatively written) \emph{free abelian monoid} with basis $X$.

\subsection*{Basic notions of factorization theory}
Let $H$ be a cancellative small category.
An element $u \in H \setminus H^\times$ is an \emph{atom} (or \emph{irreducible}) if $u = ab$ with $a$,~$b \in H$ implies either $a \in H^\times$ or $b \in H^\times$.
We denote by $\cA(H)$ the quiver of all atoms of $H$, that is, the quiver with vertex set $H_0$ and arrows consisting of atoms of $H$.
When the additional structure of the quiver is not necessary (in particular in the case that $H$ is a semigroup), we will view $\cA(H)$ simply as the set of atoms.
We say that $H$ is \emph{atomic} if every non-unit element of $H$ can be expressed as a finite product of atoms of $H$.
A sufficient condition for a cancellative small category $H$ to be atomic is that it satisfies the ascending chain condition both on principal left ideals and on principal right ideals.
The standard proof from commutative monoids or domains generalizes to this setting; see for example \cite[Proposition 3.1]{Smertnig}.

\smallskip
Transfer homomorphisms are a key tool in the investigation of non-unique factorizations (see \cite[Section 3.2]{GHK06}).
The notion of a weak transfer homomorphism was introduced in \cite{Bachman-Baeth-Gossell} to be able to study sets of lengths in a wider class of noncommutative semigroups than is possible with transfer homomorphisms.
In either case, given a cancellative small category $H$ one seeks to find an easier-to-study or more well-understood cancellative small category $T$, and a homomorphism from $H$ to $T$, that preserves many properties related to factorizations.
In our applications, the target category $T$ will always be a commutative cancellative semigroup.

\begin{definition}\label{th}
  Let $H$ and $T$ be cancellative small categories.
  \begin{enumerate}
    \item\label{th:th}
    A homomorphism $\phi\colon H \to T$ is called a \emph{transfer homomorphism} if it has the following properties:
    \begin{enumerate}[label=\textup{(\textbf{T\arabic*})},ref=\textup{(T\arabic*)}]
      \item\label{th:units} $T=T^\times \phi(H)T^{\times}$ and $\phi^{-1}(T^{\times})=H^{\times}$.
      \item\label{th:lift} If $a \in H$, $b_1$, $b_2 \in T$ and $\phi(a)=b_1b_2$, then there exist $a_1$,~$a_2 \in H$ and $\varepsilon \in T^\times$ such that $a = a_1a_2$, $\phi(a_1) = b_1 \varepsilon^{-1}$, and $\phi(a_2) = \varepsilon b_2$.
    \end{enumerate}

  \item\label{th:wth}
    Suppose $T$ is atomic.
    A homomorphism $\phi\colon H \rightarrow T$ is called a \emph{weak transfer homomorphism} if it has the following properties:
    \begin{enumerate}[label=\textup{(\textbf{WT\arabic*})},ref=\textup{(WT\arabic*)}]
      \item[\textbf{(T1)}] $T=T^\times \phi(H)T^{\times}$ and $\phi^{-1}(T^{\times})=H^{\times}$.
        \setcounter{enumii}{1}
      \item\label{wth:lift} If $a \in H$, $n \in \bN$, $v_1$, $\ldots\,$,~$v_n \in \cA(T)$ and $\phi(a)=v_1\cdots v_n$, then there exist $u_1$, $\ldots\,$,~$u_n \in \cA(H)$ and a permutation $\sigma \in \fS_n$ such that $a=u_1\cdots u_n$ and $\phi(u_i) \simeq v_{\sigma(i)}$ for each $i \in [1,n]$.
    \end{enumerate}
  \end{enumerate}
\end{definition}

It is easy to see that if $H$ and $T$ are cancellative small categories and $\phi\colon H \to T$ is a transfer homomorphism, or $T$ is atomic and $\phi\colon H \to T$ is a weak transfer homomorphism, then an element $u \in H$ is an atom of $H$ if and only if $\phi(u)$ is an atom of $T$.
If $H$ and $T$ are cancellative small categories and $\phi\colon H \to T$ is a transfer homomorphism, then $H$ is atomic if and only if $T$ is atomic.
If $T$ is atomic and $\phi\colon H \to T$ is a weak transfer homomorphism, then $H$ is also atomic.

If $\simeq$ is a congruence relation on a cancellative small category $H$ and $\red H$ is cancellative, it is easy to check that the canonical homomorphism $H \to \red H$ is a transfer homomorphism.
A composition of two transfer homomorphisms is again a transfer homomorphism, and the same holds for weak transfer homomorphisms.
In particular, if $\phi\colon H \to T$ is a (weak) transfer homomorphism, $\simeq$ is a congruence relation on $T$, and $\red T$ is cancellative, then the induced homomorphism $\phi\colon H \to \red T$ is also a (weak) transfer homomorphism.

The following example shows that in order to obtain a notion that preserves factorization theoretical invariants it is indeed necessary to require that $T$ is atomic in the definition of a weak transfer homomorphism.

\begin{example}
  Let $P$ be a countable set, say $P = \{\, p_n : n \in \bN_0 \,\}$, and let $S=\cF(P)$ be the free abelian monoid with basis $P$.
  Let $\sim$ be the congruence relation on $S$ generated by $\{\, p_n = p_{n+1}^2 : n \in \bN_0 \,\}$, and let $T = S/\!\!\sim$ be the quotient semigroup with canonical homomorphism $\pi\colon S \to T$.
  We claim that $T$ is cancellative. By the universal property of the free abelian monoid, there exists a semigroup homomorphism $\varphi\colon S \to (\bQ,+)$ such that $\varphi(p_n) = 2^{-n}$ for all $n \in \bN_0$, and $\varphi$ factors through $\pi$ to give a homomorphism $T \to (\bQ,+)$ that maps $[p_n]_\sim$ to $2^{-n}$.
  It follows that, for all $n$,~$k$,~$l \in \bN_0$, $p_n^k \sim p_n^l$ if and only if $k=l$.
  Let $a$,~$b$,~$c \in S$ be such that $ac \sim bc$.
  By the defining relations of $T$, there exists an $n \in \bN_0$ and $m$,~$k$,~$l \in \bN_0$ such that $c \sim p_n^m$, $a \sim p_n^k$, and $b \sim p_n^l$.
  Therefore $p_n^{m+k} \sim p_n^{m+l}$ and hence $k=l$ and $a=b$.
  Thus $T$ is cancellative.
  Since $S$ and $T$ are both reduced, the homomorphism $\pi$ satisfies \labelcref{th:units}, and since $T$ obviously contains no atoms \labelcref{wth:lift} is trivially satisfied. However, atoms of $S$ are not mapped to atoms of $T$ and, in fact, $S$ is factorial while $T$ is not even atomic.
\end{example}

It follows that if $T$ is atomic, then any transfer homomorphism $\phi\colon H \to T$ is also a weak transfer homomorphism.
However, the converse is not true in general.
The following lemma better illustrates the difference between transfer homomorphisms and weak transfer homomorphisms.
We omit the proof as the first two claims follow by straightforward induction and the defining properties, and the last claim is an immediate consequence of the second.

\begin{lemma}
  Let $H$ and $T$ cancellative small categories and let $T$ be atomic.
  \begin{enumerate}
    \item Let $\phi\colon H \to T$ be a homomorphism satisfying \labelcref{th:units}.
          Then $\phi$ is a transfer homomorphism if and only if the following property holds:
          If $a \in H$, $n \in \bN$, $v_1$, $\ldots\,$,~$v_n \in \cA(T)$ and $\phi(a)=v_1\cdots v_n$, then there exist $u_1$, $\ldots\,$,~$u_n \in \cA(H)$ and $\varepsilon_1=s(v_1)$,~$\varepsilon_2$, $\ldots\,$,~$\varepsilon_{n}$,~$\varepsilon_{n+1}=t(v_n) \in T^\times$ such that $a=u_1\cdots u_n$, and $\phi(u_i) = \varepsilon_i v_{i} \varepsilon_{i+1}^{-1}$ for each $i \in [1,n]$.

    \item Suppose that $T$ is a commutative semigroup, and let $\phi\colon H \to T$ be a homomorphism satisfying \labelcref{th:units}.
        The following statements are equivalent.
        \begin{enumerateequiv}
          \item $\phi$ is a weak transfer homomorphism.
          \item If $a \in H$, $n \in \bN_{\ge 2}$, $v_1$, $\ldots\,$,~$v_n \in \cA(T)$ and $\phi(a)=v_1\cdots v_n$, then there exist $i \in [1,n]$, $a_0 \in H$ and $u \in \cA(H)$ such that $a=a_0 u$, $\phi(a_0) \simeq v_1\cdots v_{i-1}v_{i+1}\cdots v_n$, and $\phi(u) \simeq v_i$.
        \end{enumerateequiv}
        Furthermore, the following statements are equivalent.
        \begin{enumerateequiv}
          \item $\phi$ is a transfer homomorphism.
          \item If $a \in H$, $b_1$,~$b_2 \in T$ and $\phi(a)=b_1 b_2$, then there exist $a_1$,~$a_2 \in H$ such that $a=a_1 a_2$, $\phi(a_1)\simeq b_1$, and $\phi(a_2) \simeq b_2$.
          \item If $a \in H$, $n \in \bN$, $v_1$, $\ldots\,$,~$v_n \in \cA(T)$ and $\phi(a)=v_1\cdots v_n$, then there exist $u_1$,~$\ldots\,$, $u_n \in \cA(H)$ such that $a=u_1\cdots u_n$ and $\phi(u_i)\simeq v_i$ for each $i \in [1,n]$.
        \end{enumerateequiv}

    \item Suppose $H$ and $T$ are commutative semigroups, and let $\phi\colon H \to T$ be a homomorphism.
          Then $\phi$ is a transfer homomorphism if and only if it is a weak transfer homomorphism.
  \end{enumerate}
\end{lemma}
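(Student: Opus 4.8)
The plan is to prove the two implications separately; the forward one is essentially free and the reverse one carries all the content.

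For ``transfer $\Rightarrow$ weak transfer'': since $T$ is atomic by hypothesis, this is exactly the observation already recorded in the text before this lemma; alternatively it is immediate from part~(2). Indeed, a transfer homomorphism satisfies \labelcref{th:units}, and by the transfer characterization in part~(2) every factorization $\phi(a) = v_1 \cdots v_n$ with $v_1, \ldots, v_n \in \cA(T)$ lifts to a factorization $a = u_1 \cdots u_n$ into atoms of $H$ with $\phi(u_i) \simeq v_i$ for each $i$; this is \labelcref{wth:lift} with the trivial permutation.

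For the converse, suppose $\phi$ is a weak transfer homomorphism. Then $\phi$ satisfies \labelcref{th:units}, so, invoking the transfer characterization in part~(2) (which applies since $T$ is commutative), it suffices to prove: whenever $a \in H$ and $b_1$,~$b_2 \in T$ satisfy $\phi(a) = b_1 b_2$, there exist $a_1$,~$a_2 \in H$ with $a = a_1 a_2$, $\phi(a_1) \simeq b_1$, and $\phi(a_2) \simeq b_2$. First I would dispose of the degenerate cases: if $b_1 \in T^\times$, take $a_1$ to be the neutral element of $H$ and $a_2 = a$, so that $\phi(a_1) \in T^\times$, hence $\phi(a_1) \simeq b_1$, while $\phi(a_2) = b_1 b_2 \simeq b_2$; the case $b_2 \in T^\times$ is symmetric. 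In the remaining case both $b_1$ and $b_2$ are non-units, so by atomicity of $T$ I write $b_1 = v_1 \cdots v_k$ and $b_2 = v_{k+1} \cdots v_n$ with $v_1, \ldots, v_n \in \cA(T)$ and $1 \le k < n$. Then $\phi(a) = v_1 \cdots v_n$, and applying \labelcref{wth:lift} produces $u_1, \ldots, u_n \in \cA(H)$ and $\sigma \in \fS_n$ with $a = u_1 \cdots u_n$ and $\phi(u_i) \simeq v_{\sigma(i)}$ for all $i$. Setting $I = \{\, i \in [1,n] : \sigma(i) \le k \,\}$, $a_1 = \prod_{i \in I} u_i$, and $a_2 = \prod_{i \in [1,n] \setminus I} u_i$ — which is legitimate because $H$ is commutative — gives $a = a_1 a_2$.

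It then remains to identify $\phi(a_1)$ with $b_1$ and $\phi(a_2)$ with $b_2$ up to associativity, and this is the only point requiring care. Since $T$ is commutative it is normalizing, so by the discussion in the preliminaries $\simeq$ is a congruence relation on $T$; hence associativity respects finite products and, after reordering factors using commutativity of $T$, $\phi(a_1) = \prod_{i \in I} \phi(u_i) \simeq \prod_{i \in I} v_{\sigma(i)} = v_1 \cdots v_k = b_1$, and likewise $\phi(a_2) \simeq b_2$. This verifies the factorization property above, whence $\phi$ is a transfer homomorphism. I expect the main — indeed the only — obstacle to be precisely this last piece of bookkeeping: one must use commutativity of $H$ to regroup the lifted atoms $u_1, \ldots, u_n$ and commutativity of $T$ to regroup the atoms $v_1, \ldots, v_n$ so that the partition induced by $\sigma$ matches the prescribed split $\phi(a) = b_1 b_2$, together with the fact that $\simeq$ passes through these finite products.
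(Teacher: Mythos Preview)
Your proposal is correct and follows exactly the approach the paper indicates: the paper omits the proof entirely, stating only that ``the first two claims follow by straightforward induction and the defining properties, and the last claim is an immediate consequence of the second.'' You have spelled out this last implication carefully, using the characterizations in part~(2) together with commutativity of $H$ and $T$ to regroup factors, which is precisely what ``immediate consequence of the second'' means here. Note that you only address part~(3); parts~(1) and~(2) are also left as exercises by the paper, and your argument correctly treats them as already established.
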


\begin{remark}
  There are examples of atomic semigroups for which there exists a weak transfer homomorphism to some commutative atomic semigroup, but for which there does not exist a transfer homomorphism to any commutative semigroup.
  Indeed, if $D$ is any commutative atomic domain with atoms $u$ and $v$ such that $u^2=v^m$ for some $m>2$, and $S=T_2(D)^\bullet$ denotes the cancellative semigroup of all $2\times 2$ upper triangular matrices with entries in $D$ having nonzero determinant, then there is no transfer homomorphism from $S$ to any commutative semigroup.
  However, there is a weak transfer homomorphism from $S$ to $(\red{D^\bullet})^2$.
  See \cite[Example 4.5]{Bachman-Baeth-Gossell} for details.
\end{remark}

In some cases, (weak) transfer homomorphisms do not transfer certain information from $T$ to $H$.
It will therefore occasionally be useful to impose the following strong extra condition.

\begin{definition}\label{aainjwt}
  Let $H$ and $T$ be cancellative small categories and let $\phi\colon H \to T$ be a homomorphism.
  We say that $\phi$ is \emph{isoatomic} provided that $\phi(u) \simeq \phi(v)$ implies $u \simeq v$ for all $u$,~$v \in \cA(H)$.
\end{definition}

The following lemma illustrates just how strong the isoatomic condition is when the domain of a weak transfer homomorphism is assumed to be commutative.
A more general version of this lemma will be given in \cref{assocwth}.

\begin{lemma}\label{aaiwthcommutative}
  Let $S$ and $T$ be commutative atomic cancellative semigroups, and let $\phi\colon S \to T$ be an isoatomic (weak) transfer homomorphism.
  Then $\phi$ induces an isomorphism $\red S\cong \red T$.
\end{lemma}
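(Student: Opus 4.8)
The plan is to show that $\phi$ descends to a semigroup homomorphism $\overline\phi\colon \red S \to \red T$ and that this induced map is both surjective and injective; since a bijective homomorphism of semigroups is automatically an isomorphism, this suffices. As $S$ and $T$ are commutative, part~(3) of the preceding lemma lets me use the transfer and weak transfer properties interchangeably, and I recall that in this setting $\simeq$ is a congruence, $\red S = S/\!\simeq$, $\red T = T/\!\simeq$, and $a \simeq b$ holds precisely when $b = \varepsilon a$ for some unit $\varepsilon$.

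For the descent and surjectivity: from $\phi^{-1}(T^\times) = S^\times$ in \labelcref{th:units} one gets $\phi(S^\times) \subseteq T^\times$, so $\phi$ respects $\simeq$ and induces $\overline\phi([a]_\simeq) = [\phi(a)]_\simeq$; and from $T = T^\times\phi(S)T^\times$ in \labelcref{th:units} every $t \in T$ is associated to some $\phi(s)$, so $[t]_\simeq = \overline\phi([s]_\simeq)$ and $\overline\phi$ is onto. Both points are routine.

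The real content is injectivity. Suppose $a$, $b \in S$ with $\phi(a) \simeq \phi(b)$. First I would handle the case where one of them, say $a$, lies in $S^\times$: then $\phi(a) \in T^\times$, hence $\phi(b) \in T^\times$, hence $b \in \phi^{-1}(T^\times) = S^\times$, and any two units are associated. So assume $a$, $b \notin S^\times$. Using that $S$ is atomic, write $a = u_1 \cdots u_n$ with $n \in \bN$ and $u_i \in \cA(S)$; then $\phi(a) = \phi(u_1)\cdots\phi(u_n)$ is a product of atoms of $T$ (a (weak) transfer homomorphism maps atoms to atoms). Choosing $\varepsilon \in T^\times$ with $\phi(b) = \varepsilon\phi(a)$, this becomes a factorization $\phi(b) = (\varepsilon\phi(u_1))\phi(u_2)\cdots\phi(u_n)$ into $n$ atoms of $T$. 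Now apply \labelcref{wth:lift} to $b$ and this factorization: there are $w_1$, \ldots, $w_n \in \cA(S)$ and $\sigma \in \fS_n$ with $b = w_1\cdots w_n$ and, since $\varepsilon\phi(u_1) \simeq \phi(u_1)$, with $\phi(w_i) \simeq \phi(u_{\sigma(i)})$ for every $i$. The isoatomic hypothesis upgrades this to $w_i \simeq u_{\sigma(i)}$, and commutativity of $S$ then yields $b = w_1\cdots w_n \simeq u_{\sigma(1)}\cdots u_{\sigma(n)} = u_1\cdots u_n = a$. Thus $\overline\phi$ is injective, which completes the argument.

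I do not anticipate a genuine obstacle here; the proof is short once the ingredients are assembled. The only delicate point is the bookkeeping in the injectivity step — pushing the unit $\varepsilon$ into a single atom and carrying the permutation $\sigma$ through — and this is harmless because everything is tracked only up to $\simeq$ and because $S$ is commutative. Conceptually, the crux is that \labelcref{wth:lift} lets one transport a factorization of $\phi(a)$ (and hence, up to a unit, of $\phi(b)$) that originates from an atomic factorization of $a$ into an atomic factorization of $b$, whereupon isoatomicity forces $b \simeq a$.
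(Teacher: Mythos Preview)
Your proof is correct and follows essentially the same approach as the paper's: both establish surjectivity via \labelcref{th:units} and injectivity by lifting atomic factorizations and invoking the isoatomic hypothesis together with commutativity. The only cosmetic difference is that the paper starts from a factorization of $\phi(a)\simeq\phi(b)$ in $T$ (using atomicity of $T$) and lifts it to both $a$ and $b$, whereas you start from a factorization of $a$ in $S$, push it forward, and lift to $b$; either way the isoatomic property forces $a\simeq b$.
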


\begin{proof}
  By definition, $T=\phi(S) T^\times$, so the induced semigroup homomorphism $\red\phi\colon \red S \to \red T$ is surjective.
  Suppose that $\phi(a) \simeq \phi(b)$ for some $a$,~$b \in S$.
  Since $T$ is atomic, there exist $n \in \bN_0$ and atoms $w_1$, $\ldots\,$,~$w_n$ in $T$ such that $\phi(a) \simeq \phi(b) \simeq w_1\cdots w_n$.
  Since $\phi$ is a weak transfer homomorphism and $S$ is commutative, $\phi$ is a transfer homomorphism. Thus there are atoms $u_1$, $\ldots\,$, $u_n$ and $v_1$, $\ldots\,$,~$v_n$ in $S$ such that $a \simeq u_1\cdots u_n$, $b \simeq v_1\cdots v_n$, and $\phi(u_i) \simeq \phi(v_i) \simeq w_i$ for each $i \in [1,n]$.
  Since $\phi$ is isoatomic, $u_i \simeq v_i$ for each $i \in [1, n]$ and thus $a \simeq b$.
  We have therefore shown that $\phi(a)\simeq \phi(b)$ implies $a \simeq b$ and hence the induced homomorphism $\red\phi\colon \red S \to \red T$ is an isomorphism.
\end{proof}

Sets of lengths and invariants derived from them belong to the most basic invariants used in studying non-unique factorizations.
We refer the reader to \cite[Chapter 4]{GHK06} for a thorough introduction to the study of sets of lengths in the commutative setting.
If $H$ is a cancellative small category and $a \in H \setminus H^\times$, then
\[
  \sL(a) = \sL_H(a) = \{\, n \in \bN : \text{there exist $u_1$,~$\ldots\,$,~$u_n \in \cA(H)$ with $a=u_1\cdots u_n$} \,\} \subset \bN
\]
is called the \emph{set of lengths of $a$}.
We set $\sL(\varepsilon) = \{ 0 \}$ for all $\varepsilon \in H^\times$.
We call $\cL(H) = \{\, \sL_H(a) : a \in H \,\}$ the \emph{system of sets of lengths of $H$}.

Let $\emptyset \ne L \subset \bZ$.
A positive integer $d \in \bN$ is a \emph{distance of $L$} if there exists an $l \in L$ such that $L \cap [l, l+d] = \{l, l+d\}$.
We denote by $\Delta(L)$ the set of all distances of $L$.
The \emph{set of distances of $H$} is defined as
\[
  \Delta(H) = \bigcup_{L \in \cL(H)} \Delta(L).
\]
We say that $H$ is \emph{half-factorial} if $\card{\sL(a)} = 1$ for all $a \in H$, equivalently, $H$ is atomic and $\Delta(H) = \emptyset$.
The \emph{elasticity} of a set $L \subset \bN$ is $\rho(L) = \sup\left\{\, \frac{m}{n} : m, n \in L \,\right\}$, and $\rho(\{0\})=0$.
The \emph{elasticity} of an element $a \in H$ is $\rho_H(a)= \rho(\sL_H(a))$ and the \emph{elasticity} of $H$ is $\rho(H)=\sup\{\,\rho_H(a) : a \in H\,\}$.
Note that $H$ is half-factorial if and only if $H$ is atomic and $\rho(H)=1$.

(Weak) transfer homomorphisms are a key tool in studying sets of lengths, due to the following straightforward but important result (a proof in the semigroup case for transfer homomorphisms can be found in \cite[Proposition 6.4]{Geroldinger} and for weak transfer homomorphisms in \cite[Theorem 3.2]{Bachman-Baeth-Gossell}).

\begin{lemma}
  Let $H$ and $T$ be cancellative small categories.
  Let $\phi\colon H \to T$ be a transfer homomorphism, or let $T$ be atomic and $\phi\colon H \to T$ a weak transfer homomorphism.
  Then $\sL_H(a) = \sL_T(\phi(a))$ for all $a \in H$, and in particular $\cL(H) = \cL(T)$.
\end{lemma}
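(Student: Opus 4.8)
The plan is to prove the two inclusions $\sL_H(a)\subseteq \sL_T(\phi(a))$ and $\sL_T(\phi(a))\subseteq \sL_H(a)$ for every $a\in H$, and then deduce $\cL(H)=\cL(T)$ from \labelcref{th:units}. First I would dispose of units: by \labelcref{th:units}, $a\in H^\times$ if and only if $\phi(a)\in T^\times$, in which case both sets of lengths are $\{0\}$ by convention, and if $a\notin H^\times$ then $0$ lies in neither set; so we may assume $a\in H\setminus H^\times$ and both sets are subsets of $\bN$. For $\sL_H(a)\subseteq\sL_T(\phi(a))$, given a factorization $a=u_1\cdots u_n$ with $u_i\in\cA(H)$ and $n\ge 1$, applying $\phi$ yields $\phi(a)=\phi(u_1)\cdots\phi(u_n)$, and each $\phi(u_i)$ is an atom of $T$ by the observation following \cref{th} (valid for a transfer homomorphism, or for a weak transfer homomorphism when $T$ is atomic); hence $n\in\sL_T(\phi(a))$.

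For the reverse inclusion $\sL_T(\phi(a))\subseteq\sL_H(a)$ the two hypotheses are treated separately. If $T$ is atomic and $\phi$ is a weak transfer homomorphism, then from $\phi(a)=v_1\cdots v_n$ with $v_i\in\cA(T)$, property \labelcref{wth:lift} directly produces $u_1,\dots,u_n\in\cA(H)$ and $\sigma\in\fS_n$ with $a=u_1\cdots u_n$, so $n\in\sL_H(a)$. If instead $\phi$ is a transfer homomorphism, I would induct on $n$, proving that for all $c\in H\setminus H^\times$ any factorization $\phi(c)=w_1\cdots w_n$ into atoms of $T$ forces $n\in\sL_H(c)$. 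The base case $n=1$ is the atom-preservation observation: $\phi(c)$ an atom and $c$ a non-unit give $c\in\cA(H)$. For $n\ge 2$, write $\phi(c)=w_1\cdot(w_2\cdots w_n)$ and apply \labelcref{th:lift} to obtain $c=c_1c_2$ and $\varepsilon\in T^\times$ with $\phi(c_1)=w_1\varepsilon^{-1}$ and $\phi(c_2)=\varepsilon w_2\cdots w_n=(\varepsilon w_2)w_3\cdots w_n$; since an atom multiplied by a unit is again an atom and a product with a non-unit factor is a non-unit, $\phi(c_1)$ is an atom (so $c_1\in\cA(H)$) and $c_2\notin H^\times$ with $\phi(c_2)$ a product of $n-1$ atoms. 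The inductive hypothesis gives $c_2=u_2\cdots u_n$ with $u_i\in\cA(H)$, and then $c=c_1u_2\cdots u_n$ shows $n\in\sL_H(c)$.

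Finally, for $\cL(H)=\cL(T)$: the inclusion $\cL(H)\subseteq\cL(T)$ is immediate from the equality $\sL_H(a)=\sL_T(\phi(a))$ just established. For the reverse, given $b\in T$, property \labelcref{th:units} lets me write $b=\varepsilon\phi(a)\eta$ with $a\in H$ and $\varepsilon,\eta\in T^\times$; adjoining units to an element changes neither unit-status nor set of lengths (a factorization of $\phi(a)$ into $n\ge 1$ atoms transfers to one of $b$ by absorbing $\varepsilon$ into the first atom and $\eta$ into the last, and conversely), so $\sL_T(b)=\sL_T(\phi(a))=\sL_H(a)\in\cL(H)$.

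The argument is entirely routine; the only points requiring attention are keeping the stray unit $\varepsilon$ from \labelcref{th:lift} under control in the inductive step of the transfer case, and remembering that the transfer case of the statement does \emph{not} assume $T$ atomic — so atom preservation must be invoked in the form valid for transfer homomorphisms unconditionally, not in the weak-transfer form which requires atomicity of $T$.
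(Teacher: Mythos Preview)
Your proof is correct. The paper itself does not prove this lemma but simply refers the reader to \cite[Proposition 6.4]{Geroldinger} and \cite[Theorem 3.2]{Bachman-Baeth-Gossell}; your argument is the standard one, handling the transfer case by the expected induction on factorization length via \labelcref{th:lift} and the weak-transfer case directly from \labelcref{wth:lift}, and you were careful about the one subtlety (atom preservation for transfer homomorphisms does not require $T$ atomic).
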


In the noncommutative setting, (weak) transfer homomorphisms to appropriate commutative semigroups have already been used to study sets of lengths in \cite{Baeth-Ponomarenko,Bachman-Baeth-Gossell,Geroldinger,Smertnig}.

\subsection*{Arithmetical maximal orders and Krull monoids} \label{arith-max-ord}
In \cref{sec:maxord} we investigate catenary degrees in arithmetical maximal orders in quotient semigroups, a class of semigroups that was first studied by Asano and Murata in \cite{Asano-Murata}.
Developing our machinery in this abstract setting allows us to simultaneously treat normalizing and commutative Krull monoids as well as bounded Krull rings in the sense of Chamarie (see \cite{Chamarie,Marubayashi-VanOystaeyen}), and in particular the classical maximal orders in central simple algebras over global fields (see \cite{Reiner}) to which we ultimately apply our abstract results.
Therefore we recall the following, referring to \cite{Smertnig} for more details.

Let $Q$ be a quotient semigroup (that is, a semigroup in which every cancellative element is invertible).
A subsemigroup $S \subset Q$ is an \emph{order in Q} if for all $q \in Q$, there exist $a$,~$b \in S$ and $c$,~$d \in S \cap Q^\times$ such that $q = ac^{-1}=d^{-1}b$.
Two orders $S$ and $S'$ in $Q$ are \emph{equivalent}, denoted by $S \sim S'$, if there exist $a$,~$b$,~$c$,~$d \in Q^\times$ such that $aSb \subset S'$ and $cS'd \subset S$.
This is an equivalence relation on the orders in $Q$.
A \emph{maximal order} is an order in $Q$ that is maximal in its equivalence class (with respect to set inclusion).
A subset $I \subset Q$ is called a \emph{fractional left $S$-ideal} if $SI \subset I$, and there exist $x$,~$y \in Q^\times$ such that $x \in I$ and $Iy \subset S$.
It is called a \emph{left $S$-ideal} if moreover $I \subset S$.
(Fractional) right $S$-ideals are defined analogously, and we call $I$ a \emph{(fractional) $S$-ideal} if it is both, a (fractional) left and right $S$-ideal.
For a fractional left (respectively right) $S$-ideal $I$, we set $I^{-1} = \{\, q \in Q \mid IqI \subset I \,\}$, and this is a fractional right (respectively left) $S$-ideal.
We define $I_v = (I^{-1})^{-1}$ and call $I$ \emph{divisorial} if $I= I_v$.
The divisorial fractional left $S$-ideals form a lattice with respect to set inclusion, where $I \vee J = (I\cup J)_v$ and $I \wedge J = I \cap J$, and so do the divisorial fractional right $S$-ideals.
An order $S$ is \emph{bounded} if every fractional left $S$-ideal contains a fractional $S$-ideal, and every fractional right $S$-ideal contains a fractional $S$-ideal.

\begin{definition}[{\cite[Definition 5.18]{Smertnig}}]\label{amo}
  Let $S$ be a maximal order in a quotient semigroup $Q$.
  We say that $S$ is an \emph{arithmetical maximal order} if it has the following properties:
  \begin{enumerate}[label=\textup{(\textbf{A\arabic*})},ref=(\textup{A\arabic*})]
    \item\label{amo:acc} $S$ satisfies both the ACC (ascending chain condition) on divisorial left $S$-ideals and the ACC on divisorial right $S$-ideals.
    \item\label{amo:bdd} $S$ is bounded.
    \item\label{amo:mod} The lattice of divisorial fractional left $S$-ideals is modular, and the lattice of divisorial fractional right $S$-ideals is modular.
  \end{enumerate}
\end{definition}

We note that if $S$ is an arithmetical maximal order and $S'$ is a maximal order in $Q$ that is equivalent to $S$, then $S'$ is also an arithmetical maximal order.
Analogous ring-theoretic definitions are made for a ring $R$ which is an order in a quotient ring $Q$.

We now summarize the connections between arithmetical maximal orders and more familiar notions.

\begin{enumerate}
\item Let $S$ be an order in a group $Q$.
If $S$ is an arithmetical maximal order, then $S$ is a Krull monoid in the sense of \cite{Geroldinger} (that is, $S$ is a cancellative semigroup that is left and right Ore, is a maximal order in its quotient group, and satisfies the ACC on divisorial $S$-ideals).
If, in addition, $S$ is normalizing, then $S$ is an arithmetical maximal order if and only if it is a Krull monoid.
In particular, a commutative cancellative semigroup is an arithmetical maximal order (in its quotient group) if and only if it is a commutative Krull monoid (that is, a commutative cancellative semigroup which is completely integrally closed and satisfies the ACC on divisorial ideals).

\item Let $S$ be a normalizing cancellative semigroup.
Then $S$ is a UF-monoid in the sense of \cite[Chapter 3.1]{Cohn85} if and only if $\red S$ is a free abelian monoid,
and $S$ is a Krull monoid if and only if $\red S$ is a commutative Krull monoid.
It follows that $S$ is a UF-monoid if and only if it is a Krull monoid with trivial divisor class group (that is, every divisorial $S$-ideal is principal).

\item
  Let $R$ be a prime PI ring.
  Then $R$ is a Krull ring if and only if $R^\bullet$ is a Krull monoid.
  Equivalently, $R$ is a Krull ring if and only if it is a maximal order in its quotient ring and satisfies the ACC on divisorial $R$-ideals (equivalently, on divisorial left and right $R$-ideals).
  If $R$ is a Krull ring, then the semigroup $(R,\cdot)$ is an arithmetical maximal order.
  (This remains true in more general settings; see \cite{Chamarie}.)

\item If $R$ is a bounded Dedekind prime ring, then $(R,\cdot)$ is an arithmetical maximal order.

\item Let $K$ be a global field, and denote by $S$ the set of non-archimedean places of $K$.
For $v \in S$ denote by $\cO_v \subset K$ the discrete valuation ring of $v$.
A holomorphy ring in $K$ is a subring $\cO \subset K$ such that $\cO = \bigcap_{v \in S \setminus S_0} \cO_v$ with $S_0 \subset S$ finite and $S_0 \ne \emptyset$ in the function field case.
A central simple algebra $A$ over $K$ is a finite-dimensional $K$-algebra with center $K$ that is simple as a ring.
A classical ($\cO$-)order in $A$ is a subring $R \subset A$ such that $\cO \subset R$, $KR = A$, and $R$ is finitely generated as $\cO$-module (see \cite{McConnell-Robson,Reiner}).
A classical maximal ($\cO$-)order in $A$ is a classical $\cO$-order that is maximal with respect to set inclusion amongst classical $\cO$-orders.
If $R$ is a classical maximal order in $A$, then $R$ is a Dedekind prime ring, a PI ring, and in particular a Krull ring.
Investigating such classical maximal orders is the focus and main motivation of \cref{sec:maxord}.

Particular examples of classical maximal orders are, for instance, $M_n(\cO)$ where $\cO$ is a ring of algebraic integers and $n \in \bN$, as well as classical maximal orders in quaternion algebras over number fields, such as the ring of Hurwitz quaternions $\bZ[i,j,k,\frac{1+i+j+k}{2}]$ with $k=ij=-ji$ and $i^2=j^2=-1$.
\end{enumerate}

\smallskip
\label{monoid-zss}
Monoids of zero-sum sequences are examples of commutative Krull monoids and play an important role in studying non-unique factorizations in commutative Krull monoids.
Indeed, every commutative Krull monoid possesses a transfer homomorphism to a monoid of zero-sum sequences over a subset of its divisor class group (see \cite[Chapter 3.4]{GHK06}).
Under certain conditions, this continues to hold true for arithmetical maximal orders, and it is this transfer homomorphism that was exploited in \cite{Smertnig} to study sets of lengths in this setting, and that we will use in \cref{sec:maxord} to study catenary degrees.
We therefore recall the definition of monoids of zero-sum sequences.

Let $G$ be an additive abelian group, $G_P \subset G$ a subset and let $\cF(G_P)$ be the (multiplicatively written) free abelian monoid with basis $G_P$.
Following the tradition of combinatorial number theory, elements $S \in \cF(G_P)$ are called \emph{sequences over $G_P$}, and are written in the form $S=g_1\cdots g_l$ with $l \in \bN_0$ and $g_1$, $\ldots\,$,~$g_l \in G_P$.
To a sequence $S$ we associate its \emph{length}, $\length{S} = l$, and its \emph{sum}, $\sigma(S) = g_1 + \cdots + g_l \in G$.
We call the submonoid
\[
  \cB(G_P) = \{\, S \in \cF(G_P) : \sigma(S) = 0_G \,\}
\]
of $\cF(G_P)$ the \emph{monoid of zero-sum sequences (over $G_P$)}.
The minimal zero-sum sequences are the atoms of $\cB(G_P)$, and the \emph{Davenport constant} is $\sD(G_P) = \sup\{\, \length{S} : S \in \cA(\cB(G_P)) \,\}$, that is, the supremum of the lengths of minimal zero-sum sequences.

If $S$ is a normalizing Krull monoid, $G$ is its divisor class group, and $G_P$ is the set of classes containing prime divisors, then there exists a transfer homomorphism $S \to \cB(G_P)$ (see \cite[Theorems 6.5 and 4.13]{Geroldinger}).
For this reason, monoids of zero-sum sequences, and in particular the Davenport constant, have been the focus of intensive study in combinatorial and additive number theory (see, for instance, \cite{Geroldinger09,Grynkiewicz}).
Many factorization theoretic invariants of $\cB(G_P)$ can be bounded (or even expressed) in terms $\sD(G_P)$.

If $\sD(G_P)$ is finite, the \emph{Structure Theorem for Sets of Lengths} (\cite[Definition 3.2.3]{Geroldinger09}) holds for $\cB(G_P)$, which implies that sets of lengths are almost arithmetical multiprogressions, with differences described by the set of distances, $\Delta(\cB(G_P))$. In the case $G=G_P$, $\Delta(\cB(G_P))$ is a finite interval starting at $1$ (if it is non-empty).
Due to the existence of a transfer homomorphism, the same is then true for $S$ itself.
Similarly, if $S$ is not half-factorial, its catenary degree is $\sc(S) = \max\{ 2, \sc(\cB(G_P)) \}$ (see \cite[Definition 1.6.1]{GHK06} or \cref{sec:catenary} for the definition of the catenary degree, and \cite[Lemma 3.2.6 and Theorem 3.2.8]{GHK06} or \cref{cor:cat-transfer-semigroup} for this result).

\subsection*{Adyan semigroups}
Since we will have need to introduce many atomic semigroups defined via generators and relations in order to illustrate various points, pathological cases, and obstructions to creating a noncommutative analogue of the commutative theory, and do not desire to expose the reader to the tedious details of checking whether or not the semigroup is indeed cancellative, we recall the notion of Adyan semigroups.

Let $\langle X \mid R\rangle$ be a presentation of a semigroup $S$ with a finite set of generators $X$ and finite set of relations $R$ of the form $u=v$ with $u$ and $v$ non-trivial elements in $\cF^*(X)$.
The \emph{left graph of the presentation} is the graph $G(V,E)$ with vertex set $V=X$ and with an edge $\{a,b\} \in E$ if and only if there is a relation $u=v$ in $R$ where $a$ is the left-most letter in $u$ and $b$ is the left-most letter in $v$.
One similarly defines the \emph{right graph of the presentation}.
A semigroup $S$ is said to be \emph{Adyan} if it has a presentation such that the left and right graphs of the presentation are acyclic; i.e., if they are forests.
For the examples of semigroups in this paper that are defined via generators and relations, it can easily be checked that they are Adyan.
Thus it is important to note the following result which allows one to easily verify that these examples are indeed cancellative (see also \cite[Theorem 4.6]{Remmers} for another proof and a more general result that allows for infinite sets $X$ and $R$).

\begin{proposition}[\cite{Adyan}]\label{adyan}
  Let $S$ be an Adyan semigroup.
  Then $S$ embeds into a group and is therefore cancellative.
\end{proposition}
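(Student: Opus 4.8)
The plan is to prove the substantive half of the statement (that $S$ embeds into a group), from which cancellativity is immediate, since every subsemigroup of a group is cancellative; note that cancellativity alone would not suffice, as there are cancellative semigroups embeddable into no group. Write $S = \langle X \mid R \rangle$ and let $G = \langle X \mid R \rangle$ denote the \emph{group} presented by the same generators and relations. Each relation of $R$ holds in $G$, so the canonical map $\cF^*(X) \to G$ factors through a semigroup homomorphism $\iota\colon S \to G$, and it suffices to prove $\iota$ injective; equivalently, that two positive words $u$,~$v \in \cF^*(X)$ representing the same element of $G$ already represent the same element of $S$, that is, are joined by a chain of elementary $R$-transformations that never leaves $\cF^*(X)$.

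To prove this I would argue by contradiction. Suppose some positive pair $u$,~$v \in \cF^*(X)$ represents the same element of $G$ but distinct elements of $S$, and choose a witness to their equality in $G$ that is minimal: say a shortest sequence of relator insertions and free reductions, equivalently a van Kampen diagram (or a reduced semigroup diagram in the sense of \cite{Remmers}) of least area. The acyclicity of the two graphs forces minimality to fail. Tracking how the leftmost letter of the intermediate words evolves produces an edge-walk in the \emph{left graph} of the presentation, while tracking the rightmost letter produces an edge-walk in the \emph{right graph}; since both graphs are forests, neither walk can close up into a cycle, and this rigidity always exhibits a local simplification of the witness or lets one replace it by a purely positive derivation. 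Either way this contradicts the choice of witness, so $\iota$ is injective, $S$ is isomorphic to a subsemigroup of $G$, and the proposition follows.

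The step I expect to be the main obstacle is precisely the bookkeeping just indicated: controlling \emph{simultaneously} the evolution of both the first and the last letter along an arbitrary chain of elementary transformations, and extracting from the forest hypotheses the conclusion that a passage through a non-positive word is never forced. This is the technical core of the result; it is carried out by a delicate induction in \cite{Adyan} and geometrically, via the theory of semigroup diagrams, in \cite[Theorem 4.6]{Remmers}, and I would follow one of those treatments rather than reconstruct it from scratch. For the concrete presentations appearing elsewhere in this paper, invoking the proposition then reduces to drawing the (finite) left and right graphs and checking that they contain no cycles, which is what we use in place of any ad hoc verification of cancellativity.
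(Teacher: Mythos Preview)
Your outline is a reasonable sketch of the standard argument, but note that the paper does not actually prove this proposition at all: it is stated with attribution to \cite{Adyan} (with a pointer also to \cite[Theorem 4.6]{Remmers}) and used as a black box. There is therefore no proof in the paper to compare your proposal against; your sketch simply reproduces, at the level of an outline, the approach of the cited references, and you yourself defer to those references for the technical core.
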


In particular, if $R$ consists of a single relation $u=v$ (which will often be the case in this manuscript), then $S$ is cancellative if the first letters of $u$ and $v$ are distinct, and the last letters of $u$ and $v$ are distinct.

\section{Distances and Factorizations}\label{sec:factorizations}

In this section we introduce rigorous notions of factorizations and distances between factorizations.
We begin by briefly recalling the concepts of factorizations as well as the usual distance from the commutative setting.
A more detailed account can be found in \cite[Section 1.2]{GHK06}.

Let $S$ be a commutative cancellative semigroup.
If $a \in S$ and $a=u_1\cdots u_k = v_1\cdots v_l$ with $k$, $l \in \bN_0$ and $u_1$, $\ldots\,$,~$u_k$, $v_1$, $\ldots\,$,~$v_l \in \cA(S)$ are two representations of $a$ as products of atoms, then one considers these representations to be the same factorization if $k=l$ and there exists a permutation $\sigma \in \fS_k$ such that $u_i \simeq v_{\sigma(i)}$ for all $i \in [1,k]$.
A fully rigorous notion of factorizations is obtained as follows:
Let $\red S$ be the associated reduced semigroup of $S$.
The \emph{factorization monoid of $S$}, denoted by $\sZ(S)$, is the free abelian monoid $\cF(\cA(\red S))$.
There is a canonical homomorphism $\pi\colon \sZ(S) \to \red S$ mapping a formal product $u_1\cdots u_k$ in $\sZ(S)$ to the product $\prod_{i=1}^k u_i$ in $\red S$.
For $a \in S$, the set $\sZ(a) = \sZ_S(a) = \pi^{-1}(aS^\times)$ is the \emph{set of factorizations of $a$}.

If $X$ is a set and $F = \cF(X)$ is the free abelian monoid on $X$, there is a natural notion of a distance function $\sd_F\colon F \times F \to \bN_0$, defined as follows:
If $x$,~$y \in F$, we may write $x$ and $y$ as
\[
  x=u_1\cdots u_k v_1\cdots v_m \quad\text{and}\quad y=u_1\cdots u_k w_1\cdots w_n
\]
with $k$,~$m$, $n \in \bN_0$ and $u_1$, $\ldots\,$,~$u_k$, $v_1$, $\ldots\,$,~$v_m$, $w_1$, $\ldots\,$,~$w_n \in X$ such that
\[
  \{ v_1, \ldots, v_m \} \cap \{ w_1, \ldots, w_n \} = \emptyset.
\]
We then set $\sd_F(x,y) = \max\{ m, n\}$.
This is a metric on $F$ having the additional property that it is invariant under translations, that is, $\sd_F(xz,yz) = \sd_F(x,y)$ for all $x$,~$y$, $z \in F$.
Moreover $\babs{\length{z} - \length{z}} \le \sd_F(z,z') \le \max\{\length{z}, \length{z'} \}$ for all $z$,~$z' \in \sZ(S)$.

Since $\sZ(S)$ is simply the free abelian monoid on $\cA(\red S)$, in this way a notion of a distance between factorizations is obtained.
It is this distance function that has been a central tool in the investigation of non-unique factorizations in the commutative setting.
For instance, the catenary degree $\sc(S)$ and the tame degree $\st(S)$ are defined in terms of $\sd_{\sZ(S)}$.

For a cancellative small category $H$ we cannot directly imitate the approach to defining factorizations that is used in the commutative setting (taking the path category instead of the free abelian monoid) because associativity may not be a congruence relation on $H$.
Instead we shall take the path category on $\cA(H)$, and afterwards impose a congruence relation to deal with the potential presence of units.
This gives rise to the notion of a \emph{rigid factorization} in cancellative small categories.
In the commutative setting, rigid factorizations differ from the usual notion in the commutative sense in that the order of factors matters.
In general, there does not seem to be an entirely natural choice of distance between rigid factorizations that also coincides with the usual distance for factorizations in the commutative case.
Therefore we introduce an axiomatic notion of a distance.
Each distance $\sd$ gives rise to the notion of \emph{$\sd$-factorizations}, possibly coarser than that of rigid factorizations.
On the other hand, different distances may give rise to the same notion of a \emph{factorization}.

We focus on what appear to be two reasonably defined distances:
The first of these notions, the \emph{permutable distance} $\sd_p$, allows for permutations of irreducible factors of an element, and hence gives rise to the notion of \emph{permutable factorizations} of an element.
In a commutative semigroup, $\sd_p$ coincides with the usual distance defined between any two factorizations of an element.
The second notion, the \emph{rigid distance}, instead corresponds more naturally to the notion of rigid factorizations, and hence does not coincide with the usual distance in the commutative setting.

In addition, for the semigroup of non zero-divisors of a ring, we shall also introduce distances based on \emph{similarity} and \emph{subsimilarity} of atoms, denoted by $\sd_\dsim$ and $\sd_\dsubsim$.
For the semigroup of non zero-divisors of a commutative ring, these also coincide with the usual distance of the commutative setting, and have an advantage over the permutable distance in that they correctly reflect the structure of noncommutative rings in the sense that PIDs are $\sd_\dsim$- and $\sd_\dsubsim$-factorial, while they are not necessarily permutably factorial.

We also show that the coarsest possible distance, $\sd_{\length{.}}$, is based on lengths alone, and that a factorization in that distance corresponds to a length, whence sets of lengths naturally reappear as sets of factorizations in this coarse distance.

\vspace{1.5mm}
\begin{center}
  \emph{Throughout this section, let $H$ be a cancellative small category.}
\end{center}
\vspace{1.5mm}

We now recall the notion of a rigid factorization as defined in \cite[Section 3]{Smertnig}.
Let $\fpath(\cA(H))$ denote the path category on the quiver of atoms of $H$. We define
\[
  H^\times \times_r \fpath(\cA(H)) = \{\, (\varepsilon, y) \in H^\times \times \fpath(\cA(H)) : t(\varepsilon) = s(y) \,\},
\]
and define an associative partial operation on $H^\times \times_r \fpath(\cA(H))$ as follows:
If $(\varepsilon,y)$,~$(\varepsilon', y') \in H^\times \times_r \fpath(\cA(H))$ with $\varepsilon$,~$\varepsilon' \in H^\times$,
\[
  y = (e, u_1, \ldots, u_k, f) \in \fpath(\cA(H)) \;\text{ and }\; y' = (e', v_1, \ldots, v_l, f') \in \fpath(\cA(H)),
\]
then the operation is defined if $t(y) = s(\varepsilon')$, and
\[
  (\varepsilon,y) \cdot (\varepsilon',y') = (\varepsilon, (e, u_1,\ldots, u_k\varepsilon', v_1, \ldots, v_l, f')) \qquad\text{if $k > 0$, }
\]
while $(\varepsilon,y)\cdot(\varepsilon',y') = (\varepsilon\varepsilon', y')$ if $k = 0$.
In this way, $H^\times \times_r \fpath(\cA(H))$ is again a cancellative small category (with identities $\{\, (e, (e,e)) : e \in H_0 \,\}$ that we identify with $H_0$, so that $s(\varepsilon,y) = s(\varepsilon)$ and $t(\varepsilon,y) = t(y)$).
We define a congruence relation $\sim$ on $H^\times \times_r \fpath(\cA(H))$ as follows:
If $(\varepsilon,y)$,~$(\varepsilon',y') \in H^\times \times_r \fpath(\cA(H))$ with $y$,~$y'$ as before, then
$(\varepsilon,y) \sim (\varepsilon',y')$ if $k = l$, $\varepsilon u_1\cdots u_k = \varepsilon' v_1 \cdots v_l \in H$ and either $k=0$ or there exist $\delta_2$,~$\ldots\,$,~$\delta_k \in H^\times$ and $\delta_{k+1} = t(u_k)$ such that
\[
  \varepsilon' v_1 = \varepsilon u_1 \delta_2^{-1} \quad\text{and}\quad v_i = \delta_i u_i \delta_{i+1}^{-1} \quad\text{for all $i \in [2,k]$}.
\]

\begin{definition}
  The \emph{category of rigid factorizations of $H$} is defined as
  \[
    \sZ^*(H) = \big(H^\times \times_r \fpath(\cA(H))\big) / \!\sim.
  \]

For $z \in \sZ^*(H)$ with $z = [(\varepsilon, (e, u_1, \ldots, u_k, f))]_\sim$, we write $z=\rf[\varepsilon]{u_1,\cdots,u_k}$ and denote the partial operation on $\sZ^*(H)$ also  by $\rfop$. The \emph{length} of the rigid factorization $z$ is $\length{z} = k$ and there is a homomorphism $\pi=\pi_H\colon \sZ^*(H) \to H$, induced by multiplication in $H$, explicitly $\pi(z) = \varepsilon u_1 \cdots u_k \in H$. For $a \in H$, we define $\sZ^*(a) = \sZ_H^*(a) = \pi^{-1}(\{a\})$ to be the \emph{set of rigid factorization of $a$}.
\end{definition}

Factoring out by the relation $\sim$ is motivated by the fact that if $e \in H_0$ and $u$,~$v \in \cA(H)$ with $t(u)=e=s(v)$, and $\varepsilon \in H^\times$ is such that $t(\varepsilon)=e$, then we always have $uv = (u\varepsilon^{-1})(\varepsilon v)$, but we do not wish to consider these to be distinct factorizations of $uv$.
Working with $H^\times \times_r \fpath(\cA(H))$ instead of $\fpath(\cA(H))$ ensures that, despite factoring out by $\sim$, every unit of $H$ has a rigid factorization (of length $0$).
Hence the homomorphism $\pi\colon \sZ^*(H) \to H$ is surjective if and only if $H$ is atomic.
In this way we often avoid having to treat units as special cases.

Each atom $u \in \cA(H)$ has a unique rigid factorization, as does each unit of $H$.
Moreover, it is easy to see that these unique rigid factorizations of atoms and units of $S$ are precisely the atoms and units of $\sZ^*(H)$, and thus we have bijections
\[
  \pi\mid_{\cA(\sZ^*(H))}\colon \cA(\sZ^*(H)) \to \cA(H) \quad\text{and}\quad \pi\mid_{\sZ^*(H)^\times}: \sZ^*(H)^\times \to H^\times.
\]
In particular, $H^\times$ embeds into $\sZ^*(H)$ as a subcategory by means of $\varepsilon \mapsto \varepsilon=[(\varepsilon, (t(\varepsilon),t(\varepsilon)))]_\sim$.
Thus we may view $\cA(H)$ and $H^\times$ as subsets of $\sZ^*(H)$.

One can verify, directly from the definition of $\sZ^*(H)$, that if $x$, $y$, $x'$, $y' \in \sZ^*(H)$ with $\rf{x,y} = \rf{x',y'}$ and $\length{x} = \length{x'}$, then there exists $\varepsilon \in H^\times$ such that $x' = \rf{x,\varepsilon^{-1}}$ and $y'= \rf{\varepsilon,y}$.
Thus any representation of a rigid factorization as a product of other rigid factorization is, up to trivial insertions of units, uniquely determined by the lengths of the factors.
Below, we will define the notion of rigid factoriality, and by the property just stated, $\sZ^*(H)$ will turn out to be rigidly factorial.

If $H$ is reduced, then we simply have $\sZ^*(H) \cong \fmon(\cA(H))$, and in this case we identify these two objects.
In particular, if $S$ is a commutative reduced cancellative semigroup, then $\sZ^*(S)$ is the free monoid on $\cA(S)$, while the usual factorization monoid from the commutative setting is the free abelian monoid on $\cA(S)$.
Thus rigid factorizations differ from the usual ones in that the order of atoms matters for rigid factorizations.

We shall sometimes write something akin to ``Let $z=\rf[\varepsilon]{u_1,\cdots,u_k} \in \sZ_H^*(a)$ be a rigid factorization \ldots'', and we will tacitly assume that we are implicitly choosing $k \in \bN_0$, $\varepsilon \in H^\times$, and $u_1$,~$\ldots\,$,~$u_k \in \cA(H)$ representing the given factorization.

With rigid factorizations defined, we are now able to introduce distances between them.

\begin{definition}\label{distance}
  A \emph{global distance on $H$} is a map $\sd \colon \sZ^*(H) \times \sZ^*(H) \to \bN_0$ satisfying the following properties.
  \begin{enumerate}[label=\textup{(\textbf{D\arabic*})},ref=\textup{(D\arabic*)}]
    \item\label{d:ref} $\sd(z,z) = 0$ for all $z \in \sZ^*(H)$.
    \item\label{d:sym} $\sd(z,z') = \sd(z',z)$ for all $z$,~$z' \in \sZ^*(H)$.
    \item\label{d:tri} $\sd(z,z') \le \sd(z,z'') + \sd(z'',z')$ for all $z$,~$z'$,~$z'' \in \sZ^*(H)$.
    \item\label{d:mul} For all $z$,~$z' \in \sZ^*(H)$ with $s(z)=s(z')$ and $x \in \sZ^*(H)$ with $t(x) = s(z)$ it holds that $\sd(x\rfop z, x \rfop z') = \sd(z,z')$, and for all $z$,~$z' \in \sZ^*(H)$ with $t(z)=t(z')$ and $y \in \sZ^*(H)$ with $s(y) = t(z)$ it holds that $\sd(z\rfop y, z'\rfop y) = \sd(z,z')$.
    \item\label{d:len} $\babs{\length{z} - \length{z'}} \le \sd(z,z') \le \max\big\{ \length{z}, \length{z'}, 1 \big\}$ for all $z$,~$z' \in \sZ^*(H)$.
  \end{enumerate}
  Let $L = \{\, (z, z') \in \sZ^*(H) \times \sZ^*(H) : \pi(z) = \pi(z') \,\}$.
  A \emph{distance on $H$} is a map $\sd \colon L \to \bN_0$ satisfying properties \labelcref*{d:ref,d:sym,d:tri,d:mul,d:len} under the additional restrictions on $z$,~$z'$ and $z''$ that $\pi(z)=\pi(z')=\pi(z'')$.
\end{definition}

A distance is only defined between two rigid factorizations $z$ and $z'$ of a fixed element, while a global distance is defined between arbitrary rigid factorizations.
If $\sd$ is a global distance, then $\sd|_L$ is a distance and we simply write $\sd=\sd|_L$.
The concept of a distance suffices to introduce $\sd$-factorizations and to study catenary degrees.
Distances have an advantage over global distances in that they can be extended to the category of principal ideals (see \cref{prop:dist-bij}).
While the particular distances we introduce will generally be global distances, our abstract results will always be stated for distances, the only exception being \subref{d-basic:rigid}, where it is necessary to assume that the given distance is a global distance.
The description of $\equiv_p$ after \cref{rel0} by means of the permutable distance makes use of the fact that the permutable distance is a global distance.

Let $z$,~$z' \in \sZ^*(H)$.
If $\length{z}=\length{z'}=0$ then $z=\varepsilon$ and $z'=\eta$ with $\varepsilon$,~$\eta \in H^\times$.
If, in addition, $\pi(z)=\pi(z')$, then $\varepsilon=\eta$ and hence $z=z'$.
In this case, \labelcref{d:len} together with \labelcref{d:ref} implies $\sd(z,z') \le \max\{\length{z},\length{z'}\}$.
For a distance, the upper bound in \labelcref{d:len} is therefore equivalently to $\sd(z,z') \le \max\{\length{z}, \length{z'}\}$.

In the literature a great number of distances between words in a free monoid have been introduced, see for example \cite[Chapter 11]{Deza-Deza}. Modifying these to account for the potential presence of units, they prove to be a rich source of possible interesting distances to study on $H$.

We now introduce two general constructions for global distances in the present context.

\begin{construction}\label{con}
\envnewline
\begin{enumerate}
  \item\label{con:edit}
    Let $\Omega$ be a non-empty set of symmetric relations on $\sZ^*(H) \times \sZ^*(H)$, and for each $\cR \in \Omega$ let $c_\cR \in \bN_0$ denote its \emph{cost}, subject to the condition that $c_\cR \ge \babs{\length{z} - \length{z'}}$ for all $z$,~$z' \in \sZ^*(H)$ with $z \cR z'$.
  We call $\cR \in \Omega$ an \emph{edit operation}.
  Let $z$, $z' \in \sZ^*(H)$.
  An \emph{edit sequence} from $z$ to $z'$ consists of a finite sequence of relations $\cR_1$,~$\ldots\,$,~$\cR_m \in \Omega$ (repetition is allowed) and factorizations $z=z_0$,~$z_1$, $\ldots\,$,~$z_{m-1}$,~$z_m=z' \in \sZ^*(H)$ such that $z_{i-1} \cR_i z_i$ for all $i \in [1,m]$.
  (Note that the intermediate factorizations may have different products, and even $s(z_i) \ne s(z_{i-1})$ and $t(z_i) \ne t(z_{i-1})$ is permitted.)
  The \emph{cost} of the sequence is $c_{\cR_1} + \cdots + c_{\cR_m}$, and the \emph{length} of the sequence is $m \in \bN_0$.

  We set $\sd(z,z') \in \bN_0 \cup \{\infty\}$ to be the minimal cost of an edit sequence from $z$ to $z'$.
  If $\sd(z,z') < \infty$ for all $z$,~$z' \in \sZ^*(H)$ (that is, for any two $z$, $z' \in \sZ^*(H)$ there exists an edit sequence from $z$ to $z'$), then $\sd\colon L \to \bN_0$ satisfies properties \labelcref{d:ref,d:sym,d:tri}.
  Moreover, $\sd$ satisfies one inequality of \labelcref{d:len}:
  For any edit sequence of minimal length,
  \[
    \sd(z,z') = \sum_{i=1}^m c_{\cR_i} \ge \sum_{i=1}^m \babs{\length{z_i} - \length{z_{i-1}}} \ge \Bigabs{\sum_{i=1}^m \length{z_i} - \length{z_{i-1}}} = \babs{\length{z} - \length{z'}}.
  \]
  To establish that $\sd$ is a global distance, it remains to check \labelcref{d:mul} and the remaining inequality from \labelcref{d:len}.
  We will use this construction to introduce the rigid distance in \cref{distdef}.

  \item\label{con:fab}
    Let $\sim$ be an equivalence relation on the set of atoms $\cA(H)$ of $H$ such that, for all $u$,~$v \in \cA(H)$, $u \simeq v$ implies $u \sim v$.
    We denote by $[u]_\sim$ the $\sim$-equivalence class of $u \in \cA(H)$, and by $F=\cF(\cA(S)/\!\sim)$ the free abelian monoid on the equivalence classes of $\cA(H)$ under the equivalence relation $\sim$.
    Then there exists a homomorphism $\varphi\colon \sZ^*(H) \to F$ such that $\varphi(z) = [u_1]_\sim \cdots [u_k]_\sim$ for all $z=\rf[\varepsilon]{u_1,\cdots,u_k} \in \sZ^*(H)$ with $k \in \bN_0$, $\varepsilon \in H^\times$, and $u_1$,~$\ldots\,$,~$u_k \in \cA(H)$.

    Let $\sd_F\colon F \times F \to \bN_0$ denote the usual distance on the free abelian monoid $F$.
    We obtain a global distance $\sd$ on $H$ by setting $\sd(z,z') = \sd_F(\varphi(z),\varphi(z'))$ for all $z$,~$z' \in \sZ^*(H)$.
    Thus, if $z = \rf[\varepsilon]{u_1,\cdots,u_k}$, $z' = \rf[\eta]{v_1,\cdots,v_l} \in \sZ^*(H)$, we compare the sequences of $\sim$-equivalence classes of $u_1$,~$\ldots\,$,~$u_k$ and $v_1$,~$\ldots\,$,~$v_l$ up to permutation.
    Explicitly, there exists a (uniquely determined) $n \in [0,\min\{k,l\}]$, subsets $I \subset [1,k]$ and $J \subset [1,l]$ of cardinality $\card{I}=\card{J} = n$, and a bijection $\sigma\colon I \to J$ such that $u_{i} \sim v_{\sigma(i)}$ for all $i \in [1,n]$, while $u_i \not \sim u_j$ for all $i \in [1,k] \setminus I$ and $j \in [1,l] \setminus J$.
    Then $\sd(z,z') = \max\{ k-n, l-n \}$.

    The permutable distance, as well as the similarity and subsimilarity distances, introduced in the following definition, will be constructed in this way.
\end{enumerate}
\end{construction}

Using these constructions, we now introduce the (global) distances we will focus on.

\begin{definition}\label{distdef}
  \envnewline
  \begin{enumerate}
    \item\label{distdef:rigid}
      In the \emph{rigid distance}, denoted by $\sd^*$, we allow the replacement of $m \in \bN_0$ consecutive atoms by $n \in \bN_0$ new ones at cost $\max\{m,n,1\}$.
      Explicitly, for all $m$, $n \in \bN_0$ we define an edit operation $\cR_{m,n}$ as follows:
      If $z$, ~$z' \in \sZ^*(H)$, then $z \cR_{m,n} z'$ if and only if there exist $x$,~$y$, $z_0$, $z_0' \in \sZ^*(H)$ such that $\{ \length{z_0}, \length{z_0'} \} = \{ m, n \}$ and one of
      \begin{align*}
        z = x \rfop z_0 \rfop y \;\quad&\text{ and }\quad z' = x \rfop z_0' \rfop y, \\
        z = z_0 \rfop y \;\quad&\text{ and }\quad z' = z_0' \rfop y, \\
        z = x \rfop z_0 \;\quad&\text{ and }\quad z' = x \rfop z_0', \qquad\text{ or} \\
        z = z_0 \;\quad&\text{ and }\quad z'=z_0'
      \end{align*}
      holds.
      We set the cost of $\cR_{m,n}$ to be $\max\{m, n, 1\}$ and set
      \[
        \Omega = \{\, \cR_{m,n} : m,n \in \bN_0 \,\}.
      \]
      The rigid distance $\sd^*$ is the distance defined by these edit operations, as described in \subref{con:edit}.
      (We verify in \cref{lemma:rd} below that $\sd^*$ is a global distance.)

    \item\label{distdef:permutable}
      The \emph{permutable distance}, denoted by $\sd_p$, is defined by means of \subref{con:fab} by setting $\sim\, =\, \simeq$.

    \item\label{distdef:cohn-brungs}
      Let $R$ be a ring and $S = R^\bullet$ the cancellative semigroup of non zero-divisors of $R$.
      Two elements $a$,~$a' \in R$ are \emph{similar} if $R/Ra \cong R/Ra'$ as left $R$-modules, and they are \emph{subsimilar} if there exist monomorphisms $R/Ra \hookrightarrow R/Ra'$ and $R/Ra' \hookrightarrow R/Ra$.
      These are equivalence relations on $S$.
      If $a \simeq a'$, then $a$ and $a'$ are similar, and hence subsimilar.
      Using \subref{con:fab}, similarity therefore gives rise to the \emph{similarity distance}, denoted by $\sd_\dsim$, and subsimilarity gives rise to the \emph{subsimilarity distance}, denoted by $\sd_\dsubsim$.
  \end{enumerate}
\end{definition}

\begin{remark}\label{some-dist}
  \envnewline
  \begin{enumerate}
  \item
    If $S$ is a commutative reduced cancellative semigroup, then $\sd_p$ coincides with the usual distance.
    To differentiate it from a generic distance we will always write $\sd_p$ for the usual distance in the commutative setting.

  \item
    The rigid distance is derived from the editing (or Levenshtein) distance on a free monoid:
    There, one permits the insertion, deletion and replacement of a single letter in a word at cost $1$.
    The variation from the definition for a free monoid accounts for the nature of the partial operation, in which a one-by-one deletion and insertion of atoms may not be possible, as well as the presence of units.
    If $z$, $z' \in \sZ^*(H)$ with $\sd^*(z,z') = 0$, then $z=z'$ since all edit operations have cost at least $1$.
    Thus the rigid distance is sufficiently fine to be able to distinguish between two distinct rigid factorizations.

    The definition of the edit operations $\cR_{m,n}$ seems repetitive.
    However, since it is possible that $s(z) \ne s(z')$ or $t(z) \ne t(z')$, one cannot assume that all the listed cases are special cases of $z = \rf{x,z_0,y}$ and $z' = \rf{x,z_0',y}$.

    It may seem natural to set the cost of $\cR_{m,n}$ to $\max\{m,n\}$ instead of $\max\{m,n,1\}$.
    However, it would then be permitted to insert or remove units in arbitrary places at cost $0$.
    It would then be possible to have $\sd^*(z,z') =0$ for two distinct factorizations $z$,~$z' \in \sZ^*(H)$, even if $\pi(z)=\pi(z')$.
    Indeed, consider $S = \langle a,\varepsilon \mid \varepsilon^2 = 1, \varepsilon a^2 = a^2 \varepsilon \rangle$.
    We first check that $S$ is cancellative.
    Since the right hand side of the relation $\varepsilon^2 = 1$ is trivial, we cannot employ Adyan's result.
    However, by mapping
    \[
      \varepsilon \mapsto \begin{pmatrix} 0 & 1 \\ 1 & 0 \end{pmatrix}
      \quad\text{and}\quad
      a \mapsto \begin{pmatrix} 0 & 2 \\ 1 & 0 \end{pmatrix},
    \]
    we obtain a homomorphism $S \to M_2(\bZ)^\bullet$.
    Using the fact that every element of $S$ affords a representation of the form $a^{2m+r} (\varepsilon a)^n \varepsilon^s$ with $m$,~$n \in \bN_0$ and $r$,~$s \in \{ 0,1 \}$, one can check directly that this homomorphism is injective.
    Therefore $S$ can be realized as a subsemigroup of $M_2(\bZ)^\bullet$ and is cancellative.
    Now note that $z=\rf{\varepsilon a, a \varepsilon}$ and $z'=\rf{a,a}$ are distinct rigid factorizations of $a^2$.
    Indeed, suppose otherwise.
    Then there exists $\eta \in S^\times$ such that $\varepsilon a = a \eta$.
    However, $S^\times=\{1,\varepsilon\}$, and thus $z$ and $z'$ are distinct.
    However, $(\rf{a,a}) \cR_{0,0} (\rf{\varepsilon a, a})$, and $(\rf{\varepsilon a, a})\cR_{0,0}(\rf{\varepsilon a, a \varepsilon})$ would give an edit sequence of cost $0$.

  \item
    In \cite[Chapter 3]{Cohn85}, P.~M.~Cohn uses the notion of \emph{similarity} to define a concept of unique factorization in noncommutative rings.
    Similarly, in \cite{Brungs}, the slightly weaker notion of \emph{subsimilarity} is used to introduce such a concept.
    If $z$, $z' \in \sZ^*(H)$ with $\pi(z)=\pi(z')$, then $\sd_\dsim(z,z') = 0$ if and only if $z$ and $z'$ are the same factorization in the sense of P.~M.~Cohn, and similarly $\sd_\dsubsim(z,z') = 0$ if and only if $z$ and $z'$ are the same factorization in the sense of Brungs.

    There is also a purely multiplicative characterization of subsimilarity (see \cite[Lemma 2]{Brungs}, but note that Brungs assumes that $R$ is a domain): Two elements $a$,~$a'$ are subsimilar if and only if there exist elements $c$,~$c' \in R$ such that
    \[
      Ra \cap Rc' = Ra'c' \quad\text{and}\quad Ra' \cap Rc = Rac,
    \]
    with $c$ and $c'$ satisfying the additional property that, for all $r \in R$, $rc \in Rac$ implies $r \in Ra$ and $rc' \in Ra'c'$ implies $r \in Ra'$ (this is a weak form of cancellativity for $c$ and $c'$).

    If $R$ is commutative, then the notions of subsimilarity and similarity coincide with associativity, and hence both of these distances coincide with the usual one in $R$:
    If $a$, $a' \in R^\bullet$ are subsimilar, then $Ra = \ann_R(R/Ra) = \ann_R(R/Ra') = Ra'$, and hence $a \simeq a'$.
\end{enumerate}
\end{remark}

The first part of the following lemma shows that any edit sequence consisting of the edit operations which define the rigid distance can be transformed into an edit sequence of equal or lower cost which consists of pairwise disjoint replacements (the idea is related to the use of traces in studying the Levenshtein distance in a free monoid, see \cite{Wagner-Fischer}).
The details of the proof are somewhat technical, but essentially, given any edit sequence, we can merge two subsequent overlapping edit operations into a single edit operation whose cost does not exceed the combined cost of the two operations.
We then use this characterization to establish that the rigid distance is a global distance.

\begin{lemma}\label{lemma:rd}
  \envnewline
  \begin{enumerate}
    \item\label{lemma:rd:char}
    Let $z$,~$z' \in \sZ^*(H)$ and let $N \in \bN_0$.
    Then $\sd^*(z,z') \le N$ if and only if there exist $n \in \bN$, $x_1$,~$\ldots\,$,~$x_{n-1} \in \sZ^*(H)$ and $y_1$, $y_1'$,~$\ldots\,$,~$y_n$, $y_n' \in \sZ^*(H)$ such that
    \begin{align*}
      z  &= \rf{y_1, x_1, y_2, \cdots, y_{n-1}, x_{n-1}, y_n}\text{ and}\\
      z' &= \rf{y_1', x_1, y_2', \cdots, y_{n-1}', x_{n-1}, y_n'}
    \end{align*}
    with $\sum_{i\in I} \max\{\length{y_i}, \length{y_i'}, 1\} \le N$ where $I = \{\, i \in [1,n] : y_i \ne s(y_i)\text{ or } y_i' \ne s(y_i') \,\}$.

  \item\label{lemma:rd:coprime}
    In the representation in \labelcref*{lemma:rd:char} we can, in addition, assume that either
    \begin{enumerate}
      \item for all $i \in [2,n]$ the suffixes $\rf{y_i,x_i,\cdots,x_{n-1},y_n}$ of $z$ and $\rf{y_i',x_i,\cdots,x_{n-1},y_n'}$ of $z'$ are left coprime, or
      \item for all $i \in [1,n-1]$ the prefixes $\rf{y_1, x_1, \cdots, x_{i-1},y_i}$ of $z$ and $\rf{y_1',x_1,\cdots,x_{i-1},y_i'}$ of $z'$ are right coprime.
    \end{enumerate}

  \item\label{lemma:rd:mul}
    Let $z$, $z' \in \sZ^*(H)$ and let $x$,~$y \in \sZ^*(H)$.
    If $t(x) = s(z) = s(z')$, then $\sd^*(\rf{x,z}, \rf{x,z'}) = \sd^*(z,z')$, and if $s(y) = t(z) = t(z')$, then $\sd^*(\rf{z,y},\rf{z',y}) = \sd^*(z,z')$.

  \item\label{lemma:rd:dist} The rigid distance $\sd^*$ is a global distance on $H$.
  \end{enumerate}
\end{lemma}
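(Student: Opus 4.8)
The plan is to prove the four parts in order, with part~\labelcref{lemma:rd:char} doing the heavy lifting and the remaining parts being comparatively short consequences.

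For part~\labelcref{lemma:rd:char}, the ``if'' direction is easy: given a representation of $z$ and $z'$ with common infixes $x_1,\dots,x_{n-1}$ and differing blocks $y_i$, $y_i'$, I would build an explicit edit sequence that replaces one block $y_i$ by $y_i'$ at a time (for $i \in I$), each step an instance of some $\cR_{m,n}$ with $m = \length{y_i}$, $n = \length{y_i'}$, placed appropriately between a prefix $x$ and suffix $y$; the total cost is then exactly $\sum_{i \in I} \max\{\length{y_i},\length{y_i'},1\} \le N$. Here one must be a little careful with the insertion of units to glue blocks together, but \labelcref{d:len}-type bookkeeping and the four cases built into the definition of $\cR_{m,n}$ are exactly what allows this. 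The ``only if'' direction is the crux. Starting from an edit sequence $z=z_0 \cR_{m_1,n_1} z_1 \cR_{m_2,n_2} \cdots \cR_{m_\ell,n_\ell} z_\ell = z'$ of cost $\le N$, I want to produce a single ``parallel'' representation as in the statement. The key move is a merging lemma: if two consecutive edit operations act on overlapping ranges of atoms, replace them by a single $\cR_{m,n}$ whose cost is at most the sum of the two costs; if they act on disjoint ranges, they commute and can be recorded as independent replacements. Formalizing ``range'' requires tracking, through each $\cR_{m_i,n_i}$, which atoms of $z_{i-1}$ are untouched and carry over to $z_i$ — essentially a trace/alignment in the sense of Wagner--Fischer. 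Iterating the merge over the length $\ell$ of the sequence (induction on $\ell$, or on $\ell$ together with some measure of total overlap) collapses everything into pairwise disjoint replacements sitting inside a common skeleton $x_1,\dots,x_{n-1}$, which is precisely the desired representation. The subadditivity of $\max\{m,n,1\}$ under composition of replacements — concretely, $\max\{m,n,1\} \le \max\{m,k,1\} + \max\{k,n,1\}$ for all $m,n,k \in \bN_0$ — is what keeps the cost from increasing during a merge.

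Part~\labelcref{lemma:rd:coprime} is a normalization of the representation obtained in~\labelcref{lemma:rd:char}: given any valid representation, if two adjacent suffixes fail to be left coprime one can peel off a common left divisor and absorb it into the skeleton, strictly shrinking the differing blocks (hence not increasing the cost) and decreasing some well-ordered quantity; iterating yields the coprimality condition~(a), and the mirror-image argument (peeling common right divisors off prefixes) yields~(b). Part~\labelcref{lemma:rd:mul}: the inequality $\sd^*(\rf{x,z},\rf{x,z'}) \le \sd^*(z,z')$ is immediate since any edit sequence from $z$ to $z'$ becomes one from $\rf{x,z}$ to $\rf{x,z'}$ of the same cost; for the reverse, apply the characterization~\labelcref{lemma:rd:char} with the coprimality option~\labelcref{lemma:rd:coprime}(a) to a minimal-cost representation of $\rf{x,z}$ versus $\rf{x,z'}$ — left coprimality of the suffixes forces the common skeleton to contain all of $x$, so the representation restricts to one of $z$ versus $z'$ of no greater cost. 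The right-multiplication statement is symmetric, using~\labelcref{lemma:rd:coprime}(b). Finally, part~\labelcref{lemma:rd:dist}: by \subref{con:edit} we already know $\sd^*$ satisfies \labelcref{d:ref,d:sym,d:tri} and the lower bound in \labelcref{d:len}, provided $\sd^*(z,z') < \infty$ always — which holds because any $z$ can be edited to an empty factorization and back (or, using \labelcref{lemma:rd:char} directly, take $n=1$ with $z = \rf{y_1}$, $z' = \rf{y_1'}$, giving $\sd^*(z,z') \le \max\{\length{z},\length{z'},1\}$, which also supplies the upper bound in \labelcref{d:len}); and \labelcref{d:mul} is exactly~\labelcref{lemma:rd:mul}.

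The main obstacle is the ``only if'' direction of~\labelcref{lemma:rd:char}: making the notion of the ``range'' of an edit operation, and the merging of two overlapping operations, precise in the setting of a partial operation with units floating around. The subtlety flagged in \cref{some-dist}(2) — that the four cases in $\cR_{m,n}$ are genuinely distinct because sources and targets of intermediate factorizations may disagree — means one cannot simply pretend everything lives inside a single word; the alignment argument has to be carried out honestly. Everything else is then bookkeeping against the axioms already verified in \subref{con:edit}.
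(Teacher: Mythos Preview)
Your proposal is correct and follows essentially the same route as the paper: induction on the length of the edit sequence with a merging step for \labelcref{lemma:rd:char}, absorbing common divisors into the skeleton for \labelcref{lemma:rd:coprime}, and then combining \labelcref{lemma:rd:char} with the coprimality normalization to get the nontrivial inequality in \labelcref{lemma:rd:mul}. One small caveat: in \labelcref{lemma:rd:mul} the coprimality does not quite force $x$ into the skeleton $x_1$ itself but only into $\rf{y_1,x_1}$ and $\rf{y_1',x_1}$, so after cancelling $x$ you still need a short length estimate to bound the new leading blocks $\hat y_1,\hat y_1'$ by the old $y_1,y_1'$ --- this is the bookkeeping you allude to, and the paper carries it out explicitly.
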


\begin{proof}
  For $k$,~$l \in \bN_0$, let $\cR_{k,l}$ denote the edit operations from \subref{distdef:rigid}, and let $\Omega$ denote the set consisting of all such edit operations.
  We recall: If $x$,~$y$, $x'$, $y' \in \sZ^*(H)$ with $\rf{x,y} = \rf{x',y'}$ and $\length{x} = \length{x'}$, then there exists $\varepsilon \in H^\times$ such that $x' = \rf{x,\varepsilon^{-1}}$ and $y'= \rf{\varepsilon,y}$.
  We will make use of this property throughout the proof.

  \ref*{lemma:rd:char}
  Suppose first that $z$ and $z'$ are of the described form.
  Then we can clearly construct an edit sequence from $z$ to $z'$ in the operations from $\Omega$ and of cost at most $N$:
  We successively replace $y_i$ by $y_i'$ for all $i \in I$ with cost at most $\max\{ \length{y_i},\length{y_i'},1\}$.
  Thus $\sd^*(z,z') \le N$.

  For the converse, suppose that $\sd^*(z,z') \le N$.
  Fix an edit sequence from $z$ to $z'$ with cost at most $N$ and with length $m \in \bN_0$.
  For each $i \in [1,m]$, let $\cR_i \in \Omega$ and let $z=z_0$,~$\ldots\,$,~$z_m=z' \in \sZ^*(H)$ be such that $z_{i-1} \cR_i z_i$ for all $i \in [1,m]$ and $\sum_{i=1}^m c_{\cR_i} \le N$.
  We proceed by induction on the length $m$ of the edit sequence.
  If $m=0$, then $z=z'$ and we simply set $n=2$, $x_1=z$, $y_1=y_1'=s(z)$ and $y_2=y_2'=t(z)$.
  Now suppose that $m \ge 1$ and that the claim holds for sequences of length $m-1$.
  Note that $N \ge 1$, since all edit operations in $\Omega$ have cost at least $1$.
  Since $z=z_0$,~$\ldots\,$,~$z_{m-1}$ is a sequence from $z$ to $z_{m-1}$ of length $m-1$, the induction hypothesis implies that there exist $r \in \bN$, $\hat x_1$,~$\ldots\,$,~$\hat x_{r-1} \in \sZ^*(H)$ and $\hat y_1$, $\hat y_1'$,~$\ldots\,$,~$\hat y_{r}$, $\hat y_{r}' \in \sZ^*(H)$ such that
  \begin{equation}\label{eq:rd-ih}
  \begin{aligned}
    z  &= \rf{\hat y_1,\hat x_1, \hat y_2,\cdots, \hat y_{r-1}, \hat x_{r-1}, \hat y_{r}}, \\
    z_{m-1} &= \rf{\hat y_1',\hat x_1, \hat y_2', \cdots, \hat y_{r-1}', \hat x_{r-1}, \hat y_{r}'},
  \end{aligned}
  \end{equation}
  and $\sum_{i \in \hat I} \max\{\length{\hat y_i}, \length{\hat y_i'}, 1\} \le N - c_{\cR_m}$ where $\hat I = \{\, i \in [1,r] : \hat y_i \ne s(\hat y_i)\text{ or } \hat y_i' \ne s(\hat y_i') \,\}$.

  Since $z_{m-1} \cR_m z'$, there exist $\hat x$, $\hat y$, $\hat z$, $\hat z' \in \sZ^*(H)$ with $\max\{ \length{\hat z}, \length{\hat z'}, 1 \} = c_{\cR_m}$ and such that one of the following holds:
  \begin{align*}
    z_{m-1} = \hat x \rfop \hat z \rfop \hat y \;\quad&\text{ and }\quad z' = \hat x \rfop \hat z' \rfop \hat y, \\
    z_{m-1} = \hat z \rfop \hat y \;\quad&\text{ and }\quad z' = \hat z' \rfop \hat y, \\
    z_{m-1} = \hat x \rfop \hat z \;\quad&\text{ and }\quad z' = \hat x \rfop \hat z', \qquad\text{ or} \\
    z_{m-1} = \hat z \;\quad&\text{ and }\quad z'=\hat z'.
  \end{align*}
  We first consider the case where $z_{m-1} = \rf{\hat x,\hat z, \hat y}$ and $z' = \rf{\hat x,\hat z',\hat y}$. The other cases will be analogous to special cases of this one.
  To simplify the notation in what follows, for $i$,~$j \in [0,r]$, we set
  \begin{align*}
    P_{i} &= \rf{s(\hat y_1'),\hat y_{1}',\hat x_{1}, \hat y_2', \cdots, \hat y_{i-1}', \hat x_{i-1}, \hat y_i'}\quad\text{ and} \\
    S_{j} &= \rf{\hat y_{j+1}',\hat x_{j+1},\hat y_{j+2}', \cdots, \hat y_{r-1}', \hat x_{r-1}, \hat y_r', t(\hat y_r')}.
  \end{align*}
  These are the prefixes of $z_{m-1}$ ending in $\hat y_i'$, respectively the suffixes of $z_{m-1}$ starting with $\hat y_{j+1}'$, for $i$,~$j \in [0,r]$.
  Note that $P_0=s(\hat y_1')$ and $S_r = t(\hat y_r')$ are empty products.

  Let $k \in [0,r]$ be maximal and $l \in [k,r]$ be minimal such that
  \[
    \length{P_{k}} \le \length{\hat x}\quad\text{and}\quad \length{S_{l}} \le \length{\hat y}.
  \]

  We first deal with some extremal cases.
  If $k=0$ and $l = r$, then we set $n=1$, $y_1 = z$ and $y_1' = z'$.
  Then
  \begin{align*}
    \length{z'} &= \length{\hat x} + \length{ \hat y } + \length{\hat z'} \le \sum_{i \in \{1,r\}} \length{\hat y_i'} + c_{\cR_m}\quad\text{ and}\\
    \length{z} &= \sum_{i=1}^{r-1} \length{\hat x_i} + \sum_{i=1}^r \length{\hat y_i} \le c_{\cR_m} + \sum_{i \in \hat I} \max\{ \length{\hat y_i}, \length{\hat y_i'} \},
  \end{align*}
  and thus $\max\{ \length{z}, \length{z'}, 1 \} \le N$.

  If $k=0$ and $l < r$, then, by enlarging $\hat z$ and $\hat z'$ if necessary by at most $\length{\hat y_l'}$ elements, we may assume $\length{S_l} \le \length{\hat y} \le \length{\rf{\hat x_l,S_l}}$.
  Then there exist $x_1$, $y_{1,1} \in \sZ^*(H)$ such that $\hat x_l = \rf{y_{1,1},x_1}$ and $\hat y = \rf{x_1,S_l}$.
  Setting $y_1 = \rf{\hat y_1, \hat x_1, \cdots, \hat x_{l-1}, \hat y_{l} ,y_{1,1}}$ and $y_1' = \rf{\hat x, \hat z'}$, we have
  \begin{align*}
    z  &= \rf{y_1,x_1,\hat y_{l+1},\hat x_{l+1},\cdots,\hat x_{r-1},\hat y_r}\quad\text{ and} \\
    z' &= \rf{y_1',x_1,\hat y_{l+1}',\hat x_{l+1},\cdots,\hat x_{r-1},\hat y_r'}.
  \end{align*}
  We set $n=r-l+1$,\; $y_i = \hat y_{i+l-1}$ and $y_i'= \hat y_{i+l-1}'$ for all $i \in [2,n]$, and $x_i = \hat x_{i+l-1}$ for all $i \in [2,n-1]$.
Then
  \begin{align*}
    \length{y_1} &= \sum_{i=1}^{l-1} \length{\hat x_i} + \length{y_{1,1}} + \sum_{i=1}^l \length{\hat y_i} \le c_{\cR_m} + \sum_{i \in \hat I \cap [1,l]} \max\{ \length{\hat y_i}, \length{\hat y_i'} \}\quad\text{ and}\\
    \length{y_1'} &= \length{\hat x} + \length{\hat z'} \le \length{\hat y_1'} + c_{\cR_m} + \length{\hat y_l'}.
  \end{align*}
  Thus $\max\{\length{y_1}, \length{y_1'}, 1\} \le N - \sum_{i \in \hat I \cap [l+1,r]} \max\{\length{\hat y_i}, \length{\hat y_i'},1 \}$.

  The case $k>0$ and $l = r$ is analogous to the previous case.
  We can assume from now on that $k$,~$l \in [1,r-1]$.
  Suppose first that $k=l$.
  Comparing the following two representations of $z_{m-1}$:
  \[
    z_{m-1}=\rf{\hat x, \hat z, \hat y} = \rf{\hat y_1',\hat x_1, \hat y_2', \cdots, \hat y_{r-1}', \hat x_{r-1}, \hat y_{r}'},
  \]
  it follows that there exist $x_k$,~$x_{k+1} \in \sZ^*(H)$ such that $\hat x_k = \rf{x_k,\hat z,x_{k+1}}$,\; $\hat x = \rf{P_k,x_k}$, and $\hat y = \rf{x_{k+1},S_k}$.
  Then
  \begin{align*}
    z  &= \rf{\hat y_1, \cdots, \hat y_{k}, x_k, \hat z, x_{k+1}, \hat y_{k+1}, \cdots, \hat y_{r}}\quad\text{ and} \\
    z' &= \rf{\hat y_1', \cdots, \hat y_{k}', x_k, \hat z', x_{k+1}, \hat y_{k+1}',\cdots, \hat y_{r}'}.
  \end{align*}
  Setting $n=r+1$,\; $y_{k+1}=\hat z$,\; $y_{k+1}' = \hat z'$,\; $x_i=\hat x_i$ for all $i \in [1,k-1]$,\; $y_i = \hat y_i$ and $y_i' = \hat y_i'$ for all $i \in [1,k]$, $x_{i} = \hat x_{i-1}$ for all $i \in [k+2,r]$, and $y_i = \hat y_{i-1}$ and $y_i' = \hat y_{i-1}'$ for all $i \in [k+2,r+1]$, the claim follows since $\max\{\length{\hat z},\length{\hat z'}, 1\} \le c_{\cR_m}$.

  Now suppose that $k < l$ with $k$, $l \in [1,r-1]$.
  Enlarging $\hat z$ and $\hat z'$ if necessary by at most $\length{\hat y_{k+1}'} + \length{\hat y_{l}'}$ elements if $k+1 < l$, respectively by at most $\length{\hat y_l'}$ elements if $k+1=l$, we may further assume
  \[
    \length{P_k} \le \length{\hat x} \le \length{\rf{P_{k},\hat x_k}}\quad\text{and}\quad \length{S_l} \le \length{\hat y} \le \length{\rf{\hat x_{l}, S_{l}}}.
  \]
  Then there exist $x_k$,~$x_{k+1}$, $y_{k+1,1}$, $y_{k+1,2} \in \sZ^*(H)$ such that $\hat x_k = \rf{x_k,y_{k+1,1}}$,\; $\hat x_l = \rf{y_{k+1,2},x_{k+1}}$,\; $\hat x = \rf{P_k,x_k}$,\;  $\hat y = \rf{x_{k+1},S_l}$, and $\hat z = \rf{y_{k+1,1},\hat y_{k+1}',\hat x_{k+1},\cdots,\hat x_{l-1}, \hat y_l', y_{k+1,2}}$.
  Thus
  \begin{align*}
    z  &= \rf{\hat y_1, \cdots, \hat y_{k}, x_k, \hat z, x_{k+1}, \hat y_{l+1}, \cdots, \hat y_{r}}\quad\text{ and} \\
    z' &= \rf{\hat y_1', \cdots, \hat y_{k}', x_k, \hat z', x_{k+1}, \hat y_{l+1}',\cdots, \hat y_{r}'}.
  \end{align*}
  We set $n = r - l + k +1$,\; $y_{k+1} = \hat z$,\; $y_{k+1}' = \hat z'$,\; $x_i = \hat x_i$ for all $i \in [1,k-1]$,\; $y_i = \hat y_i$ and $y_i' = \hat y_i'$ for all $i \in [1,k]$,\; $x_i = \hat x_{i-k+l-1}$ for all $i \in [k+2,n-1]$, and $y_i = \hat y_{i-k+l-1}$ and $y_i' = \hat y_{i-k+l-1}'$ for all $i \in [k+2,n]$.
  If $k+1=l$, then $\max\{ \length{\hat z}, \length{\hat z'},1 \} \le c_{\cR_m} + \length{\hat y_l'}$, and if $k+1 <l$, then $\max\{ \length{\hat z}, \length{\hat z'},1 \} \le c_{\cR_m} + \length{\hat y_{k+1}'} + \length{\hat y_l'}$.
  Thus, in any case, with $I = \{\, i \in [1,n] : y_i \ne s(y_i)\text{ or }y_i' \ne s(y_i') \,\}$ we have
  \[
    \sum_{i \in I} \max\{\length{y_i},\length{y_i'},1\} \le \sum_{i \in \hat I} \max\{\length{\hat y_i}, \length{\hat y_i'},1\} + c_{\cR_m}.
  \]
  Hence the claim is verified in the case where $z_{m-1} = \rf{\hat x, \hat z, \hat y}$ and $z' = \rf{\hat x, \hat z', \hat y}$.
  If $z_{m-1} = \rf{\hat z, \hat y}$ and $z'=\rf{\hat z',\hat y}$, then the proof is similar to the case $k=0$ above, noting that it is possible that $s(\hat z) \ne s(\hat z')$ and hence this case is not strictly a special case where $x=t(x)$.
  If $z_{m-1} = \rf{\hat x, \hat z}$ and $z'=\rf{\hat x, \hat z'}$, then the proof is similar to the case $l=r$ above.
  If $z_{m-1} = \hat z$ and $z' = \hat z'$, then the proof is similar to the case $k=0$ and $l=r$.

  \ref*{lemma:rd:coprime}
  We show that the suffixes can be chosen to be left coprime.
  The basic idea here is that a common left factor of a suffix may be moved into the $x_i$ preceding the suffix.
  Let $i \in [2,n]$.
  Suppose that $\rf{y_i,x_i,\cdots,x_{n-1},y_n} = \rf{a,b}$ and $\rf{y_i',x_i,\cdots,x_{n-1},y_n'} = \rf{a,c}$ for some $a$,~$b$, $c \in \sZ^*(H)$.
  We may assume that $\length{a}$ is maximal.
  Then there exist $k$, $l \in [i,n]$ and $y_{k,1}$, $y_{k,2}$, $y_{l,1}'$, $y_{l,2}' \in \sZ^*(H)$ such that $y_k = \rf{y_{k,1}, y_{k,2}}$,\; $y_l' = \rf{y_{l,1}', y_{l,2}'}$,
  \begin{equation}\label{eq:gcd}
  \begin{aligned}
    &\rf{y_i, x_i,y_{i+1}, \cdots,y_{k-1},x_{k-1},y_{k,1}} \!\!\!\! &=&\;\, a,\text{ and}\\
    &\rf{y_i',x_i,y_{i+1}',\cdots,y_{l-1}',x_{l-1},y_{l,1}'} \!\!\!\! &=&\;\, a.
  \end{aligned}
  \end{equation}
  By swapping the roles of $z$ and $z'$ if necessary, we may, without restriction, assume $k \le l$.
  We set
  \begin{align*}
    \hat x_{i-1} &= x_{i-1} \rfop a,
    &\hat y_i     &= \rf{y_{k,2},x_{k},y_{k+1},\cdots,x_{l-1},y_l},
    &\hat y_i'    &= y_{l,2}',
  \end{align*}
  and $\hat x_j=t(\hat y_i)$ for all $j \in [i,l-1]$ as well as $\hat y_j' = \hat y_j = t(\hat y_i)$ for all $j \in [i+1,l]$.
  Then, comparing lengths in \cref{eq:gcd},
  \[
    \sum_{j=k}^{l-1} \length{x_j} = \sum_{j=i}^{k-1} \length{y_i} + \length{y_{k,1}} - \sum_{j=i}^{l-1} \length{y_j'} - \length{y_{l,1}'},
  \]
  and thus
  \[
    \length{\hat y_i} = \sum_{j=k}^{l-1} \length{x_j} + \sum_{j=k+1}^l \length{y_j} + \length{y_{k,2}} = \sum_{j=i}^l \length{y_i} - \sum_{j=i}^{l-1} \length{y_j'} - \length{y_{l,1}'} \le \sum_{j=i}^l \length{y_i}.
  \]
  Clearly $\length{\hat y_i'} \le \length{y_l'}$.
  If $I \cap [i,l] \ne \emptyset$, then
  \[
    \max\{ \length{\hat y_i}, \length{\hat y_i'}, 1 \} \le \sum_{j \in I \cap [i,l]} \max\{ \length{y_j},  \length{y_j'}, 1 \}.
  \]
  Otherwise, $y_j = s(y_j) = y_j'$ for all $j \in [i,l]$.
  Then \cref{eq:gcd} implies $k=l$ and also $y_{k,1} = y_{l,1}'$.
  Modifying $a$ by a unit if necessary, we may take $y_{k,1} = y_{l,1}' = t(a)$, and hence also $y_{k,2}=y_{l,2}'=t(a)$.
  Thus $\hat y_i' = \hat y_i = s(\hat y_i)$ is trivial.

  Note that we only need to modify the representation to the right of $y_{i-1}$.
  Thus, working our way from left to right, we may ensure that the suffixes are left coprime.

  \ref*{lemma:rd:mul}
  We show $\sd^*(\rf{x,z}, \rf{x,z'}) = \sd^*(z,z')$, and begin by showing $\sd^*(\rf{x,z}, \rf{x,z'}) \le \sd^*(z,z')$.
  Suppose $N = \sd^*(z,z')$ and take a representation of $z$ and $z'$ as in \ref*{lemma:rd:char}.
  Since $s(z) = s(z')$, we have $s(y_1) = s(y_1')$.
  Thus we may multiply both representations by $x$ from the left.
  Again by \ref*{lemma:rd:char}, this implies $\sd^*(\rf{x,z}, \rf{x,z'}) \le N$.

  We now show $\sd^*(\rf{x,z}, \rf{x,z'}) \ge \sd^*(z,z')$.
  Let $N = \sd^*(x\rfop z, x \rfop z')$.
  By \labelcref{lemma:rd:char}, there exist $n \in \bN$, $x_1$,~$\ldots\,$,~$x_{n-1} \in \sZ^*(H)$ and $y_1$, $y_1'$,~$\ldots\,$,~$y_n$, $y_n' \in \sZ^*(H)$ such that
    \begin{align*}
      \rf{x,z}  &= \rf{y_1,x_1,y_2, \cdots, x_{n-1}, y_n}\quad\text{ and}\\
      \rf{x,z'} &= \rf{y_1',x_1,y_2', \cdots, x_{n-1}, y_n'}
    \end{align*}
    with $\sum_{i \in I} \max\{\length{y_i}, \length{y_i'},1\} \le N$ where $I = \{\, i \in [1,n] : y_i\ne s(y_i)\text{ or }y_i' \ne s(y_i') \,\}$.
    By \ref*{lemma:rd:coprime} we may further assume that $\rf{y_2,x_2,\cdots,x_{n-1},y_n}$ and $\rf{y_2',x_2,\cdots,x_{n-1}, y_n'}$ are left coprime.
    If $y_1=y_1'=s(y_1)$, then $x_1 = x \rfop \hat x_0$ with $\hat x_0 \in \sZ^*(H)$.
    Cancelling $x$ on the left, we obtain representations of $z$ and $z'$ as in \ref*{lemma:rd:char}, and conclude $\sd^*(z,z') \le \sd^*(x \rfop z, x \rfop z')$.

    Now suppose that $y_1$ or $y_1'$ is non-trivial.
    Due to the coprimality condition, we must have $\rf{y_1,x_1} = \rf{x,a}$ and $\rf{y_1',x_1}=\rf{x,b}$ with $a$,~$b \in \sZ^*(H)$.
    Let $a=\rf{\hat y_1,\hat x_1}$ and $b = \rf{\hat y_1',\hat x_1}$ with $\hat y_1$,~$\hat y_1'$,~$\hat x_1 \in \sZ^*(H)$ and $\length{\hat x_1}$ chosen to be maximal.
    Since $\rf{y_1,x_1}$ and $\rf{y_1',x_1}$ have common right divisor $x_1$, we have at least $\length{\hat x_1} \ge \length{x_1} - (\length{x} - \min\{\length{y_1},\length{y_1'}\})$.
    Thus
    \[
      \length{\hat y_1} = \length{y_1} + \length{x_1} - \length{x} - \length{\hat x_1} \le \length{y_1} - \min\{\length{y_1},\length{y_1'}\} \le \length{y_1},
    \]
    and similarly $\length{\hat y_1'} \le \length{y_1'}$.
    Therefore $\max\{\length{\hat y_1},\length{\hat y_1'},1\} \le \max\{\length{y_1}, \length{y_1'}, 1\}$ and, applying \ref*{lemma:rd:char} to
    \begin{align*}
      \rf{z}  &= \rf{\hat y_1, \hat x_1, y_2, \cdots, x_{n-1}, y_n}\quad\text{ and}\\
      \rf{z'} &= \rf{\hat y_1',\hat x_1, y_2', \cdots, x_{n-1}, y_n'},
    \end{align*}
    the claim follows.

  \ref*{lemma:rd:dist}
  Let $z$, $z' \in \sZ^*(H)$.
  By the general properties of the construction, \labelcref{d:ref,d:sym,d:tri}, and one inequality from \labelcref{d:len} hold.
  It remains to show that $\sd^*(z,z') \le \max\{ \length{z}, \length{z'}, 1  \}$, and that \labelcref{d:mul} holds.
  We have $z \cR_{\length{z}, \length{z'}} z'$, and this operation has cost $\max\{\length{z}, \length{z'}, 1\}$.
  Property \labelcref{d:mul} follows from \ref*{lemma:rd:mul}.
\end{proof}

If $\sd$ and $\sd'$ are global distances on $H$ with $\sd(z,z') \le \sd'(z,z')$ for all $z$,~$z' \in \sZ^*(H)$, we shall say that $\sd'$ is \emph{finer} than $\sd$ and $\sd$ is \emph{coarser} than $\sd'$.
If $\sd$ and $\sd'$ are distances on $H$ with $\sd(z,z') \le \sd'(z,z')$ for all $z$,~$z' \in \sZ^*(H)$ with $\pi(z) = \pi(z')$, we shall say that $\sd'$ is \emph{finer} than $\sd$ and $\sd$ is \emph{coarser} than $\sd'$.

We note some basic properties of distances.

\begin{lemma}\label{d-basic}
  Let $\sd$ be a distance on $H$.
  \begin{enumerate}
    \item\label{d-basic:mul}
      For $z_1$, $z_2$, $z_3$, $z_4 \in \sZ^*(H)$ with $\pi(z_1)=\pi(z_3)$, $\pi(z_2)=\pi(z_4)$, $t(z_1)=s(z_2)$, and $t(z_3)=s(z_4)$ we have $\sd(z_1 \rfop z_2, z_3 \rfop z_4) \le \sd(z_1, z_3) + \sd(z_2, z_4)$.
    \item\label{d-basic:cong} The relation $\sim_\sd$ on $\sZ^*(H)$, defined by $z \sim_\sd z'$ if and only if $\pi(z) = \pi(z')$ and $\sd(z,z') = 0$, is a congruence relation.

    \item\label{d-basic:rigid} Any global distance $\sd$ on $H$ is coarser than $\sd^*$.
  \end{enumerate}
\end{lemma}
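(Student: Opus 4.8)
For \labelcref{d-basic:mul} I would argue purely from the axioms by inserting $z_1 \rfop z_4$ as an intermediate factorization. Since $\pi$ preserves sources and targets, the hypotheses $\pi(z_1)=\pi(z_3)$, $\pi(z_2)=\pi(z_4)$, together with $t(z_1)=s(z_2)$ and $t(z_3)=s(z_4)$, force $s(z_1)=s(z_3)$, $t(z_1)=t(z_3)=s(z_2)=s(z_4)$ and $t(z_2)=t(z_4)$; in particular $z_1 \rfop z_4$ is defined, and as $\pi$ is a homomorphism, $\pi(z_1 \rfop z_2)=\pi(z_1 \rfop z_4)=\pi(z_3 \rfop z_4)$, so every distance occurring below is defined. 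Now \labelcref{d:tri} gives
\[
  \sd(z_1 \rfop z_2,\, z_3 \rfop z_4) \le \sd(z_1 \rfop z_2,\, z_1 \rfop z_4) + \sd(z_1 \rfop z_4,\, z_3 \rfop z_4),
\]
and two applications of \labelcref{d:mul} — cancelling the common prefix $z_1$ in the first summand and the common suffix $z_4$ in the second — rewrite the right-hand side as $\sd(z_2,z_4)+\sd(z_1,z_3)$, which is the claim.

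For \labelcref{d-basic:cong}, observe that $z \sim_\sd z'$ implies $\pi(z)=\pi(z')$, hence $s(z)=s(z')$ and $t(z)=t(z')$, so $\sim_\sd$ restricts to a relation on each $\sZ^*(H)(e,f)$; it is reflexive by \labelcref{d:ref}, symmetric by \labelcref{d:sym}, and transitive by \labelcref{d:tri}. The compatibility condition for a congruence is then immediate from \labelcref{d-basic:mul}: if $x \sim_\sd x'$ and $y \sim_\sd y'$ with $t(x)=t(x')=s(y)=s(y')$, then $\pi(x \rfop y)=\pi(x)\pi(y)=\pi(x')\pi(y')=\pi(x' \rfop y')$ and $\sd(x \rfop y,\, x' \rfop y')\le\sd(x,x')+\sd(y,y')=0$, so $x \rfop y \sim_\sd x' \rfop y'$.

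For \labelcref{d-basic:rigid} the plan is to reduce to a single edit step: it suffices to show $\sd(z,z')\le c_{\cR}$ whenever $z \cR z'$ for one of the edit operations $\cR=\cR_{m,n}$ of \subref{distdef:rigid}. Granting this, and using that $\sd^*$ is finite-valued (it is a global distance by \cref{lemma:rd}), one picks a minimal-cost edit sequence $z=z_0, z_1, \ldots, z_t=z'$ realizing $\sd^*(z,z')$ and chains \labelcref{d:tri} for $\sd$ to get $\sd(z,z')\le\sum_{i=1}^{t}\sd(z_{i-1},z_i)\le\sum_{i=1}^{t}c_{\cR_i}=\sd^*(z,z')$. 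To prove the single-step bound, write $z$ and $z'$ in one of the four forms of \subref{distdef:rigid} with $\{\length{z_0},\length{z_0'}\}=\{m,n\}$, strip off the common prefix (if present) via left multiplication in \labelcref{d:mul} and the common suffix (if present) via right multiplication in \labelcref{d:mul} to obtain $\sd(z,z')=\sd(z_0,z_0')$, and conclude with \labelcref{d:len}, which yields $\sd(z_0,z_0')\le\max\{\length{z_0},\length{z_0'},1\}=\max\{m,n,1\}=c_{\cR_{m,n}}$. The point to be careful about — and essentially the whole content of \labelcref{d-basic:rigid} — is that $\sd$ must be a \emph{global} distance here, since the intermediate factorizations in an edit sequence need not have equal products, so $\pi(z_0)=\pi(z_0')$ may fail and \labelcref{d:mul} is used in its unrestricted form; the only remaining work is the routine case bookkeeping of source/target conditions across the four forms of $\cR_{m,n}$ (for example, for $z=z_0 \rfop y$, $z'=z_0' \rfop y$ one has $t(z_0)=t(z_0')=s(y)$ but possibly $s(z_0)\ne s(z_0')$, so only the right-multiplication clause of \labelcref{d:mul} applies, whereas for $z=z_0$, $z'=z_0'$ no clause is needed).
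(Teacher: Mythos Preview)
Your proposal is correct and follows essentially the same approach as the paper's proof: triangle inequality plus translation invariance for \labelcref{d-basic:mul}, a direct check using \labelcref{d-basic:mul} for \labelcref{d-basic:cong}, and for \labelcref{d-basic:rigid} bounding each edit step by its cost via \labelcref{d:mul} and \labelcref{d:len}, then summing along a minimal edit sequence using \labelcref{d:tri}. The only cosmetic difference is that in \labelcref{d-basic:mul} you insert $z_1 \rfop z_4$ as the intermediate term while the paper inserts $z_3 \rfop z_2$; both choices work equally well, and your added remarks about why a \emph{global} distance is needed in \labelcref{d-basic:rigid} are a welcome clarification.
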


\begin{proof}
  \ref*{d-basic:mul}
  From the triangle inequality \ref{d:tri} we obtain $\sd(z_1 \rfop z_2, z_3 \rfop z_4) \le \sd(z_1 \rfop z_2, z_3 \rfop z_2) + \sd(z_3 \rfop z_2,\, z_3 \rfop z_4)$.
  The translation invariance \ref{d:mul} implies $\sd(z_1 \rfop z_2, z_3 \rfop z_2) = \sd(z_1, z_3)$ and $\sd(z_3 \rfop z_2, z_3 \rfop z_4) = \sd(z_2, z_4)$, and therefore we have $\sd(z_1 \rfop z_2, z_3 \rfop z_4) \le \sd(z_1, z_3) + \sd(z_2, z_4)$.

  \ref*{d-basic:cong}
  It is immediate that $\sim_\sd$ gives reflexive, symmetric and transitive relations on $\sZ^*(H)(e,f)$ for all $e$,~$f \in H_0$.
  Let $z$,~$z'$, $w$, $w' \in \sZ^*(S)$ with $s(z)=s(z')$, $t(z)=t(z')=s(w)=s(w')$, and $t(w)=t(w')$. Moreover assume that $\pi(z)=\pi(z')$, $\sd(z,z') = 0$, $\pi(w)=\pi(w')$ and $\sd(w,w') = 0$.
  We must show that $z \rfop w \sim_\sd z' \rfop w'$.
  Since $\pi(w)=\pi(w')$, $\pi(z)=\pi(z')$ and $\pi$ is a homomorphism of small categories, we also have $\pi(z \rfop w)=\pi(z' \rfop w')$.
  Moreover, by \ref*{d-basic:mul}, $\sd(z \rfop w,z' \rfop w') \le \sd(z,z') + \sd(w,w') = 0$.

  \ref*{d-basic:rigid}
  Let $\sd$ be a global distance on $H$.
  Let $z$,~$z' \in \sZ^*(H)$ and let $N = \sd^*(z,z')$.
  By the definition of $\sd^*$, there exist $l \in \bN_0$, $m_1$,~$n_1$,~$\ldots\,$, $m_l$,~$n_l \in \bN_0$ and $z=z_0$,~$\ldots\,$, $z_l = z' \in \sZ^*(H)$ such that $z_{i-1} \cR_{m,n} z_i$ for all $i \in [1,l]$ and $c_{\cR_{m_1,n_1}} + \cdots + c_{\cR_{m_l,n_l}} = N$.
  By the definition of $\cR_{m_i,n_i}$ and property \labelcref{d:mul} of $\sd$, we find $\sd(z_{i-1},z_i) \le c_{\cR_{m_i,n_i}}$ for all $i \in [1,l]$.
  From the triangle inequality \labelcref{d:tri} we conclude $\sd(z,z') \le N$.
\end{proof}

By application of \subref{d-basic:rigid}, the rigid distance plays a special role in that it is the finest global distance.
Note that $\sd_{\length{\cdot}}(z,z') = \babs{\length{z} - \length{z'}}$ also defines a global distance on $H$.
By property \labelcref{d:len}, we have $\sd_{\length{\cdot}}(z,z') \le \sd(z,z')$ for any other global distance $\sd$, and similarly $\sd_{\length{\cdot}}(z,z') \le \sd(z,z')$ if $\pi(z)=\pi(z')$ and $\sd$ is a distance.
Thus, $\sd_{\length{\cdot}}$ is the coarsest possible (global) distance.
If $R$ is a ring, then $\sd_p$ is finer than $\sd_{\dsim}$ (since associated elements are similar), and $\sd_{\dsim}$ is finer than $\sd_{\dsubsim}$ (since similar elements are subsimilar), and all three of these global distances are coarser than $\sd^*$ by the previous lemma.

It follows from \cref{d-basic} that every distance $\sd$ gives rise to a notion of factorizations derived from $\sd$ by identifying rigid factorizations $z$ and $z'$ of an element $a \in H$ if $\sd(z,z')=0$.

\begin{definition}
  Let $\sd$ be a distance on $H$ and let $a \in H$.
  \begin{enumerate}
    \item We define $\sZ_\sd(H) = \sZ^*(H) / \!\sim_\sd$ and $\sZ_\sd(a)$ to be the image of $\sZ^*(a)$ under the canonical homomorphism $\sZ^*(H) \to \sZ_\sd(H)$.
      An element of $\sZ_\sd(a)$ is called a \emph{$\sd$-factorization of $a$} and $\sZ_\sd(H)$ is the \emph{category of $\sd$-factorizations}. We say that $H$ is \emph{$\sd$-factorial} if $\card{\sZ_\sd(a)} = 1$ for all $a \in H$.

    \item
      We set $\sZ_{p}(H) = \sZ_{\sd_p}(H)$ and call these factorizations \emph{permutable factorizations}.
      Given $z \in \sZ^*(H)$ we shall write $\pf{z}$ for its image in $\sZ_p(H)$.
      If $H$ is $\sd_p$-factorial, we say instead that $H$ is \emph{permutably factorial}.
  \end{enumerate}
\end{definition}

Let $z$,~$z' \in \sZ^*(H)$ with $\pi(z)=\pi(z')$.
Since $\sd(z,z')$ depends only on the classes of $z$ and $z'$ in $\sZ_\sd(S)$, we may think of $\sd$ as being defined on $\{\, (z, z') \in \sZ_\sd(S) \times \sZ_\sd(S) : \pi(z)=\pi(z') \,\}$ whenever this is convenient.
Since $\sd^*(z,z') = 0$ if and only if $z = z'$,\, $\sZ_{\sd^*}(H) = \sZ^*(H)$ is just the category of rigid factorizations.
If $H$ is $\sd^*$-factorial, we say instead that it is \emph{rigidly factorial}.
Observe that $H$ is $\sd$-factorial if and only if the homomorphism $\sZ_\sd(H) \to H$ induced by $\pi\colon \sZ^*(H) \to H$ is an isomorphism.

\begin{remark}
  Let $z$,~$z' \in \sZ^*(H)$ with $\pi(z)=\pi(z')$.
  \begin{enumerate}
    \item We have $\sd_{\length{\cdot}}(z,z') = 0$ if and only if $\length{z} = \length{z'}$.
          Thus $H$ is $\sd_{\length{\cdot}}$-factorial if and only if it is half-factorial.

    \item We have $\sd_p(z,z') = 0$ if and only if $\length{z} = \length{z'}$ and there exists a permutation of the factors of $z$ such that they are pairwise associated to those of $z'$.
      If $H$ is a commutative cancellative semigroup, and $a \in H$, then $\sZ_p(a)$ coincides with the usual notion $\sZ(a)$ of factorizations of $a$ (but $\sZ_p(H) \not \cong \sZ(S)$ if $S$ is not reduced).

    \item
      If $\sd$ and $\sd'$ are distances on $\sZ(H)$ with $\sd'$ finer than $\sd$ and $H$ is $\sd'$-factorial, then $H$ is $\sd$-factorial.
      In particular, we have the following:
      Let $R$ be a ring.
      If $R^\bullet$ is permutably factorial, then it is $\sd_\dsim$-factorial.
      If $R^\bullet$ is $\sd_\dsim$-factorial, then it is $\sd_\dsubsim$-factorial.
      Finally, if $R$ is commutative, then all three notions coincide, since then $\sd_p=\sd_\dsim=\sd_\dsubsim$.
  \end{enumerate}
\end{remark}

Since we have identified $\cA(H)$ with $\cA(\sZ^*(H))$, and since representations of rigid factorizations as products of atoms are unique up to a trivial insertion of units, it follows immediately that $\sZ^*(H)$ is rigidly factorial, and thus in particular, $\sZ^*(\sZ^*(H)) \cong \sZ^*(H)$.
Similarly, for any distance $\sd$, the sets $\cA(H)$ and $H^\times$ embed into $\sZ_\sd(H)$, and $\sZ_\sd(H)$ is atomic with $\cA(\sZ_d(H)) = \cA(H)$.

The finer a distance $\sd$, the more refined the notion of factorizations that can be derived from $\sd$.
While $\sd^*$ turns out to be a very useful tool, it may not always be practical to study such a fine notion as rigid factorizations.
For instance, a commutative cancellative semigroup is rigidly factorial if and only if it is factorial and possesses, up to associativity, a unique prime element (that is, it is a discrete valuation monoid).
Thus even commutative PIDs are usually not rigidly factorial.
However, every path category is rigidly factorial.

Nonetheless, rigid factorizations have been studied in the following settings:
Generalizing the study of PIDs, the study of 2-firs (see \cite[Chapter 3]{Cohn85}) and, on an ideal-theoretic level, saturated subcategories of arithmetical groupoids (see \cite{Smertnig}).
The study of polynomial decompositions, that is, the study of the factorization properties of the noncommutative semigroup  $(K[X] \setminus K, \circ)$ where $K$ is (usually) a field, also concerns itself with what amounts to rigid factorizations (see \cite{Zieve-Mueller}).

The following lemma shows that (weak) transfer homomorphisms induce homomorphisms on the categories of rigid factorizations.
We omit the straightforward proof.

\begin{lemma}\label{ext-hom}
  Let $H$ and $T$ be cancellative small categories.
  Let $\phi\colon H \to T$ be a transfer homomorphism, or let $T$ be atomic and $\phi\colon H \to T$ a weak transfer homomorphism.
  There exists a unique homomorphism $\phi^*\colon \sZ^*(H) \to \sZ^*(T)$ satisfying
  \[
    \phi^*(u) = \phi(u) \quad\text{and}\quad \phi^*(\varepsilon) = \phi(\varepsilon) \quad\text{ for all $u \in \cA(H)$ and $\varepsilon \in H^\times$}.
  \]
  Moreover, $\phi^*$ induces the following commutative diagram
  \[
    \xymatrix{
      \sZ^*(H) \ar^{\phi^*}[r] \ar_{\pi_H}[d] & \sZ^*(T) \ar^{\pi_T}[d] \ar^{[.]_p}[r] & \sZ_p(T) \ar[d]\\
      H        \ar^{\phi}[r]                  & T \ar@{=}[r]                           & T.
    }
  \]
  Let $\lbar\phi\colon \sZ^*(H) \to \sZ_p(T)$ denote the homomorphism in the top row.
  \begin{enumerate}
    \item\label{ext-hom:th} If $\phi$ is a transfer homomorphism, then
      \[
        \sZ^*(T) = T^\times \phi^*(\sZ^*(H))T^\times \quad\text{and}\quad \sZ_p(T) = T^\times \lbar\phi(\sZ^*(H)) T^\times.
      \]
      In particular, for all $a \in H$, the induced maps $\sZ^*(a) \to \sZ^*(\phi(a))$ and $\sZ_p(a) \to \sZ_p(\phi(a))$ are surjective.
    \item\label{ext-hom:wth} If $T$ is atomic and $\phi$ is a weak transfer homomorphism, then $\sZ_p(T) = T^\times \lbar\phi(\sZ^*(H))T^\times$.
      In particular, for all $a \in H$, the induced map $\sZ_p(a) \to \sZ_p(\phi(a))$ is surjective.
  \end{enumerate}
  In either case, if $\phi$ is isoatomic, then the homomorphism $\phi_p\colon \sZ_p(H) \to \sZ_p(T)$ induced from $\lbar\phi$ is injective.
\end{lemma}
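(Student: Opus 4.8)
The plan is to construct $\phi^*$ from the universal property of the path category, verify it descends past the congruence defining $\sZ^*$, read the commutative diagram off on generators, and then obtain the surjectivity statements by lifting rigid factorizations atom-by-atom via \labelcref{th:lift} (resp.\ \labelcref{wth:lift}), with the final injectivity statement coming from the isoatomic hypothesis.

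First I would build $\phi^*$. Since $\phi$ is a (weak) transfer homomorphism it carries $H^\times$ into $T^\times$ by \labelcref{th:units} and sends atoms of $H$ to atoms of $T$ (as observed right after \cref{th}; for weak transfer homomorphisms this uses that $T$ is atomic). Hence there are a morphism of quivers $\cA(H)\to\cA(T)\subset\sZ^*(T)$ and a homomorphism $H^\times\to T^\times\subset\sZ^*(T)$ agreeing on objects through $\phi|_{H_0}$, and by the universal property of $\fpath(\cA(H))$ together with the explicit partial operation these assemble into a homomorphism $H^\times\times_r\fpath(\cA(H))\to\sZ^*(T)$. To see it factors through $\sim$: if $(\varepsilon,y)\sim(\varepsilon',y')$ then $k=l$, $\varepsilon u_1\cdots u_k=\varepsilon'v_1\cdots v_k$ in $H$, and there are units $\delta_2,\dots,\delta_k$ and $\delta_{k+1}=t(u_k)$ with $\varepsilon'v_1=\varepsilon u_1\delta_2^{-1}$ and $v_i=\delta_i u_i\delta_{i+1}^{-1}$; applying the homomorphism $\phi$ to these equations, and using $\phi(\delta_i)\in T^\times$, exhibits the two images as $\sim$-related in $\sZ^*(T)$. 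Uniqueness of $\phi^*$ is immediate since $\sZ^*(H)$ is generated by $\cA(H)\cup H^\times$. The left square $\pi_T\circ\phi^*=\phi\circ\pi_H$ is checked on the generators $u\in\cA(H)$, $\varepsilon\in H^\times$, where both composites equal $\phi(u)$, resp.\ $\phi(\varepsilon)$; the right square commutes because $\sZ_p(T)\to T$ is by construction the map induced by $\pi_T$. Setting $\lbar\phi=[\cdot]_p\circ\phi^*$ completes this part.

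For part (1), let $\phi$ be a transfer homomorphism and $z'=\rf[\eta]{v_1,\cdots,v_n}\in\sZ^*(\phi(a))$, so $\phi(a)=\eta v_1\cdots v_n$; the case $n=0$ uses $\phi^{-1}(T^\times)=H^\times$, so take $n\ge 1$. Iterating \labelcref{th:lift} on $\phi(a)=(\eta v_1)v_2\cdots v_n$ (a product of atoms of $T$) produces $u_1,\dots,u_n\in\cA(H)$ and units $\varepsilon_2,\dots,\varepsilon_n\in T^\times$ with $a=u_1\cdots u_n$, $\phi(u_1)=(\eta v_1)\varepsilon_2^{-1}$ and $\phi(u_i)=\varepsilon_iv_i\varepsilon_{i+1}^{-1}$; then $z:=\rf{u_1,\cdots,u_n}\in\sZ^*(a)$ and $\phi^*(z)=\rf{\phi(u_1),\cdots,\phi(u_n)}=z'$, the last equality being an instance of the relation $\sim$ in $\sZ^*(T)$ with the $\varepsilon_i$ in the role of the $\delta_i$. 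This gives surjectivity of $\sZ^*(a)\to\sZ^*(\phi(a))$; the global identity $\sZ^*(T)=T^\times\phi^*(\sZ^*(H))T^\times$ then follows because an arbitrary $w\in\sZ^*(T)$ has $\pi_T(w)=\mu\phi(a)\nu$ with $\mu,\nu\in T^\times$ by \labelcref{th:units}, whence $\mu^{-1}\rfop w\rfop\nu^{-1}\in\sZ^*(\phi(a))$ lies in the image. Applying the surjective homomorphism $[\cdot]_p$ yields the corresponding statements about $\sZ_p$. For part (2), with $\phi$ only a weak transfer homomorphism one cannot hit $z'$ exactly, but applying \labelcref{wth:lift} directly to $\phi(a)=(\eta v_1)v_2\cdots v_n$ produces $z\in\sZ^*(a)$ whose image has the same product and the same multiset of atoms up to $\simeq$ as $z'$; hence $\phi^*(z)\sim_{\sd_p}z'$, which — using that $\sim_{\sd_p}$ is a congruence by \cref{d-basic}, so that $\sZ_p(T)$ is a quotient category — is exactly what is needed for surjectivity of $\sZ_p(a)\to\sZ_p(\phi(a))$ and for $\sZ_p(T)=T^\times\lbar\phi(\sZ^*(H))T^\times$.

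For the final assertion, suppose $\phi$ is isoatomic and $\lbar\phi(z)=\lbar\phi(z')$ with $z=\rf[\alpha]{u_1,\cdots,u_k}$, $z'=\rf[\beta]{u_1',\cdots,u_l'}$ representing factorizations of one and the same element $a$. Then $\phi^*(z)\sim_{\sd_p}\phi^*(z')$, and unwinding $\sd_p$ (via \subref{con:fab} with $\sim\,=\,\simeq$) forces $k=l$ and a $\tau\in\fS_k$ with $\phi(u_i)\simeq\phi(u_{\tau(i)}')$ for all $i$; isoatomicity upgrades this to $u_i\simeq u_{\tau(i)}'$, so $z$ and $z'$ have the same multiset of atoms up to $\simeq$, i.e.\ $\sd_p(z,z')=0$, and since they also have the same product this says precisely $z\sim_{\sd_p}z'$, so $[z]_p=[z']_p$. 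Hence $\phi_p$ is injective on every fibre $\sZ_p(a)$ and, together with the surjectivity already established, restricts to a bijection $\sZ_p(a)\to\sZ_p(\phi(a))$. The main obstacle is not any one step being deep but the pervasive bookkeeping of the auxiliary units $\varepsilon$, $\delta_i$, $\varepsilon_i$: both the well-definedness of $\phi^*$ and, above all, the assertion that the lifted rigid factorization in part (1) equals $z'$ on the nose rather than merely up to units rely on tracking how these units propagate — exactly the ``straightforward'' verification the authors elide.
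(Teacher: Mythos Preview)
Your proof is correct and supplies exactly the routine verifications the paper elides (the paper writes only ``We omit the straightforward proof''). One small remark: the lemma as stated asserts that $\phi_p\colon \sZ_p(H)\to\sZ_p(T)$ is injective, but---as your phrasing ``injective on every fibre $\sZ_p(a)$'' acknowledges---only fiber-wise injectivity holds in general (global injectivity would fail whenever $\phi$ identifies distinct elements of $H$, e.g.\ collapses units), and fiber-wise injectivity is all that is ever used downstream.
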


\section{Catenary Degrees}\label{sec:catenary}

\vspace{1.5mm}
\begin{center}
  \emph{Throughout this section, let $H$ be a cancellative small category.}
\end{center}
\vspace{1.5mm}

Each notion of a distance $\sd$ gives rise to a corresponding catenary degree, as well as a monotone catenary degree.
These invariants provide a measure of how far away $H$ is from being $\sd$-factorial.
For basic properties of the catenary degree in the commutative setting, see \cite[Section 1.6]{GHK06}.

After giving the basic definitions, in \cref{cf} we provide a technical result that allows the study of catenary degrees using transfer homomorphisms.
This will be applied in \cref{sec:maxord} to arithmetical maximal orders in quotient semigroups.
In \cref{strong-weak-distance} we prove a transfer result for distances using an isoatomic weak transfer homomorphism.
This will be applied at the end of \cref{sec:abelianization} to the semigroup $T_n(D)^\bullet$ of non zero-divisors of the ring of $n \times n$ upper triangular matrices over a commutative atomic domain.

\begin{definition}
  Let $H$ be atomic, $\sd$ a distance on $H$, and $a \in H$.
  \begin{enumerate}
    \item Let $z$,~$z' \in \sZ^*(a)$ and $N \in \bN_0$.
      A finite sequence of rigid factorizations $z_0$,~$\ldots\,$,~$z_n \in \sZ^*(a)$, where $n \in \bN_0$, is called an \emph{$N$-chain (in distance $\sd$)} between $z$ and $z'$ if
      \[
        z = z_0,\  z' = z_n,\ \text{ and } \ \sd(z_{i-1}, z_{i}) \le N\ \text{ for all $i \in [1,n]$.}
      \]
      It is called a \emph{monotone $N$-chain} if either $\length{z_0} \le \length{z_1} \le \cdots \le \length{z_n}$ or $\length{z_0} \ge \length{z_1} \ge \cdots \ge \length{z_n}$.

    \item The \emph{[monotone] catenary degree (in distance $\sd$) of $a$}, denoted by $\sc_\sd(a)$ [$\sc_{\sd,\cmon}(a)$], is the minimal $N \in \bN_0 \cup \{\infty\}$ such that for any two factorizations $z$,~$z' \in \sZ^*(a)$ there exists a [monotone] $N$-chain between $z$ and $z'$.

    \item The \emph{catenary degree (in distance $\sd$) of $H$} is $\sc_\sd(H) = \sup\{\, \sc_\sd(a) : a \in H \,\} \in \bN_0 \cup \{\infty\}$, and the \emph{monotone catenary degree (in distance $\sd$)} is $\sc_{\sd,\cmon}(H) = \sup\{\, \sc_{\sd,\cmon}(a) : a \in H \,\} \in \bN_0 \cup \{\infty\}$.
  \end{enumerate}
\end{definition}

As in the commutative setting, the monotone catenary degree is usually studied using two auxiliary invariants, the equal catenary degree and the adjacent catenary degree. The \emph{equal catenary degree}, $\sc_{\sd,\ceq}(a)$, is the smallest $N \in \bN_0 \cup \{\infty\}$ such that for any two factorizations $z$,~$z' \in \sZ^*(a)$ with $\length{z}=\length{z'}$, there exists a monotone $N$-chain between $z$ and $z'$ (since $\length{z}=\length{z'}$, this means one in which every factorization is of length $\length{z}$).
We set
\[
  \sc_{\sd,\ceq}(H) = \sup\{\,\sc_{\sd,\ceq}(a) : a \in H \,\} \in \bN_0 \cup \{\infty\}.
\]
For $a \in H$ and $k$,~$l \in \sL(a)$ write,
\[
  d_{k,l}(a) = \min\{\, \sd(z,z') : z,\, z' \in \sZ^*(a),\, \length{z}=k,\,\length{z'}=l \,\}.
\]
We say that $k$ and $l$ are adjacent in $\sL(a)$ if $\sL(a) \cap [k,l] = \{k,l\}$.
The \emph{adjacent catenary degree} of $a \in H$ is defined as
\[
  \sc_{\sd,\cadj}(a) = \sup\{\, d_{k,l}(a) : \text{$k$, $l$ are adjacent in $\sL(a)$} \,\} \in \bN_0 \cup \{\infty\},
\]
with $\sc_{\sd,\cadj}(H) = \sup\{\, \sc_{\sd,\cadj}(a) : a \in H \,\} \in \bN_0 \cup \{\infty\}$.
It is immediate from the definitions that
\[
  \sc_{\sd}(a) \le \sc_{\sd,\cmon}(a) =  \sup\{ \sc_{\sd,\ceq}(a), \sc_{\sd,\cadj}(a) \},
\]
and hence $$\sc_\sd(H) \le \sc_{\sd,\cmon}(H) =  \sup\{ \sc_{\sd,\ceq}(H), \sc_{\sd,\cadj}(H) \}.$$

We denote the catenary degrees associated to $\sd^*$, $\sd_p$, $\sd_{\dsim}$, and $\sd_{\dsubsim}$ by $\sc^* = \sc_{\sd^*}$, $\sc_p = \sc_{\sd_p}$, $\sc_\dsim = \sc_{\sd_\dsim}$, and $\sc_\dsubsim = \sc_{\sd_\dsubsim}$, and use analogous conventions for the monotone, equal and adjacent catenary degrees.

The following lemma parallels \cite[Lemma 1.6.2]{GHK06}.
\cref{remark} shows that in \labelcref{dl:difflen,dl:delta} this is the best we can do in a general noncommutative setting, despite the fact that stronger bounds are available for the usual distance in the commutative setting.

\begin{lemma}\label{dl}
  Let $H$ be atomic and let $\sd$ be a distance on $H$. Let $a \in H$.
  \begin{enumerate}
    \item\label{dl:basic} We have $\sc_\sd(a) \le \sc_{\sd,\cmon}(a) \leq \sup \sL(a)$, and $\sc_\sd(a)=0$ if and only if $\sc_{\sd,\cmon}(a)=0$ if and only if $|\sZ_\sd(a)|=1$.
          In particular, $H$ is $\sd$-factorial if and only if $\sc_\sd(H) =\sc_{\sd,\cmon}(H)= 0$.
    \item\label{dl:difflen} If $z$,~$z' \in \sZ^*(a)$, then $\babs{\length z - \length{z'}} \leq \sd(z,z')$.
    \item\label{dl:delta} If $\Delta(\sL(a)) \ne \emptyset$, then $\sup \Delta(\sL(a))\leq \sc_\sd(s)$. In particular, $\sup \Delta(H) \le \sc_\sd(H)$.
    \item\label{dl:hf}
      If $\sc_\sd(H)=0$, then $H$ is half-factorial.
      If $c_\sd(a)\leq 1$, then $\sL(a)$ is an interval.
  \end{enumerate}
\end{lemma}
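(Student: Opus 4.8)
The plan is to prove the four items in the order \labelcref{dl:difflen}, \labelcref{dl:basic}, \labelcref{dl:delta}, \labelcref{dl:hf}, following the commutative blueprint \cite[Lemma 1.6.2]{GHK06}. Item \labelcref{dl:difflen} is simply the lower bound in axiom \labelcref{d:len} applied to two rigid factorizations of $a$, and it is the workhorse for the remaining items, so I would record it first.

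For \labelcref{dl:basic} I would argue as follows. The inequality $\sc_\sd(a) \le \sc_{\sd,\cmon}(a)$ is clear, since every monotone $N$-chain is an $N$-chain. For $\sc_{\sd,\cmon}(a) \le \sup\sL(a)$ I would observe that, for $z$, $z' \in \sZ^*(a)$, the one-step chain $z,z'$ is vacuously monotone, and $\sd(z,z') \le \max\{\length z,\length{z'}\} \le \sup\sL(a)$ by the reformulation of \labelcref{d:len} recorded just after \cref{distance} (if $a \in H^\times$ then $\sZ^*(a)$ is a singleton and there is nothing to prove). For the equivalences: if $\sc_\sd(a)=0$, then any two rigid factorizations of $a$ are joined by a $0$-chain, all of whose members share a common length by \labelcref{dl:difflen}, so the chain is monotone and $\sc_{\sd,\cmon}(a)=0$; the converse is immediate from the first inequality. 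Moreover, consecutive members of a $0$-chain are $\sim_\sd$-related, hence by transitivity of $\sim_\sd$ (\subref{d-basic:cong}) they all represent one class of $\sZ_\sd(a)$, so $\card{\sZ_\sd(a)}=1$; conversely, $\card{\sZ_\sd(a)}=1$ forces $\sd(z,z')=0$ for all $z$, $z' \in \sZ^*(a)$ (this set being nonempty as $H$ is atomic), which is a $0$-chain of length one. The closing claim of \labelcref{dl:basic} then follows by passing to suprema over $a \in H$ and unwinding the definition of $\sd$-factoriality.

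For \labelcref{dl:delta} I would fix $d \in \Delta(\sL(a))$, witnessed by $l \in \sL(a)$ with $\sL(a) \cap [l,l+d] = \{l,l+d\}$, pick $z$, $z' \in \sZ^*(a)$ with $\length z = l$ and $\length{z'} = l+d$, and take an $\sc_\sd(a)$-chain $z = z_0,\ldots,z_n = z'$. Letting $i$ be the largest index with $\length{z_i} \le l$ (it exists since $\length{z_0}=l$, and $i<n$ since $\length{z_n}=l+d$), one has $\length{z_{i+1}} > l$, hence $\length{z_{i+1}} \ge l+d$ because $\length{z_{i+1}} \in \sL(a)$ avoids the open interval $(l,l+d)$; by \labelcref{dl:difflen} then $d \le \length{z_{i+1}} - \length{z_i} \le \sd(z_i,z_{i+1}) \le \sc_\sd(a)$. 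Taking the supremum over $d$ gives $\sup\Delta(\sL(a)) \le \sc_\sd(a)$, and the global statement follows from $\Delta(H) = \bigcup_{a \in H}\Delta(\sL(a))$.

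Finally, for \labelcref{dl:hf}: if $\sc_\sd(H)=0$, then $\sc_\sd(a)=0$ for every $a$, so by the $0$-chain argument in \labelcref{dl:basic} each $\sL(a)$ is a singleton, and since $H$ is atomic by hypothesis it is half-factorial; if $\sc_\sd(a) \le 1$, then \labelcref{dl:delta} forces $\Delta(\sL(a)) \subseteq \{1\}$, i.e. consecutive elements of $\sL(a)$ differ by exactly $1$, so $\sL(a)$ is an interval. I do not expect any real obstacle here: the whole argument is a near-verbatim transcription of the commutative one, the only points deserving mild care being the index bookkeeping in \labelcref{dl:delta} (existence of $i$ and $i<n$) and the degenerate cases (units, the empty rigid factorization) in \labelcref{dl:basic}.
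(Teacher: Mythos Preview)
Your proposal is correct and follows essentially the same approach as the paper's proof; the arguments are near-identical, with yours being slightly more explicit in \labelcref{dl:delta} (the paper compresses the index-jump argument into a one-line appeal to \labelcref{dl:difflen}) and in the equivalence chain of \labelcref{dl:basic}. The only cosmetic difference is ordering: the paper proves the items in the stated order \labelcref{dl:basic}--\labelcref{dl:hf} rather than leading with \labelcref{dl:difflen}.
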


\begin{proof}
  \ref*{dl:basic}
  If $a \in H^\times$, then $\card{\sZ_d(a)} = 1$, and $\sc_\sd(a)=\sc_{\sd,\cmon}(a) = \sup \sL(a) = 0$.
  Suppose that $a \in H$ is a non-unit and let $z$,~$z' \in \sZ^*(a)$.
  Then $\sd(z,z') \le \max\{ \length{z}, \length{z'},1 \} \le \sup \sL(a)$.
  Thus $\sc_\sd(a) \le \sc_{\sd,\cmon}(a) \leq \sup \sL(a)$.
  If $\card{\sZ_\sd(a)} = 1$, then clearly $\sc_\sd(a) = \sc_{\sd,\cmon}(a) = 0$.
  Conversely, suppose that $\sc_\sd(a) = 0$.
  Then there exist $n \in \bN_0$ and $z_0$,~$\ldots\,$,~$z_n \in \sZ^*(a)$ such that $z=z_0$, $z'=z_n$ and $\sd(z_{i-1},z_i) = 0$ for all $i \in [1,n]$.
  Therefore $\sd(z,z') = 0$ by the triangle inequality.
  Since $z$ and $z'$ are both rigid factorizations of $a$, we have $z \sim_\sd z'$.
  Thus $\card{\sZ_\sd(a)} = 1$ and $\sc_{\sd,\cmon}(a)=0$.

  \ref*{dl:difflen} This is simply property \labelcref{d:len}.

  \ref*{dl:delta} Let $d \in \Delta(\sL(a))$.
  Then there exist $z$,~$z' \in \sZ^*(a)$ such that $d = \length{z'} - \length{z}$ and there exists no $z'' \in \sZ^*(a)$ with $\length{z} < \length{z''} < \length{z'}$.
  By definition of the catenary degree, there exists a $\sc_\sd(a)$-chain in distance $\sd$ between $z$ and $z'$.
  By \ref*{dl:difflen}, this implies $d \le \sc_\sd(a)$.

  \ref*{dl:hf} This is clear by \ref*{dl:delta}.
\end{proof}

\begin{remark}\label{remark}
  The bounds in the previous lemma are weaker than their commutative counterparts.
  In particular, for a commutative cancellative semigroup $S$, it is true that $\sup \Delta(S) + 2 \le \sc_p(S)$ and hence even $\sc_p(S) \le 2$ implies that $S$ is half-factorial.
  We now point out that \cref{dl} is the best possible for a general result in the noncommutative setting.
  Let $T=\langle a, b, c\mid abc=cb\rangle$.
  Clearly $T$ is reduced with $\cA(T) = \{ a,b,c\}$, and it is easily verified that $T$ is an Adyan semigroup and hence cancellative.
  Let $S$ be a commutative atomic cancellative semigroup.

  \begin{enumerate}
    \item
      For all $s \in S$ we have either $\sc_p(s)=0$ or $\sc_p(s) \geq 2$.
      This fails for the semigroup $T$, as $\sc_p(abc=cb)=1$.

    \item If $s \in S$ and $z$,~$z'$ are two distinct factorizations in $\sZ_p(s)$, then $\sd_p(z,z') \geq \babs{\length z - \length{z'}}+2$.
      This fails for $T$ since $\babs{\length{abc}-\length{cb}}=1=\sd_p(\rf{a,b,c},\rf{c,b})$.
      Moreover, in the cancellative semigroup $\langle a,b \mid aba = b \rangle$, we have $\sd^*(\rf{a,b,a}, \rf{b}) = 2 = \length{\rf{a,b,a}} - \length{\rf{b}}$,
      showing that the inequality in \subref{dl:difflen} is also best possible for the rigid distance.

    \item If $s \in S$ and $\card{\sZ_p(s)}\geq 2$, then $\sc_p(s) \geq \sup \Delta(s) +2$.
      This fails for $T$ since $\sL(abc=cb)=\{2,3\}$ and thus $\Delta(abc)=\{1\}$ as well as $\sc_p(abc) = 1$.
  \end{enumerate}
\end{remark}

\begin{example}
  We illustrate the terminology of this section by means of a classical example.
  Consider $S = (\bC[X]\setminus \bC,\circ)$, that is, decompositions of non-constant polynomials with coefficients in $\bC$.
  An atom of $S$ is called an indecomposable polynomial, and a rigid factorization of $f \in S$ is called a complete decomposition of $f$.
  Ritt's first theorem (\cite[Theorem 2.1]{Zieve-Mueller}) says that any complete decomposition of $f \in S$ can be transformed into any other, by a sequence of transformations in each of which two adjacent indecomposable factors are replaced by two new ones.
  In our present terminology, this can be expressed simply as $\sc^*(S) \le 2$.
  (Note however that much more refined results on polynomial decompositions are known.)
\end{example}

Let $H$ and $T$ be atomic cancellative small categories, and let $\phi\colon H \to T$ be a weak transfer homomorphism.
Let $\phi^*\colon \sZ^*(H) \to \sZ^*(T)$ denote the extension of $\phi$ to the categories of rigid factorizations as given in \cref{ext-hom}.
If $z$,~$z' \in \sZ^*(H)$, then
\[
  \sd^*(z,z') \,\ge\, \sd^*(\phi^*(z), \phi^*(z')) \,\ge\, \sd_p(\phi^*(z), \phi^*(z')).
\]
If $\phi$ is a transfer homomorphism, this together with \subref{ext-hom:th} implies $\sc^*(a) \ge \sc^*(\phi(a)) \ge \sc_p(\phi(a))$ for all $a \in H$, and $\sc^*(H) \ge \sc^*(T) \ge \sc_p(T)$.
Moreover,
\[
  \sd_p(z,z') \ge \sd_p(\phi^*(z), \phi^*(z')).
\]
Thus \subref{ext-hom:wth} implies that $\sc^*(a) \ge \sc_p(a) \ge \sc_p(\phi(a))$ for all $a \in H$, and $\sc^*(H) \ge \sc_p(H) \ge \sc_p(T)$.
Analogous inequalities hold for the monotone and equal catenary degrees.

In the commutative setting, catenary degrees can be studied using transfer homomorphisms.
If $\phi\colon S \to T$ is a transfer homomorphism of commutative atomic cancellative semigroups, one finds that $\sc_p(S) \le \max\{ \sc_p(T), \sc_p(S,\phi) \}$, where $\sc_p(S,\phi)$ is a suitably defined catenary degree in the fibers of $\phi$ (see \cite[Lemma 3.2.6]{GHK06}).
The strength of this method lies in the fact that, for a commutative Krull monoid $S$ and its usual transfer homomorphism $\phi$ to a monoid of zero-sum sequences, it always holds that $\sc_p(T,\phi) \le 2$ (by \cite[Theorem 3.2.8]{GHK06}).
Thus the catenary degree in the image $T$ controls the catenary degree in $S$ to a very large degree: Unless $S$ is half-factorial, it holds that $\sc_p(S) = \sc_p(T)$.

Aiming for similar results, we now introduce the notion of a catenary degree in the permutable fibers.

\begin{definition}[Catenary degree in the permutable fibers]
  Let $H$ and $T$ be atomic cancellative small categories, and let $\sd$ be a distance on $H$.
  Suppose that there exists a transfer homomorphism $\phi\colon H \to T$.
  Denote by $\phi^*\colon \sZ^*(H) \to \sZ^*(T)$ its extension to the categories of rigid factorizations as given in \cref{ext-hom}, and denote by $\lbar\phi\colon \sZ^*(H) \to \sZ_p(T)$ the composition of $\phi^*$ with the canonical homomorphism $\sZ^*(T) \to \sZ_p(T)$.

  Let $a \in H$, and let $z$,~$z' \in \sZ^*(a)$ with $\lbar \phi(z) = \lbar \phi(z')$ (that is, $\sd_p(\phi^*(z), \phi^*(z')) = 0$).
  We say that an $N$-chain $z=z_0$,~$z_1$, $\ldots\,$,~$z_n=z' \in \sZ^*(a)$ \emph{lies in the permutable fiber of $z$} if $\lbar\phi(z_i) = \lbar\phi(z)$ for all $i \in [0,n]$.

  We define $\sc_\sd(a,\phi)$ to be the smallest $N \in \bN_0 \cup \{\infty\}$ such that, for any two $z$, $z' \in \sZ^*(a)$ with $\lbar \phi(z) = \lbar \phi(z')$, there exists an $N$-chain (in distance $\sd$) between $z$ and $z'$, lying in the permutable fiber of $z$.
  Moreover, we define
  \[
    \sc_\sd(H, \phi) = \sup\{\, \sc_\sd(a, \phi) : a \in H \,\} \in \bN_0 \cup \{\infty \}.
  \]
\end{definition}

The first claim of the following proposition provides a weaker analogue of \cite[Proposition 3.2.3.3(c)]{GHK06} for the present setting, while the second statement roughly corresponds to \cite[Lemma 3.2.6.2]{GHK06}.
Note that to prove the first claim we need to make use of the catenary degree in the permutable fibers, quite unlike the commutative variant.
The restriction to $T$ being reduced is made to simplify the proof, but is not essential.
We note that the commutative hypothesis on $T$ is essential in the proof.

\begin{proposition}\label{cf}
  Let $T$ be a commutative reduced cancellative semigroup.
  Let $H$ be atomic, let $\sd$ be a distance on $H$, and suppose $\phi\colon H \to T$ is a transfer homomorphism.
  Denote by $\phi^*\colon \sZ^*(H) \to \sZ^*(T)$ the extension of $\phi$ to the categories of rigid factorizations (as in \cref{ext-hom}).
  Let $a \in H$.
  \begin{enumerate}
    \item\label{cf:lift} Let $z \in \sZ_{H}^*(a)$.
      If $\lbar y \in \sZ^*_T(\phi(a))$, then there exist $y$, $y'$, $z' \in \sZ_H^*(a)$ such that
      \[
        \phi^*(y) = \lbar y,\quad \phi^*(z') \sim_{\sd_p} \phi^*(z),\quad \phi^*(y') \sim_{\sd_p} \lbar y,\quad \sd(z',y') \le \sd_p(\phi^*(z), \lbar y),
      \]
      and there exist $\sc_\sd(a,\phi)$-chains between $z$ and $z'$ lying in the permutable fiber of $z$, and between $y$ and $y'$ lying in the permutable fiber of $y$.

    \item\label{cf:chain}
      Let $z$, $z' \in \sZ_H^*(a)$, $k \in \bN_0$, $\lbar z_1$, $\ldots\,$,~$\lbar z_k \in \sZ_T^*(\phi(a))$ and set $\lbar z_0 = \phi^*(z)$ and $\lbar z_{k+1} = \phi^*(z')$.
      Then there exist rigid factorizations $z=z_0$,~$z_1$, $\ldots\,$,~$z_k$,~$z_{k+1}=z' \in \sZ_H^*(a)$ such that $z_i$ and $z_{i+1}$ are connected by a monotone $\max\{\sc_{\sd}(a,\phi), \sd_{p}(\lbar z_i, \lbar z_{i+1} ) \}$-chain in distance $\sd$ for all $i \in [0,k]$, and $\phi^*(z_i) = \lbar z_i$ for all $i \in [1,k]$.
      In particular,
      \begin{align*}
        \sc_\sd(a) &\le \max\{ \sc_p(\phi(a)),\, \sc_\sd(a, \phi) \},  & \sc_\sd(H) & \le \max\{ \sc_p(T), \sc_\sd(H, \phi) \}, \\
        \sc_{\sd,\cmon}(a) &\le \max\{ \sc_{p,\cmon}(\phi(a)),\, \sc_{\sd}(a, \phi) \},  & \sc_{\sd,\cmon}(H) & \le \max\{ \sc_{p,\cmon}(T), \sc_\sd(H,\phi) \}, \\
        \sc_{\sd,\ceq}(a) &\le \max\{ \sc_{p,\ceq}(\phi(a)),\, \sc_{\sd}(a, \phi) \},  & \sc_{\sd,\ceq}(H) & \le \max\{ \sc_{p,\ceq}(T), \sc_\sd(H,\phi) \}.
      \end{align*}
  \end{enumerate}
\end{proposition}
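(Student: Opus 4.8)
The plan is to prove part (2) by concatenating instances of part (1), so the substance lies in part (1). Throughout I use that, $T$ being commutative and reduced, $\sZ^*(T) = \fmon(\cA(T))$ is free on $\cA(T)$, $\sZ_p(T) = \cF(\cA(T))$ is the free abelian monoid on $\cA(T)$, and $T = \phi(H)$ by \labelcref{th:units}; moreover $\phi^*$ preserves length and carries atoms to atoms and units to units, so that any two rigid factorizations lying in a common permutable fiber of $\lbar\phi$ have the same length — this underlies all later monotonicity bookkeeping. For part (1) I would first lift $\lbar y$ to some $y \in \sZ_H^*(a)$ with $\phi^*(y) = \lbar y$, using surjectivity of $\sZ_H^*(a) \to \sZ_T^*(\phi(a))$ from \cref{ext-hom} (valid since $\phi$ is a transfer homomorphism). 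The key is then to manufacture $z'$ and $y'$ that literally share a common left factor $\hat c$ in $H$ and differ only in short tails $\hat p$, $\hat q$ whose lengths realize $\sd_p(\phi^*(z), \lbar y)$. To arrange this, work in $\sZ_p(T) = \cF(\cA(T))$: set $d = \gcd(\lbar\phi(z), \lbar\phi(y))$ and write $\lbar\phi(z) = dp$, $\lbar\phi(y) = dq$ with $\gcd(p,q) = 1$, so that $\sd_p(\phi^*(z), \lbar y) = \max\{\length p, \length q\}$ by the description in \subref{con:fab}; choose rigid factorizations $\bar c$, $\bar p$, $\bar q$ in $\sZ^*(T)$ whose $\sZ_p(T)$-images are $d$, $p$, $q$. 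Since $T$ is a commutative semigroup, $\bar c \rfop \bar p$ and $\bar c \rfop \bar q$ are rigid factorizations of $\phi(a)$ — their products run over the atoms of $dp = \lbar\phi(z)$, resp. $dq = \lbar\phi(y)$, each of which multiplies to $\phi(a)$ — and in particular $\pi_T(\bar p) = \pi_T(\bar q)$.

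Next I would complete the lifting. By \cref{ext-hom} pick $z'' \in \sZ_H^*(a)$ with $\phi^*(z'') = \bar c \rfop \bar p$; because $\sZ^*(T)$ is free on $\cA(T)$ and $\phi^*$ sends atoms to atoms, $z''$ is forced to have the form $\hat c \rfop \hat p$ with $\phi^*(\hat c) = \bar c$ and $\phi^*(\hat p) = \bar p$. Put $b_H = \pi_H(\hat p)$; then $\phi(b_H) = \pi_T(\bar p) = \pi_T(\bar q)$, so $\bar q \in \sZ_T^*(\phi(b_H))$, and surjectivity of $\sZ_H^*(b_H) \to \sZ_T^*(\phi(b_H))$ gives $\hat q \in \sZ_H^*(b_H)$ with $\phi^*(\hat q) = \bar q$. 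Set $z' = z'' = \hat c \rfop \hat p$ and $y' = \hat c \rfop \hat q$; both are rigid factorizations of $a$. Then $\lbar\phi(z') = dp = \lbar\phi(z)$ and $\lbar\phi(y') = dq = \lbar\phi(y)$, so $\phi^*(z') \sim_{\sd_p} \phi^*(z)$, $\phi^*(y') \sim_{\sd_p} \lbar y$, $z'$ lies in the permutable fiber of $z$ and $y'$ in that of $y$ — whence the $\sc_\sd(a,\phi)$-chains demanded in part (1) exist by definition of $\sc_\sd(a,\phi)$. Finally, as $z'$ and $y'$ share the left factor $\hat c$, translation invariance \ref{d:mul} together with the length bound noted after \cref{distance} yields $\sd(z',y') = \sd(\hat p, \hat q) \le \max\{\length{\hat p}, \length{\hat q}\} = \max\{\length p, \length q\} = \sd_p(\phi^*(z), \lbar y)$; if $\length p = \length q = 0$ then $b_H$ is a unit, hence $\hat p = \hat q$ and the distance is $0$. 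This proves part (1).

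For part (2), choose $z_i \in \sZ_H^*(a)$ with $\phi^*(z_i) = \bar z_i$ for $i \in [1,k]$ (surjectivity) and set $z_0 = z$, $z_{k+1} = z'$, so $\phi^*(z_i) = \bar z_i$ for all $i \in [0,k+1]$. To join $z_i$ to $z_{i+1}$, apply part (1) to $z_i$ and $\bar z_{i+1}$; this gives $\tilde z$, $\tilde z' \in \sZ_H^*(a)$ (the ``$z'$, $y'$'' of part (1)) with $\lbar\phi(\tilde z) = \lbar\phi(z_i)$, $\lbar\phi(\tilde z') = [\bar z_{i+1}]_p = \lbar\phi(z_{i+1})$, and $\sd(\tilde z, \tilde z') \le \sd_p(\bar z_i, \bar z_{i+1})$. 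Concatenate the $\sc_\sd(a,\phi)$-chain from $z_i$ to $\tilde z$ inside the permutable fiber of $z_i$, the single step from $\tilde z$ to $\tilde z'$, and the $\sc_\sd(a,\phi)$-chain from $\tilde z'$ to $z_{i+1}$ inside the permutable fiber of $\tilde z'$ (which contains $z_{i+1}$). Since the two fiber-chains have constant length $\length{\bar z_i}$, resp. $\length{\bar z_{i+1}}$, and the middle step passes from length $\length{\bar z_i}$ to length $\length{\bar z_{i+1}}$, this is a monotone $\max\{\sc_\sd(a,\phi), \sd_p(\bar z_i, \bar z_{i+1})\}$-chain with a ``two-plateau'' length profile from $\length{\bar z_i}$ to $\length{\bar z_{i+1}}$; that is part (2). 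The displayed inequalities follow at once: given $z$, $z' \in \sZ_H^*(a)$, choose in $\sZ_T^*(\phi(a))$ a $\sc_p(\phi(a))$-chain, respectively a monotone $\sc_{p,\cmon}(\phi(a))$-chain, respectively (when $\length z = \length{z'}$) a constant-length $\sc_{p,\ceq}(\phi(a))$-chain, from $\phi^*(z)$ to $\phi^*(z')$; feed it into part (2) and concatenate the segments. The two-plateau shape makes the concatenation monotone, resp. of constant length $\length z$, whenever the $T$-chain is, giving $\sc_\sd(a) \le \max\{\sc_p(\phi(a)), \sc_\sd(a,\phi)\}$ and the analogues for $\sc_{\sd,\cmon}$ and $\sc_{\sd,\ceq}$; taking suprema over $a$ and using $\phi(H) = T$ yields the $H$-level bounds.

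The main obstacle is part (1): producing two liftings over the fixed element $a$ that genuinely share a left factor in $H$, so that their $\sd$-distance collapses to the length of the divergent tails, and recognizing that this collapsed value equals $\sd_p(\phi^*(z), \lbar y)$. Commutativity of $T$ is indispensable here — both to form $d = \gcd$ in $\sZ_p(T)$ and to know $\bar c \rfop \bar p$ and $\bar c \rfop \bar q$ are factorizations of $\phi(a)$ — whereas reducedness of $T$ merely streamlines the splitting $z'' = \hat c \rfop \hat p$; the remaining care, in the monotonicity bookkeeping of part (2) and in the catenary-degree bounds, rests solely on permutable fibers of $\lbar\phi$ being length-homogeneous.
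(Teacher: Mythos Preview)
Your proof is correct and follows essentially the same approach as the paper's: for part (1), both arguments split $\phi^*(z)$ and $\lbar y$ (up to $\sim_{\sd_p}$) into a common prefix and short divergent tails, lift the prefix-plus-tail to $z' \in \sZ_H^*(a)$, and then re-factor the tail element of $H$ via the transfer property to obtain $y'$ sharing the same left factor as $z'$, so that \labelcref{d:mul} and \labelcref{d:len} give the desired bound; for part (2), both concatenate instances of part (1), noting that the fiber-chains have constant length so the resulting chains are monotone. Your use of explicit gcd language in $\cF(\cA(T))$ and your choice to pre-select the $z_i$ rather than build them inductively are purely organizational differences.
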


\begin{proof}
  \ref*{cf:lift}
      Since $T$ is commutative, $\sd_p$ is the usual distance on $T$. Thus there exist $\lbar x$, $\lbar y_0$, $\lbar z_0 \in \sZ^*(T)$ such that $\lbar y \sim_{\sd_p} \rf{\lbar x,\lbar y_0}$,\, $\phi^*(z) \sim_{\sd_p} \rf{\lbar x, \lbar z_0}$ and $\max\{ \length{\lbar y_0}, \length{\lbar z_0} \} = \sd_p(\phi^*(z),\lbar y)$.
      In particular, $\rf{\lbar x,\lbar y_0}$ and $\rf{\lbar x, \lbar z_0}$ are indeed rigid factorization of $\phi(a)$.
      Since $\phi$ is a transfer homomorphism, this implies that there exists a rigid factorization $z' \in \sZ_H^*(a)$ such that $\phi^*(z') = \rf{\lbar x, \lbar z_0}$.
      Then $\phi^*(z') \sim_{\sd_p} \phi^*(z)$ and, by definition of $\sc_\sd(a, \phi)$, there exists a $\sc_\sd(a,\phi)$-chain between $z$ and $z'$ lying in the permutable fiber of $z$.
      Let $y \in \sZ^*(a)$ be an arbitrary preimage of $\lbar y$ under $\phi^*$.

      Now let $z' = \rf[\varepsilon]{u_1,\cdots,u_l}$ with $l \in \bN_0$, $\varepsilon \in H^\times$, $u_1$,~$\ldots\,$,~$u_l \in \cA(H)$, and let $k = \length{x}$.
      Then $\phi^*(\rf[\varepsilon]{u_1,\cdots,u_k}) = \lbar x$.
      Suppose $\lbar y_0 = \rf{\lbar v_1, \cdots, \lbar v_n}$ with $n \in \bN_0$ and $\lbar v_1$,~$\ldots\,$,~$\lbar v_n \in \cA(T)$.
      Then $\phi(a) = \phi(\varepsilon^{-1}a) = \phi(u_1)\cdots\phi(u_l) = \phi(u_1) \cdots \phi(u_k)  \lbar v_1 \cdots \lbar v_n$, and thus $\phi(u_{k+1} \cdots u_l) = \phi(u_{k+1})\cdots \phi(u_l) = \lbar v_1 \cdots \lbar v_n$.
      If $k=l$, then $n=0$ and $\lbar y \sim_{\sd_p} \lbar x \sim_{\sd_p} \phi^*(z)$.
      Setting $y' = z'$, we are done.
      Now suppose that $k < l$.
      Then $n > 0$ and, since $\phi$ is a transfer homomorphism, there exist $v_1$, $\ldots\,$,~$v_n \in \cA(H)$ such that $u_{k+1}\cdots u_l = v_1\cdots v_n$ and $\phi(v_i) = \lbar v_i$ for all $i \in [1,n]$.
      We define $y' = \rf[\varepsilon]{u_1, \cdots, u_k, v_1, \cdots, v_n}$ (note that $s(v_1) = s(u_{k+1}) = t(u_k)$).
      Using property \labelcref{d:mul}, it follows that $\sd(y',z') = \sd(\rf{v_1,\cdots,v_n}, \rf{u_{k+1},\cdots,u_l})$.
      Thus property \labelcref{d:len} implies $\sd(y',z') \le \max\{ n, l-k, 1 \} = \sd_p(\phi^*(z),\lbar y)$.
      Now $y$ and $y'$ lie in the same permutable fiber, and hence are connected by a $\sc_\sd(a,\phi)$-chain in the permutable fiber of $y$.

  \ref*{cf:chain}
      Set $z_0 = z$, and $z_{k+1} = z'$.
      We apply \ref*{cf:lift} inductively to construct $z_1, \ldots, z_k \in \sZ^*_H(a)$ with the desired properties.
      Suppose that we have constructed $z_i$ for some $i \in [0,k-1]$.
      Applying \ref*{cf:lift} to $z_i$ and $\lbar z_{i+1}$, we find $z_i'$,~$z_{i+1}'$, $z_{i+1} \in \sZ_H^*(a)$ such that $\phi^*(z_i) \sim_{\sd_p} \phi^*(z_i')$, $\phi^*(z_{i+1}') \sim_{\sd_p} \phi^*(z_{i+1})$, $\phi^*(z_{i+1}) = \lbar z_{i+1}$ and $\sd(z_{i}', z_{i+1}') \le \sd_p(\lbar z_i, \lbar z_{i+1})$.
      Since $z_i$ and $z_i'$ lie in the same permutable fiber, there exists a $\sc_\sd(a,\phi)$-chain between $z_i$ and $z_i'$ lying in the permutable fiber of $z_i$.
      In particular, all the rigid factorizations in this chain have length $\length{z_i}$.
      Similarly, between $z_{i+1}'$ and $z_{i+1}$ there exists a $\sc_\sd(a,\phi)$-chain in the permutable fiber of $z_{i+1}$.
      Since $\sd(z_{i}', z_{i+1}') \le \sd_p(\lbar z_i, \lbar z_{i+1})$ we can therefore construct a $\max\{ \sc_\sd(a,\phi), \sd_p(\lbar z_{i},\lbar z_{i+1}) \}$-chain between $z_i$ and $z_{i+1}$.
      This chain is obviously monotone, since the only point at which the length can change is from $z_{i}'$ to $z_{i+1}'$.

      The upper bound on $\sc_\sd(a)$ now follows immediately by lifting chains from the image.
      Similarly, the upper bounds for the monotone and equal catenary degree follow by lifting monotone chains, respectively chains of equal length.
\end{proof}

\begin{remark}
  Let $H$ be atomic.
  We have $\sc_p(a) \le \sc^*(a)$ for all $a \in H$.
  Suppose $H$ is a commutative semigroup.
  Then the identity map $\id\colon H \to H$ is a transfer homomorphism.
  Let $a \in H$.
  Two rigid factorizations $z=\rf[\varepsilon]{u_1,\cdots,u_k}$, $z'=\rf[\eta]{v_1,\cdots,v_l} \in \sZ^*(a)$ lie in the same permutable fiber of the identity map if and only if $[z]_p = [z']_p$, that is, $k=l$ and there exists a permutation $\sigma \in \fS_k$ such that $u_i \simeq v_{\sigma(i)}$ for all $i \in [1,k]$.
  By writing $\sigma$ as a product of transpositions, it is therefore easy to construct a $2$-chain in distance $\sd^*$ between $z$ and $z'$ (here we use the commutativity of $H$ to ensure that permuting two atoms does not change the product).
  Therefore $\sc^*(a,\id) \le 2$.
  Applying the previous proposition, in the commutative case we therefore have
  \[
    \sc_p(a) \le \sc^*(a) \le \max\{ 2, \sc_p(a) \}.
  \]
  Trivially, this remains true if we replace the rigid distance by any distance finer than the permutable distance, but coarser than the rigid distance: While two such distances can be quite different, their catenary degrees cannot differ by much.

  However, this does not hold in general.
  Let $n \in \bN$ and let
  \[
    S = \langle a,b \;|\; a^n b^n = b^n a^n \rangle.
  \]
  Then $S$ is cancellative and reduced with $\cA(S) = \{a,b\}$, and it follows immediately that $\sc_p(S) = 0$ while $\sc^*(S) = 2n$.
\end{remark}

The following proposition shows that distances and catenary degrees are preserved by isoatomic weak transfer homomorphisms.

\begin{proposition}\label{strong-weak-distance}
  Let $H$ and $T$ be atomic cancellative small categories, and assume that there exists an isoatomic weak transfer homomorphism $\phi\colon H \rightarrow T$.
  Denote by $\phi_p\colon \sZ_p(H) \rightarrow \sZ_p(T)$ the extension of $\phi$ to permutable factorizations (as in \cref{ext-hom}).
  Then $\sd_p(z,z')=\sd_p(\phi_p(z),\phi_p(z'))$ for any two permutable factorizations $z$ and $z'$ of $a \in H$.
  In particular, $\sc_p(H) = \sc_p(T)$.
\end{proposition}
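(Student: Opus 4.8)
The plan is to reduce the distance identity to an isometry between the two free abelian monoids underlying the permutable distances, and then to bootstrap that identity to the equality of catenary degrees.

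Recall that the permutable distance on $H$ is obtained via \subref{con:fab} by taking $\sim\,=\,\simeq$, so that $\sd_p(z,z') = \sd_{F_H}(\varphi_H(z),\varphi_H(z'))$, where $F_H = \cF(\cA(H)/\!\simeq)$ is the free abelian monoid on the associativity classes of atoms of $H$ and $\varphi_H\colon \sZ^*(H)\to F_H$ sends $\rf[\varepsilon]{u_1,\cdots,u_k}$ to $[u_1]_\simeq\cdots[u_k]_\simeq$; likewise for $T$. The first step is to observe that $\phi$ induces a bijection between $\cA(H)/\!\simeq$ and $\cA(T)/\!\simeq$: it is well defined because $\phi$ is a homomorphism, injective because $\phi$ is isoatomic, and surjective because a weak transfer homomorphism maps atoms to atoms and $T=T^\times\phi(H)T^\times$, so every atom of $T$ is associated to one in $\phi(\cA(H))$. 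A bijection of bases induces an isomorphism $\Phi\colon F_H\to F_T$ of free abelian monoids, and such an isomorphism is automatically an isometry for the canonical distances. Checking on atoms and units gives $\varphi_T\circ\phi^* = \Phi\circ\varphi_H$, and since $\phi_p$ sends the class of $z\in\sZ^*(H)$ to the class of $\phi^*(z)$ in $\sZ_p(T)$, we conclude, for $z$,~$z'\in\sZ^*(a)$,
\[
  \sd_p(\phi_p(z),\phi_p(z')) = \sd_{F_T}\bigl(\Phi\varphi_H(z),\ \Phi\varphi_H(z')\bigr) = \sd_{F_H}\bigl(\varphi_H(z),\ \varphi_H(z')\bigr) = \sd_p(z,z'),
\]
which is the first assertion.

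For the equality of catenary degrees, note first that whether a sequence of rigid factorizations of $a$ is an $N$-chain in distance $\sd_p$ depends only on the permutable classes of its terms, so $\sc_p(a)$ is an invariant of the metric space $(\sZ_p(a),\sd_p)$. By \subref{ext-hom:wth} the induced map $\sZ_p(a)\to\sZ_p(\phi(a))$ is surjective, by the last sentence of \cref{ext-hom} it is injective (here we use that $\phi$ is isoatomic), and by the identity just established it preserves $\sd_p$; hence it is a bijective isometry and $\sc_p(a)=\sc_p(\phi(a))$. Finally, $\sd_p$ on $T$ is unaffected when a rigid factorization is multiplied on either side by a unit --- the leading unit does not enter $\varphi_T$ --- so $\sc_p$ is constant on associativity classes of $T$; combined with $T=T^\times\phi(H)T^\times$ this yields
\[
  \sc_p(T) = \sup\{\,\sc_p(\phi(a)) : a\in H\,\} = \sup\{\,\sc_p(a) : a\in H\,\} = \sc_p(H).
\]

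I expect the only genuine work to be bookkeeping: verifying the commutativity $\varphi_T\circ\phi^* = \Phi\circ\varphi_H$ at the level of rigid factorizations (which is what justifies computing $\phi_p$ via $\varphi_T\circ\phi^*$), and handling, in the last step, the passage from elements literally of the form $\phi(a)$ to arbitrary elements of $T$, where the partial operation on the small category $T$ forces one to keep track of leading units. Neither point is deep. Note that, in contrast to \cref{cf}, no commutativity or reducedness hypothesis on $T$ is required here.
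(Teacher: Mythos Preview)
Your proof is correct and follows essentially the same idea as the paper's: both hinge on the observation that the isoatomic condition makes the induced map on associativity classes of atoms a bijection, so the maximal matching defining $\sd_p$ is preserved under $\phi$. You phrase this via the isomorphism $\Phi\colon F_H\to F_T$ of free abelian monoids from \subref{con:fab}, whereas the paper carries out the count of matched atoms directly; your treatment of the catenary degree equality is in fact more detailed than the paper's, which simply asserts it as an immediate consequence.
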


\begin{proof}
If $a \in H^\times$, the claim is trivially true.
Assume from now on that $a$ is not a unit.
Without loss of generality, write $k=|z|\leq |z'|=l$ with $k \leq l$.
Then we can write $z=\pf{u_1,\cdots,u_k}$ and $z'=\pf{v_1,\cdots,v_l}$ with $u_i$,~$v_j \in \cA(H)$ for $i \in [1,k]$ and $j \in [1,l]$.
Moreover, there exists an $m \in [1,k]$ and permutations $\sigma \in \fS_k$, $\tau \in \fS_l$ such that $u_{\sigma(i)} \simeq v_{\tau(i)}$ for all $i \in [1,m]$ and such that $u_{\sigma(i)} \not\simeq v_{\tau(j)}$ whenever $i \in [m+1,k]$ and $j \in [m+1,l]$.
Note that $\sd_p(z,z')=l-m$.

We have $\phi_p(z)=\pf{\phi(u_1),\cdots,\phi(u_k)}$ and $\phi_p(z')=\pf{\phi(v_1),\cdots,\phi(v_l)}$ with $\phi(u_i)$ and $\phi(v_j) \in \cA(T)$ for $i \in [1,k]$ and $j \in [1,l]$.
There exists $n \in [1,k]$ and permutations $\widehat\sigma \in \fS_k$, $\widehat\tau \in \fS_l$ such that $\phi(u_{\widehat\sigma(i)}) \simeq \phi(v_{\widehat\tau(i)})$ for all $i \in [1,n]$ and such that $\phi(u_{\widehat\sigma(i)}) \not\simeq \phi(v_{\widehat\tau(j)})$ whenever $i \in [n+1,k]$ and $j \in [n+1,l]$.
Note that $\sd_p(\phi_p(z), \phi_p(z')) =l-n$.

Since $\phi$ is isoatomic, $n=m$ and therefore $\sd_p(z,z')=\sd_p(\phi_p(z),\phi_p(z'))$.
\end{proof}

\section{Divisibility}\label{sec:divisibility}

\vspace{1.5mm}
\begin{center}
  \emph{Throughout this section, let $H$ be a cancellative small category.}
\end{center}
\vspace{1.5mm}

Our goal is to generalize the notion of divisibility of one element by another from the commutative setting, to use this notion to better understand the factorizations introduced in \cref{sec:factorizations}, and to give another measure of the non-uniqueness of permutable factorizations by generalizing the tame-degree and $\omega$-invariant from the commutative setting.
To this end, we begin by defining an abstract divisibility relation.

\begin{definition}\label{div}
A relation $\wr$ on $H$ is a \emph{divisibility relation} provided that the following conditions are satisfied.
\begin{enumerate}
  \item\label{div:prod} If $a\wr b$ or $a \wr c$ for any elements $a$,~$b$,~$c \in H$ with $t(b)=s(c)$, then $a \wr bc$.
  \item\label{div:unit} For all $a \in H$ and $\varepsilon$,~$\eta \in H^{\times}$ with $t(\varepsilon)=s(a)$ and $s(\eta)=t(a)$, we have $a \wr \varepsilon a \eta$.
  \item\label{div:assoc} For all $a \in H\setminus H^\times$ and $u \in \cA(H)$, if $a \wr u$, then $a \simeq u$.
  \item\label{div:nontrivial} If $a \wr \varepsilon$ for some $\varepsilon \in H^\times$, then $a \in H^\times$.
\end{enumerate}
\end{definition}

If $H$ is a commutative semigroup, the usual notion of divisibility, $a\mid b$ if $b \in aH$, is a divisibility relation.
In the noncommutative setting, our focus will be on one of the following two relations, each of which clearly satisfies the formal properties of a divisibility relation.
As we are mostly interested in when an atom divides a product, the particular choice of divisibility relation will not make a difference as long as $H$ is atomic (see \subref{cdivind:atom}).

\begin{definition}\label{div2}
  Let $a$ and $b$ be two elements of $H$.
  \begin{enumerate}
    \item\label{div2:leftright} We say that $a$ \emph{left-right divides} $b$, and write $a \mid_{l-r} b$, provided  $b \in HaH$.
    \item\label{div2:perm} We say that \emph{$a$ divides $b$ up to permutation}, and write $a \mid_{p} b$, if there are permutable factorizations $\pf[\varepsilon]{u_1,\cdots,u_k}$ of $a$ and $\pf[\eta]{v_1,\cdots,v_l}$ of $b$ (with $k$,~$l \in \bN_0$, $\varepsilon$,~$\eta \in H^\times$ and $u_1$,~$\ldots\,$,~$u_k$, $v_1$,~$\ldots\,$,~$v_l \in \cA(H)$) such that
  \begin{enumerate}
    \item $k \leq l$, and
    \item there exists an injective map $\sigma\colon [1,k] \to [1,l]$ with $u_{i} \simeq v_{\sigma(i)}$ for all $i \in [1,k]$.
  \end{enumerate}
  \end{enumerate}
\end{definition}

If $H$ is a commutative semigroup, then $a$ left-right divides $b$ if and only if $a$ divides $b$, since $HaH=aH$.
If, moreover, $H$ is atomic, then $a$ divides $b$ up to permutation if and only if $a$ divides $b$.

If $H$ is atomic, we can characterize left-right divisibility in terms of rigid factorizations as follows:
$a \mid_{l-r} b$ if and only if for any (equivalently all) rigid factorization $z \in \sZ^*(a)$ there exist $x$,~$y \in \sZ^*(H)$ such that $x\rfop z\rfop y$ is a rigid factorization of $b$.
Indeed, if $z \in \sZ^*(a)$ and $x$,~$y \in \sZ^*(H)$ are as described, then $b = \pi(x)\pi(z)\pi(y) = \pi(x)a\pi(z)$.
Conversely, if $b=cad$ with $c$,~$d \in H$, let $z \in \sZ^*(a)$, $x \in \sZ^*(c)$ and $y \in \sZ^*(d)$.
Then $x \rfop z \rfop y \in \sZ^*(b)$.

If $H$ is atomic and $a \mid_{l-r} b$, then $a \mid_p b$.
The converse is clearly not true.
However, if $u \in \cA(H)$, then $u \mid_p b$ if and only if $u \mid_{l-r} b$, as we shall see in \subref{cdivind:atom}.
The notions of left, respectively right, divisibility, do not generally give a divisibility relation due to the failure of property \labelcref{div:prod} to hold.

We now study a general divisibility relation $\wr$ on $H$.
However, throughout we have in mind the two specific divisibility relations given in \cref{div2}.
In fact, we will return at the end of this section to a more thorough investigation of $\mid_p$.

\begin{definition}
  Let $\wr$ be a divisibility relation on $H$.
  A non-unit $q \in H$ is an \emph{almost prime-like element (with respect to $\wr$)} if, whenever $q \wr ab$ for some $a$,~$b \in H$ with $t(a)=s(b)$, either $q \wr a$ or $q \wr b$.
\end{definition}

\begin{remark}\label{almostprime-like}
\envnewline
\begin{enumerate}
\item\label{almostprime-like:commutative} It is clear from the definition that the notion of an almost prime-like element in $H$ corresponds to the usual notion of a prime element if $H$ is a commutative semigroup and if either $\wr$ is $\mid_{l-r}$, or $H$ is atomic and $\wr$ is $\mid_p$.

\item\label{almostprime-like:ideals}
  We will compare the notion of almost prime-like elements to more established concepts below in \cref{prime-like-ideals}.

\item\label{almostprime-like:independent} We shall see in \cref{cdivind} that if $H$ is atomic, the notion of almost prime-like elements is in fact independent of the particular divisibility relation chosen.

\item\label{almostprime-like:prime-like} After seeing how almost prime-like elements behave like prime elements in the commutative setting in \cref{primesalvage} and \cref{primesalvage2} and how products of almost prime-like elements do not necessarily give elements with unique permutable factorizations in \cref{weirdprimes}, we strengthen the definition in \ref{prime-like} to prime-like elements.
\end{enumerate}
\end{remark}

We now show that if $H$ is atomic, then, as in the commutative setting, almost prime-like elements are necessarily atoms of $H$.

\begin{lemma}\label{consdivprod}
  Let $\wr$ be a divisibility relation on $H$, and let $q$ be an almost prime-like element of $H$.
  \begin{enumerate}
    \item\label{consdivprod:fin} If $q\wr a_1\cdots a_m$ for some $m \in \bN$ and elements $a_1$,~$\ldots\,$,~$a_m \in H$, then $q\wr a_i$ for some $i \in [1,m]$.
    \item\label{consdivprod:assoc} If $q \wr u_1\cdots u_m$ for some $m \in \bN$ and atoms $u_1$,~$\ldots\,$,~$u_m \in \cA(H)$, then $q \simeq u_i$ for some $i \in [1,m]$.
    \item\label{consdivprod:atom} If $H$ is atomic, then $q$ is an atom.
  \end{enumerate}
\end{lemma}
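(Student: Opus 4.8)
The plan is to prove the three parts in order, feeding each one into the next.

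For \labelcref{consdivprod:fin} I would induct on $m$. The case $m = 1$ is trivial. For $m \ge 2$, note that $a_1 \cdots a_m = a_1\,(a_2 \cdots a_m)$ is a composable product (the inner product is defined since the whole product is), so the almost prime-like property of $q$ gives $q \wr a_1$ or $q \wr a_2 \cdots a_m$; in the first case we are done, and in the second the induction hypothesis applied to the shorter product $a_2 \cdots a_m$ yields $q \wr a_i$ for some $i \in [2,m]$.

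For \labelcref{consdivprod:assoc}, I would apply \labelcref{consdivprod:fin} to obtain some $i \in [1,m]$ with $q \wr u_i$. Since an almost prime-like element is by definition a non-unit and $u_i \in \cA(H)$, axiom \labelcref{div:assoc} of the divisibility relation immediately gives $q \simeq u_i$.

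For \labelcref{consdivprod:atom}, the key first observation is that $\wr$ is reflexive: taking $\varepsilon = s(q)$ and $\eta = t(q)$ in \labelcref{div:unit} yields $q \wr q$. Since $H$ is atomic and $q$ is a non-unit, we may write $q = u_1 \cdots u_m$ with $m \in \bN$ and $u_1, \ldots, u_m \in \cA(H)$; then $q \wr u_1 \cdots u_m$, and \labelcref{consdivprod:assoc} produces $i \in [1,m]$ with $q \simeq u_i$. Finally I would invoke the routine fact that $\cA(H)$ is closed under $\simeq$ — if $v \in \cA(H)$ and $q = \alpha v \beta$ with $\alpha, \beta \in H^\times$, then $q \notin H^\times$ (a product of units is a unit), and any factorization $q = ab$ rewrites as $v = (\alpha^{-1} a)(b \beta^{-1})$, forcing $a \in H^\times$ or $b \in H^\times$ — and conclude $q \in \cA(H)$. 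I do not expect a genuine obstacle here; the only slightly subtle point is recognizing that reflexivity of $\wr$ (via \labelcref{div:unit}) is exactly what lets the almost-prime-like hypothesis be applied to $q$'s own factorization in the last part.
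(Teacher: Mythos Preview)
Your proof is correct and follows essentially the same approach as the paper: the same induction for \labelcref{consdivprod:fin} (splitting off one factor and applying the almost prime-like property), the same appeal to axiom \labelcref{div:assoc} for \labelcref{consdivprod:assoc}, and the same use of \labelcref{div:unit} to get $q \wr q = u_1\cdots u_m$ for \labelcref{consdivprod:atom}. The only cosmetic differences are that the paper splits off the last factor rather than the first, and it leaves the closure of $\cA(H)$ under $\simeq$ implicit where you spell it out.
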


\begin{proof}
  \ref*{consdivprod:fin}
  We proceed by induction on $m$.
  By definition, if $m \in \{1,2\}$, then $q\wr a_1$ or $q\wr a_2$.
  Now suppose that $m>2$ and that if $q \wr a_1\cdots a_{m-1}$, then $q\wr a_i$ for some $i\in [1,m-1]$.
  Since $q\wr (a_1\cdots a_{m-1})a_m$ and $q$ is almost prime-like, either $q\wr a_1\cdots a_{m-1}$ or $q\wr a_m$.
  If $q\wr a_m$, then we are done.
  Otherwise, the induction hypothesis implies $q\wr a_i$ for some $i\in [1,m-1]$.

  \ref*{consdivprod:assoc}
  Applying \ref*{consdivprod:fin}, we find $q \wr u_i$ for some $i \in [1,m]$, and then property \labelcref{div:assoc} of the divisibility relation implies $q \simeq u_i$.

  \ref*{consdivprod:atom}
  Since $H$ is atomic and $q$ is a non-unit, there exist $m \in \bN$ and atoms $u_1$,~$\ldots\,$,~$u_m \in \cA(H)$ such that $q = u_1\cdots u_m$.
  By property \labelcref{div:unit} of the divisibility relation, we have $q \wr u_1\cdots u_m$, and then \ref*{consdivprod:assoc} implies $q \simeq u_i$ for some $i \in [1,m]$.
  Hence $q$ is an atom.
\end{proof}

The following lemma shows that if $H$ is atomic, the choice of divisibility relation has no bearing on which elements are almost prime-like.

\begin{lemma}\label{cdivind}
  Let $H$ be atomic, and let $\wr$ and $\wr'$ be divisibility relations on $H$.
  \begin{enumerate}
    \item\label{cdivind:atom} If $u \in \cA(H)$ and $a \in H$, then $u \wr a$ if and only if $u \wr' a$.
    \item\label{cdivind:consistent} An element $q \in H$ is almost prime-like with respect to $\wr$ if and only if it is almost prime-like with respect to $\wr'$.
  \end{enumerate}
\end{lemma}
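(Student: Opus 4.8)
The plan is to prove part~(1) by identifying, for an atom $u$, the statement ``$u \wr a$'' with ``$u \mid_{l-r} a$'' for \emph{every} divisibility relation $\wr$; any two divisibility relations must then agree on pairs (atom, element), and part~(2) becomes a short formal deduction from part~(1) together with \subref{consdivprod:atom}. For the ``if'' half I would first observe that $\mid_{l-r}$ is contained in every divisibility relation: if $u \mid_{l-r} a$, write $a = cud$ with $c$,~$d \in H$; then \labelcref{div:unit}, applied with the identity morphisms $s(u)$ and $t(u)$ as the units, gives $u \wr u$, and two successive applications of \labelcref{div:prod} give first $u \wr ud$ and then $u \wr cud = a$. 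The same argument applies verbatim to $\wr'$.

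For the ``only if'' half I would show $u \wr a$ implies $u \mid_{l-r} a$. Since $u \notin H^\times$, property \labelcref{div:nontrivial} forces $a \notin H^\times$, so atomicity lets us write $a = u_1\cdots u_m$ with $m \in \bN$ and $u_1,\ldots,u_m \in \cA(H)$; it then suffices to produce an index $i$ with $u \simeq u_i$, since this yields $a \in HuH$. Property \labelcref{div:assoc} (a non-unit dividing an atom is associated to it) is the tool here. For $\wr\, =\, \mid_{l-r}$ the conclusion is the definition, and for $\wr\, =\, \mid_{p}$ it is almost as direct: an atom has, up to associativity, only the trivial length-one permutable factorization, so unwinding the definition of $\mid_{p}$ at once produces some $i$ with $u \simeq u_i$. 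Combining the two halves gives $u \wr a \iff u \mid_{l-r} a \iff u \wr' a$, which is part~(1).

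Part~(2) then follows formally. Suppose $q$ is almost prime-like with respect to $\wr$; since $H$ is atomic, \subref{consdivprod:atom} shows $q \in \cA(H)$. Given $a$,~$b \in H$ with $t(a) = s(b)$ and $q \wr' ab$, part~(1) applied to the atom $q$ gives $q \wr ab$; almost-prime-likeness of $q$ with respect to $\wr$ gives $q \wr a$ or $q \wr b$; and part~(1), used in the other direction, gives $q \wr' a$ or $q \wr' b$. Hence $q$ is almost prime-like with respect to $\wr'$, and the reverse implication is obtained by interchanging the two relations.

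The step I expect to be the main obstacle is the implication $u \wr a \implies u \mid_{l-r} a$. Axioms \labelcref{div:prod} and \labelcref{div:unit} only describe how $\wr$ \emph{propagates upward} to larger products, and \labelcref{div:assoc} controls only divisors of a single atom, so none of them lets one descend directly from ``$u$ divides the composite $u_1\cdots u_m$'' to ``$u \simeq u_i$ for some $i$''. The argument must exploit atomicity of $H$ in tandem with \labelcref{div:assoc}, and checking that this works uniformly — not only for $\mid_{l-r}$ and $\mid_{p}$ but for a general divisibility relation — is the delicate point, which is exactly why isolating the reduction to $\mid_{l-r}$ is worthwhile.
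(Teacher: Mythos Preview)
Your reduction of part~(2) to part~(1) via \subref{consdivprod:atom}, and your handling of the direction $u \mid_{l-r} a \Rightarrow u \wr a$ via \labelcref{div:unit} and \labelcref{div:prod}, are correct and match the paper. For the converse direction the paper, like you, factors $a = u_1\cdots u_k$ into atoms and seeks an index $i$ with $u \simeq u_i$.

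The obstacle you single out, however, is genuine and cannot be overcome: part~(1) is false for arbitrary divisibility relations. Take $H = \cF(\{p,q\})$ and declare $a \wr b$ to hold iff $a = 1$, or $b$ is a non-unit non-atom, or $b$ is an atom with $a = b$. One checks directly that all four axioms \labelcref{div:prod,div:unit,div:assoc,div:nontrivial} hold, yet $p \wr q^2$ while $p \nmid_{l-r} q^2$. The paper's own proof has exactly this gap: it invokes \subref{consdivprod:assoc}, but that lemma carries the standing hypothesis that the dividing element is \emph{almost prime-like}, not merely an atom, and no such assumption is available here. Your argument is sound for the two relations $\mid_{l-r}$ and $\mid_p$ (the only ones the paper ever uses), so the downstream applications are unaffected; but part~(1) as stated requires either an additional axiom forcing $\wr$ to descend to factors, or the restriction to the concrete relations of \cref{div2}.
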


\begin{proof}
  \ref*{cdivind:atom}
  Suppose $u \wr a$.
  By property \labelcref{div:nontrivial} of a divisibility relation, $a$ is not a unit, and hence there exist $k \in \bN$ and atoms $u_1$,~$\ldots\,$,~$u_k$ of $H$ such that $a=u_1\cdots u_k$.
  Thus \subref{consdivprod:assoc} implies $u \simeq u_i$ for some $i \in [1,k]$.
  Then property \labelcref{div:unit} of $\wr'$ implies $u \wr' u_i$, and by property \labelcref{div:prod} we have $u \wr' a$.
  The converse follows by symmetry.

  \ref*{cdivind:consistent}
  Suppose $q$ is almost prime-like with respect to $\wr$, and note that \subref{consdivprod:atom} implies that $q$ is an atom.
  Let $a$,~$b \in H$ such that $q \wr' ab$.
  Then $q \wr ab$ by \ref*{cdivind:atom}, and hence either $q \wr a$ or $q \wr b$.
  Using \ref*{cdivind:atom} again, $q \wr' a$ or $q \wr' b$.
  The converse follows by symmetry.
\end{proof}

Products of prime elements in a commutative cancellative semigroup have unique factorization (cf. \cite[Proposition 1.1.8]{GHK06}).
In fact, if $p_1\cdots p_kc=q_1\cdots q_ld$ with each $p_i$ and $q_j$ prime, with no $p_i$ dividing $d$, and no $q_j$ dividing $c$, then $k=l$ and, up to permutation, $p_i\simeq q_i$ for each $i$. Moreover, $c\simeq d$.
We now provide, for almost prime-like elements in a cancellative small category, a weaker statement than its commutative counterpart.
That this result is necessarily weaker is exhibited in \cref{weirdprimes}.

We first make some notational remarks.
The category of rigid factorizations, $\sZ^*(H)$, is itself an atomic cancellative small category, and hence $\mid_p$ and $\mid_{l-r}$ are defined on $\sZ^*(H)$.
As in \cref{sec:factorizations}, we may identify $\cA(H)$ with $\cA(\sZ^*(H))$ and $H^\times$ with $\sZ^*(H)^\times$.
If $z \in \sZ^*(H)$, we say that $u \in \cA(H)$ \emph{occurs in $z$} if $u \mid_p z$ (equivalently $u \mid_{l-r} z$) in $\sZ^*(H)$.
The same remarks apply to $\sZ_p(H)$ instead of $\sZ^*(H)$ and clearly, if $z \in \sZ^*(H)$, $u$ occurs in $z$ if and only if $u$ occurs in $\pf{z} \in \sZ_p(H)$.

We now show that if an almost prime-like element $q$ occurs in some rigid factorization of an element $a$, then $q$ occurs in every such factorization of $a$.

\begin{proposition}\label{primesalvage}
Let $\wr$ be a divisibility relation on $H$ and let $a \in H$.
Suppose that $z=\rf[\varepsilon]{u_1,\cdots,u_k}$ and $z'=\rf[\eta]{v_1,\cdots,v_l}$ are two rigid factorizations of $a$, where
\[
  \{u_1, \ldots, u_k\}=\{p_1, \ldots, p_m\}\cup \{u_{m+1}', \ldots, u_k'\}
\]
with $p_i$ an almost prime-like atom for all $i\in [1,m]$ and where
\[
  \{v_1, \ldots, v_l\}=\{q_1, \ldots, q_n\}\cup \{v_{n+1}', \ldots, v_l'\}
\]
with $q_j$ an almost prime-like atom for all $j\in [1,n]$.
Further suppose that for all $i \in [1,m]$ and $j \in [n+1,l]$ it holds that $p_i \nwr v_{j}'$, and for all $j \in [1,n]$ and $i \in [m+1,k]$ it holds that $q_j \nwr u_i'$.
Then there is a bijective correspondence between the set of associativity classes of the $p_i$ and the set of associativity classes of the $q_j$.
\end{proposition}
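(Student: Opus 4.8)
The plan is to prove the slightly stronger assertion that the sets of associativity classes $\{\,[p_i]_\simeq : i \in [1,m]\,\}$ and $\{\,[q_j]_\simeq : j \in [1,n]\,\}$ coincide; since both are finite sets, this is exactly the claimed bijective correspondence. After relabelling I may assume $u_i = p_i$ for $i \in [1,m]$, $u_i = u_i'$ for $i \in [m+1,k]$, and $v_j = q_j$ for $j \in [1,n]$, $v_j = v_j'$ for $j \in [n+1,l]$.

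First I would isolate two elementary consequences of the axioms in \cref{div}. (a) For every $u \in H$ and every $w \simeq u$ we have $u \wr w$: indeed $u \wr u$ by \labelcref{div:unit} (applied with identity units), and if $w = \mu u \nu$ with $\mu$,~$\nu \in H^\times$, then two applications of \labelcref{div:prod} give first $u \wr \mu u$ and then $u \wr \mu u \nu = w$. (b) If $u$ is an atom occurring in a rigid factorization $\rf[\gamma]{w_1,\cdots,w_r}$ of $a$, say $u = w_t$, then $u \wr a$: starting from $u \wr u = w_t$, successive applications of \labelcref{div:prod} yield $u \wr w_t \cdots w_r$, then $u \wr w_1 \cdots w_r$, and finally $u \wr \gamma(w_1 \cdots w_r) = a$. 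Both observations of course apply verbatim to $z'$.

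Now fix $i \in [1,m]$. By (b), $p_i \wr a$. Since $a = (\eta v_1) v_2 \cdots v_l$ is a product of the atoms $\eta v_1$, $v_2$,~$\ldots\,$,~$v_l$ and $p_i$ is almost prime-like, \subref{consdivprod:assoc} shows that $p_i \simeq \eta v_1$ or $p_i \simeq v_j$ for some $j \in [2,l]$; as $\eta v_1 \simeq v_1$, in every case $p_i \simeq v_j$ for some $j \in [1,l]$. If $j$ were in $[n+1,l]$, then $v_j = v_j'$ and the hypothesis would give $p_i \nwr v_j'$, contradicting (a), which (with $u = p_i$ and $w = v_j \simeq p_i$) gives $p_i \wr v_j = v_j'$. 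Hence $j \in [1,n]$ and $[p_i]_\simeq = [v_j]_\simeq = [q_j]_\simeq$ belongs to the second set. Interchanging the roles of $z$ and $z'$ --- now using $q_j \wr a = \varepsilon(u_1 \cdots u_k)$ and the hypothesis $q_j \nwr u_i'$ for $i \in [m+1,k]$ --- yields the reverse inclusion, so the two sets are equal. (This also covers the degenerate cases $m = 0$ or $n = 0$, since then the argument forces the other index to vanish.)

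The proof is essentially a routine unwinding of the definitions once (a) and (b) have been extracted from the axioms. The only point needing a little care is the presence of the leading units $\varepsilon$ and $\eta$, which is handled by stating (b) for $a$ itself and by absorbing $\eta$ into $v_1$ before invoking \subref{consdivprod:assoc}; I do not anticipate a substantive obstacle. The conceptual core is simply the passage, via \subref{consdivprod:assoc}, from ``$p_i$ divides $a$'' to ``$p_i$ is associated with a factor occurring in the other factorization,'' followed by eliminating the forbidden factors by means of the divisibility hypotheses.
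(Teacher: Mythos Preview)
Your proof is correct and follows essentially the same approach as the paper's: both arguments show that each $p_i$ divides $a$, invoke \cref{consdivprod} to conclude that $p_i$ is associated to some $v_j$, rule out $j \in [n+1,l]$ via the hypothesis, and finish by symmetry. Your version is somewhat more explicit about the auxiliary facts (a) and (b) and the handling of the leading units, but the substance is the same.
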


\begin{proof}
Since $\varepsilon u_1\cdots u_k=\eta v_1 \cdots v_l$, we have $p_i \wr v_1\cdots v_l$ for each $i\in [1,m]$.
By \cref{consdivprod}, this implies that for each $i \in [1,m]$, $p_i \wr v_j$ for some $j \in [1,l]$.
Then $p_i \nwr v_j'$ for any $j \in [n+1,l]$ and thus $p_i \wr q_j$ for some $j \in [1,n]$.
As $p_i$ and $q_j$ are both atoms, $p_i \simeq q_j$.
Therefore each almost prime-like element $p_i$ occurring in $z$ is associated to an almost prime-like element $q_j$ occurring in $z'$.
A symmetrical argument shows that each almost prime-like element $q_j$ occurring in $z'$ is associated to an almost prime-like element $p_i$ occurring in $z$.
The result follows.
\end{proof}

\begin{corollary}\label{primesalvage2}
Let $H$ be atomic, and let $q$ be an atom of $H$.
The following statements are equivalent.
\begin{enumerateequiv}
  \item\label{primesalvage2:cdiv} $q$ is an almost prime-like element.
  \item\label{primesalvage2:rf} If $a \in H$, $z \in \sZ^*(a)$ and $q$ occurs in $z$, then $q$ occurs in $z'$ for all $z' \in \sZ^*(a)$.
  \item\label{primesalvage2:pf} If $a \in H$, $z \in \sZ_p(a)$ and $q$ occurs in $z$, then $q$ occurs in $z'$ for all $z' \in \sZ_p(a)$.
\end{enumerateequiv}
\end{corollary}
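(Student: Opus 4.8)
The plan is to prove the cyclic chain of implications \labelcref{primesalvage2:cdiv} $\Rightarrow$ \labelcref{primesalvage2:rf} $\Rightarrow$ \labelcref{primesalvage2:pf} $\Rightarrow$ \labelcref{primesalvage2:cdiv}. The first implication is essentially a special case of \cref{primesalvage}, and the second is almost immediate from the remarks preceding the corollary; the only real work is in the last implication, where one must recover the defining property of an almost prime-like element from a statement about occurrences in factorizations.

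For \labelcref{primesalvage2:cdiv} $\Rightarrow$ \labelcref{primesalvage2:rf}: suppose $q$ is almost prime-like, $a \in H$, $z \in \sZ^*(a)$, and $q$ occurs in $z$. Let $z' \in \sZ^*(a)$ be arbitrary. I would apply \cref{primesalvage} with the roles of the two factorizations being $z$ and $z'$, taking $m = 1$, $p_1 = q$, and letting $\{q_1,\ldots,q_n\}$ be the (possibly empty) set of atoms occurring in $z'$ that are associated to $q$, with $\{v'_{n+1},\ldots,v'_l\}$ the remaining atoms. The hypothesis $p_1 \nwr v'_j$ for $j \in [n+1,l]$ holds because, by \cref{consdivprod}\,\labelcref{consdivprod:assoc} applied in $H$, if $q \wr v'_j$ then $q \simeq v'_j$, contradicting that $v'_j$ was chosen not associated to $q$; the other non-divisibility hypothesis is vacuous as no further constraint is placed on the $u_i'$ side if we absorb everything except one copy of $q$ there. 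Actually, to keep things clean, I would instead invoke \cref{primesalvage} in the symmetric form where all of $z$'s atoms other than a single distinguished $p_1=q$ are placed in the $u_i'$-part, and argue directly: since $q \wr \pi(z) = \pi(z') = \eta v_1\cdots v_l$, \cref{consdivprod}\,\labelcref{consdivprod:assoc} gives $q \simeq v_j$ for some $j$, i.e.\ $q$ occurs in $z'$. This direct argument is cleaner than citing \cref{primesalvage} wholesale, so I would use it.

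For \labelcref{primesalvage2:rf} $\Leftrightarrow$ \labelcref{primesalvage2:pf}: this follows from the observation, recorded just before the statement, that for $z \in \sZ^*(H)$ and $u \in \cA(H)$, $u$ occurs in $z$ if and only if $u$ occurs in $\pf{z} \in \sZ_p(H)$, together with the fact that $\sZ_p(a)$ is precisely the image of $\sZ^*(a)$ under $z \mapsto \pf{z}$. So the statements in \labelcref{primesalvage2:rf} and \labelcref{primesalvage2:pf} are literally the same statement, transported along this surjection.

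The main obstacle is \labelcref{primesalvage2:pf} $\Rightarrow$ \labelcref{primesalvage2:cdiv} (equivalently \labelcref{primesalvage2:rf} $\Rightarrow$ \labelcref{primesalvage2:cdiv}). Here I assume the occurrence property and must show: if $q \wr ab$ with $t(a) = s(b)$, then $q \wr a$ or $q \wr b$. By \cref{cdivind}, since $q$ is an atom it suffices to work with any one divisibility relation; take $\wr \,=\, \mid_{l-r}$. Assume $q \mid_{l-r} ab$, so $ab \in H q H$, say $ab = c q d$. Choose a rigid factorization $z$ of $ab$ by concatenating a rigid factorization $x$ of $c$, the length-one factorization of $q$, and a rigid factorization $y$ of $d$; then $q$ occurs in $z$. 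Now also factor $ab$ the ``other'' way: pick $z_a \in \sZ^*(a)$ and $z_b \in \sZ^*(b)$ (using atomicity of $H$), so $z_a \rfop z_b \in \sZ^*(ab)$. By the assumed property, $q$ occurs in $z_a \rfop z_b$, i.e.\ $q \simeq u$ for some atom $u$ appearing in $z_a$ or in $z_b$. If $u$ appears in $z_a$, then writing $z_a = \rf[\varepsilon]{u_1,\cdots,u_r}$ with $u_i = u$, we get $a = \varepsilon u_1 \cdots u_r = (\varepsilon u_1 \cdots u_{i-1}) u (u_{i+1}\cdots u_r)$, so $u \mid_{l-r} a$, and since $q \simeq u$ we conclude $q \mid_{l-r} a$ using properties \labelcref{div:unit} and \labelcref{div:prod} of the divisibility relation. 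Symmetrically, if $u$ appears in $z_b$ then $q \mid_{l-r} b$. Either way $q \wr a$ or $q \wr b$, so $q$ is almost prime-like. This closes the cycle and completes the proof.
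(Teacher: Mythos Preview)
Your proof is correct and follows essentially the same route as the paper's. The paper likewise dispatches \labelcref{primesalvage2:rf}$\Leftrightarrow$\labelcref{primesalvage2:pf} by the remarks preceding the corollary, derives \labelcref{primesalvage2:cdiv}$\Rightarrow$\labelcref{primesalvage2:rf} from \cref{primesalvage} (your direct argument via \cref{consdivprod}\labelcref{consdivprod:assoc} is a valid shortcut to the same conclusion), and for \labelcref{primesalvage2:rf}$\Rightarrow$\labelcref{primesalvage2:cdiv} concatenates rigid factorizations of $a$ and $b$ to reach a contradiction; the only cosmetic difference is that the paper works with $\mid_p$ and argues by contradiction, whereas you work with $\mid_{l-r}$ directly, which is equivalent by \cref{cdivind}.
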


\begin{proof}
  The equivalence of \ref*{primesalvage2:rf} and \ref*{primesalvage2:pf} is clear by the discussion preceeding \cref{primesalvage}.
  If $q$ is an almost prime-like element of $H$, then \ref*{primesalvage2:rf} holds by \cref{primesalvage}.
  Now suppose that \ref*{primesalvage2:rf} holds, and suppose that $q \mid_p ab$ for some $a$,~$b \in H$.
  If $q \nmid_p a$ and $q \nmid_p b$, then $q$ cannot occur in any rigid factorization of $a$ or $b$.
  Let $z \in \sZ^*(a)$ and $z' \in \sZ^*(b)$.
  Then $z \rfop z'$ is a rigid factorization of $ab$ in which $q$ does not occur, but this contradicts $q \mid_p ab$ by \cref{primesalvage}.
\end{proof}

\begin{example}\label{weirdprimes}
Let $S=\langle a,b,c\mid aba=ba^3bc\rangle$.
Clearly $S$ is an Adyan semigroup and hence cancellative. Considering the single relation defining $S$, we see that any word in $\mathcal F^{\ast}(a,b,c)$ that contains $a$ (respectively $b$) can only be rewritten in such a way that it again contains $a$ (respectively $b$).
Therefore the two atoms $a$ and $b$ are almost prime-like elements in $S$.
Since $aba=ba^3bc$, the atom $c$ is not an almost prime-like element.
Considering the relation $aba=ba^3bc$, we see that the product $aba$ of almost prime-like elements does not have a unique permutable factorization in $S$.
\end{example}

The phenomena exhibited in \cref{primesalvage}, \cref{primesalvage2}, and \cref{weirdprimes} motivate the following definition.
In particular we associate to an almost prime-like element $q$ a multi-valued $q$-adic valuation, that corresponds to the concept of a $p$-adic valuation of a prime $p$ in the commutative setting (cf. \cite[Definition 1.1.9]{GHK06}).

\begin{definition}\label{prime-like}
  Let $\wr$ be a divisibility relation on $H$, and let $q \in H$ be an almost prime-like atom with respect to $\wr$.
  \begin{enumerate}
    \item
      Let $a \in H$.
      We define $\sV_q(a) \subset \bN_0$ as follows:
      A non-negative integer $n \in \bN_0$ is contained in $\sV_q(a)$ if and only if there exist $k \in \bN_0$ and $u_1$,~$\ldots\,$, $u_k \in \cA(H)$ such that $a \simeq u_1\cdots u_k$ and $n = \card{\{\,i \in [1,k] : u_i \simeq q\,\}}$.
      In particular $\sV_q(a) = \{0\}$ if $a \in H^\times$.
      We call $\sV_q(a)$ the \emph{$q$-adic valuation} of $a$.

    \item
      The almost prime-like element $q$ is \emph{prime-like (with respect to $\wr$)} provided that $\card{\sV_q(a)}=1$ for all $a \in H$.
  \end{enumerate}
\end{definition}

Note that unlike in the commutative setting, $\sV_q(a)$ need not be a singleton.
Indeed, if $S$ is as in \cref{weirdprimes}, then $\sV_a(aba)=\{2,3\}$ and $\sV_b(aba)=\{1,2\}$.
By considering the Adyan semigroup $S=\langle a,b \mid aba=ba^3b\rangle$, one sees that even if each atom of an atomic cancellative semigroup $S$ is almost prime-like, $S$ need not be permutably factorial.
However, if $q$ is an almost prime-like atom and $a \in H$ is a non-unit, then $0\in \sV_q(a)$ if and only if $\sV_q(a)=\{0\}$ (by \cref{primesalvage}).

\begin{remark}\label{prime-like-ideals}
  We compare the notion of (almost) prime-like elements in cancellative semigroups and rings to more established concepts.
  We do so by means of left-right divisibility, but recall that the choice of divisibility relation does not matter if the semigroup or ring is atomic (by \cref{cdivind}).
  Let $S$ be a cancellative semigroup.
  A proper semigroup ideal $P \subset S$ is called a \emph{completely prime ideal} if, for all $a$, $b \in S$, $ab \in P$ implies $a \in P$ or $b \in P$.
  It is immediate from the definitions that $p \in S$ is an almost prime-like element if and only if $SpS$ is a completely prime ideal of $S$.

  Now let $R$ be a ring and consider the cancellative semigroup $S = R^\bullet$ of non zero-divisors of $R$.
  A proper ideal $P$ of $R$ is called a \emph{prime ideal} if, for all $a$,~$b \in R$,\; $aRb \subset P$ implies $a \in P$ or $b \in P$, and $P$ is called a \emph{completely prime ideal} if, for all $a$,~$b \in R$,\; $ab \in P$ implies $a \in P$ or $b \in P$.
  If $p \in R$ is an almost prime-like element, then in general $pR$ and $Rp$ need not even be ideals of $R$, while the ideal of $R$ generated by $p$ need not even be proper.
  For instance, let $D$ be a commutative PID and $R=M_n(D)$ with $n \in \bN_{\ge 2}$.
  An element $A \in R^\bullet$ is an atom if and only if $\det(A)$ is a prime element of $D$ if and only if $A$ is a prime-like element of $R^\bullet$ (this follows easily by means of the Smith Normal Form, see also the examples at the end of \cref{sec:abelianization}).
  Thus, an (almost) prime-like element $A \in R^\bullet$ is not contained in any proper ideal of $R$.
  Conversely, if $P$ is a prime ideal of $M_n(D)$ then $P = Rp = pR = {}_R \langle p \rangle_R$ with $p$ a prime element of $D$.
  However, $p$ is not even an atom in $R^\bullet$ since $\det(p)=p^n$.
  Thus elements that generate principal prime ideals (as left ideals, right ideals, or two-sided ideals) need not be almost prime-like.

  However, suppose that $p \in R^\bullet$ is such that $Rp=pR$.
  One verifies directly: If $a \in R^\bullet$ and $x \in R$ with either $xp=a$ or $px=a$, then $x \in R^\bullet$.
  In particular $R^\bullet p = pR^\bullet$.
  Thus the following statements are equivalent for $a \in R^\bullet$:
  \begin{enumerate*}[label=(\alph*)]
    \item $p \mid_{l-r} a$,
    \item $p \mid_{l} a$,
    \item $p \mid_{r} a$,
    \item $a \in pR$,
    \item $a \in Rp$.
  \end{enumerate*}
  This implies that the ideal $Rp$ is a completely prime ideal if and only if $p$ is an almost prime-like element.
  In \cite{Chatters}, an element $p$ in a Noetherian ring $R$ is called a \emph{prime element} of $R$ if $Rp = pR$ and $Rp$ is a height-1 prime ideal of $R$ and completely prime.
  Thus any non zero-divisor prime element $p$ of $R$ is an almost prime-like element of $R^\bullet$.
\end{remark}

As the following lemma shows, the behavior of valuations for prime-like elements is quite similar to the behavior of valuations for prime elements in the commutative setting.

\begin{lemma}\label{additive}
Let $H$ be atomic, and let $q$ be an almost prime-like element of $H$.
Then $q$ is prime-like if and only if $\sV_q(a)+\sV_q(b)=\sV_q(ab)$ for all $a$,~$b \in H$ with $t(a)=s(b)$.
\end{lemma}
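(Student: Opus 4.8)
The plan is to prove both implications by reducing the computation of $\sV_q(a)$ to a Minkowski sum of the sets $\sV_q(u)$ over the atoms $u$ occurring in a single chosen factorization of $a$. The basic observation is that for an atom $u$ of $H$ the set $\sV_q(u)$ is a singleton --- it is $\{1\}$ if $u \simeq q$ and $\{0\}$ otherwise --- because, up to associativity, an atom has only the trivial factorization into a single atom. I would also use two elementary facts that are immediate from \cref{prime-like}: $\sV_q$ is invariant on associativity classes (if $a \simeq a'$ then $\sV_q(a) = \sV_q(a')$, by transitivity of $\simeq$), and $\sV_q(\varepsilon) = \{0\}$ for every unit $\varepsilon \in H^\times$.

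For the direction ``$\Rightarrow$'', assume $q$ is prime-like and let $a$,~$b \in H$ with $t(a)=s(b)$. Choosing factorizations $a \simeq u_1\cdots u_k$ and $b \simeq w_1\cdots w_l$ into atoms, one obtains a factorization of $ab$ into atoms by concatenation, absorbing any remaining unit into an adjacent atom (so that, for instance, $u_k$ gets replaced by the atom $u_k\eta \simeq u_k$); this also covers the cases where $a$ or $b$ is a unit. Counting the factors associated to $q$ in this factorization of $ab$ shows that $s+t \in \sV_q(ab)$, where $\{s\} = \sV_q(a)$ and $\{t\} = \sV_q(b)$ by prime-likeness. Since $q$ is prime-like, $\sV_q(ab)$ is also a singleton, hence $\sV_q(ab) = \{s+t\} = \sV_q(a) + \sV_q(b)$.

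For the direction ``$\Leftarrow$'', assume $\sV_q(ab) = \sV_q(a) + \sV_q(b)$ for all composable $a$,~$b$, and let $a \in H$. If $a$ is a unit there is nothing to prove, so assume $a$ is a non-unit and fix, using atomicity of $H$, a factorization $a \simeq u_1\cdots u_k$ into atoms. An induction on $k$, at each step applying the additivity hypothesis to the composable pair $(u_1\cdots u_{i-1},\, u_i)$ and using the associativity-invariance of $\sV_q$, yields
\[
  \sV_q(a) = \sV_q(u_1) + \sV_q(u_2) + \cdots + \sV_q(u_k).
\]
Every summand on the right is a singleton, so the Minkowski sum is a singleton; hence $\card{\sV_q(a)} = 1$. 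As $a$ was arbitrary, $q$ is prime-like.

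I do not expect a genuine obstacle; the argument is essentially bookkeeping. The only points needing mild care are that $\sV_q$ descends to associativity classes and is trivial on units --- so that concatenating factorizations and pushing units into neighbouring atoms are legitimate operations on factorizations --- and keeping track of which products are defined in the small-category setting when forming $ab$ and its factorization.
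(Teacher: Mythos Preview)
Your proposal is correct and follows essentially the same approach as the paper: both directions hinge on the inclusion $\sV_q(a)+\sV_q(b)\subset\sV_q(ab)$ obtained by concatenating factorizations, and the converse is proved by iterating additivity along a single factorization into atoms (plus a unit), using that $\sV_q$ is a singleton on atoms and units. The only cosmetic difference is that the paper writes $a=\varepsilon u_1\cdots u_k$ and includes the term $\sV_q(\varepsilon)=\{0\}$ explicitly, whereas you absorb the unit into an adjacent atom and invoke associativity-invariance of $\sV_q$.
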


\begin{proof}
  By definition, $q$ is prime-like if and only if $\card{\sV_q(a)}=1$ for all $a \in H$.

  Note that for all $a$,~$b \in H$ with $t(a)=s(b)$, we trivially have $\sV_q(a) + \sV_q(b) \subset \sV_q(ab)$.
  Indeed, if $m \in \sV_q(a)$ and $n \in \sV_q(b)$, then there exist rigid factorizations $z=\rf[\varepsilon]{u_1,\cdots,u_k}$ of $a$ and $z'=\rf[\eta]{v_1,\cdots,v_l}$ of $b$ with $k$,~$l \in \bN_0$, $\varepsilon$,~$\eta \in H^\times$, and $u_1$,~$\ldots\,$,~$u_k$, $v_1$,~$\ldots\,$,~$v_l \in \cA(H)$ such that $\card{\{\, i \in [1,k] : u_i \simeq q\,\}} = m$ and $\card{\{\, j \in [1,l] : v_j \simeq q\,\}} = n$.
  Since $z \rfop z'$ is a rigid factorization of $ab$, we have $m+n \in \sV_q(ab)$.

  Suppose first that $\card{\sV_q(a)}=1$ for all $a \in H$.
  Let $m$,~$n \in \bN_0$ with $\sV_q(a) = \{m\}$ and $\sV_q(b) = \{n\}$.
  Since $\{m+n\} = \sV_q(a) + \sV_q(b) \subset \sV_q(ab)$, and the latter set is a singleton, it must be the case $\sV_q(ab) = \{m+n\}$.

  We now prove the converse.
  Let $a \in H$, $k \in \bN_0$, $\varepsilon \in H^\times$ and $u_1$,~$\ldots\,$,~$u_k \in \cA(H)$ be such that $a = \varepsilon u_1\cdots u_k$.
  Then
  \[
    \sV_q(a) = \sV_q(\varepsilon) + \sV_q(u_1) + \cdots + \sV_q(u_k)
  \]
  by hypothesis.
  Since $\varepsilon \in H^\times$, $\sV_q(\varepsilon) = \{0\}$.
  Since each $u_i$ is an atom in $H$, for each $i \in [1,k]$ either $\sV_q(u_i)=\{0\}$ (if $u_i \not\simeq q$) or $\sV_q(u_i)=\{1\}$ (if $u_i \simeq q$).
  As $\sV_q(u_i)$ is a singleton for all $i \in [1,k]$, so is $\sV_q(a)$.
\end{proof}

We have the following immediate corollary to \cref{primesalvage} which generalizes the familiar result from the commutative setting (cf. \cite[Proposition 1.1.8]{GHK06}).

\begin{corollary}\label{acdunique}
Let $\wr$ be a divisibility relation on $H$.
Let $a \in H$ and suppose that $z=\rf[\varepsilon]{u_1,\cdots,u_k}$ and $z'=\rf[\eta]{v_1,\cdots,v_l}$ are two rigid factorizations of $a$, where
\[
  \{u_1, \ldots, u_k\}=\{p_1, \ldots, p_m\}\cup \{u_{m+1}', \ldots, u_k'\}
\]
with $p_i$ a prime-like atom for all $i\in [1,m]$ and where
\[
  \{v_1, \ldots, v_l\}=\{q_1, \ldots, q_n\}\cup \{v_{n+1}', \ldots, v_l'\}
\]
with $q_j$ a prime-like atom for all $j\in [1,n]$.
Further suppose that for all $i \in [1,m]$ and $j \in [n+1,l]$ it holds that $p_i \nwr v_{j}'$, and for all $j \in [1,n]$ and $i \in [m+1,k]$ it holds that $q_j \nwr u_i'$.
Then $m=n$ and there exists a permutation $\sigma \in \fS_m$ such that $p_i \simeq q_{\sigma(i)}$ for all $i \in [1,m]$.
\end{corollary}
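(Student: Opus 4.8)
The plan is to upgrade the set-level bijection furnished by \cref{primesalvage} to an equality of \emph{multisets} of associativity classes, exploiting the fact that for a prime-like atom $q$ the valuation $\sV_q$ is single-valued. Once we know that the multiset $\{[p_1]_\simeq,\ldots,[p_m]_\simeq\}$ equals the multiset $\{[q_1]_\simeq,\ldots,[q_n]_\simeq\}$, comparing total sizes gives $m=n$, and a matching $\sigma \in \fS_m$ with $p_i \simeq q_{\sigma(i)}$ for all $i \in [1,m]$ drops out at once. So everything reduces to a multiplicity count.

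Fix $i_0 \in [1,m]$ and write $p = p_{i_0}$, which is a prime-like atom, so $\sV_p(a)$ is a singleton. Since $a = \varepsilon u_1\cdots u_k \simeq u_1\cdots u_k$ and $a \simeq v_1\cdots v_l$, the definition of $\sV_p$ shows that both $\card{\{\,i \in [1,k] : u_i \simeq p\,\}}$ and $\card{\{\,j \in [1,l] : v_j \simeq p\,\}}$ equal the unique element of $\sV_p(a)$; in particular they are equal, and positive (as $u_{i_0}$ witnesses). It then remains to see that no factor associated to $p$ hides among the $u_i'$ or $v_j'$. Indeed, if $u_i \simeq p$ with $u_i = u_{i'}'$ for some $i' \in [m+1,k]$, then by \cref{primesalvage} there is a $q_{j'}$ with $j' \in [1,n]$ and $q_{j'} \simeq p \simeq u_{i'}'$; property \labelcref{div:unit} of the divisibility relation then gives $q_{j'} \wr u_{i'}'$, contradicting the hypothesis $q_{j'} \nwr u_{i'}'$. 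Symmetrically, $v_j \simeq p$ with $v_j = v_{j'}'$ and $j' \in [n+1,l]$ would yield $p \simeq v_{j'}'$, hence $p \wr v_{j'}'$, contradicting $p \nwr v_{j'}'$. Therefore $\card{\{\,i \in [1,m] : p_i \simeq p\,\}} = \card{\{\,j \in [1,n] : q_j \simeq p\,\}}$. Running this over all $i_0 \in [1,m]$, and using \cref{primesalvage} once more to see that every associativity class occurring among the $q_j$ also occurs among the $p_i$, we conclude that the two multisets of associativity classes coincide, which finishes the proof.

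I expect the only real obstacle to be the ``no hiding in the remainder'' step: one must rule out that an atom associated to one of the singled-out prime-like atoms $p_i$ occurs among the non-distinguished factors $u_i'$ or $v_j'$. This is precisely where the hypotheses $p_i \nwr v_j'$ and $q_j \nwr u_i'$ enter, together with property \labelcref{div:unit} (which gives that $\simeq$ implies $\wr$) and the associativity bijection from \cref{primesalvage}; the remaining bookkeeping with multisets is routine.
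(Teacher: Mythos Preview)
Your proof is correct. The paper gives no proof for this corollary, stating only that it is ``the following immediate corollary to \cref{primesalvage}''; your argument is precisely the natural way to flesh out this claim, using the single-valuedness of $\sV_p$ for prime-like $p$ to upgrade the set-level correspondence of \cref{primesalvage} to a multiset equality, with the non-divisibility hypotheses ruling out hidden associates among the $u_i'$ and $v_j'$.
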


\begin{remark}
  \envnewline
  \begin{enumerate}
    \item
      Even products of prime-like elements need not have unique permutable factorizations as is exhibited by the following example.
      Let $S=\langle a,b \mid a^2=ba^2b\rangle$. Then $a$ is prime-like, yet $a^2$ does not have a unique permutable factorization.

    \item
      Let $H$ be a commutative semigroup.
      For the sake of completeness, we note that neither the concept of almost prime-like elements nor that of prime-like elements coincide with that of absolutely irreducible elements --- atoms $u$ such that $u^n$ has a unique permutable factorization for all $n \in \bN$.
      Let $m \in \bN$, $n \in \bN_{\ge 2}$ and $S=\langle a,b \mid ab^ma=b^n\rangle$.
Then $b$ is almost prime-like (in fact prime-like if $m=n$), but is not absolutely irreducible.
Conversely, if $S'=\langle a,b,c \mid ab=bc\rangle$, then $a$ is absolutely irreducible, yet is not almost prime-like.
  \end{enumerate}
\end{remark}

We now study permutable factorizations by means of divisibility relations.
By \cref{cdivind} we may consider only the divisibility relation $\mid_p$.
Recall, from \cref{sec:factorizations}, that $H$ is \emph{permutably factorial} if $\card{\sZ_p(a)}=1$ for all $a \in H$.
Explicitly, $H$ is atomic and for all non-units $a \in H$, whenever $a=u_1\cdots u_k = v_1 \cdots v_l$ with $k$,~$l \in \bN$ and atoms $u_1$,~$\ldots\,$,~$u_k$, $v_1$,~$\ldots\,$,~$v_l \in \cA(H)$, $k=l$ and there exists a permutation $\sigma \in \fS_k$ with $u_i \simeq v_{\sigma(i)}$ for all $i \in [1,k]$.

We shall now see that for an atomic cancellative small category $H$, the conditions
\smallskip
\begin{enumerate}
\item \emph{Every atom is almost prime-like} and
\item \emph{Every atom is prime-like}
\end{enumerate}
provide a measure of how close $H$ is to being permutably factorial.
\smallskip

\begin{proposition}\label{p-factorial-acd}
  $H$ is permutably factorial if and only if $H$ is atomic and every atom of $H$ is prime-like.
\end{proposition}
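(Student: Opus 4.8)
The plan is to treat the two implications separately: the ``if'' direction (atomic with every atom prime-like $\Rightarrow$ permutably factorial) follows almost immediately from \cref{acdunique}, while the ``only if'' direction rests on \cref{cdivind} together with the uniqueness of permutable factorizations.

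For the ``if'' direction, assume $H$ is atomic and every atom of $H$ is prime-like; by \cref{cdivind} this notion does not depend on the divisibility relation chosen, so fix one. Given $a \in H$ and two rigid factorizations $z = \rf[\varepsilon]{u_1,\cdots,u_k}$ and $z' = \rf[\eta]{v_1,\cdots,v_l}$ of $a$, I would apply \cref{acdunique} with $\{p_1,\dots,p_m\} = \{u_1,\dots,u_k\}$ and $\{q_1,\dots,q_n\} = \{v_1,\dots,v_l\}$, i.e.\ with $m = k$, $n = l$, and no leftover atoms. All the $u_i$ and $v_j$ are prime-like atoms by hypothesis, and the two divisibility hypotheses of \cref{acdunique} hold vacuously because $[m+1,k]$ and $[n+1,l]$ are empty. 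Hence $k = l$ and there is $\sigma \in \fS_k$ with $u_i \simeq v_{\sigma(i)}$ for all $i$, that is, $\pf{z} = \pf{z'}$. As $z$, $z'$ were arbitrary, $\card{\sZ_p(a)} = 1$, and as $a$ was arbitrary, $H$ is permutably factorial.

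For the ``only if'' direction, assume $H$ is permutably factorial, so $\card{\sZ_p(a)} = 1$ for all $a$; in particular $\sZ_p(a) \neq \emptyset$, so $a$ lies in the image of $\pi$ and $H$ is atomic. Fix an atom $q$; by \cref{cdivind} it suffices to show $q$ is prime-like with respect to $\mid_p$. To see $q$ is almost prime-like, suppose $q \mid_p ab$. Using atomicity, write $a = \varepsilon u_1\cdots u_k$ and $b = \eta v_1 \cdots v_l$ with $\varepsilon, \eta \in H^\times$ and $u_i, v_j \in \cA(H)$; absorbing $\eta$ into $u_k$ (or into $\varepsilon$ when $k = 0$) produces a rigid, hence permutable, factorization of $ab$ whose atoms are, up to associativity, $u_1,\dots,u_k,v_1,\dots,v_l$. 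Since $q \mid_p ab$ forces $q$ to occur in \emph{some} permutable factorization of $ab$, and since $\card{\sZ_p(ab)} = 1$, the atom $q$ occurs in the one just produced; thus $q \simeq u_i$ for some $i$, whence $q$ occurs in the permutable factorization $\pf[\varepsilon]{u_1,\cdots,u_k}$ of $a$ and $q \mid_p a$ --- or $q \simeq v_j$ for some $j$ and $q \mid_p b$. So $q$ is almost prime-like. Finally, $\sV_q(a)$ is a singleton for every $a$: every factorization of an associate of $a$ has the same multiset of associativity classes of atomic factors as some factorization of $a$ itself, and permutable factoriality makes all factorizations of $a$ agree on this multiset, so the number of factors associated to $q$ is the same for all of them. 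Hence $q$ is prime-like.

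The step requiring the most care is showing an atom is almost prime-like in the ``only if'' direction: one must track the unit $\eta$ when splicing a factorization of $a$ and one of $b$ into a factorization of $ab$, dispose of the degenerate cases $k = 0$ or $l = 0$ (where $q \mid_p ab$ itself guarantees the relevant factorization is nonempty), and remember that $\sV_q$ is defined via $\simeq$ rather than equality. None of this is substantive; the genuinely combinatorial content --- turning a matching of prime-like factors into a length-preserving permutation --- is already packaged into \cref{primesalvage}, \cref{additive}, and \cref{acdunique}.
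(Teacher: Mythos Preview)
Your proof is correct and follows essentially the same approach as the paper's: the ``if'' direction is an immediate application of \cref{acdunique} with vacuous side conditions, and the ``only if'' direction shows each atom is almost prime-like by splicing factorizations of $a$ and $b$ into one of $ab$ and invoking uniqueness, then reads off $\card{\sV_q(a)}=1$ from permutable factoriality. The only cosmetic difference is that the paper phrases the splicing step as $\pf{z}=\pf{x}\rfop\pf{y}$ using the product in $\sZ_p(H)$, whereas you unpack it by hand (including the unit-absorption bookkeeping); both are the same argument.
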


\begin{proof}
Suppose first that $H$ is permutably factorial.
Then $H$ is atomic.
If $u$ is an atom in $H$ and $u \mid_p ab$ for some $a$,~$b \in H$ with $t(a)=s(b)$, then $u$ occurs in the unique permutable factorization $\pf{z}$ of $ab$ by \subref{consdivprod:assoc}.
However, $\pf{z} = \pf{x} \rfop \pf{y}$ with $\pf{x}$ and $\pf{y}$ being the unique permutable factorization of $a$ and $b$.
Thus, $u$ must occur in either $\pf{x}$ or $\pf{y}$ implying $u \mid_p a$ or $u \mid_p b$, and so $u$ is almost prime-like.
Moreover, since $H$ is permutably factorial, $\card{\sV_q(a)} = 1$ for any element $a\in H$ and any almost prime-like element $q$ of $H$.

If, conversely, $H$ is atomic, and every atom of $H$ is prime-like, then \cref{acdunique} implies that $H$ is permutably factorial.
\end{proof}

Applying \cref{additive}, we immediately obtain the following corollary.

\begin{corollary}\label{alt-perm-fact}
$H$ is permutably factorial if and only if $H$ is atomic and the following two conditions are satisfied.
\begin{enumerate}
\item Every atom in $H$ is almost prime-like.
\item For every almost prime-like element $q \in H$ and for all $a$,~$b \in H$ with $t(a)=s(b)$, we have $\sV_q(ab) = \sV_q(a) + \sV_q(b)$.
\end{enumerate}
\end{corollary}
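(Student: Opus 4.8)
The plan is to read this off directly from \cref{p-factorial-acd} together with \cref{additive}; the argument is short, and the only point that deserves a moment's attention is that, under the standing hypothesis that $H$ is atomic, every almost prime-like element is already an atom (by \subref{consdivprod:atom}). Consequently the quantifier ``for every almost prime-like element $q \in H$'' in condition~(2) ranges over exactly the atoms that are candidates to be prime-like, which is why conditions~(1) and~(2) together amount to the single condition ``every atom of $H$ is prime-like'' appearing in \cref{p-factorial-acd}.

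For the forward implication I would assume $H$ is permutably factorial. Then $H$ is atomic, and by \cref{p-factorial-acd} every atom of $H$ is prime-like; in particular every atom is almost prime-like, which is condition~(1). For condition~(2), let $q \in H$ be almost prime-like. Since $H$ is atomic, $q$ is an atom by \subref{consdivprod:atom}, hence $q$ is prime-like, and then \cref{additive} yields $\sV_q(ab) = \sV_q(a) + \sV_q(b)$ for all $a$,~$b \in H$ with $t(a)=s(b)$.

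For the converse, assume $H$ is atomic and that (1) and (2) hold. Let $u \in \cA(H)$ be an atom. By~(1), $u$ is almost prime-like, so applying~(2) with $q = u$ gives $\sV_u(ab) = \sV_u(a) + \sV_u(b)$ for all composable $a$,~$b \in H$; by \cref{additive} this means $u$ is prime-like. Thus every atom of $H$ is prime-like, and \cref{p-factorial-acd} shows that $H$ is permutably factorial.

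There is no genuine obstacle here: each step is an immediate invocation of an already-established result. The only thing that needs to be said explicitly is the remark in the first paragraph, namely that the atomicity hypothesis forces almost prime-like elements to be atoms, so that quantifying over all almost prime-like $q$ in~(2) is the same as quantifying over the atoms and asking each of them to satisfy the additivity criterion of \cref{additive}, i.e.\ to be prime-like.
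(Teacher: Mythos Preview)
Your proof is correct and follows exactly the paper's approach: the paper states this corollary as an immediate consequence of \cref{p-factorial-acd} together with \cref{additive}, which is precisely what you do. Your explicit remark that atomicity forces almost prime-like elements to be atoms (via \subref{consdivprod:atom}) is the one detail the paper leaves implicit, and it is the right observation to make the quantifiers in condition~(2) line up with those in \cref{additive}.
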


We now briefly consider generalizations of the $\omega$-invariant and the tame degree, which are well-studied invariants in the commutative setting, and measure how far away an atom is from being a prime element.
Accordingly, our invariants will give a measure of how far away an atom is from being almost prime-like.

\begin{definition}\label{omega} Let $H$ be atomic and let $a$,~$b \in H$.
\begin{enumerate}
  \item\label{omega:nonunits} Define $\omega'_p(a,b)$ to be the smallest $N \in \bN_0\cup \{\infty\}$ with the following property: For all $n \in \bN$ and $a_1$,~$\ldots\,$,~$a_n \in H$ with $a=a_1\cdots a_n$, if $b\mid_p a$, then there exists $k \in [0,N]$ and an injective map $\sigma\colon [1,k] \to [1,n]$ such that the permuted subproduct $a_\sigma = s(a_{\sigma(1)}) a_{\sigma(1)}\cdots a_{\sigma(k)}$ is defined, and $b \mid_p a_\sigma$.
    Set $\omega_p'(H,b) = \sup\{\, \omega_p'(a,b) : a \in H \,\}$ and $\omega_p'(H)=\sup \{\,\omega_p'(H,u) : u \in \cA(H) \,\}$.
  \item\label{omega:atoms} Define $\omega_p(a,b)$ to be the smallest $N \in \bN_0\cup \{\infty\}$ with the following property: For all $n \in \bN$ and atoms $u_1$,~$\ldots\,$,~$u_n$ of $H$ with $a=u_1\cdots u_n$, if $b\mid_p a$, then there exists $k \in [0,N]$ and an injective map $\sigma\colon [1,k] \to [1,n]$ such that the permuted subproduct $a_\sigma = s(u_{\sigma(1)}) u_{\sigma(1)}\cdots u_{\sigma(k)}$ is defined, and $b \mid_p a_\sigma$.
    Set $\omega_p(H,b) = \sup\{\, \omega_p(a,b) : a \in H \,\}$ and $\omega_p(H)=\sup \{\,\omega_p(H,u) : u \in \cA(H) \,\}$.
\end{enumerate}
\end{definition}

Note that $\omega'_p(H,a) = \omega_p(H,a) = 0$ if and only if $a \in H^\times$, and $\omega'_p(H,a)=\omega_p(H,a)=1$ if and only if $a$ is an almost prime-like element.
We always have $\omega_p(H,a) \le \omega_p'(H,a)$, and if $H$ is a commutative semigroup, then $\omega_p(H,a) = \omega_p'(H,a) = \omega(H,a)$, where $\omega(H,a)$ is the usual $\omega$-invariant as defined in the commutative setting (see \cite[Definition 2.8.14]{GHK06}).
The following example illustrates that $\omega_p(H,a) = \omega_p'(H,a)$ does not hold in general for noncommutative semigroups.

\begin{example}\label{omega-differs}
  Let $S = \langle a,b,c,d,e \mid ab=cd,\, cede = ba \rangle$.
  The semigroup is Adyan and hence cancellative. Moreover, $S$ is reduced and atomic with $\cA(S) = \{a,b,c,d,e\}$.
  We claim $\omega_p(S,a) = 2$.
  Clearly $\omega_p(S,a) \ge 2$, since $a \mid_p cd=ab$, but $a$ does not permutably divide any permuted subproduct of $cd$.
  Suppose $k \in \bN$ and $u_1$,~$\ldots\,$,~$u_k \in \cA(S)$ are such that $a \mid_p u_1\cdots u_k$.
  By the defining relations of $S$, either $u_i = a$ for some $i \in [1,k]$, $k \ge 2$ and $u_iu_{i+1}=cd$ for some $i \in [1,k-1]$, or that $k \ge 4$ and $u_iu_{i+1}u_{i+2}u_{i+3}=cede$ for some $i \in [1,k-4]$.
  In any of these cases we can take a subproduct of at most two elements that is divided up to permutation by $a$ (due to $cd=ab$ in the second and third case).

  However, $\omega_p'(S,a) \ge 3$:
  Let $a_1=ce$, $a_2=d$ and $a_3=e$.
  Then $a \mid_p a_1a_2a_3=ba$, but clearly $a \nmid_p a_i$ for any $i \in [1,3]$.
  Moreover $\{\, a_i a_j : i,j \in [1,3], i\ne j \,\} = \{ ced, ce^2, de, dce, ece, ed \}$, none of which is divided up to permutation by $a$.
\end{example}

However, even in the noncommutative setting we have the following.

\begin{proposition}\label{pf-omega}
$H$ is permutably factorial if and only if $H$ is atomic, $\omega_p'(H)=\omega_p(H)\le1$ and every almost prime-like element is prime-like.
\end{proposition}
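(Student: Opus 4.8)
The plan is to prove both implications, letting \cref{p-factorial-acd} carry the structural burden: it already identifies permutable factoriality of $H$ with the conjunction of ``$H$ is atomic'' and ``every atom of $H$ is prime-like''.

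For the forward implication I would argue as follows. If $H$ is permutably factorial, then $H$ is atomic, and by \cref{p-factorial-acd} every atom of $H$ is prime-like, hence in particular almost prime-like. By the observations recorded right after \cref{omega}, this gives $\omega_p'(H,u)=\omega_p(H,u)=1$ for every $u\in\cA(H)$, whence $\omega_p'(H)=\omega_p(H)\le 1$. Finally, every almost prime-like element of $H$ is an atom by \subref{consdivprod:atom} (using atomicity), and therefore prime-like; so all three asserted conditions hold.

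For the converse, the core of the argument is the following. Assume $H$ is atomic, $\omega_p(H)\le 1$, and every almost prime-like element of $H$ is prime-like. By \cref{p-factorial-acd} it is enough to show every atom of $H$ is prime-like, and by the third hypothesis it suffices to show every atom of $H$ is almost prime-like. So I would fix $u\in\cA(H)$ together with $a$, $b\in H$ satisfying $t(a)=s(b)$ and $u\mid_p ab$, and deduce $u\mid_p a$ or $u\mid_p b$. First, $ab\notin H^\times$: were $ab$ a unit, property \labelcref{div:nontrivial} of the divisibility relation $\mid_p$ would force $u\in H^\times$, contrary to $u\in\cA(H)$. Choosing rigid factorizations $x\in\sZ^*(a)$ and $y\in\sZ^*(b)$, the product $x\rfop y\in\sZ^*(ab)$ has some length $n\ge 1$, and, absorbing units at the ends and at the junction, one may list its atoms as $u_1,\ldots,u_n\in\cA(H)$ with $ab=u_1\cdots u_n$ so that $u_1,\ldots,u_j$ come from $x$ and $u_{j+1},\ldots,u_n$ from $y$ for some $j\in[0,n]$. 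Now I would invoke $\omega_p(ab,u)\le\omega_p(H)\le 1$ for this factorization: since $u\mid_p ab$, there are $k\in[0,1]$ and an injective map $\sigma\colon[1,k]\to[1,n]$ such that the permuted subproduct $(ab)_\sigma$ is defined and $u\mid_p (ab)_\sigma$. The option $k=0$ is excluded, since then $(ab)_\sigma\in H^\times$ and \labelcref{div:nontrivial} again yields $u\in H^\times$; hence $k=1$ and $(ab)_\sigma=u_{\sigma(1)}$. From $u\mid_p u_{\sigma(1)}$, with $u$ a non-unit and $u_{\sigma(1)}$ an atom, property \labelcref{div:assoc} gives $u\simeq u_{\sigma(1)}$. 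Setting $i=\sigma(1)$: if $i\le j$ then, tracking the unit absorbed at the junction, $u$ is associated to an atom occurring in the rigid factorization $x$ of $a$, whence $u\mid_p a$ by \subref{div2:perm}; if $i>j$ the same reasoning with $y$ and $b$ gives $u\mid_p b$. This shows every atom of $H$ is almost prime-like, finishing the argument.

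I do not expect a serious obstacle here: once the definition of $\omega_p$ is unwound, the conclusion essentially reads off. The few points needing care are the disposal of the degenerate possibilities --- $ab$ a unit, and $k=0$ --- both handled uniformly by property \labelcref{div:nontrivial} together with $u\in\cA(H)$; the routine bookkeeping of which atoms of a rigid factorization of $ab$ stem from $a$ and which from $b$, including the absorption of units; and the trivial remark that an atom associated to one occurring in a rigid factorization of $a$ permutably divides $a$, which is exactly the matching condition in \subref{div2:perm}. It is worth noting that the converse uses only $\omega_p(H)\le 1$, so the full equality $\omega_p'(H)=\omega_p(H)$ in the statement enters only --- and in the form $\omega_p'=\omega_p=1$ --- in the forward direction.
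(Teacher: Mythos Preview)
Your proof is correct and follows essentially the same route as the paper's. The only difference is cosmetic: in the converse direction the paper simply invokes the observation recorded immediately after \cref{omega} (that $\omega_p'(H,a)=\omega_p(H,a)=1$ if and only if $a$ is almost prime-like), whereas you unfold that implication explicitly by chasing the definition of $\omega_p$ on a concatenated rigid factorization of $ab$.
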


\begin{proof}
  We have $\omega_p'(H)=\omega_p(H)=0$ if and only if $H$ is a groupoid. We may from now on exclude this trivial case, and assume that $H$ is not a groupoid.
  Suppose first that $H$ is permutably factorial.
  By \cref{p-factorial-acd}, $H$ is atomic and every atom of $H$ is prime-like.
  Thus, for all $u \in \cA(H)$, we have $\omega_p(H,u) = \omega_p'(H,u) = 1$, and therefore $\omega_p'(H)=\omega_p(H)=1$.

  We now show the converse implication.
  If $\omega_p'(H)=\omega_p(H)=1$, then every atom in $H$ is almost prime-like.
  By hypothesis, therefore all atoms of $H$ are prime-like, and thus $H$ is permutably factorial by \cref{p-factorial-acd}.
\end{proof}

Continuing our discussion of the notion of divisibility up to permutation in $\sZ^*(H)$ and $\sZ_p(H)$ preceding \cref{primesalvage}, let $x=\rf[\varepsilon]{w_1,\cdots,w_m} \in \sZ^*(H)$ with $m \in \bN_0$, $\varepsilon \in H^\times$ and $w_1$,~$\ldots\,$,~$w_m \in \cA(H)$.
We observe that, for $z \in \sZ^*(H)$ with $z=\rf[\eta]{u_1,\cdots,u_k}$ where $k \in \bN_0$, $\eta \in H^\times$ and $u_1$,~$\ldots\,$,~$u_k \in \cA(H)$, we have $x \mid_p z$ if and only if there exists an injective map $\sigma\colon [1,m] \to [1,k]$ such that $w_i \simeq u_{\sigma(i)}$ for all $i \in [1,m]$.
Note that this is also equivalent to $\pf{x} \mid_p \pf{z}$ in $\sZ_p(H)$.

\begin{definition}
  Let $H$ be atomic and $a \in H$.
  For a permutable factorization $x \in \sZ_p(H)$, let $\st_p(a,x)$ denote the smallest $N \in \bN_0 \cup \{\infty\}$ with the following property:
  \begin{enumerate}
    \item []
      If there exists any $z_0 \in \sZ_p(a)$ such that $x \mid_p z_0$, and $z \in \sZ_p(a)$ is an arbitrary permutable factorization of $a$, then there exists some $z' \in \sZ_p(a)$ with $x \mid_p z'$ in $\sZ_p(H)$ and with $\sd_p(z,z') \leq N$.
  \end{enumerate}
  For subsets $H' \subset H$ and $Z \subset \sZ_p(H)$, we define
  \[
    \st_p(H', Z) = \sup\{\, \st_p(a, z) : a \in H', z \in Z \,\} \in \bN_0 \cup \{\infty\}.
  \]
  In particular, for $u \in \cA(H)$ we set $\st_p(H,u) = \st_p(H,\{u\})$, and the \emph{(permutable) tame degree of $H$} is defined as $\st_p(H) = \st_p(H, \cA(H)) \in \bN_0 \cup \{\infty\}$.
\end{definition}

If $H$ is a commutative semigroup, then $\st_p(H,u) = \st(H,u)$, where $\st(H,u)$ denotes the usual tame degree as defined in the commutative setting (see \cite[Definition 1.6.4]{GHK06}).

We are now able to give a characterization of permutable factoriality in terms of the tame degree.
Compare with \cite[Theorem 1.6.6]{GHK06} for the commutative analogue.

\begin{proposition}\label{perm-tame}
Let $H$ be atomic.
The following statements are equivalent.
\begin{enumerate}
  \item\label{perm-tame:fact} $H$ is permutably factorial.
  \item\label{perm-tame:val} $\st_p(H)=0$ and every almost prime-like element is prime-like.
  \item\label{perm-tame:abscon} $\st_p(H,\sZ_p(H))=0$ and every almost prime-like element is prime-like.
\end{enumerate}
\end{proposition}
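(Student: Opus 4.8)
The plan is to prove the cycle $\labelcref{perm-tame:fact} \Rightarrow \labelcref{perm-tame:abscon} \Rightarrow \labelcref{perm-tame:val} \Rightarrow \labelcref{perm-tame:fact}$; the first two implications are essentially formal, and only the last carries content.

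For $\labelcref{perm-tame:fact} \Rightarrow \labelcref{perm-tame:abscon}$, I would observe that if $H$ is permutably factorial then $\card{\sZ_p(a)} = 1$ for all $a \in H$, so for every $a$ and every $x \in \sZ_p(H)$ the unique permutable factorization of $a$ is the only candidate for the $z'$ in the definition of $\st_p(a,x)$, and it lies at distance $0$ from itself; hence $\st_p(H,\sZ_p(H)) = 0$. For the second clause, \cref{p-factorial-acd} says every atom of $H$ is prime-like while \subref{consdivprod:atom} says every almost prime-like element of $H$ is an atom, so every almost prime-like element is prime-like. For $\labelcref{perm-tame:abscon} \Rightarrow \labelcref{perm-tame:val}$, it suffices to note that $\cA(H)$ is (identified with) a subset of $\sZ_p(H)$ — each atom has a unique rigid, hence a unique permutable, factorization — so that $\st_p(H) = \st_p(H,\cA(H)) \le \st_p(H,\sZ_p(H)) = 0$; the clause on almost prime-like elements is verbatim the same in the two statements.

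The real work is $\labelcref{perm-tame:val} \Rightarrow \labelcref{perm-tame:fact}$. By \cref{p-factorial-acd} it is enough to show every atom of $H$ is prime-like, and since by hypothesis every almost prime-like element is prime-like, it is enough to show that every atom is almost prime-like. So I would fix $u \in \cA(H)$, assume $u \mid_p ab$ for some $a$,~$b \in H$ with $t(a) = s(b)$, and try to deduce $u \mid_p a$ or $u \mid_p b$. Pick permutable factorizations $z_a \in \sZ_p(a)$ and $z_b \in \sZ_p(b)$ and set $z = z_a \rfop z_b \in \sZ_p(ab)$. The assumption $u \mid_p ab$ provides some $z_0 \in \sZ_p(ab)$ with $u \mid_p z_0$ in $\sZ_p(H)$, so $\st_p(ab,u) = 0$ (which holds because $\st_p(H) = 0$), applied with witness $z_0$ and test factorization $z$, yields $z' \in \sZ_p(ab)$ with $u \mid_p z'$ and $\sd_p(z,z') = 0$; since $z$ and $z'$ are permutable factorizations of the same element, $z' = z$ in $\sZ_p(H)$. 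Hence $u$ occurs in $z = z_a \rfop z_b$, and since the atoms occurring in a composite factorization are precisely those occurring in the two factors combined, $u$ is associated to an atom occurring in $z_a$ or to one occurring in $z_b$, i.e.\ $u \mid_p a$ or $u \mid_p b$, as desired.

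I do not expect a genuine obstacle here: the argument is short and all the tools are in place (\cref{p-factorial-acd}, \subref{consdivprod:atom}, the description of $\mid_p$ in $\sZ^*(H)$ and $\sZ_p(H)$ via occurrence of atoms, and the fact that $\sd_p(z,z')=0$ together with $\pi(z)=\pi(z')$ forces $z=z'$ in $\sZ_p(H)$). The one point that must be handled carefully is invoking the definition of $\st_p(a,x)$ with its antecedent ``there exists $z_0 \in \sZ_p(a)$ with $x \mid_p z_0$'' genuinely satisfied — which is exactly what the hypothesis $u \mid_p ab$ delivers — and keeping the divisibility relation $\mid_p$ on $H$ consistently distinguished from $\mid_p$ on the factorization category $\sZ_p(H)$.
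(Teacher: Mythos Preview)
Your proof is correct and follows essentially the same route as the paper. The only cosmetic difference is in $\labelcref{perm-tame:val} \Rightarrow \labelcref{perm-tame:fact}$: the paper first shows that $\st_p(H)=0$ forces every atom $u$ to occur in \emph{every} permutable factorization of any $a$ with $u \mid_p a$, and then invokes \cref{primesalvage2} to conclude $u$ is almost prime-like, whereas you bypass that corollary and verify the defining property of almost prime-like directly on a product $ab$ --- which is exactly the content of the relevant implication of \cref{primesalvage2}, inlined.
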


\begin{proof}
  It is obvious from the definitions that \ref*{perm-tame:abscon} implies \ref*{perm-tame:val}.
  If $H$ is permutably factorial, then every atom of $H$ is prime-like by \cref{p-factorial-acd}, and by definition of permutable factoriality, $\card{\sZ_p(a)} = 1$ for all $a \in H$. Then $\st_p(H,\sZ_p(H)) = 0$ follows trivially and thus \ref*{perm-tame:fact} implies \ref*{perm-tame:abscon}.

  We now show that \ref*{perm-tame:val} implies \ref*{perm-tame:fact}.
  If $\st_p(H)=0$, then if $u \in \cA(H)$ and $u \mid_p a$ for some $a \in H$, $u \mid_p z$ for all $z \in \sZ_p(a)$ by definition of the tame degree.
  Therefore $u$ is almost prime-like by \cref{primesalvage2}.
Since $u$ is prime-like by hypothesis, \cref{p-factorial-acd} implies that $H$ is permutably factorial.
\end{proof}

The following example illustrates that, despite providing some insight into factorizations in the noncommutative setting, this noncommutative tame degree does not carry nearly as much information as in the commutative case.

\begin{example}
  Let $S=\langle a,b \mid aba=bab\rangle$. Then $\st_p(S,a)=\st_p(S,b)=0$ and hence $\st_p(S)=0$.
  However, $S$ is not permutably factorial and thus the additional hypotheses given in \labelcref{perm-tame:val} and \labelcref{perm-tame:abscon} of \cref{perm-tame} cannot be removed.
\end{example}

We now show that any isoatomic weak transfer homomorphism preserves the values of $\st(-)$ and, under some additional restrictions, of $\omega_p(-)$, $\omega_p'(-)$.
We will see specific applications of this in \cref{cor:tri-fact} and \cref{prop:krull-finite-omega-invariant}.

\begin{proposition}\label{awth-tame}
  Let $T$ be an atomic cancellative small category and let $\phi\colon H \to T$ be an isoatomic weak transfer homomorphism.
  Let $\phi_p\colon \sZ_p(H) \to \sZ_p(T)$ denote the extension of $\phi$ to permutable factorizations as in \cref{ext-hom}.
  \begin{enumerate}
    \item\label{awth-tame:divpf}
      For $z$,~$z' \in \sZ_p(H)$ we have $z \mid_p z'$ if and only if $\phi_p(z) \mid_p \phi(z')$.
    \item\label{awth-tame:div}
      For $a$, $b \in H$ we have $a \mid_p b$ if and only if $\phi(a) \mid_p \phi(b)$.
    \item\label{awth-tame:omega}
      Suppose $H$ and $T$ are semigroups.
      For $a \in H$ we have $\omega_p(H,a) \le \omega_p(T,\phi(a))$ and in particular $\omega_p(H) \le \omega_p(T)$.
      If $T$ is commutative, then $\omega_p(H,a) = \omega_p(T,\phi(a))$ and $\omega_p(H) = \omega_p(T)$.
    \item\label{awth-tame:omega'}
      Suppose $H$ and $T$ are semigroups.
      For $a \in H$ we have $\omega_p'(H,a) \le \omega_p'(T,\phi(a))$ and in particular $\omega_p'(H) \le \omega_p'(T)$.
      If $T$ is commutative, then $\omega_p'(H,a) = \omega_p'(T,\phi(a))$ and $\omega_p'(H) = \omega_p'(T)$.
    \item\label{awth-tame:tame} For $a \in H$ and $x \in \sZ_p(H)$ we have $\st_p(a,x) = \st_p(\phi(a),\phi_p(x))$ and in particular $\st_p(H) = \st_p(T)$.
  \end{enumerate}
\end{proposition}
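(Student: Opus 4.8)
The plan is to prove the five statements in order, bootstrapping each from the earlier ones together with three ingredients from the preceding material: the surjectivity of $\sZ_p(a)\to\sZ_p(\phi(a))$ and the injectivity of $\phi_p$ furnished by \cref{ext-hom}; the identity $\sd_p(z,z')=\sd_p(\phi_p(z),\phi_p(z'))$ for $z,z'\in\sZ_p(a)$ proved in \cref{strong-weak-distance}; and the fact that a weak transfer homomorphism carries atoms to atoms.

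For \labelcref{awth-tame:divpf}, write $z=\pf{u_1,\cdots,u_k}$ and $z'=\pf{v_1,\cdots,v_l}$ with $u_i,v_j\in\cA(H)$. By the characterization of $\mid_p$ on $\sZ_p(H)$ recalled just before \cref{perm-tame}, $z\mid_p z'$ holds precisely when there is an injective $\sigma\colon[1,k]\to[1,l]$ with $u_i\simeq v_{\sigma(i)}$ for all $i$, and $\phi_p(z)=\pf{\phi(u_1),\cdots,\phi(u_k)}\mid_p\pf{\phi(v_1),\cdots,\phi(v_l)}=\phi_p(z')$ holds precisely when there is an injective $\tau$ with $\phi(u_i)\simeq\phi(v_{\tau(i)})$. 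Since $\phi$ is a homomorphism preserving units, $u_i\simeq v_{\sigma(i)}$ implies $\phi(u_i)\simeq\phi(v_{\sigma(i)})$, giving the forward implication; since $\phi$ is isoatomic, $\phi(u_i)\simeq\phi(v_{\tau(i)})$ implies $u_i\simeq v_{\tau(i)}$, giving the converse. For \labelcref{awth-tame:div}, note that by \subref{div2:perm} the relation $a\mid_p b$ is equivalent to the existence of $z\in\sZ_p(a)$ and $z'\in\sZ_p(b)$ with $z\mid_p z'$ in $\sZ_p(H)$; since the diagram of \cref{ext-hom} commutes we have $\phi_p(\sZ_p(a))\subset\sZ_p(\phi(a))$, and since $\sZ_p(a)\to\sZ_p(\phi(a))$ and $\sZ_p(b)\to\sZ_p(\phi(b))$ are surjective by \subref{ext-hom:wth}, the claim follows from \labelcref{awth-tame:divpf} by pushing a witnessing pair down and lifting one back up.

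For \labelcref{awth-tame:omega} and \labelcref{awth-tame:omega'}, I first observe that since $H$ and $T$ are semigroups every permuted subproduct is automatically defined, so the side condition in \cref{omega} is vacuous. The inequality $\omega_p(H,a)\le\omega_p(T,\phi(a))$ (and its analogue for $\omega_p'$) is then direct: given $c=u_1\cdots u_n$ in $H$ with $a\mid_p c$, apply $\phi$ to get $\phi(c)=\phi(u_1)\cdots\phi(u_n)$ with each $\phi(u_i)\in\cA(T)$ and $\phi(a)\mid_p\phi(c)$ by \labelcref{awth-tame:div}; the length-$\le\omega_p(T,\phi(a))$ subproduct produced downstairs has the form $\phi(c_\sigma)$, so $a\mid_p c_\sigma$ by \labelcref{awth-tame:div}. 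For the reverse inequalities (with $T$ commutative) one must lift an arbitrary factorization of an element $d\in T$. Using $T=T^\times\phi(H)T^\times=\phi(H)T^\times$, write $d=\phi(c)\varepsilon$; in the $\omega_p$-case feed the resulting atom-factorization of $\phi(c)$ into \labelcref{wth:lift} to obtain $c=u_1\cdots u_n$ with $\phi(u_i)\simeq v_{\tau(i)}$, and in the $\omega_p'$-case write each factor $b_i$ of $d=b_1\cdots b_n$ as $b_i=\phi(c_i)\varepsilon_i$ and set $c=c_1\cdots c_n$. In both cases $a\mid_p c$ by \labelcref{awth-tame:div}, the definition applied upstairs yields a short subproduct $c_\rho$ with $a\mid_p c_\rho$, and --- here commutativity of $T$ is used --- $\phi(c_\rho)$ is associated to the corresponding subproduct $d_\sigma$ of $d$, so $\phi(a)\mid_p d_\sigma$. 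The statements about $\omega_p(H)$ and $\omega_p'(H)$ then follow since $\phi$ maps $\cA(H)$ into $\cA(T)$, since (in the commutative case) every atom of $T$ is associated to some $\phi(u)$ with $u\in\cA(H)$ (as $T=\phi(H)T^\times$ and $\phi(c)$ an atom forces $c$ an atom), and since $\omega_p(\cdot,-)$ and $\omega_p'(\cdot,-)$ depend only on the association class of their second argument.

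For \labelcref{awth-tame:tame}, to see $\st_p(\phi(a),\phi_p(x))\le\st_p(a,x)$, take $\bar z_0\in\sZ_p(\phi(a))$ with $\phi_p(x)\mid_p\bar z_0$ and an arbitrary $\bar z\in\sZ_p(\phi(a))$, lift both to $z_0,z\in\sZ_p(a)$, note $x\mid_p z_0$ by \labelcref{awth-tame:divpf}, invoke the definition of $\st_p(a,x)$ to get $z'\in\sZ_p(a)$ with $x\mid_p z'$ and $\sd_p(z,z')\le\st_p(a,x)$, then push down using \labelcref{awth-tame:divpf} and $\sd_p(\bar z,\phi_p(z'))=\sd_p(z,z')$ from \cref{strong-weak-distance}. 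The inequality $\st_p(a,x)\le\st_p(\phi(a),\phi_p(x))$ is symmetric, again lifting the factorization produced downstairs. Finally, for $\st_p(H)=\st_p(T)$: the inequality $\le$ is immediate from $\st_p(a,u)=\st_p(\phi(a),\phi(u))$ with $\phi(u)\in\cA(T)$; for $\ge$, given $b\in T$ and $v\in\cA(T)$ it suffices to treat $\st_p(b,v)>0$ (the value being $0$ otherwise), whence $v$ occurs in some factorization of $b$ and $b$ is a non-unit, so $b=\varepsilon\phi(c)\eta$ with $c$ a non-unit of $H$ and $\st_p(b,v)=\st_p(\phi(c),v)$ via the unit-conjugation bijection $\sZ_p(\phi(c))\to\sZ_p(b)$, which preserves lengths, the multiset of association classes of atoms (hence $\sd_p$, by the formula in \subref{distdef:permutable}), and divisibility by $v$; then $v\simeq\phi(u)$ for some $u\in\cA(H)$ occurring in a factorization of $c$, so $\st_p(b,v)=\st_p(\phi(c),\phi(u))=\st_p(c,u)\le\st_p(H)$. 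I expect the main obstacle to be the unit-bookkeeping in the reverse directions of \labelcref{awth-tame:omega'} and in $\st_p(H)=\st_p(T)$, where $T$ need not equal $\phi(H)$, together with the careful appeal to commutativity of $T$ needed to realize the lifted subproducts as genuine subproducts (up to association) of the original factorization.
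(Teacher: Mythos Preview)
Your proof is correct and follows essentially the same strategy as the paper's: parts \labelcref{awth-tame:divpf} and \labelcref{awth-tame:div} match almost verbatim, and parts \labelcref{awth-tame:omega}--\labelcref{awth-tame:tame} differ only in minor bookkeeping (in \labelcref{awth-tame:omega} the paper lifts each atom $v_i$ individually to some $u_i\in\cA(H)$ with $\phi(u_i)\simeq v_i$ rather than first writing $d=\phi(c)\varepsilon$, and in \labelcref{awth-tame:tame} you supply more detail on the passage from $\st_p(a,x)=\st_p(\phi(a),\phi_p(x))$ to $\st_p(H)=\st_p(T)$ than the paper does). None of these differences are substantive.
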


\begin{proof}
  \ref*{awth-tame:divpf}
  Let $k$,~$l \in \bN_0$, $u_1$,~$\ldots\,$,~$u_k$, $v_1$,~$\ldots\,$,~$v_l \in \cA(H)$, and $\varepsilon$, $\eta \in H^\times$ be such that $z=\pf[\varepsilon]{u_1,\cdots,u_k}$ and $z'=\pf[\eta]{v_1,\cdots,v_l}$.
  Suppose first that $z \mid_p z'$.
  Then $k \le l$ and there exists an injective map $\sigma\colon [1,k] \to [1,l]$ such that $u_i \simeq v_{\sigma(i)}$ for all $i \in [1,k]$.
  Then it is clear that $\phi(u_i) \simeq \phi(v_{\sigma(i)})$ for all $i \in [1,k]$.
  Since $\phi_p(z) = \pf[\phi(\varepsilon)]{\phi(u_1),\cdots,\phi(u_k)}$ and $\phi_p(z') = \pf[\phi(\eta)]{\phi(v_1),\cdots,\phi(v_l)}$, we have $\phi_p(z) \mid_p \phi_p(z')$.

  Now suppose that $\phi(z) \mid_p \phi(z')$.
  Again $k \le l$ and there exists an injective map $\sigma\colon [1,k] \to [1,l]$ such that $\phi(u_i) \simeq \phi(v_{\sigma(i)})$ in $T$.
  Since $\phi$ is isoatomic, $u_i \simeq v_{\sigma(i)}$ and hence $z \mid_p z'$.

  \ref*{awth-tame:div}
  Note that $a \mid_p b$ if and only if there exist $z \in \sZ_p(a)$ and $z' \in \sZ_p(b)$ such that $z \mid_p z'$.
  Since $\phi_p$ restricted to $\sZ_p(a) \to \sZ_p(\phi(a))$, respectively $\sZ_p(b) \to \sZ_p(\phi(b))$, is surjective by \subref{ext-hom:wth}, the claim follows from \ref*{awth-tame:divpf}.

  \ref*{awth-tame:omega}
  Let $H$ and $T$ be semigroups.
  Suppose first that $a \mid_p u_1\cdots u_n$ for $n \in \bN_0$ and atoms $u_1$,~$\ldots\,$,~$u_n$ of $H$.
  Then $\phi(a) \mid_p \phi(u_1)\cdots \phi(u_n)$ by \ref*{awth-tame:div}.
  Thus there exists $k \in [0,n]$ and an injective map $\sigma\colon [1,k] \to [1,n]$ such that
  \[
    \phi(a) \mid_p \phi(u_{\sigma(1)})\cdots \phi(u_{\sigma(k)})=\phi(u_{\sigma(1)}\cdots u_{\sigma(k)}),
  \]
  where the product $u_{\sigma(1)}\cdots u_{\sigma(k)}$ is defined since $H$ is a semigroup.
  Again by \ref*{awth-tame:div}, $a \mid_p u_{\sigma(1)}\cdots u_{\sigma(k)}$, showing $\omega(H,a) \le \omega(T,\phi(a))$.

  Let $T$ be commutative.
  Now suppose that $\phi(a) \mid_p v_1\cdots v_n$ for $n \in \bN_0$ and atoms $v_1$,~$\ldots$,~$v_n$ of $T$.
  Since $\phi$ is a weak transfer homomorphism, there exist atoms $u_1$,~$\ldots\,$,~$u_n$ of $H$ such that $\phi(u_i) \simeq v_i$ for all $i \in [1,n]$.
  Since $H$ is a semigroup, the product $u_1\cdots u_n$ is defined and, since $T$ is a commutative semigroup, $\phi(u_1\cdots u_n) \simeq v_1\cdots v_n$.
  Therefore $\phi(a) \mid_p \phi(u_1\cdots u_n)$, and we have $a \mid_p u_1\cdots u_n$ by \ref*{awth-tame:div}.
  Thus, there exists $k \in [0,n]$ and an injective map $\sigma\colon [1,k] \to [1,n]$ with $a \mid_p u_{\sigma(1)}\cdots u_{\sigma(k)}$.
   Then $\phi(a) \mid_p \phi(u_{\sigma(1)})\cdots\phi(u_{\sigma(k)})$ by \ref*{awth-tame:div} again, and hence $\phi(a) \mid_p v_{\sigma(1)}\cdots v_{\sigma(k)}$ (for this we use the commutativity of $T$ again).
  Thus $\omega_p(T,\phi(a)) \le \omega_p(H,a)$.
  Since $T = T^\times\phi(H)T^\times$, it follows that $\omega_p(H)= \omega_p(T)$.

  \ref*{awth-tame:omega'} The proof of \ref*{awth-tame:omega'} is analogous to that of \ref*{awth-tame:omega}.

  \ref*{awth-tame:tame}
  We first show $\st_p(a, x) \le \st_p(\phi(a),\phi_p(x))$.
  Let $z \in \sZ_p(a)$ and suppose there exists $z_0 \in \sZ_p(a)$ such that $x \mid_p z_0$.
  Then $\phi_p(x) \mid_p \phi_p(z_0)$ by \ref*{awth-tame:divpf}, and $\phi_p(z_0) \in \sZ_p(\phi(a))$.
  Thus there exists $\lbar z' \in \sZ_p(\phi(a))$ with $\phi_p(x) \mid_p \lbar z'$ and $\sd_p(\lbar z',\phi_p(z)) \le \st_p(\phi(a), \phi_p(x))$.
  Since the restriction of $\phi_p$ to $\sZ_p(a) \to \sZ_p(\phi(a))$ is surjective, there exists $z' \in \sZ_p(a)$ such that $\phi_p(z') = \lbar z'$.
  Since $\phi$ is isoatomic, \cref{strong-weak-distance} implies $\sd_p(z',z) = \sd_p(\phi_p(z'),\phi_p(z)) \le \st_p(\phi(a),\phi_p(x))$.
  Moreover, \ref*{awth-tame:divpf} implies $x \mid_p z'$ and thus we have $\st_p(a,x) \le \st_p(\phi(a),\phi_p(x))$.

  We now show $\st_p(\phi(a),\phi_p(x)) \le \st_p(a,x)$.
  Suppose that $\lbar z \in \sZ_p(\phi(a))$ and there exists $\lbar z_0 \in \sZ_p(\phi(a))$ such that $\phi_p(x) \mid_p \lbar z_0$.
  Again, there exist $z$,~$z_0 \in \sZ_p(a)$ such that $\phi_p(z) = \lbar z$ and $\phi_p(z_0) = \lbar z_0$.
  Then $x \mid_p z_0$ and thus there exists $z' \in \sZ_p(a)$ such that $\sd_p(z',z) \le \st_p(a,x)$ and $x \mid_p z'$.
  Hence $\sd_p(\phi(z'),\phi(z)) = \sd_p(z',z) \le \st_p(a,x)$ and $\phi_p(x) \mid_p \phi_p(z')$, proving the claim.
  Since $T = T^\times \phi(H) T^\times$, it follows that $\st_p(H) = \st_p(T)$.
\end{proof}

\begin{corollary}
  Let $H$ be a cancellative semigroup possessing an isoatomic weak transfer homomorphism to a commutative atomic cancellative semigroup.
  Then
  \begin{enumerate}
    \item $\rho(b) \le \sup \sL(b) \le \omega_p(H,b)$ for all $b \in H$,
    \item $\omega_p(H,u) \le \st_p(H,u)$ if $u \in \cA(H)$ is not an almost prime-like element,
    \item $\rho(H) \le \omega_p(H) \le \st_p(H)$ unless $\st_p(H)=0$, and
    \item $\sc_p(H) \le \st_p(H)$.
  \end{enumerate}
\end{corollary}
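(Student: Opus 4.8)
The plan is to reduce everything to the commutative atomic cancellative semigroup $T$ via the isoatomic weak transfer homomorphism $\phi\colon H\to T$ provided by the hypothesis, and then invoke the corresponding inequalities from the commutative theory. The first step is to record the translations of the relevant invariants. Since weak transfer homomorphisms preserve sets of lengths, $\sL_H(b)=\sL_T(\phi(b))$ for all $b\in H$, so also $\rho_H(b)=\rho_T(\phi(b))$, $\sup\sL_H(b)=\sup\sL_T(\phi(b))$, and $\rho(H)=\rho(T)$. By \cref{awth-tame} (together with $T=T^\times\phi(H)T^\times$ and the unit-invariance of these invariants in their element argument), $\omega_p(H,b)=\omega(T,\phi(b))$, $\omega_p(H)=\omega(T)$, $\st_p(H,u)=\st(T,\phi(u))$ for $u\in\cA(H)$, and $\st_p(H)=\st(T)$; here I use that on a commutative semigroup $\omega_p$ and $\st_p$ agree with the classical $\omega$- and $\st$-invariants. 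Finally, \cref{strong-weak-distance} gives $\sc_p(H)=\sc(T)$. I would also note that $u\in\cA(H)$ is almost prime-like if and only if $\phi(u)$ is a prime element of $T$, which follows from \cref{awth-tame}(2) since $\phi(u)\in\cA(T)$ and $T=T^\times\phi(H)T^\times$.

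With these identifications in hand, each part of the corollary becomes a known statement about the commutative semigroup $T$. For (1): the inequality $\rho(L)\le\sup L$ is immediate for any $L\subseteq\bN$ (and for $L=\{0\}$), so $\rho_H(b)\le\sup\sL_H(b)$; and $\sup\sL_T(c)\le\omega(T,c)$ because any factorization $c=w_1\cdots w_k$ into atoms of $T$ gives $c\mid w_1\cdots w_k$, while cancelling any subproduct divisible by $c$ forces the complementary subproduct to be a unit — hence empty — so $k\le\omega(T,c)$; taking $k=\sup\sL_T(\phi(b))$ and translating back yields $\sup\sL_H(b)\le\omega_p(H,b)$. For (4): the classical bound $\sc(T)\le\st(T)$ (see \cite[Theorem~1.6.6]{GHK06}) translates directly to $\sc_p(H)\le\st_p(H)$.

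For (2) and (3) I would appeal to the classical inequalities $\omega(T,w)\le\st(T,w)$ for a \emph{non-prime} atom $w$ of $T$, $\rho(T)\le\omega(T)$, and $\omega(T)\le\st(T)$ whenever $T$ is not factorial, i.e. whenever $\st(T)\ne 0$ (see \cite{GHK06}). Part (2) then follows because, if $u$ is not almost prime-like, $\phi(u)$ is a non-prime atom of $T$, so $\omega_p(H,u)=\omega(T,\phi(u))\le\st(T,\phi(u))=\st_p(H,u)$. Part (3) follows by chaining $\rho(H)=\rho(T)\le\omega(T)=\omega_p(H)$ (which holds unconditionally) and, when $\st_p(H)=\st(T)\ne 0$, $\omega_p(H)=\omega(T)\le\st(T)=\st_p(H)$.

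The work is almost entirely bookkeeping: the transfer identities are immediate consequences of \cref{awth-tame} and \cref{strong-weak-distance}. The only points that require care are the degenerate prime cases — the hypothesis ``$u$ not almost prime-like'' in (2) and the exclusion ``$\st_p(H)\ne 0$'' in (3) are precisely what is needed so that the commutative facts $\omega(T,w)\le\st(T,w)$ and $\omega(T)\le\st(T)$ apply (for a prime atom $w$ one has $\st(T,w)=0<1=\omega(T,w)$), and I would make sure the cited commutative statements are invoked with matching hypotheses. The remaining minor task is to pin down the precise references in \cite{GHK06} for the commutative inequalities $\sup\sL\le\omega$, $\rho\le\omega$, $\omega\le\st$, and $\sc\le\st$; none of these presents a genuine difficulty.
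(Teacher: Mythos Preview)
Your proposal is correct and follows essentially the same approach as the paper: reduce to the commutative target $T$ via the transfer identities of \cref{awth-tame} and \cref{strong-weak-distance}, then invoke the known commutative inequalities. The paper's own proof consists of a single sentence doing exactly this, whereas you have spelled out the individual transfer identities, the argument for $\sup\sL\le\omega$, and the handling of the degenerate prime/factorial cases in (2) and (3); all of this extra detail is accurate.
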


\begin{proof}
  These inequalities hold whenever $H$ is a commutative semigroup, and hence, by the previous proposition, they also hold if $H$ possesses an isoatomic weak transfer homomorphism to a commutative atomic cancellative semigroup.
\end{proof}

The next examples show that the inequalities in the previous corollary fail to hold in general.

\begin{example}
  \envnewline
  \begin{enumerate}
    \item Let $S=\langle a,b,c \mid ba^{n-1} = a^{n-1}c \rangle$ for some $n \in \bN_{\ge 2}$.
      Since $a$ is almost prime-like, we have $\st_p(a)=0$.
      Moreover, $\st_p(b)=\st_p(c)=1$ and thus $\st_p(S)=1$.
      However, $\omega_p(S,a)=1$, $\omega_p(S,b)=\omega_p(S,c)=n$, and hence $\omega_p(S)=n$.

    \item Let $S=\langle a,b \mid ab=ba^{n-1}\rangle$, where $n \in \bN_{\geq 2}$.
      Since $a$ and $b$ are almost prime-like, $\st_p(S,a)=\st_p(S,b)=0$, whence $\st_p(S)=0$.
      Similarly $\omega_p(S,a)=\omega_p(S,b) = 1$ and $\omega(S) = 1$.

      However, it is clear that $\rho(S)=n/2$.
      Moreover,
      \[
        \sZ^*(a^m b) = \{ \rf{a^m,b},\, \rf{a^{m-1},b,a^{n-1}},\, \ldots,\, \rf{b,a^{m(n-1)}} \},
      \]
      and hence $\sL(a^m b) = \{\, m+1+ k(n-2) : k \in [0,m] \,\}$.
      Thus $\sup \sL(a^m b) = m(n-1)+1$ and $\rho(a^m b) = \frac{m(n-1)+1}{m+1}$, while $\omega_p(S,a^m b) = m+1$.
      Finally, $\sc_p(a^m b) = n - 2$, and hence $\sc_p(S) \ge n-2$.
  \end{enumerate}
\end{example}

\section{The abelianization of a noncommutative semigroup}\label{sec:abelianization}

In this section we study when the natural homomorphism $\pi\colon S \to \rab S$ from a cancellative semigroup to its reduced abelianization is a weak transfer homomorphism.
A necessary and sufficient condition is given in \cref{exwt} where we also see that whenever $\pi$ is a weak transfer homomorphism it must be isoatomic.
In \cref{wtup} we show that in this case $\pi$ satisfies a universal property with regards to weak transfer homomorphisms into commutative reduced cancellative semigroups.
Finally, we give applications to the semigroup of non zero-divisors of the ring of $n \times n$ upper triangular matrices over a commutative atomic domain, and the semigroup of non zero-divisors of the ring of $n \times n$ matrices over a PID.\spacefactor=\sfcode`\.{}

\begin{definition}\label{abelianization}
  Let $S$ be a semigroup and let $\ab \equiv$ be the smallest congruence relation on $S$ such that $ab \abrel ba$ for all $a$,~$b \in S$.
  \begin{enumerate}
    \item The \emph{abelianization of $S$} is the pair $(\ab S, \pi)$ consisting of $\ab S = S/\ab \equiv$ together with the canonical homomorphism $\pi \colon S \to \ab S$.
    \item The \emph{reduced abelianization of $S$} is the pair $(\rab S, \pi)$ consisting of $\rab S = \red{(\ab S)}$ together with the canonical homomorphism $\pi\colon S \to \rab S$.
      We denote the corresponding congruence on $S$ by $\rab \equiv$.
  \end{enumerate}
\end{definition}

\begin{remark}
  \envnewline
  \begin{enumerate}
    \item
      Explicitly, the congruence $\ab \equiv$ is given as follows:
      Let $a$,~$b \in S$.
      Then $a \abrel b$ if and only if there exist $m \in \bN$ and, for each $i \in [1,m]$, $k_i \in \bN$ and $c_{i,j} \in S$ for $j \in [1,k_i]$ as well as a permutation $\sigma_i \in \fS_{k_i}$ such that:
      \begin{equation}\label{eq:ab-rel}
        \begin{split}
          a                                                     &= c_{1,1}\cdots c_{1,k_1}, \\
          c_{1,\sigma_1(1)}\cdots c_{1,\sigma_1(k_1)} &= c_{2,1}\cdots c_{2,k_2}, \\
          \vdots \\
          c_{m-1,\sigma_{m-1}(1)}\cdots c_{m-1,\sigma_{m-1}(k_{m-1})} &= c_{m,1}\cdots c_{m,k_m}, \\
          c_{m,\sigma_m(1)}\cdots c_{m,\sigma_m(k_m)} &= b.
        \end{split}
      \end{equation}

    \item
      The abelianization $(\ab S, \pi)$ satisfies the following universal property:
      Let $T$ be any commutative semigroup, and let $\phi\colon S \to T$ be a semigroup homomorphism.
      Then there exists a unique homomorphism $\lbar{\phi}\colon \ab S \to T$ such that $\phi = \lbar{\phi} \circ \pi$.

    \item
      The reduced abelianization $(\rab S, \pi)$ satisfies the following universal property:
      Let $T$ be any commutative reduced semigroup, and let $\phi\colon S \to T$ be a semigroup homomorphism.
      Then there exists a unique homomorphism $\lbar{\phi}\colon \rab S \to T$ such that $\phi = \lbar{\phi} \circ \pi$.

      If associativity is a congruence relation on $S$, then $\ab{(\red S)}$ together with a canonical homomorphism $\pi'\colon S \to \ab{(\red S)}$ is defined.
      Again we see that every homomorphism $S \to T$ to a commutative reduced semigroup factors through $\pi'$ in a unique way.
      Therefore $(\ab{(\red S)}, \pi')$ satisfies the same universal property as $(\rab S,\pi)$ and hence the two semigroups must be canonically isomorphic.
      We identify $\rab S$ and $\ab{(\red S)}$ by means of this isomorphism.
  \end{enumerate}
\end{remark}

\begin{lemma}\label{ab-prop}
  Let $S$ be a cancellative semigroup, $\ab S$ its abelianization, and denote by $\pi\colon S \to \ab S$ the canonical homomorphism.
  Suppose that $\ab S$ is also cancellative.
  \begin{enumerate}
    \item\label{ab-prop:units} We have $\pi^{-1}({\ab S}^\times) = S^\times$.
    \item\label{ab-prop:atoms} Let $a \in S$.
      Then $a \in \cA(S)$ if and only if $\pi(a) \in \cA(\ab S)$.
    \item\label{ab-prop:assoc} If $u \in \cA(S)$, then $[u]_{\simeq} = \pi^{-1}([\pi(u)]_\simeq)$.
  \end{enumerate}
\end{lemma}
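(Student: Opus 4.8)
The plan is to prove the three statements in order, using the universal property of the abelianization together with cancellativity of both $S$ and $\ab S$ at each step. For \labelcref{ab-prop:units}, the inclusion $S^\times \subset \pi^{-1}({\ab S}^\times)$ is immediate since $\pi$ is a homomorphism. For the reverse inclusion, suppose $\pi(a)$ is a unit in $\ab S$. Then there is $b \in S$ with $\pi(a)\pi(b) = \pi(ab)$ equal to the identity of $\ab S$, i.e. $ab \abrel e$ where $e$ is the neutral element. Unwinding the description of $\abrel$ in the preceding remark (equation \labelcref{eq:ab-rel}), we see $ab = c_{1,1}\cdots c_{1,k_1}$ and after finitely many permutations and rewritings we arrive at the empty product $e$. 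Since each rewriting step is an honest equation in $S$, and permutations only reorder the factors of a product that equals some fixed element, the key observation is that if a product of elements of $S$ equals (after such a chain) the neutral element, then in fact $ab$ is already a unit: more directly, one shows $\abrel$ restricted to comparisons with $e$ forces $ab \in S^\times$. Concretely, I would argue that $ab \abrel e$ together with the homomorphism $S \to \ab S$ being the universal map to a commutative semigroup means that the image of $ab$ is invertible; but to descend this to $S$ I instead note: by the first rewriting $ab = c_{1,1}\cdots c_{1,k_1}$, and by cancellativity in $S$ the element $ab$ is a unit in $S$ iff each $c_{1,j}$ is (using the earlier observation in the preliminaries that a product lies in $H^\times$ iff each factor does). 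Tracing the chain, since the final product is empty, each intermediate product must consist entirely of units, hence $ab \in S^\times$, and then $a \in S^\times$ as well (again since a left-invertible element in a cancellative category is invertible, as established in the preliminaries).

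For \labelcref{ab-prop:atoms}: if $a \in S^\times$ then $\pi(a) \in {\ab S}^\times$ by \labelcref{ab-prop:units}, so neither is an atom; assume henceforth $a \notin S^\times$, equivalently $\pi(a) \notin {\ab S}^\times$. Suppose first $a \in \cA(S)$ and write $\pi(a) = \beta\gamma$ in $\ab S$. Lift $\beta = \pi(b)$, $\gamma = \pi(c)$; then $\pi(bc) = \pi(a)$, i.e. $bc \abrel a$. Using the chain \labelcref{eq:ab-rel} starting from $bc = c_{1,1}\cdots c_{1,k_1}$ and ending at $a$, together with the fact that $a$ is an atom, I would show that all but one of the factors appearing are units — here one repeatedly uses that an atom equal to a product has exactly one non-unit factor, and that permuting and rewriting preserves this property. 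Consequently one of $\beta,\gamma$ is a unit in $\ab S$, so $\pi(a) \in \cA(\ab S)$. Conversely, suppose $\pi(a) \in \cA(\ab S)$ and write $a = bc$ in $S$. Then $\pi(a) = \pi(b)\pi(c)$, so $\pi(b) \in {\ab S}^\times$ or $\pi(c) \in {\ab S}^\times$; by \labelcref{ab-prop:units} this gives $b \in S^\times$ or $c \in S^\times$, so $a \in \cA(S)$.

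For \labelcref{ab-prop:assoc}: the inclusion $[u]_\simeq \subset \pi^{-1}([\pi(u)]_\simeq)$ is clear, since $v = \varepsilon u \eta$ with $\varepsilon,\eta \in S^\times$ gives $\pi(v) = \pi(\varepsilon)\pi(u)\pi(\eta)$ with $\pi(\varepsilon),\pi(\eta) \in {\ab S}^\times$. For the reverse inclusion, let $v \in S$ with $\pi(v) \simeq \pi(u)$. By \labelcref{ab-prop:atoms}, $v \in \cA(S)$. In the commutative semigroup $\ab S$, $\pi(v) \simeq \pi(u)$ means $\pi(v) = \delta\pi(u)$ for some $\delta \in {\ab S}^\times$, i.e. $\pi(v) = \pi(\eta u)$ for a lift $\eta = \pi(e)$ with $e \in S^\times$ (using \labelcref{ab-prop:units} to lift the unit), hence $v \abrel \eta u$. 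Now I chase the chain \labelcref{eq:ab-rel} from $v$ to $\eta u$: since $v$ and $\eta u$ are both atoms (up to the unit $\eta$), every intermediate product consists of one atom-like factor and units, and the permutations merely shuffle units around the single non-unit factor. Cancelling the units on both sides, we extract $\varepsilon, \eta' \in S^\times$ with $v = \varepsilon u \eta'$, i.e. $v \simeq u$.

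The main obstacle is the bookkeeping in the chain \labelcref{eq:ab-rel}: in each step one has an equation $c_{i,\sigma_i(1)}\cdots c_{i,\sigma_i(k_i)} = c_{i+1,1}\cdots c_{i+1,k_{i+1}}$ in $S$, and one must argue that if the left-hand product has exactly one non-unit factor then so does the right-hand one, with the non-unit factors associated. This requires the earlier preliminary facts — that a product lies in $H^\times$ iff each factor does, and that one-sided invertible elements in a cancellative category are invertible — applied carefully, and a small induction on $m$ showing that the "unique non-unit factor, well-defined up to association" property is an invariant of the rewriting. Once that invariant is in hand, all three parts follow quickly.
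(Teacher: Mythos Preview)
Your proposal is correct and follows essentially the same approach as the paper: all three parts are proved by tracing through the explicit chain description of $\abrel$ given in \cref{eq:ab-rel}, using as invariant that a product is a unit (respectively an atom) if and only if all (respectively all but one) of its factors are units. The only differences are cosmetic --- for \labelcref{ab-prop:atoms} the paper argues via the contrapositive whereas you argue directly, and in \labelcref{ab-prop:assoc} you have a small notational slip (you write $\eta u$ with $\eta \in \ab S$ but $u \in S$; you mean $eu$ with your lift $e \in S^\times$) --- but the underlying argument is the same.
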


\begin{proof}
  \ref*{ab-prop:units}
  Clearly $S^\times \subset \pi^{-1}(\ab S^\times)$.
  Now let $a \in S$ be such that $\pi(a) \in \ab S^\times$.
  Then there exists $b \in S$ such that $\pi(a)\pi(b) = 1$, and hence $ab \abrel 1$.
  Using notation as in \cref{eq:ab-rel}, with $a$ replaced by $ab$ and $b$ replaced by $1$, we see that $c_{m,\sigma_m(1)}\cdots c_{m,\sigma_m(k_m)} = 1$ for some $c_{i,j}$ in $S$.
  Hence, for all $j \in [1,k_m]$, we have $c_{m,\sigma_m(j)} \in S^\times$.
  Continuing inductively, $c_{i,j} \in S^\times$ for all $i \in [1,m]$ and $j \in [1,k_i]$, and hence $ab \in S^\times$.
  Therefore $a \in S^\times$.

  \ref*{ab-prop:atoms}
  Suppose $a \in S \setminus S^\times$ is not an atom.
  Then $a=bc$ with $b$,~$c \in S\setminus S^\times$ and \ref*{ab-prop:units} implies $\pi(b),\pi(c) \in \ab S \setminus {\ab S}^\times$.
  Hence $\pi(a) = \pi(b) \pi(c)$ is not an atom.
  Conversely, suppose $\pi(a) \in S \setminus \cA(S)$.
  If $\pi(a) \in {\ab S}^\times$, then $a \in S^\times$, and hence we may assume that $\pi(a)$ is not a unit.
  Since $\pi(a)$ is not an atom, there exist $\lbar b, \lbar c \in {\ab S} \setminus {\ab S}^\times$ such that $\pi(a) = \lbar b \lbar c$.
  Since $\pi$ is surjective, there exist $b$,~$c \in S \setminus S^\times$ such that $\pi(b)=\lbar b$ and $\pi(c) = \lbar c$.
  Thus $a \abrel bc$.
  Using the notation of \cref{eq:ab-rel} to write out this relation, it follows inductively that for all $i \in [1,m]$, $c_{i,1}\cdots c_{i,k_i}$ is not an atom.
  Therefore $a$ is not an atom.

  \ref*{ab-prop:assoc}
  If $a \in S$ with $u \simeq a$, then $\pi(u) \simeq \pi(a)$.
  For the converse direction suppose $\pi(u) \simeq \pi(a)$.
  It suffices to show $u \simeq a$.
  Let $m \in \bN$, and $k_i \in \bN$, $c_{i,j} \in S$ for all $i \in [1,m]$, $j \in [1,k_i]$ be as in \cref{eq:ab-rel}.
  Since $u$ is an atom and $u = c_{1,1}\cdots c_{1,k_1}$, there exists some $j_1 \in [1,k_1]$ such that $c_{1,j_1}$ is an atom, and $c_{1,j} \in S^\times$ for all $j \in [1,k_1] \setminus \{j_1\}$.
  In particular, $u \simeq c_{1,j_1}$.
  Inductively it follows that for all $i \in [2,m]$, there exists $j_i \in [1,k_i]$ such that $c_{i,j_i} \in \cA(S)$ and $c_{i,j} \in S^\times$ for all $j \in [1,k_i] \setminus \{j_i\}$, and therefore $c_{i-1,j_{i-1}} \simeq c_{i,j_i}$.
  It follows that $u \simeq c_{m,j_m} \simeq a$.
\end{proof}

\begin{remark}
  \envnewline
  \begin{enumerate}
  \item
    Replacing $\ab S$ by $\rab S$ in \cref{ab-prop}, and assuming that $\rab S$ is cancellative,  we obtain the corresponding statements of the lemma for $\rab S$.
  \item
    Suppose $S$ is in fact a group.
    Then, by \subref{ab-prop:units}, $\ab S$ is a group.
    Using the universal property of the abelianization it follows that $\ab S$ satisfies the universal property of the abelianization of $S$ as a group.
    Thus, in this case, $\ab S$ is the just the usual abelianization of a group.
  \end{enumerate}
\end{remark}

\begin{definition}\label{rel0}
  Let $S$ be a semigroup.
  We define a relation $\equiv_p$ on $S$ as follows:
  For $a$,~$b \in S$ we set $a \equiv_p b$ if and only if there exist $m \in \bN_0$ and $a_1$, $\ldots\,$,~$a_m$, $b_1$, $\ldots\,$,~$b_m \in S$ such that $a \simeq a_1\cdots a_m$, $b \simeq b_1\cdots b_m$ and there exists a permutation $\sigma \in \fS_m$ such that $a_{i} \simeq b_{\sigma(i)}$ for all $i \in [1,m]$.
\end{definition}

If $S$ is atomic, the $a_1$, $\ldots\,$,~$a_m$ and $b_1$, $\ldots\,$,~$b_m$ in the definition of $\equiv_p$ can equivalently be taken to be atoms.
In this case, the definition may equivalently be stated as:
$a \equiv_p b$ if and only if there exist rigid factorizations $z$ of $a$ and $z'$ of $b$ such that $\sd_p(z,z') = 0$.
The relation $\equiv_p$ is obviously reflexive and symmetric, but may not be transitive.
If $a \simeq b$, then clearly $a \equiv_p b$.

\begin{lemma}\label{cong0}
  Let $S$ be a semigroup.
  The following statements are equivalent.
  \begin{enumerate}
    \item\label{cong0:transitive} $\equiv_p$ is transitive.
    \item\label{cong0:congruence} $\equiv_p$ is a congruence relation.
    \item\label{cong0:rab} $\equiv_p \;=\; \rab \equiv$.
  \end{enumerate}
\end{lemma}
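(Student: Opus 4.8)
I would prove the equivalences in the cyclic order \labelcref{cong0:transitive} $\Rightarrow$ \labelcref{cong0:congruence} $\Rightarrow$ \labelcref{cong0:rab} $\Rightarrow$ \labelcref{cong0:transitive}. The implication \labelcref{cong0:rab} $\Rightarrow$ \labelcref{cong0:transitive} is immediate, since $\rab\equiv$ is a congruence relation and hence transitive. For \labelcref{cong0:transitive} $\Rightarrow$ \labelcref{cong0:congruence}, recall that $\equiv_p$ is always reflexive and symmetric, so assuming transitivity it is an equivalence relation, and only compatibility with multiplication remains; this holds with no hypotheses. If $a\equiv_p a'$ is witnessed by $a\simeq a_1\cdots a_m$, $a'\simeq a_1'\cdots a_m'$ with $a_i\simeq a_{\sigma(i)}'$, and $b\equiv_p b'$ is witnessed by $b\simeq b_1\cdots b_n$, $b'\simeq b_1'\cdots b_n'$ with $b_i\simeq b_{\tau(i)}'$, then concatenating the two lists of atoms — pushing into the last atom of the first block the unit that appears where the two products meet, and absorbing the outer units — exhibits $ab$ and $a'b'$ as products of $m+n$ atoms matched by $\sigma$ on the first block and (shifted) $\tau$ on the second, so $ab\equiv_p a'b'$. (The degenerate cases $m=0$ or $n=0$, where $a$ or $b$ is a unit, I would treat directly.)

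For \labelcref{cong0:congruence} $\Rightarrow$ \labelcref{cong0:rab} I would prove the two inclusions separately. The inclusion $\equiv_p\,\subseteq\,\rab\equiv$ needs no hypothesis on $\equiv_p$: the relation $\rab\equiv$ is a congruence, $\rab S$ is commutative, and $a\simeq b$ implies $a\,\rab\equiv\,b$ since units of $S$ map to units of $\rab S$. Hence, if $a\equiv_p b$ is witnessed as above, then
\[
  a\;\rabrel\;a_1\cdots a_m\;\rabrel\;a_{\sigma^{-1}(1)}\cdots a_{\sigma^{-1}(m)}\;\rabrel\;b_1\cdots b_m\;\rabrel\;b,
\]
using commutativity of $\rab S$ in the middle step and that $\rab\equiv$ is a congruence containing $\simeq$ at the end. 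For the reverse inclusion $\rab\equiv\,\subseteq\,\equiv_p$ I would use that, under \labelcref{cong0:congruence}, $\equiv_p$ is a congruence, so it suffices to exhibit a generating set of $\rab\equiv$ lying inside $\equiv_p$. The claim is that $\rab\equiv$ is the congruence relation on $S$ generated by the pairs $(ab,ba)$ with $a,b\in S$ together with the pairs $(\varepsilon,1)$ with $\varepsilon\in S^\times$: indeed $\rab\equiv\supseteq\,\abrel$, and $\abrel$ is generated by the commutators $(ab,ba)$, while the further passage from $\ab S$ to $\rab S=\red{(\ab S)}$ collapses exactly the units of $\ab S$, which by \subref{ab-prop:units} are precisely the images of the units of $S$. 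Now $(ab,ba)\in\equiv_p$ by taking $m=2$ in the definition of $\equiv_p$, and $(\varepsilon,1)\in\equiv_p$ by taking $m=0$ (since $\varepsilon\simeq 1$). Therefore $\rab\equiv\subseteq\equiv_p$, and combined with the first inclusion this gives $\equiv_p\,=\,\rab\equiv$.

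The main obstacle is the inclusion $\rab\equiv\subseteq\equiv_p$, and within it the careful handling of units: one must be sure that reducing $\ab S$ modulo its units is the same as quotienting $S$ by the pairs $(\varepsilon,1)$ with $\varepsilon\in S^\times$ — this is exactly where \subref{ab-prop:units} enters — and one must notice that $(\varepsilon,1)$ genuinely lies in $\equiv_p$ via the degenerate case $m=0$. Everything else reduces to formal manipulation of the relation \cref{eq:ab-rel} describing $\abrel$ and of congruences generated by sets of pairs.
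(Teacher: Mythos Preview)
Your proof follows the same cyclic structure as the paper's, and your treatments of \labelcref{cong0:transitive}$\Rightarrow$\labelcref{cong0:congruence} and \labelcref{cong0:rab}$\Rightarrow$\labelcref{cong0:transitive} match the paper essentially verbatim. For \labelcref{cong0:congruence}$\Rightarrow$\labelcref{cong0:rab}, however, you take a different route and this introduces a gap.

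The paper does not try to identify generators of $\rab\equiv$. It argues instead: from $ab\equiv_p ba$ one has $\ab\equiv\;\subseteq\;\equiv_p\;\subseteq\;\rab\equiv$ (the right inclusion being the easy one you also prove); since $\equiv_p$ is assumed to be a congruence, $S/\!\equiv_p$ is a commutative monoid, and one checks that $S/\!\equiv_p$ is reduced. Then, because $\rab\equiv$ is by construction the \emph{smallest} congruence containing $\ab\equiv$ whose quotient is reduced, $\rab\equiv\;\subseteq\;\equiv_p$ follows at once.

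Your approach --- showing that $\rab\equiv$ is the congruence generated by the pairs $(ab,ba)$ and $(\varepsilon,1)$ for $\varepsilon\in S^\times$ --- rests on the claim that $(\ab S)^\times=\pi(S^\times)$, for which you invoke \subref{ab-prop:units}. But that lemma carries the explicit hypotheses that both $S$ and $\ab S$ are cancellative, and neither is assumed in \cref{cong0}. If $\ab S$ has units not coming from $S^\times$, the congruence generated by your pairs is strictly smaller than $\rab\equiv$, and your argument breaks down. The paper's minimality argument avoids this entirely: it never needs to describe the units of $\ab S$, only to verify that $S/\!\equiv_p$ itself is reduced --- a statement purely about the relation $\equiv_p$, and one that follows immediately once one unpacks what $ab\equiv_p 1$ means (a product associated to $1$ forces all the witnessing factors, and hence $ab$, and hence $a$, to be units).
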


\begin{proof}
  \ref*{cong0:transitive}${}\Rightarrow{}$\ref*{cong0:congruence}:
  The relation is symmetric and reflexive, and since we assume transitivity it is therefore an equivalence relation.
  Thus we must show that for all $a$,~$a'$,~$b$,~$b' \in S$, if $a \equiv_p a'$ and $b \equiv_p b'$, then $ab \equiv_p a'b'$.
  Since $a \equiv_p a'$, there exist $m \in \bN_0$, a permutation $\sigma \in \fS_m$, and elements $a_1$, $\ldots\,$~$a_m$, $a_1'$, $\ldots\,$,~$a_m' \in S$ such that $a \simeq a_1\cdots a_m$, $a' \simeq a_1'\cdots a_m'$ and $a_i \simeq a_{\sigma(i)}'$ for all $i \in [1,m]$.
  Similarly, there exist $n \in \bN_0$, a permutation $\tau \in \fS_n$, and elements $b_1$, $\ldots\,$~$b_n$, $b_1'$, $\ldots\,$,~$b_n' \in S$ such that $b \simeq b_1\cdots b_n$, $b' \simeq b_1'\cdots b_n'$ and $b_i \simeq b_{\tau(i)}'$ for all $i \in [1,n]$.

  If $m=0$, then $a$,~$a' \in S^\times$ and thus $ab \simeq b$ and $a'b' \simeq b'$.
  Therefore $ab \equiv_p a'b'$.
  We argue analogously if $n = 0$, and may now assume $m$,~$n > 0$.

  Without loss of generality, we replace $a_1$, $a_m$, $a_1'$, $a_m'$, $b_1$, $b_n$, $b_1'$, and $b_n'$ by associates such that $a=a_1\cdots a_m$, $a'=a_1'\cdots a_m'$, $b=b_1\cdots b_n$, and $b'=b_1'\cdots b_n'$.
  Then $ab=(a_1\cdots a_m)(b_1\cdots b_n)$ and $a'b'=(a_1'\cdots a_m')(b_1'\cdots b_n')$, each written as a product of $m+n$ atoms of $S$.
  Moreover, applying the permutation $(\sigma, \tau)$, interpreted accordingly as a permutation on $[1,m+n]$, we see that $ab \equiv_p a'b'$ and hence $\equiv_p$ is a congruence relation on $S$.

  \ref*{cong0:congruence}${}\Rightarrow{}$\ref*{cong0:rab}:
  Let $a$,~$b \in S$.
  If $a \equiv_p b$, then $a \rabrel b$.
  From the definition of $\equiv_p$ it follows that $ab \equiv_p ba$.
  Thus $\ab\equiv \;\subset\; \equiv_p \;\subset\; \rab\equiv$ and
  moreover, $S/\!\equiv_p$ is reduced. Since $\rab\equiv$ is the minimal
  congruence containing $\ab\equiv$ with respect to being reduced, and by assumption $\equiv_p$ is indeed a congruence, it follows that $\equiv_p = \rab\equiv$.

  \ref*{cong0:rab}${}\Rightarrow{}$\ref*{cong0:transitive}: Clear, since $\rab \equiv$ is transitive.
\end{proof}

\begin{proposition}\label{exwt}
  Let $S$ be a cancellative semigroup and suppose that $\rab S$ is also cancellative.
  The following statements are equivalent.
  \begin{enumerate}
    \item\label{exwt:explicit} If $a$, $b \in S$ are such that $a \equiv_p b$ and $a \simeq u_1\cdots u_m$ with $m \in \bN_0$ and $u_1$, $\ldots\,$,~$u_m \in \cA(S)$, then there exist $v_1$, $\ldots\,$,~$v_m \in \cA(S)$ and a permutation $\sigma \in \fS_m$ such that $b \simeq v_1\cdots v_m$ and $u_i \simeq v_{\sigma(i)}$ for all $i \in [1,m]$.
    \item\label{exwt:wt} The canonical homomorphism $\pi\colon S \to \rab S$ is a weak transfer homomorphism.
    \item\label{exwt:wtaai} The canonical homomorphism $\pi\colon S \to \rab S$ is an isoatomic weak transfer homomorphism.
  \end{enumerate}
  Moreover, each of \labelcref*{exwt:explicit,exwt:wt,exwt:wtaai} imply the equivalent conditions given in  \cref{cong0}.
\end{proposition}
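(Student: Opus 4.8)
The plan is to first harvest from \cref{ab-prop}, applied with $\rab S$ in place of $\ab S$ as the following remark permits, the structural facts we need, and then to close a short loop of implications. Since $\rab S$ is commutative and reduced, $\rab S^\times$ is trivial and $\pi$ is surjective, so $\pi$ automatically satisfies \labelcref{th:units}: $\rab S = \pi(S)$ and $\pi^{-1}(\rab S^\times) = S^\times$ by \subref{ab-prop:units}. Moreover $a \in \cA(S)$ iff $\pi(a) \in \cA(\rab S)$ by \subref{ab-prop:atoms}, and by \subref{ab-prop:assoc} two atoms $u, v \in \cA(S)$ satisfy $u \simeq v$ if and only if $\pi(u) = \pi(v)$. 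In particular, any homomorphism $S \to \rab S$ that happens to be a weak transfer homomorphism is isoatomic for free, so \labelcref{exwt:wt} and \labelcref{exwt:wtaai} are equivalent, and it remains to prove \labelcref{exwt:explicit} $\Leftrightarrow$ \labelcref{exwt:wt} together with the final assertion.

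For \labelcref{exwt:wt}${}\Rightarrow{}$\labelcref{exwt:explicit}: a weak transfer homomorphism has atomic target, so $\rab S$, and hence $S$, is atomic. If $a \equiv_p b$ then commutativity of $\rab S$ forces $\pi(a) = \pi(b)$; if moreover $a \simeq u_1 \cdots u_m$ with $u_i \in \cA(S)$, then $\pi(b) = \pi(u_1) \cdots \pi(u_m)$ is a product of $m$ atoms of $\rab S$. For $m = 0$ this gives $\pi(b) = 1$, hence $b \in S^\times$ by \labelcref{th:units}, and the empty product works. For $m \ge 1$, property \labelcref{wth:lift} applied to $b$ yields atoms $v_1, \ldots, v_m \in \cA(S)$ and $\sigma \in \fS_m$ with $b = v_1 \cdots v_m$ and $\pi(v_i) \simeq \pi(u_{\sigma(i)})$; since $\rab S$ is reduced this reads $\pi(v_i) = \pi(u_{\sigma(i)})$, and the atom fact promotes it to $v_i \simeq u_{\sigma(i)}$, which is \labelcref{exwt:explicit} (with permutation $\sigma^{-1}$).

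The substantial direction is \labelcref{exwt:explicit}${}\Rightarrow{}$\labelcref{exwt:wt}; only \labelcref{wth:lift} needs checking, \labelcref{th:units} being free. Let $a \in S$ and $g_1, \ldots, g_n \in \cA(\rab S)$ with $\pi(a) = g_1 \cdots g_n$. By surjectivity of $\pi$ and the atom fact, choose $w_i \in \cA(S)$ with $\pi(w_i) = g_i$; then $w := w_1 \cdots w_n$ satisfies $\pi(w) = \pi(a)$, i.e.\ $w \rabrel a$. The key step is the lemma: \emph{if $x \in S$ is a product of atoms and $x \rabrel y$, then $x \equiv_p y$.} To prove it, recall that by \subref{ab-prop:units} the units of $\ab S$ lift to units of $S$, so $x \rabrel y$ means there is $x' \simeq x$ with $x' \abrel y$; writing $x' \abrel y$ out via the chain \cref{eq:ab-rel} exhibits $y$ from $x'$ by finitely many steps, each replacing a product of elements of $S$ by a permuted product of the same elements, hence each an instance of $\equiv_p$. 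Since $x$ is a product of atoms, so is $x'$ (adjust by units), and traversing the chain while applying \labelcref{exwt:explicit} at every link turns an atom factorization of one endpoint into an atom factorization of the next with the same multiset of associativity classes; at the end $y$ is a product of atoms whose multiset of associativity classes matches that of $x$, i.e.\ $x \equiv_p y$. Granting the lemma with $x = w$ gives $a \equiv_p w$, and \labelcref{exwt:explicit} (with the roles of $a$ and $w$ swapped) then produces atoms $u_1, \ldots, u_n \in \cA(S)$ and $\sigma \in \fS_n$ with $a \simeq u_1 \cdots u_n$ and $w_i \simeq u_{\sigma(i)}$. Absorbing the units witnessing $a \simeq u_1 \cdots u_n$ into the atoms (harmless, as $\pi$ kills units) yields $a = u_1 \cdots u_n$ with $\pi(u_{\sigma(i)}) = \pi(w_i) = g_i$; this is \labelcref{wth:lift} with permutation $\sigma^{-1}$.

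Finally, for the ``moreover'' clause: by the equivalence just established we may assume \labelcref{exwt:wt} holds, so $S$ is atomic, and then the computation of the second paragraph (where only $\pi(a) = \pi(b)$ was used, not the full $a \equiv_p b$) shows that $\pi(a) = \pi(b)$ already implies $a \equiv_p b$; together with the trivial converse this says $\equiv_p \;=\; \rab\equiv$, which is condition \subref{cong0:rab}, and \cref{cong0} supplies the remaining equivalent forms. The genuinely delicate point is the key lemma of the third paragraph --- pushing \labelcref{exwt:explicit} along the defining chain of $\rab\equiv$ while keeping associativity, equality, and the possibly present units carefully separated; everything else is routine bookkeeping via \cref{ab-prop}.
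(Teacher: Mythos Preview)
Your argument is correct and uses the same ingredients as the paper's proof; the only difference is the order in which you assemble them. The paper first shows directly that \labelcref{exwt:explicit} implies transitivity of $\equiv_p$, invokes \cref{cong0} to obtain $\equiv_p \,=\, \rabrel$, and then \labelcref{exwt:explicit}${}\Rightarrow{}$\labelcref{exwt:wt} is immediate since $a \rabrel u_1'\cdots u_m'$ already reads $a \equiv_p u_1'\cdots u_m'$. You instead prove \labelcref{exwt:explicit}${}\Rightarrow{}$\labelcref{exwt:wt} by your ``key lemma'', traversing the chain \cref{eq:ab-rel} and applying \labelcref{exwt:explicit} at each link---which is exactly the same transitivity argument, just carried out inline rather than abstracted through \cref{cong0}---and you recover the ``moreover'' clause at the end from \labelcref{exwt:wt}. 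One small point of care: when you write ``by \subref{ab-prop:units} the units of $\ab S$ lift to units of $S$'', you are implicitly using the $\rab S$-version (as you set up at the start), together with the observation that an element of $\ab S$ is a unit precisely when its image in $\rab S$ is trivial; this is fine, but worth saying, since the hypothesis is only that $\rab S$, not $\ab S$, is cancellative.
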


\begin{proof}
  We first show that \ref*{exwt:explicit} implies \subref{cong0:transitive}.
  Let $a$,~$b$,~$c \in S$ be such that $a \equiv_p b$ and $b \equiv_p c$.
  By the definition of $\equiv_p$ there exist $m \in \bN_0$, $u_1$,~$\ldots\,$,~$u_m$, $v_1$,~$\ldots\,$,~$v_m \in \cA(S)$ and a permutation $\sigma \in \fS_m$ such that $a \simeq u_1\cdots u_m$, $b \simeq v_1\cdots v_m$ and $u_i \simeq v_{\sigma(i)}$ for all $i \in [1,m]$.
  By \ref*{exwt:explicit} and since $b \equiv_p c$, there exist $w_1$,~$\ldots\,$,~$w_m \in \cA(S)$ and a permutation $\tau \in \fS_m$ such that $c \simeq w_1\cdots w_m$ and $v_i \simeq w_{\tau(i)}$ for all $i \in [1,m]$.
  Then $u_i \simeq v_{\sigma(i)} \simeq w_{\tau(\sigma(i))}$ for all $i \in [1,m]$, and hence $a \equiv_p c$.

  \ref*{exwt:explicit}${}\Rightarrow{}$\ref*{exwt:wt}:
  Property \labelcref{th:units} of \subref{th:wth} holds since $\pi$ is surjective and by applying \subref{ab-prop:units}.
  It remains to verify property \labelcref{wth:lift} of a weak transfer homomorphism.
  Let $a \in S$, let $m \in \bN$, and let $v_1$,~$\ldots\,$,~$v_m \in \cA(\rab S)$ such that $\pi(a) = v_1\cdots v_m$.
By the surjectivity of $\pi$, there exist $u_1'$,~$\ldots\,$,~$u_m' \in S$ such that $\pi(u_i') = v_i$ for all $i \in [1,m]$, and by \subref{ab-prop:atoms}  $u_i' \in \cA(S)$ for all $i \in [1,m]$.
  We thus have $\pi(a) = \pi(u_1'\cdots u_m')$, whence $a \rabrel u_1'\cdots u_m'$.
  Since we have already established that \ref*{exwt:explicit} implies the equivalent conditions of \cref{cong0}, $\rab \equiv = \equiv_p$.
  Hence $a \equiv_p u_1'\cdots u_m'$, and thus there exist $u_1$,~$\ldots\,$,~$u_m \in \cA(S)$ and a permutation $\sigma \in \fS_m$ such that $a \simeq u_1\cdots u_m$ with $u_i \simeq u_{\sigma(i)}'$ for all $i \in [1,m]$.
  Then $\pi(a)=\pi(u_1)\cdots \pi(u_m)$ and $\pi(u_i) = \pi(u'_{\sigma(i)}) = v_{\sigma(i)}$.

  \ref*{exwt:wt}${}\Rightarrow{}$\ref*{exwt:wtaai}:
  By \subref{ab-prop:assoc}, $\pi\colon S \to \rab S$ is isoatomic.

  \ref*{exwt:wtaai}${}\Rightarrow{}$\ref*{exwt:explicit}:
  Let $a$, $b \in S$ with $a \equiv_p b$, $m \in \bN_0$, and $u_1$,~$\ldots\,$,~$u_m \in \cA(S)$ with $a \simeq u_1\cdots u_m$.
  By the definition of $\equiv_p$ there exist $n \in \bN_0$ and $u_1'$,~$\ldots\,$,~$u_n'$, $v_1'$,~$\ldots\,$,~$v_n' \in \cA(S)$ as well as a permutation $\tau \in \fS_n$ such that $a \simeq u_1'\cdots u_n'$, $b \simeq v_1'\cdots v_n'$ and $u_i' \simeq v_{\tau(i)}'$ for all $i \in [1,n]$.
Since $a \equiv_p b$ implies $a \rabrel b$, $\pi(u_1)\cdots \pi(u_m) = \pi(a) = \pi(b)$, and, by \subref{ab-prop:atoms}, $\pi(u_i) \in \cA(\rab S)$ for all $i \in [1,m]$.
  Since $\pi$ is a weak transfer homomorphism, there exist $v_1,\ldots,v_m \in \cA(S)$ and a permutation $\sigma \in \fS_m$ such that $b \simeq v_1\cdots v_m$ and $\pi(v_{\sigma(i)}) \simeq \pi(u_i)$ for all $i \in [1,m]$.
By \subref{ab-prop:assoc}, $v_{\sigma(i)} \simeq u_i$ for each $i \in [1,m]$.
\end{proof}

We now illustrate that the equivalent statements of \cref{exwt} can fail to hold for a semigroup $S$ even if there is a transfer homomorphism from $S$ to a commutative reduced cancellative semigroup $T$.

\begin{example}\label{length}
  Let $S = \langle a,b,c,d \mid ab = cd \rangle$. Clearly $S$ is reduced and is an Adyan semigroup, whence $S$ is cancellative.
  Then $\ab S$ is the free abelian monoid $\cF(\{\alpha,\beta,\gamma,\delta\})$ modulo the congruence relation generated by $\alpha \beta = \gamma \delta$, with the canonical homomorphism $\pi\colon S \to \ab S$ being defined by $\pi(a) = \alpha$, $\pi(b)=\beta$, $\pi(c)=\gamma$, $\pi(d)=\delta$.
  We have $\pi(ab) = \alpha\beta = \gamma\delta = \delta\gamma = \pi(dc)$, but the two possible permutable factorizations of $ab$ in $S$ are $\pf{a,b}$ and $\pf{c,d}$, while $dc$ only has the factorization $\pf{d,c}$.
  Therefore the factorization $\pf{\alpha,\beta}$ of $\pi(dc)$ does not lift, and thus $\pi$ is not a weak transfer homomorphism.

  However, from the relation imposed, it is clear that there exists a length function $\ell\colon S \to \bN_0$ mapping each of $a$,~$b$, $c$ and $d$ to $1$.
  The map $\ell$ is a transfer homomorphism from $S$ to the commutative semigroup $(\mathbb N_0,+)$.
  Thus a noncommutative semigroup may possess a (weak) transfer homomorphism to a commutative semigroup even when the canonical map to the reduced abelianization is not a (weak) transfer homomorphism.
\end{example}

The following proposition shows that, if $S$ is a cancellative semigroup with $\rab S$ cancellative, the existence of an isoatomic weak transfer homomorphism from $S$ to a commutative cancellative semigroup however does imply that the canonical homomorphism $S \to \rab S$ is a weak transfer homomorphism, and thus that the equivalent conditions of \cref{exwt} are satisfied.
We note that the transfer homomorphism $\ell$ from \cref{length} is not isoatomic.

\begin{proposition}\label{assocwth}
  Let $S$ be a cancellative semigroup.
  Assume that $T$ is a commutative atomic cancellative semigroup and that there exists an isoatomic weak transfer homomorphism $\phi\colon S \rightarrow T$.
  Then $\rab S$ is cancellative, the canonical homomorphism $\pi\colon S \to \rab S$ is an isoatomic weak transfer homomorphism, and $\phi$ induces an isomorphism $\rab S \cong \red T$.
\end{proposition}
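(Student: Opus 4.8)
The plan is to construct the isomorphism $\rab S \cong \red T$ first, straight from the universal property of the reduced abelianization, and then deduce the other two assertions from it. Throughout, recall that for the commutative cancellative semigroup $T$ the reduced semigroup $\red T$ is again cancellative (associativity is a congruence on $T$ and $T^\times a = aT^\times$ for every $a$), hence atomic since $T$ is, and the canonical homomorphism $q\colon T \to \red T$ is a transfer homomorphism.

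First I would pass to the reduced target. The composite $q\phi\colon S \to \red T$ is a weak transfer homomorphism — a weak transfer homomorphism followed by a transfer homomorphism, with $\red T$ atomic — and it remains isoatomic: if $(q\phi)(u) \simeq (q\phi)(v)$ for $u$, $v \in \cA(S)$ then, $\red T$ being reduced, this says $\phi(u) \simeq \phi(v)$ in $T$, so $u \simeq v$ by hypothesis. Since $\red T$ is commutative and reduced, the universal property of $(\rab S, \pi)$ yields a unique homomorphism $\psi\colon \rab S \to \red T$ with $q\phi = \psi\circ\pi$; this is the map induced by $\phi$. It is surjective because $q\phi$ is: $T = T^\times\phi(S)T^\times$ by \labelcref{th:units}, and $q$ collapses $T^\times$, so $q\phi(S) = q(T) = \red T$.

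The crux is the injectivity of $\psi$, and this is where the isoatomic hypothesis is used, entirely parallel to the proof of \cref{aaiwthcommutative}. Suppose $a$, $b \in S$ with $\psi(\pi(a)) = \psi(\pi(b))$, that is, $\phi(a) \simeq \phi(b)$ in $T$. If $\phi(a)$ is a unit then so is $\phi(b)$, whence $a$, $b \in S^\times$ by \labelcref{th:units} and $\pi(a) = \pi(b)$ since $\rab S$ is reduced. Otherwise, using that $T$ is atomic and $\phi(a)\simeq\phi(b)$, write $\phi(a) = v_1\cdots v_n$ and $\phi(b) = v_1'\cdots v_n'$ with $n \in \bN$, $v_i$, $v_i' \in \cA(T)$ and $v_i \simeq v_i'$ for all $i$. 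Applying property \labelcref{wth:lift} of a weak transfer homomorphism to $a$ and, separately, to $b$ produces atoms $u_1$, $\ldots$, $u_n$ and $w_1$, $\ldots$, $w_n$ of $S$ and permutations $\sigma$, $\tau \in \fS_n$ with $a = u_1\cdots u_n$, $b = w_1\cdots w_n$, $\phi(u_i) \simeq v_{\sigma(i)}$ and $\phi(w_j) \simeq v_{\tau(j)}$. Comparing the two factorizations of $v_1\cdots v_n$ up to associativity gives $\phi(u_i) \simeq \phi(w_{\rho(i)})$ for $\rho = \tau^{-1}\sigma$, and isoatomicity upgrades this to $u_i \simeq w_{\rho(i)}$ for all $i$; thus $a \equiv_p b$ in the sense of \cref{rel0}. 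Finally $a \equiv_p b$ implies $\pi(a) = \pi(b)$, since $\pi$ identifies associated elements and $\rab S$ is commutative. Hence $\psi$ is a bijective homomorphism, so an isomorphism $\rab S \cong \red T$.

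The remaining claims are then immediate. As $\red T$ is cancellative, so is $\rab S$. And $\pi = \psi^{-1}\circ(q\phi)$ exhibits $\pi$ as the composition of the isomorphism $\psi^{-1}$ — which, being an isomorphism of atomic cancellative categories, is a transfer homomorphism and hence a weak transfer homomorphism — with the weak transfer homomorphism $q\phi$; a composition of weak transfer homomorphisms is a weak transfer homomorphism, and isoatomicity is preserved under such compositions (atoms map to atoms under weak transfer homomorphisms), so $\pi$ is an isoatomic weak transfer homomorphism. The only genuinely delicate step is the injectivity of $\psi$, which uses the lifting property \labelcref{wth:lift} twice, the isoatomic hypothesis, and the elementary fact that $a \equiv_p b$ implies $a \rabrel b$; everything else is bookkeeping with the universal property and with the cancellativity conditions that make the transfer-homomorphism machinery of \cref{sec:preliminaries} applicable.
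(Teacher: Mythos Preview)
Your proof is correct and follows essentially the same approach as the paper's: factor $\phi$ through $\pi$ using the universal property of $\rab S$, show the induced map is surjective from \labelcref{th:units}, and prove injectivity by lifting a single atomic factorization of $\phi(a)\simeq\phi(b)$ via \labelcref{wth:lift} to both $a$ and $b$, matching atoms via the isoatomic hypothesis, and then collapsing in the commutative reduced quotient. The only cosmetic differences are that the paper replaces $T$ by $\red T$ at the outset (your explicit use of $q$ amounts to the same thing) and computes $\pi(a)=\pi(b)$ directly rather than naming the relation $\equiv_p$.
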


\begin{proof}
  It suffices to show that $\phi$ induces an isomorphism $\rab\phi\colon\rab S \isomto \red T$.
  Then $\rab S$ is cancellative, and since $\phi$ is an isoatomic weak transfer homomorphism and $\rab\phi$ is an isomorphism, $\pi = \rab\phi^{-1} \circ\phi$ is an isoatomic weak transfer homomorphism.

  We may, without loss of generality, assume that $T$ is reduced.
  Since $T$ is commutative, $\phi$ factors through $\pi$, that is, there exists $\rab\phi\colon \rab S \to T$ such that $\rab\phi \circ \pi = \phi$.
  Since $T=T^\times\phi(S)T^\times=\phi(S)$, the induced map $\rab \phi$ is surjective.

  It remains to show that $\rab \phi$ is injective.
  Let $\lbar a$, $\lbar b \in \rab S$ be such that $\rab\phi(\lbar a) = \rab\phi(\lbar b)$, and let $a$,~$b \in S$ be such that $\pi(a) = \lbar a$ and $\pi(b) = \lbar b$.
  We may assume $\lbar a \ne 1$, and hence $a \not \in S^\times$.
  We have $\phi(a) = \phi(b)$ and, by \labelcref{th:units}, $\phi(a) \ne 1$.
  Thus there exist $m \in \bN$ and atoms $w_1$, $\ldots\,$,~$w_m \in \cA(T)$ such that $\phi(a) = w_1\cdots w_m$.
  By \labelcref{wth:lift}, there exist $u_1$, $\ldots\,$,~$u_m \in \cA(S)$, $v_1$, $\ldots\,$,~$v_m \in \cA(S)$ and permutations $\sigma$,~$\tau \in \fS_m$ such that $a = u_1\cdots u_m$, $b = v_1\cdots v_m$ and $w_i = \phi(u_{\sigma(i)}) = \phi(v_{\tau(i)})$ for all $i \in [1,m]$.
  Since $\phi$ is isoatomic, $u_{\sigma(i)} \simeq v_{\tau(i)}$ for all $i \in [1,m]$.
  Since $\rab S$ is commutative and reduced, we have
  \[
    \pi(a) = \pi(u_1)\cdots\pi(u_m) = \pi(u_{\sigma(1)}) \cdots \pi(u_{\sigma(m)}) = \pi(v_{\tau(1)}) \cdots \pi(v_{\tau(m)}) = \pi(v_1)\cdots \pi(v_m ) = \pi(b),
  \]
  that is, $\lbar a = \lbar b$.
\end{proof}

\begin{remark}\label{representatives}
  Let $S$ be an atomic cancellative semigroup and let $R$ be a set of representatives for the associativity classes of $\cA(S)$.
  Suppose that the equivalent conditions of \cref{exwt} are satisfied.
  In this case we can give a construction of $\rab S$ in terms of the free abelian monoid $\cF(R)$.
  We define a relation $\equiv_S$ on $\cF(R)$ as follows:
  We set $1 \equiv_S 1$ and, for $a$, $b \in \cF(R) \setminus \{1\}$, we set $a \equiv_S b$ if and only if there exist $k$,~$l \in \bN$ and $u_1$,~$\ldots\,$,~$u_k$, $v_1$,~$\ldots\,$,~$v_l \in R$ such that $a=u_1 \cdots u_k$, $b=v_1\cdots v_l$ and, for all $i \in [1,k]$ and $j \in [1,l]$, there exist associated atoms $u_i' \simeq u_i$ and $v_j' \simeq v_j$ in $S$, and permutations $\sigma \in \fS_k$ and $\tau \in \fS_l$ such that $u_{\sigma(1)}'\cdots u_{\sigma(k)}' \simeq v_{\tau(1)}'\cdots v_{\tau(l)}'$ in $S$.
  Since we are assuming that the canonical homomorphism to $\rab S$ is a weak transfer homomorphism, it is easy to check that $\equiv_S$ is transitive.
  Trivially, $\equiv_S$ is reflexive and symmetric and thus also a congruence relation.
  For any $x \in \cF(R)$, we write $[x]$ for its image in $\cF(R) / \!\equiv_S$.

  Let $a \in S$ and let $k$,~$l \in \bN_0$ and $u_1'$,~$\ldots\,$,~$u_k'$, $v_1'$,~$\ldots\,$,~$v_l' \in \cA(S)$ be such that $a\simeq u_1'\cdots u_k' \simeq v_1'\cdots v_l'$, and, for all $i \in [1,k]$ and $j \in [1,l]$, let $u_i \in R$ and $v_j \in R$ be such that $u_i \simeq u_i'$ and $v_i \simeq v_j'$.
  From the definition of $\equiv_S$ it is then clear that $u_1\cdots u_k \equiv_S v_1\cdots v_l$, and thus we can define a map
  \[
    \pi\colon S \to \cF(R)/\!\equiv_S,\  a \mapsto [u_1\cdots u_k]  \quad\text{(with units mapping to $[1]$).}
  \]
  It is now straightforward to check that $\pi$ is a homomorphism and that $(\cF(R)/\!\equiv_S, \pi)$ satisfies the universal property of the reduced abelianization.
  Therefore $\cF(R) / \!\equiv_S\; \cong \rab S$.
\end{remark}

The following example illustrates that not every atomic cancellative semigroup possesses a weak transfer homomorphism into a commutative semigroup.

\begin{example}
  Let $S = \langle a,b,c,d,e \mid abc = de \rangle$.
  Then $\sL(abc) = \{ 2,3 \}$, while $\sL(bac) = \{ 3 \}$.
  Hence $S$ does not admit a weak transfer homomorphism into any commutative semigroup.
\end{example}

The next proposition shows that if $\rab S$ is cancellative, then every weak transfer homomorphism to a commutative reduced cancellative semigroup factors through the canonical homomorphism $\pi\colon S \to \rab S$.
If, moreover, $\pi$ is a weak transfer homomorphism, then we obtain as an immediate corollary a universal property that characterizes the weak transfer homomorphism $\pi$.

\begin{proposition}\label{wtup}
  Let $S$ be a cancellative semigroup with $\rab S$ cancellative and let $\pi\colon S \to \rab S$ denote the canonical homomorphism.
  Suppose that there exists a weak transfer homomorphism $\phi\colon S \to T$ to a commutative atomic cancellative semigroup $T$.
  Let $\red \pi\colon T \to \red T$ denote the canonical homomorphism.
  Then there exists a unique transfer homomorphism $\rab\phi\colon \rab S\to \red T$ such that $\red \pi \circ \phi = \rab\phi \circ \pi$.

  In particular, if $\pi$ is a weak transfer homomorphism, it satisfies the following universal property:
  If $\phi\colon S \rightarrow T$ is a weak transfer homomorphism from $S$ to a commutative reduced cancellative semigroup $T$, then there exists a unique transfer homomorphism $\rab\phi \colon \rab S \rightarrow T$ such that $\rab\phi \circ \pi=\phi$, that is, the following diagram commutes.
  \[
  \xymatrix{
      S \ar@{->>}^\phi[r] \ar@{->>}_\pi[d] & T \\
      \rab S \ar@{-->>}_{\exists !\, \rab\phi }[ur]
    }
  \]
\end{proposition}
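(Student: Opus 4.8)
The plan is to construct $\rab\phi$ from the universal property of the reduced abelianization and then verify, axiom by axiom, that it is a transfer homomorphism, transporting the relevant properties from the composite $\red\pi\circ\phi$.

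First I would record the standard facts about $\red\pi\colon T\to\red T$. Since $T$ is commutative (hence normalizing) and cancellative, $\simeq$ is a congruence relation on $T$ and $\red T$ is again cancellative, so $\red\pi$ is a transfer homomorphism; since $T$ is atomic, $\red T$ is atomic as well, so $\red\pi$ is moreover a weak transfer homomorphism. Hence the composite $\psi = \red\pi\circ\phi\colon S\to\red T$ is a composition of two weak transfer homomorphisms, and therefore a weak transfer homomorphism into the commutative reduced cancellative semigroup $\red T$. As $\psi$ is a homomorphism from $S$ to a commutative reduced semigroup, the universal property of $(\rab S,\pi)$ produces a unique homomorphism $\rab\phi\colon\rab S\to\red T$ with $\rab\phi\circ\pi = \psi = \red\pi\circ\phi$, and uniqueness of $\rab\phi$ \emph{as a homomorphism} is exactly the uniqueness clause of that universal property. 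It then remains only to show that $\rab\phi$ is a transfer homomorphism.

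Because $\rab S$ and $\red T$ are commutative, a homomorphism between them is a transfer homomorphism if and only if it is a weak transfer homomorphism (the lemma following \cref{th}), so it suffices to check that $\rab\phi$ is a weak transfer homomorphism; here I would use the surjectivity of $\pi$ together with \cref{ab-prop} applied to $\rab S$ (admissible since $\rab S$ is cancellative by hypothesis). For \labelcref{th:units}: $\rab\phi(\rab S) = \rab\phi(\pi(S)) = \psi(S) = \red T$, the last equality holding because $\psi$ is a weak transfer homomorphism and $\red T$ is reduced, which is the first half of \labelcref{th:units}; and from $\pi^{-1}(\rab\phi^{-1}((\red T)^\times)) = \psi^{-1}((\red T)^\times) = S^\times = \pi^{-1}((\rab S)^\times)$, using \subref{ab-prop:units}, surjectivity of $\pi$ forces $\rab\phi^{-1}((\red T)^\times) = (\rab S)^\times$. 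For \labelcref{wth:lift}: given $\bar a\in\rab S$, $n\in\bN$ and atoms $\bar v_1,\dots,\bar v_n$ of $\red T$ with $\rab\phi(\bar a) = \bar v_1\cdots\bar v_n$, choose $a\in S$ with $\pi(a)=\bar a$; then $\psi(a)=\bar v_1\cdots\bar v_n$, so the weak transfer property of $\psi$ yields atoms $u_1,\dots,u_n\in\cA(S)$ and a permutation $\sigma\in\fS_n$ with $a=u_1\cdots u_n$ and $\psi(u_i)\simeq\bar v_{\sigma(i)}$ for all $i$; by \subref{ab-prop:atoms} each $\pi(u_i)$ is an atom of $\rab S$, and then $\bar a = \pi(u_1)\cdots\pi(u_n)$ is the required factorization, since $\rab\phi(\pi(u_i)) = \psi(u_i)\simeq\bar v_{\sigma(i)}$.

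Finally, the ``in particular'' assertion follows by specialization: if $\pi$ is itself a weak transfer homomorphism and $T$ is commutative, reduced and cancellative — necessarily atomic, since the target of a weak transfer homomorphism is atomic — then $\red T = T$ and $\red\pi = \id_T$, so the first part furnishes a unique transfer homomorphism $\rab\phi\colon\rab S\to T$ with $\rab\phi\circ\pi = \phi$, which is exactly the stated universal property and the commutativity of the displayed triangle. I do not anticipate a genuine obstacle; the argument merely chains the universal property of $\rab S$, the composition behavior of (weak) transfer homomorphisms, and \cref{ab-prop}. The one step that needs care is the descent of \labelcref{wth:lift} along the surjection $\pi$: one must invoke \subref{ab-prop:atoms} (for $\rab S$) to guarantee that the atoms $u_i$ of $S$ lifting a factorization of $\bar a$ have atomic images $\pi(u_i)$ in $\rab S$, so that $\bar a = \pi(u_1)\cdots\pi(u_n)$ really is a factorization into atoms.
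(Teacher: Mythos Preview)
Your proof is correct and follows essentially the same approach as the paper: reduce to the case of a reduced target by composing with $\red\pi$, invoke the universal property of $\rab S$ to obtain $\rab\phi$, and then verify \labelcref{th:units} and \labelcref{wth:lift} for $\rab\phi$ by lifting along the surjection $\pi$ and using \cref{ab-prop}. The only cosmetic difference is that the paper simply replaces $\phi$ by $\red\pi\circ\phi$ at the outset and assumes $T$ reduced, whereas you carry the composite $\psi$ explicitly; the verifications are otherwise identical.
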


\begin{proof}
  Replacing $\phi$ by $\red\pi \circ \phi$ if necessary, we may without loss of generality assume that $T$ is reduced.
  Since $T$ is commutative and reduced, $\phi$ factors through $\pi \colon S \to \rab S$, that is, there exists a homomorphism $\rab\phi\colon \rab S \to T$ such that $\phi = \rab\phi \circ \pi$.
  Clearly, $\rab\phi$ is uniquely determined by this relation, and it remains to show that it is a transfer homomorphism.
  Since $T$ is atomic, and $\rab S$ is commutative, it suffices to show that $\rab\phi$ is a weak transfer homomorphism.

  Since $\phi$ is a weak transfer homomorphism, we have $T=\phi(S)T^\times=\phi(S)$ and $\phi^{-1}(T^\times)=S^\times$, with $T^\times=\{1\}$.
  The first property immediately implies $T = \rab\phi(\rab S)$.
  We now show $\rab\phi^{-1}(\{1\}) = \{1\}$.
  Trivially $\rab\phi(1) = 1$.
  For the other inclusion, suppose that $\lbar a \in \rab S$ is such that $\rab\phi(\lbar a) = 1$.
  There exists $a \in S$ such that $\pi(a) = \lbar a$, and hence $\phi(a)=\rab \phi(\lbar a) =1$.
  Thus $a \in S^\times$ and therefore $\lbar a= \pi(a) = 1$ in $\rab S$.
  Hence $\rab\phi$ satisfies \labelcref{th:units}.

  We now check \labelcref{wth:lift}.
  Let $\lbar a \in \rab S$ and suppose $\rab\phi (\lbar a)=w_1\cdots w_m$ with $m \in \bN$, $w_1$, $\ldots\,$,~$w_m \in \cA(\red T)$.
  Let $a \in \pi^{-1}(\{\lbar a\})$.
  Since $\phi$ is a weak transfer homomorphism, there exist atoms $u_1$, $\ldots\,$,~$u_m \in \cA(S)$ and a permutation $\sigma \in \fS_m$ such that $a=u_1\cdots u_m$ and $\phi(u_i)=w_{\sigma(i)}$ for each $i \in [1,m]$.
  Now $\pi(u_i) \in \cA(\rab S)$ and $\rab\phi \circ\pi(u_i)=\phi(u_i)=w_{\sigma(i)}$ for each $i \in [1,m]$.
  Therefore $\rab\phi $ is a weak transfer homomorphism.
\end{proof}

\subsection*{Examples}

We now highlight some examples in which the canonical homomorphism to the reduced abelianization is a weak transfer homomorphism.

\subsubsection*{Rings of Triangular Matrices}\label{triangularmatrices}
  Let $D$ be a commutative atomic domain, let $n \in \bN$, and let $R = T_n(D)$ denote the ring of all $n \times n$ upper triangular matrices with entries in $D$.
  We study $S=T_n(D)^\bullet$, the multiplicative subsemigroup of non zero-divisors of $R$ consisting of those upper triangular matrices having nonzero determinant.
  Sets of lengths in this semigroup were studied extensively in \cite{Bachman-Baeth-Gossell} where the homomorphism
  \[
    \delta\colon
    \begin{cases}
      T_n(D)^\bullet         &\to     (\red{D^\bullet})^n \\
      [a_{i,j}]_{i,j \in [1,n]} &\mapsto (a_{i,i}D^\times)_{i \in [1,n]},
    \end{cases}
  \]
  mapping a matrix to the vector of associativity classes of its diagonal entries, was shown to be a weak transfer homomorphism.

  We now give a lemma which illustrates that $\delta$ is, in fact, isoatomic. Moreover we show that associativity, similarity and subsimilarity coincide for atoms of $S$.

\begin{lemma}\label{triangular}
Let $D$ be a commutative atomic domain and let $\overline{\cA(D)}$ denote a set of representatives for the associativity classes of atoms of $D$.
Let $n \in \bN$ and $R = T_n(D)$.
\begin{enumerate}
  \item\label{triangular:atoms}
    An element $A=[a_{i,j}] \in T_n(D)^\bullet$ is an atom if and only if there exists $m \in [1,n]$ such that $a_{i,i}\in D^{\times}$ for all $i \in [1,n]$ with $i\not=m$ and $a_{m,m} \in \cA(D)$.

  \item\label{triangular:repr}
    If $A$ is an atom of $T_n(D)^\bullet$ with $\det(A)$ associated to $a \in \cA(D)$, then $A$ is associated to the matrix $B=[b_{i,j}] \in T_n(D)^\bullet$ where
  \[
  b_{i,j} =
  \begin{cases}
    1             \quad &\text{if $i=j$ and $(i,j)\not=(m,m)$,} \\
    \overline{a}  \quad &\text{if $i=j=m$,} \\
    0             \quad &\text{if $i\not= j$,}
  \end{cases}
  \]
  and $\overline{a}$ is the representative of $a$ in $\overline{\cA(D)}$.

  \item\label{triangular:ann}
    If $A$ is an atom of $T_n(D)^\bullet$, and $m \in [1,n]$ is such that the $m$-th diagonal entry of $A$ is $a \in \cA(D)$, then
    \begin{equation}\label{eq:ann}
      \ann_R(R/RA) = \{\, [b_{i,j}] \in T_n(D) : b_{i,j} \in aD \text{ for all $i$,~$j \in [1,m]$} \,\}.
    \end{equation}
  \item\label{triangular:sim}
    For $A$, $B \in \cA(T_n(D)^\bullet)$, the following statements are equivalent.
    \begin{enumerateequiv}
    \item\label{simtri:ass} $A$ is associated to $B$.
    \item\label{simtri:sim} $A$ is similar to $B$.
    \item\label{simtri:sub} $A$ is subsimilar to $B$.
    \item\label{simtri:ann} $\ann_R(R/RA) = \ann_R(R/RB)$.
    \end{enumerateequiv}
    In particular, $\sd_p = \sd_\dsubsim = \sd_\dsim$ on $T_n(D)^\bullet$.
\end{enumerate}
\end{lemma}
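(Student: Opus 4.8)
The plan is to prove the four parts in order, using the fact recalled above that $\delta\colon T_n(D)^\bullet \to (\red{D^\bullet})^n$ is a weak transfer homomorphism, together with the reduction in \labelcref{triangular:repr}, which will let us replace module-theoretic computations by computations with diagonal matrices. For \labelcref{triangular:atoms}: since $(\red{D^\bullet})^n$ is atomic and $\delta$ is a weak transfer homomorphism, $A$ is an atom of $T_n(D)^\bullet$ if and only if $\delta(A)$ is an atom of the direct power $(\red{D^\bullet})^n$; and in a finite direct power of a reduced cancellative commutative monoid the atoms are exactly the tuples having one coordinate an atom and all other coordinates the identity, which is an immediate coordinatewise check. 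As the $i$-th coordinate of $\delta(A)$ is $a_{i,i}D^\times$, which is the identity exactly when $a_{i,i}\in D^\times$ and an atom exactly when $a_{i,i}\in\cA(D)$, this yields precisely the stated criterion (the hypotheses also force $\det(A)\ne 0$, so membership in $T_n(D)^\bullet$ is automatic).

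For \labelcref{triangular:repr} I would argue by induction on $n$, the case $n=1$ being clear. Recall that the units of $R=T_n(D)$ are exactly the upper triangular matrices with unit diagonal, and that left and right multiplication by suitable invertible upper triangular matrices realises scaling a row by a unit, adding a multiple of a higher-indexed row to a lower-indexed one, and adding a multiple of an earlier column to a later one — all preserving upper-triangularity. Writing
\[
  A = \begin{pmatrix} A' & v \\ 0 & a_{n,n} \end{pmatrix},
\]
if $m<n$ then $a_{n,n}\in D^\times$, so scaling the last row to $1$ and subtracting $v$ times it from the first $n-1$ rows transforms $A$ into $\begin{pmatrix} A' & 0 \\ 0 & 1\end{pmatrix}$ with $A'\in T_{n-1}(D)^\bullet$ again an atom whose distinguished diagonal entry is associated to $a$, and the inductive hypothesis applies; if $m=n$ then $A'$ has unit diagonal, hence is a unit of $T_{n-1}(D)$, and multiplying $A$ on the left by $\begin{pmatrix}(A')^{-1}&0\\0&1\end{pmatrix}$ and then on the right by a suitable unipotent matrix clears $A'$ and $v$, after which a diagonal unit turns the bottom entry into $\overline{a}$.

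For \labelcref{triangular:ann}, first observe that $\ann_R(R/RA)$ depends only on the associativity class of $A$: for $\varepsilon$, $\eta\in R^\times$ one has $R\varepsilon A\eta=RA\eta$, and $r+RA\mapsto r\eta+RA\eta$ is an isomorphism of left $R$-modules $R/RA\cong R/RA\eta$. Hence, by \labelcref{triangular:repr}, we may take $A$ to be the diagonal matrix with $a$ in position $(m,m)$ and $1$'s elsewhere (which leaves $aD$ unchanged). For such $A$ one has $(XA)_{i,j}=X_{i,j}A_{j,j}$, so $RA$ is exactly the set of upper triangular matrices whose $m$-th column lies in $aD$; and since $\ann_R(R/RA)=\{\,C\in R : CR\subseteq RA\,\}$, the condition on $C$ is that $(CX)_{i,m}=\sum_k C_{i,k}X_{k,m}\in aD$ for all $X\in R$ and all $i$. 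Testing with matrices $X$ having a single nonzero entry shows this is equivalent to $C_{i,k}\in aD$ for all $i\le k\le m$, which, together with upper-triangularity, is precisely \eqref{eq:ann}.

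For \labelcref{triangular:sim} I would prove the cycle \labelcref{simtri:ass}$\Rightarrow$\labelcref{simtri:sim}$\Rightarrow$\labelcref{simtri:sub}$\Rightarrow$\labelcref{simtri:ann}$\Rightarrow$\labelcref{simtri:ass}. The first implication is the module isomorphism used in the previous step, the second is immediate, and \labelcref{simtri:sub}$\Rightarrow$\labelcref{simtri:ann} follows since a monomorphism of modules $M\hookrightarrow N$ forces $\ann_R(N)\subseteq\ann_R(M)$, so monomorphisms both ways yield equal annihilators. The crux is \labelcref{simtri:ann}$\Rightarrow$\labelcref{simtri:ass}: write $A$ with atom $a$ at diagonal position $m$ and $B$ with atom $a'$ at diagonal position $m'$, so that by \labelcref{triangular:ann} the two annihilators are the explicit ideals of \eqref{eq:ann}. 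If $m<m'$, the matrix with a single $1$ in position $(m',m')$ lies in $\ann_R(R/RA)$ but not in $\ann_R(R/RB)$ (as $1\notin a'D$, $a'$ being a non-unit), so equality forces $m=m'$; testing with the matrix having $a$ in position $(1,1)$ and zeros elsewhere then forces $a\in a'D$ and, symmetrically, $a'\in aD$, i.e.\ $a\simeq a'$ in $D$; by \labelcref{triangular:repr}, $A$ and $B$ are then both associated to the same standard matrix, so $A\simeq B$. Finally, $\sd_p$, $\sd_\dsim$, $\sd_\dsubsim$ are all produced via \subref{con:fab}, where the resulting distance depends only on the partition of $\cA(T_n(D)^\bullet)$ induced by the chosen equivalence relation; the equivalence of \labelcref{simtri:ass}, \labelcref{simtri:sim}, \labelcref{simtri:sub} for atoms means these three partitions coincide, whence $\sd_p=\sd_\dsim=\sd_\dsubsim$. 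The step I expect to be the main obstacle is this implication \labelcref{simtri:ann}$\Rightarrow$\labelcref{simtri:ass} together with \labelcref{triangular:repr}: one must handle the bookkeeping of the row and column operations so that clearing one block does not disturb another, and must check carefully that the explicit annihilator ideal really does pin down both the position $m$ and the associativity class of $a$.
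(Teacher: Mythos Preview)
Your proof is correct and follows the same overall architecture as the paper: use the weak transfer homomorphism $\delta$ for \labelcref{triangular:atoms}, reduce to a diagonal representative for \labelcref{triangular:repr}, compute the annihilator in the diagonal case for \labelcref{triangular:ann}, and use that explicit description to close the cycle in \labelcref{triangular:sim}.

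There are two minor tactical differences worth noting. For \labelcref{triangular:repr}, you argue by induction on $n$ via the block decomposition, whereas the paper proceeds directly: it first normalises the diagonal with a single diagonal unit, then clears rows $1,\ldots,m-1$ (from the right) and columns $m+1,\ldots,n$ (from the left) in one pass, avoiding any inductive bookkeeping. For \labelcref{triangular:ann}, you test the condition $CR\subset RA$ against elementary matrices to pin down exactly which entries of $C$ must lie in $aD$; the paper instead recalls the general form of two-sided ideals of $T_n(D)$ (as arrays of ideals of $D$ with the obvious inclusion constraints), checks that the right-hand side of \eqref{eq:ann} is such an ideal and annihilates $R/RA$, and then observes it is the \emph{largest} two-sided ideal contained in $RA$, hence equals the annihilator. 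Your route is more hands-on and self-contained; the paper's is slightly more structural. Either way, the implication \labelcref{simtri:ann}$\Rightarrow$\labelcref{simtri:ass} via comparing the explicit annihilators is the same, and your choice of test matrices ($E_{m',m'}$ to force $m=m'$, then $aE_{1,1}$ to force $a\in a'D$) does exactly what the paper's ``compare entries'' argument does.
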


\begin{proof}
Let $S=T_n(D)^\bullet$.
The claim in \ref*{triangular:atoms} follows from the fact that $\delta$ is a weak transfer homomorphism.

\ref*{triangular:repr} We denote by $I_n$ the $n \times n$ identity matrix and, for all $i$,~$j \in [1,n]$, by $E_{i,j}$ the $n \times n$ matrix with $1$ in the $(i,j)$ position, and $0$ in all other positions.
By \ref*{triangular:atoms} there exists an $m \in [1,n]$ such that $a_{i,i}$ is a unit of $D$ for all $i \in [1,n]$ with $i \not=m$, while $a_{m,m} \in \cA(S)$.
Let $\lbar a$ be the element of $\cA(S)$ that is associated to $a_{m,m}$.
Consider the diagonal matrix
\[
  U = \overline{a}a_{m,m}^{-1} E_{m,m} + \sum_{\substack{i=1\\i \ne m}}^n a_{i,i}^{-1} E_{i,i} \,\in\, T_n(D)^\bullet.
\]
Then, $A'=UA=[a'_{i,j}] \in T_n(D)^\bullet$, with $a'_{m,m} = \lbar a$, and all other diagonal entries equal to $1$.
Since $A$ is associated to $UA$ and associativity is transitive, we may assume for the remainder of this proof that $A=A'$, that is, all but one of the diagonal entries of $A$ are $1$ and that the non-unit diagonal entry $a_{m,m}$ is already in the pre chosen set $\overline{\cA(D)}$.

We now define a sequence of associates of $A$ inductively, successively eliminating rows and columns of $A$.
Set $A^{(0)} = A$.
For all $i \in [1,m-1]$, assuming that $A^{(i-1)}=[a_{k,l}^{(i-1)}]$, let
\[
  C_i = I_n - \sum_{j = i+1}^n a_{i,j}^{(i-1)} E_{i,j}.
\]
Clearly $C_i \in T_n(D)^\times$, and we set $A^{(i)} = A^{(i-1)}C_i$, that is, $A^{(i)}$ is obtained from $A^{(i-1)}$ by eliminating the $i$-th row (except for the diagonal entry).
Setting now $A^{(m)} = A^{(m-1)}$, we inductively define, for all $j \in [m+1,n]$, a matrix $A^{(j)} \in S$:
Assuming that $A^{(j-1)}=[a_{k,l}^{(j-1)}]$, let
\[
  C_j = I_n - \sum_{i = 1}^{j-1} a_{i,j}^{(j-1)} E_{i,j}.
\]
Again $C_j \in T_n(D)^\times$, and we set $A^{(j)} = C_jA^{(j-1)}$, that is, $A^{(j)}$ is obtained from $A^{(j-1)}$ by eliminating the $j$-th column (except for the diagonal entry).
The final matrix $A^{(n)}=[b_{k,l}]$ is therefore diagonal, with $b_{m,m} = a_{m,m}$, and $b_{i,i} =1$ for all $i \in [1,n] \setminus \{m\}$.
Therefore $A$ is associated to a diagonal matrix as desired.

\ref*{triangular:ann}
We first recall a description of the ideals of $T_n(D)$.
For all $i \in [1,n]$ and $j \in [i,n]$ let $I_{i,j}$ be an ideal of $D$, and suppose that $I_{i,j} \subset I_{i-1,j}$ for all $j \in [1,n]$ and $i \in [2,j]$, and that $I_{i,j} \subset I_{i,j+1}$ for all $j \in [1,n-1]$ and $i \in [1,j]$.
An elementary calculation shows that
\[
  I = \big\{\, [a_{i,j}] \in T_n(D) : a_{i,j} \in I_{i,j} \text{ for all $i \in [1,n]$, $j \in [i,n]$} \,\big\}
\]
is an ideal of $R$, and it is easy to check that in fact every ideal of $R$ is of this form.

Let $A \in \cA(S)$.
We will show that \cref{eq:ann} holds.
Since $R/RA \cong R/RA'$ for all $A' \in R$ with $A' \simeq A$, we may assume without restriction (by \ref*{triangular:repr}), that all off-diagonal entries of $A$ are zero, and that there exist $m \in [1,n]$ and $a \in \cA(D)$ such that the $m$-th diagonal entry of $A$ is equal to $a$, while all other diagonal entries are equal to $1$.
It is then easy to check that $RA$ consists of all those $n \times n$ upper triangular matrices for which the entries of the $m$-th column are contained in $aD$.
Therefore all elements of the right-hand side of \cref{eq:ann} annihilate $R/RA$.
Recall that $\ann_R(R/RA)$ is a two-sided ideal of $R$ necessarily contained in $RA$.
By our description of ideals of $R$, the set on the right hand side of \cref{eq:ann} is an ideal of $R$, and moreover the maximal two-sided ideal of $R$ contained in $RA$.
Thus it must be the annihilator of $R/RA$ and we have established the claim.

\ref*{triangular:sim}
  The implications \ref*{simtri:ass}${}\Rightarrow{}$\ref*{simtri:sim}${}\Rightarrow{}$\ref*{simtri:sub}${}\Rightarrow{}$\ref*{simtri:ann} are immediate from the definitions.

\ref*{simtri:ann}${}\Rightarrow{}$\ref*{simtri:ass}:
Let $A$,~$B \in \cA(S)$ be such that $\ann_R(R/RA) = \ann_R(R/RB)$.
There exist $k$,~$l \in [1,n]$ and $a$,~$b \in \cA(D)$ such that the $k$-th diagonal entry of $A$ is $u$, the $l$-th diagonal entry of $B$ is $v$ and all other diagonal entries of $A$ and $B$ are units.
To show $A \simeq B$, we need to show that $k=l$ and $a \simeq b$ in $D$ (using \ref*{triangular:repr}).
From the description of annihilator ideals in \ref*{triangular:ann}, we must have $k=l$. Comparing the entries in the upper left corner, $aD=bD$, that is, $a\simeq b$.
\end{proof}

Since $\delta$ is isoatomic, it follows from \cref{assocwth} and \subref{triangular:repr} that $\rab S \cong (\red{D^\bullet})^n$.

\begin{proposition}\label{cor:tri-fact}
  Let $D$ be a commutative atomic domain, $n \in \bN$, and $S = T_n(D)^\bullet$.
  Then
  \[
    \sc_p(S) = \sc_\dsim(S) = \sc_\dsubsim(S) = \sc_p(D^\bullet)
  \]
  and
  \[
    \sc_{p,\cmon}(S) = \sc_{\dsim,\cmon}(S) = \sc_{\dsubsim,\cmon}(S) = \sc_{p,\cmon}(D^\bullet).
  \]
  In particular, $T_n(D)^\bullet$ is permutably ($\sd_\dsim$-, $\sd_\dsubsim$-) factorial if only if $D$ is factorial.
  Moreover, $\st_p(S) = \st(D^\bullet)$ and $\omega_p(S) = \omega_p(D^\bullet)$.
\end{proposition}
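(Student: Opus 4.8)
The plan is to push every invariant through the weak transfer homomorphism
\[
  \delta\colon S = T_n(D)^\bullet \longrightarrow (\red{D^\bullet})^n,
\]
which, by \subref{triangular:repr}, is isoatomic, whose target is a commutative atomic cancellative semigroup, and which, as noted just before the proposition (via \cref{assocwth}), identifies $(\red{D^\bullet})^n$ with $\rab S$. Since \subref{triangular:sim} gives $\sd_p = \sd_\dsim = \sd_\dsubsim$ on $S$, the catenary degrees (ordinary and monotone) in the three distances all coincide with the permutable ones, so it suffices to establish $\sc_p(S) = \sc_p(D^\bullet)$, $\sc_{p,\cmon}(S) = \sc_{p,\cmon}(D^\bullet)$, $\st_p(S) = \st(D^\bullet)$ and $\omega_p(S) = \omega_p(D^\bullet)$.

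First I would transfer the invariants from $S$ to the commutative semigroup $(\red{D^\bullet})^n$. By the surjectivity clause of \cref{ext-hom} together with the isoatomicity of $\delta$ (the last assertion of \cref{ext-hom}), the induced map $\delta_p$ restricts on every fiber to a bijection $\sZ_p(A) \to \sZ_p(\delta(A))$; this bijection preserves lengths, since it sends atoms to atoms, and by \cref{strong-weak-distance} it is an isometry for $\sd_p$. Consequently $N$-chains, monotone $N$-chains, and equal-length $N$-chains transport in both directions, so $\sc_p(S) = \sc_p((\red{D^\bullet})^n)$ and likewise $\sc_{p,\cmon}(S) = \sc_{p,\cmon}((\red{D^\bullet})^n)$, as well as the corresponding equalities for the equal and adjacent catenary degrees. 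For the tame degree and the $\omega$-invariant I would invoke \subref{awth-tame:tame} and \subref{awth-tame:omega} directly, obtaining $\st_p(S) = \st_p((\red{D^\bullet})^n)$ and $\omega_p(S) = \omega_p((\red{D^\bullet})^n)$ (the latter using commutativity of the target).

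It then remains to compute the invariants of $G^n$, where $G := \red{D^\bullet}$, and to match them with $D^\bullet$. Viewing $G^n$ as an $n$-fold direct power, one uses that its atoms live in a single coordinate, that every permutable factorization of a tuple is the tuple of permutable factorizations of its coordinates, and that divisibility up to permutation is coordinatewise; this makes the invariants decouple. The embedding $G \hookrightarrow G^n$, $a \mapsto (a,1,\ldots,1)$, whose elements have precisely the padded factorizations of $a$, yields $\iota(G^n) \ge \iota(G)$, while transforming a tuple one coordinate at a time yields $\iota(G^n) \le \iota(G)$ for $\iota \in \{\sc_p,\st,\omega_p\}$; for $\sc_{p,\cmon}$ the coordinatewise procedure must be interleaved so that the total length stays monotone --- equivalently one treats $\sc_{p,\ceq}$ and $\sc_{p,\cadj}$ of a finite power separately --- but the result is again $\sc_{p,\cmon}(G^n) = \sc_{p,\cmon}(G)$. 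Finally $\iota(G) = \iota(D^\bullet)$ for each invariant, because the canonical homomorphism $D^\bullet \to G = \red{D^\bullet}$ is an isoatomic transfer homomorphism between commutative atomic cancellative semigroups, so the transfer results above apply once more (and now no fiber subtlety occurs, as permutable factorizations only record associativity classes of atoms). The factoriality statement then drops out: by \subref{dl:basic}, $S$ is $\sd_p$-factorial precisely when $\sc_p(S) = 0$, and $\sc_p(S) = \sc_p(D^\bullet) = 0$ exactly when $D^\bullet$ --- hence $D$ --- is factorial; since $\sd_\dsim$ and $\sd_\dsubsim$ agree with $\sd_p$ on $S$, the same equivalence holds for $\sd_\dsim$- and $\sd_\dsubsim$-factoriality.

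The step I expect to be the main obstacle is the identity $\sc_{p,\cmon}(G^n) = \sc_{p,\cmon}(G)$: a chain obtained coordinate by coordinate is monotone within each coordinate but may fail to be monotone in the total length as soon as two coordinates move in opposite directions, so this requires either a careful interleaving of the per-coordinate monotone chains or a separate, somewhat technical, analysis of the equal-length and adjacent-length catenary degrees of a direct power. Everything else is bookkeeping built on \cref{strong-weak-distance}, \cref{awth-tame}, \cref{ext-hom}, and the structural facts assembled in \cref{triangular}.
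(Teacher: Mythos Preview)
Your proposal is correct and follows essentially the same route as the paper: transfer to $(\red{D^\bullet})^n$ via the isoatomic weak transfer homomorphism $\delta$ using \cref{strong-weak-distance} and \cref{awth-tame}, collapse the three distances via \subref{triangular:sim}, and then reduce the finite power $G^n$ to $G=\red{D^\bullet}$. The only cosmetic difference is that the paper cites \cite[Proposition~1.6.8]{GHK06} for the product-monoid identities $\sc_p(G^n)=\sc_p(G)$, $\st(G^n)=\st(G)$, $\omega(G^n)=\omega(G)$ (and handles the monotone case by the same reference), whereas you sketch the coordinatewise argument directly; your flagged ``obstacle'' for $\sc_{p,\cmon}(G^n)=\sc_{p,\cmon}(G)$ is exactly what that citation absorbs.
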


\begin{proof}
  From \cref{strong-weak-distance} it follows that $\sc_p(T_n(D)^\bullet) = \sc_p({(\red{D^\bullet})}^n)$, and \cite[Proposition 1.6.8.1]{GHK06} implies $\sc_p({(\red{D^\bullet})}^n) = \sc_p(\red{D^\bullet})$, the latter of which is trivially equal to $\sc_p(D^\bullet)$.
  Since \subref{triangular:sim} implies that $\sd_p = \sd_\dsim = \sd_\dsubsim$ on $T_n(D)^\bullet$, the remaining equalities for the catenary degrees follow.
  The claims for the monotone catenary degrees are shown in the same way.
  The equality $\st_p(S) = \st(D^\bullet)$ follows from \cref{awth-tame} together with \cite[Proposition 1.6.8.4]{GHK06}, and $\omega_p(S) = \omega(D^\bullet)$ follows similarly.

  Since $D$ is factorial if and only if $\sc_p(D^\bullet) = 0$ and $S$ is permutably factorial if and only if $\sc_p(S) = 0$, the final statement follows.
\end{proof}

\subsubsection*{Matrix Rings}

We now provide an even more straightforward example which illustrates how simple the abelianization of a somewhat complicated-looking noncommutative semigroup $S$ can be.

Let $D$ be a commutative principal ideal domain, let $n \in \bN$, and let $S=M_n(D)^\bullet$ denote the multiplicative semigroup of all $n \times n$ matrices with entries in $D$ having nonzero determinant.
Factorization invariants of this semigroup were studied in \cite{Baeth-Ponomarenko}.
Every $A \in S$ can be put into Smith Normal Form, that is, there exist $U$,~$V \in M_n(D)^\times$ and a diagonal matrix $C=[c_{i,j}] \in M_n(D)^\bullet$ with $c_{i+1,i+1}$ dividing $c_{i,i}$ for all $i \in [1,n-1]$ and such that $A = UCV$.
From this it follows immediately that $\det\colon M_n(D)^\bullet \rightarrow D^\bullet$ is a transfer homomorphism (cf. \cite[Lemma 2.2]{Baeth-Ponomarenko}).
Therefore $A$ is an atom if and only if $c_{1,1} \in \cA(D)$ and $c_{i,i} \in D^\times$ for all $i \in [2,n]$.
Hence the transfer homomorphism is isoatomic, and thus $\rab S \cong \red{D^\bullet}$ by \cref{assocwth}.

As in $T_n(D)$, in $R=M_n(D)$ the notions of associativity, similarity and subsimilarity coincide (if $A \in S$ is an atom with Smith Normal Form $C$ as above, then $\ann_{R}(R/RA)=\ann_R(R/RC)= Rc_{1,1}$, implying as for the ring of $n \times n$ upper triangular matrices, that two atoms with the same annihilator are associated), and thus results analogous to \cref{cor:tri-fact} hold.

\subsubsection*{Almost Commutative Semigroups}

Recall that if $S$ is a cancellative semigroup on which associativity is a congruence relation and for which $\red S$ is cancellative, the canonical homomorphism $\pi\colon S \to \red S$ is always an isoatomic transfer homomorphism.
However, this is only useful to us if $\red S$ is a semigroup that we understand better than $S$ itself.
In this example we consider the case where $\red S$ is commutative.
We say that a semigroup $S$ is \emph{almost commutative} if $ab \simeq ba$ for all $a$,~$b \in S$.

\begin{proposition}
  Let $S$ be a semigroup on which $\simeq$ is a congruence relation.
  \begin{enumerate}
    \item\label{ac:equiv} The following statements are equivalent:
          \begin{enumerateequiv}
          \item\label{aachar:sim} $S$ is almost commutative.
          \item\label{aachar:red} $\red S$ is commutative.
          \end{enumerateequiv}
    \item\label{ac:th} Suppose $S$ is almost commutative, and that $S$ and $\red S$ are cancellative.
        Then $\red S = \rab S$, and the canonical homomorphism $\pi\colon S \to \red S$ is an isoatomic transfer homomorphism to a commutative reduced cancellative semigroup.
  \end{enumerate}
\end{proposition}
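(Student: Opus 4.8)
\emph{Proof sketch / plan.}

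For \labelcref{ac:equiv} I would simply unwind the definitions. Since $\simeq$ is a congruence relation on the semigroup $S$, the quotient $\red S = S/\!\simeq$ carries the induced multiplication with $[a]_\simeq [b]_\simeq = [ab]_\simeq$. Hence $\red S$ is commutative if and only if $[ab]_\simeq = [ba]_\simeq$ for all $a$,~$b \in S$, that is, if and only if $ab \simeq ba$ for all $a$,~$b \in S$, which is exactly the definition of $S$ being almost commutative. Both implications \labelcref{aachar:sim}${}\Leftrightarrow{}$\labelcref{aachar:red} follow at once; there is nothing more to do.

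For \labelcref{ac:th} the plan is to assemble facts already available. By \labelcref{ac:equiv}, $\red S$ is commutative; by hypothesis it is cancellative, and it is reduced by construction. Recall from the preliminaries that, whenever $\simeq$ is a congruence relation on a cancellative small category (here the one-object category attached to $S$) whose reduced quotient is cancellative, the canonical homomorphism $\pi\colon S \to \red S$ is a transfer homomorphism. So $\pi$ is a transfer homomorphism onto a commutative reduced cancellative semigroup. That $\pi$ is isoatomic is automatic: if $u$,~$v \in \cA(S)$ with $\pi(u) \simeq \pi(v)$ in $\red S$, then, $\red S$ being reduced (so that $\red S^\times$ is trivial), $\pi(u) = \pi(v)$, which by definition of $\simeq$ means $u \simeq v$.

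It remains to identify $\red S$ with $\rab S$. Here I would invoke the remark following \cref{abelianization}: since $\simeq$ is a congruence relation on $S$, the pair $\big(\ab{(\red S)}, \pi'\big)$ satisfies the universal property of the reduced abelianization, hence $\rab S \cong \ab{(\red S)}$ canonically. By \labelcref{ac:equiv} the semigroup $\red S$ is already commutative, so it coincides with its own abelianization ($\ab T = T$ for any commutative semigroup $T$, since equality is the smallest congruence forcing products to commute). Therefore $\rab S = \ab{(\red S)} = \red S$. (Alternatively, one checks directly that $(\red S,\pi)$ has the universal property of the reduced abelianization: any homomorphism from $S$ into a commutative reduced semigroup sends associated elements to equal elements, hence kills $\simeq$ and factors through $\pi$, while $\red S$ is itself commutative and reduced.) I do not anticipate any real obstacle here: the whole statement follows from the transfer-homomorphism property of $S \to \red S$, the observation that almost commutativity of $S$ is literally commutativity of $\red S$, and the identification $\rab S \cong \ab{(\red S)}$; the one point to handle with a bit of care is checking that this last identification applies, which it does precisely because $\simeq$ is assumed to be a congruence on $S$.
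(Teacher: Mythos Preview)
Your proposal is correct and follows essentially the same approach as the paper's proof: part \labelcref{ac:equiv} is an immediate unwinding of definitions, and part \labelcref{ac:th} combines the already-established fact that $S \to \red S$ is an isoatomic transfer homomorphism with the identification $\rab S = \ab{(\red S)} = \red S$, the last equality holding because $\red S$ is commutative. Your write-up spells out the isoatomic verification and the universal-property identification in slightly more detail than the paper (which simply asserts ``$\pi$ is always an isoatomic transfer homomorphism''), but the argument is the same.
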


\begin{proof}
  \ref*{ac:equiv}
  \ref*{aachar:sim}${}\Rightarrow{}$\ref*{aachar:red}:
  Let $\lbar a$,~$\lbar b \in \red S$ and let $a$,~$b \in S$ be such that $\pi(a) = \lbar a$ and $\pi(b) = \lbar b$.
  Then $\lbar a \lbar b = \pi(ab) = \pi(ba) = \lbar b \lbar a$, where the middle equality holds since $ab \simeq ba$.

  \ref*{aachar:red}${}\Rightarrow{}$\ref*{aachar:sim}:
  Let $a$,~$b \in S$. Then $\pi(ab) = \pi(a)\pi(b) = \pi(b)\pi(a) = \pi(ba)$, and hence $ab \simeq ba$.

  \smallskip
  \ref*{ac:th} The homomorphism $\pi$ is always an isoatomic transfer homomorphism, and hence it suffices to show that $\red S = \rab S$.
  By \ref*{ac:equiv}, $\red S$ is commutative, whence $\red S = \ab{(\red S)}$, but we have identified $\ab{(\red S)} = \rab S$.
\end{proof}

Let $S$ be a normalizing Krull monoid.
Then $\red S$ embeds into a free abelian monoid (as a consequence of \cite[Lemma 4.6 and Theorem 4.13]{Geroldinger}), and hence is itself commutative, and in fact a commutative Krull monoid.
Thus normalizing Krull monoids are almost commutative, with their associated reduced monoids being commutative Krull monoids that have been well-investigated.
We can expect factorization theoretic results about $\red S$ to immediately carry over to $S$.
For instance, we have the following.

\begin{proposition}\label{prop:krull-finite-omega-invariant}
Let $S$ be a normalizing Krull monoid.
Then, for all $a \in S$, $\omega_p(S,a)<\infty$ and $\omega'_p(S,a)<\infty$.
\end{proposition}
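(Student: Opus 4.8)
The plan is to reduce the whole statement to the well-understood commutative theory of the reduced abelianization. By the discussion of almost commutative semigroups preceding the proposition, if $S$ is a normalizing Krull monoid then associativity is a congruence relation on $S$, both $S$ and $\red S$ are cancellative, $\red S$ is commutative, and in fact $\red S = \rab S$ is a commutative Krull monoid (being a saturated submonoid of a free abelian monoid, as noted above); moreover the canonical homomorphism $\pi\colon S \to \red S$ is an isoatomic transfer homomorphism. Since a commutative Krull monoid is atomic, $\pi$ is in particular an isoatomic weak transfer homomorphism to a commutative atomic cancellative semigroup.

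Next I would invoke \cref{awth-tame}. Both $S$ and $\red S$ are semigroups and $\red S$ is commutative, so parts \labelcref{awth-tame:omega} and \labelcref{awth-tame:omega'} apply and give, for every $a \in S$,
\[
  \omega_p(S,a) = \omega_p(\red S,\pi(a)) \qquad\text{and}\qquad \omega_p'(S,a) = \omega_p'(\red S,\pi(a)).
\]
Since $\red S$ is commutative, the invariants $\omega_p$, $\omega_p'$ and the ordinary $\omega$-invariant of the commutative setting all coincide on $\red S$ (as noted following \cref{omega}). Hence it suffices to prove that $\omega(\red S,b) < \infty$ for every $b \in \red S$, a statement purely about the commutative Krull monoid $\red S$.

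This last finiteness is standard. Fix a divisor theory $\partial\colon \red S \to \cF(P)$ of $\red S$, so that $x \mid y$ in $\red S$ if and only if $\partial(x) \mid \partial(y)$ in $\cF(P)$. Given $b \in \red S$ and $b_1$,~$\ldots\,$,~$b_n \in \red S$ with $b \mid b_1\cdots b_n$, only the finitely many primes $p$ dividing $\partial(b)$ are relevant; for each such $p$ one may select at most $\mathsf v_p(\partial(b))$ of the $b_i$ whose $\partial$-image is divisible by $p$, and the union of these selections is a subproduct of size at most $\length{\partial(b)}$ that is still divisible by $b$. Thus $\omega(\red S,b) \le \length{\partial(b)} < \infty$ (alternatively, cite the corresponding finiteness result for commutative Krull monoids in \cite{GHK06}). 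Combined with the previous paragraph, $\omega_p(S,a) = \omega(\red S,\pi(a)) < \infty$ and $\omega_p'(S,a) = \omega(\red S,\pi(a)) < \infty$.

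There is no genuine obstacle here: the content lies entirely in the transfer statement \cref{awth-tame} and the classical finiteness of the $\omega$-invariant on commutative Krull monoids. The only points that deserve a little care are verifying the hypotheses of \cref{awth-tame}\labelcref{awth-tame:omega,awth-tame:omega'} — that $S$ and $\red S$ are genuine semigroups and that $\pi$ is an \emph{isoatomic} weak transfer homomorphism with commutative target, all supplied by the preliminary discussion — and recording that it is the reduced abelianization $\rab S = \red S$, rather than some auxiliary transfer target, through which the bound passes.
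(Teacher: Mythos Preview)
Your proof is correct and follows essentially the same route as the paper: reduce to the commutative Krull monoid $\red S = \rab S$ via the isoatomic transfer homomorphism $\pi$ and \cref{awth-tame}, then invoke finiteness of the classical $\omega$-invariant for commutative Krull monoids. The only cosmetic difference is that the paper cites \cite[Theorem 4.2]{GH08} for this last finiteness, whereas you sketch the standard divisor-theory argument directly.
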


\begin{proof}
  Since $\red S$ is a commutative Krull monoid, we have $\omega'_p(\red S,a) = \omega_p(\red S, a) < \infty$ for all $a \in \red S$ by \cite[Theorem 4.2]{GH08}.
  The canonical homomorphism $\pi\colon S \to \red S=\rab S$ from $S$ to its reduced abelianization is an isoatomic transfer homomorphism, and thus the same holds true for $S$ by \cref{awth-tame}.
\end{proof}

We conclude this section by giving a construction of another family of semigroups which are almost commutative.

\begin{example}
Let $T$ be a commutative reduced cancellative semigroup and let $G$ be a group.
Further suppose that there is an action
\[
  \cdot: T \times G \rightarrow G
\]
of $T$ on $G$ such that the following two additional conditions hold:
\begin{enumerate}
\item\label{morphism} for each $t \in T$ and all pairs $g_1$, $g_2 \in G$,\; $t\cdot(g_1g_2)=(t\cdot g_1)(t\cdot g_2)$, and
\item\label{injective} for each $t \in T$ and all pairs $g_1$, $g_2 \in G$,\; $t\cdot g_1=t \cdot g_2$ implies $g_1=g_2$.
\end{enumerate}
(Equivalently, there is a homomorphism $T \to \Mono(G)$ given by $t \mapsto (g \mapsto t\cdot g)$, with $\Mono(G)$ denoting the semigroup of monomorphisms of $G$.)
The \emph{semidirect product} $S=G \rtimes T$ is the semigroup defined on the cartesian product $G \times T$ by the operation
\[
  (g_1,t_1)(g_2,t_2) = (g_1 (t_1 \cdot g_2), t_1 t_2).
\]
The identity element of this semigroup is $(1_G,1_T)$ and, since $T$ is reduced, $S^\times = \{\, (g, 1_T) \in S : g \in G \,\}$.

\noindent \emph{$S$ is cancellative:}
Suppose that $g_1$,~$g_2$,~$g_3 \in G$ and $t_1$,~$t_2$, $t_3 \in T$ are such that
\[
  (g_1,t_1)(g_2,t_2)=(g_1,t_1)(g_3,t_3).
\]
Then $t_1t_2=t_1t_3$, and due to the cancellativity of $T$, $t_2=t_3$.
Moreover $g_1 (t_1 \cdot g_2) = g_1 (t_1 \cdot g_3)$ implies $t_1\cdot g_2 = t_1\cdot g_3$ and hence, by \ref*{injective}, $g_2=g_3$.

Suppose that $g_1$,~$g_2$,~$g_3 \in G$ and $t_1$,~$t_2$, $t_3 \in T$ are such that
\[
  (g_2,t_2)(g_1,t_1)=(g_3,t_3)(g_1,t_1).
\]
Cancellativity of $T$ again implies $t_2=t_3$.
Then $g_2 (t_2 \cdot g_1) = g_3 (t_3 \cdot g_1) = g_3 (t_2 \cdot g_1)$, and hence $g_2=g_3$.

\noindent \emph{$S$ is normalizing:}
 Let $(g,t) \in S$.
 Let $(g_1,t_1) \in (g,t)S$, with $(g_2,t_2) \in S$ such that $(g_1,t_1) = (g,t)(g_2,t_2)$.
 Then
 \[
   (g(t\cdot g_2)(t_2\cdot g)^{-1},t_2)\, (g,t) = (g(t\cdot g_2)(t_2\cdot g)^{-1}(t_2\cdot g),t_2t) = (g (t \cdot g_2), t t_2) = (g,t)(g_2,t_2) = (g_1,t_1),
 \]
 showing that also $(g_1,t_1) \in S(g,t)$.
 (We used the commutativity of $T$ to conclude $t_2t=t t_2$.)
 A symmetrical argument gives the reverse containment and hence $S$ is normalizing.

\noindent \emph{$S$ is almost commutative:}
  Since $S$ is normalizing, associativity is a congruence relation on $S$.
  We claim $\red S \cong T$, and indeed, this follows because, for all $g \in G$ and $t \in T$,
  \[
    S^\times (g,t) S^\times = S^\times (g,t) = \{\, (g',t) : g' \in G \,\},
  \]
  where the first equality holds because $S$ is normalizing.
  Since $\red S \cong T$ is commutative, $S$ is almost commutative.
\end{example}

\section{Maximal orders}\label{sec:maxord}

In this final section we study arithmetical maximal orders and right-saturated subcategories of the integral elements of an arithmetical groupoid.
We refer the reader to the corresponding subsection of \cref{sec:preliminaries} on page~\pageref{arith-max-ord} for the definition of arithmetical maximal orders (\cref{amo}) and a discussion as to how they relate to Krull monoids and classical maximal orders in central simple algebras over global fields.

We show that certain conditions imply the existence of a transfer homomorphism from an arithmetical maximal order to a monoid of zero-sum sequences and that this transfer homomorphism has catenary degree in the permutable fibers at most $2$.
Thus we obtain \cref{thm:cat-transfer,cor:cat-transfer-semigroup} which generalize the corresponding result for commutative Krull monoids.
We then apply these results to classical maximal orders $R$ in central simple algebras over global fields satisfying the condition that every stable free left $R$-ideal is free.
Thereby we obtain \cref{thm:cat-transfer-csa} which gives a description of permutable, $\sd_\dsim$- and $\sd_\dsubsim$-catenary degrees of $R^\bullet$ in terms of the well-studied catenary degree of a monoid of zero-sum sequences over a finite abelian group.
Finally we note several immediate corollaries to this theorem.

We start by recalling the notion of an arithmetical groupoid, which is useful in describing the divisorial one-sided ideal theory of an arithmetical maximal order (see \cite[Section 4]{Smertnig} for more details and proofs).

\begin{definition}\label{lg}
  A lattice-ordered groupoid $(G, \le)$ is a groupoid $G$ together with a relation $\le$ on $G$ such that for all $e, f \in G_0$
  \begin{enumerate}
    \item\label{lg:ll} $(G(e,\cdot),\, \le \mid_{G(e,\cdot)})$ is a lattice (we write $\meet'_e$ and $\join'_e$ for the meet and join),
    \item\label{lg:rl} $(G(\cdot,f),\, \le \mid_{G(\cdot,f)})$ is a lattice (we write $\meet''_f$ and $\join''_f$ for the meet and join),
    \item\label{lg:il} $(G(e,f), \, \le \mid_{G(e,f)})$ is a sublattice of both $G(e,\cdot)$ and $G(\cdot,f)$.
      Explicitly: For all $a,b \in G(e,f)$ it holds that $a \meet'_e b = a \meet''_f b \in G(e,f)$ and $a \join'_e b = a \join''_f b \in G(e,f)$.
  \end{enumerate}

  If $a$,~$b \in G$ and $s(a) = s(b)$ we write $a \meet b = a \meet'_{s(a)} b$ and $a \join b = a \join'_{s(a)} b$.
  If $t(a) = t(b)$ we write $a \meet b = a \meet''_{t(a)} b$ and $a \join b = a \join''_{t(a)} b$.
  By \labelcref*{lg:il} this is unambiguous if both $s(a)=s(b)$ and $t(a)=t(b)$.
  The restriction of $\le$ to any of $G(e,\cdot)$, $G(\cdot,f)$ or $G(e,f)$ will simply be denoted by $\le$ again.

  An element $a$ of a lattice-ordered groupoid is called \emph{integral} if $a \le s(a)$ and $a \le t(a)$, and we write $G_+$ for the subset of all integral elements of $G$.
\end{definition}

\begin{definition}
  A lattice-ordered groupoid $G$ is called an \emph{arithmetical groupoid} if it has the following properties for all $e, f \in G_0$:

  \begin{enumerate}[label=\textup{(\textbf{P\arabic*})},ref=\textup{(P\arabic*)}]
  \item\label{p:int} For $a \in G$, $a \le s(a)$ if and only if $a \le t(a)$.
  \item\label{p:mod} $G(e, \cdot)$ and $G(\cdot, f)$ are modular lattices.
  \item\label{p:ord} If $a \le b$ for $a$,~$b \in G(e, \cdot)$ and $c \in G(\cdot,e)$, then $ca \le cb$. Analogously, if $a$,~$b \in G(\cdot, f)$ and $c \in G(f, \cdot)$, then $ac \le bc$.
  \item\label{p:sup} For every non-empty subset $M \subset G(e,\cdot) \cap G_+$, the supremum $\sup(M) \in G(e,\cdot)$ exists, and similarly for $M \subset G(\cdot, f) \cap G_+$.
    If moreover $M \subset G(e,f)$ then $\sup_{G(e,\cdot)}(M) = \sup_{G(\cdot,f)}(M)$.
  \item\label{p:bdd} $G(e,f)$ contains an integral element.
  \item\label{p:acc} $G(e, \cdot)$ and $G(\cdot, f)$ satisfy the ACC on integral elements.
\end{enumerate}
\end{definition}

Let $G$ be an arithmetical groupoid.
The set of integral elements $G_+$ is a reduced subcategory.
An element $a \in G_+$ is an atom if and only if it is \emph{maximal integral}, that is, it is maximal in $G_+(s(a),\cdot) \setminus \{s(a)\}$ (equivalently, in $G_+(\cdot,t(a)) \setminus \{t(a)\}$) with respect to $\le$.
The category $G_+$ is atomic.
The groupoid $G$ is a group if and only if it is free abelian with basis $\cA(G_+)$ and, in this case, $G_+$ is the free abelian monoid with basis $\cA(G_+)$.

For all $e \in G_0$, the group $G(e)$ is a free abelian group and, if $f \in G_0$, then every $a \in G(e,f)$ induces an order-preserving group isomorphism $G(e) \to G(f)$ defined by $x \mapsto a^{-1}xa$ that is independent of the choice of $a$.
For $e \in G_0$ and $x \in G(e)$ we define $(x) = \{\, a^{-1}xa : a \in G(e,\cdot) \,\}$, and we set
\[
  \bG = \{\, (x) : x \in G(e), e \in G_0 \,\}.
\]
For all $e \in G_0$, the map $G(e) \to \bG$ defined by $x \mapsto (x)$ is a bijection, and therefore induces the structure of a free abelian group on $\bG$.
We note that this structure is independent of the choice of $e$ and we call $\bG$ the \emph{universal vertex group}.
Moreover, with the order induced from any $G(e)$, $\bG$ is an arithmetical groupoid.
The subsemigroup of integral elements, $\bG_+$, is the free abelian monoid on $\cA(\bG_+)$, the maximal integral elements of $\bG_+$.
In particular, $\bG_+$ is factorial with set of prime elements $\cA(\bG_+)$.
For $\cX \in \bG$ we denote its unique preimage in $G(e)$ by $\cX_e$.

Let $u \in \cA(G_+)$ and let $x = \sup\{ x' \in G(s(u)) : x' \le u \} \in G(s(u))$.
We define $\eta(u) \in \bG$ as $\eta(u) = (x)$ and note that $\eta(u) \in \cA(\bG_+)$.
Let $a \in G_+$ and $\rf[s(a)]{u_1,\cdots,u_k} \in \sZ_{G_+}^*(a)$.
A key result on the factorization theory of $G_+$ is the following:
For all $\rf[s(a)]{v_1,\cdots,v_l} \in \sZ_{G_+}^*(a)$ we have $l=k$ and there exists a permutation $\sigma \in \fS_k$ such that $\eta(u_i) = \eta(v_{\sigma(i)})$ for all $i \in [1,k]$ (see \cite[Proposition 4.12]{Smertnig}, noting that $\Phi(u)=\eta(u)$ for $u \in \cA(G_+)$).
Thus we can extend $\eta$ to a homomorphism $G_+ \to \bG$ and further to a surjective homomorphism $\eta\colon G \to \bG$, which we call the \emph{abstract norm}.
We also recall that, given any permutation $\tau \in \fS_k$, there exist $w_1,\ldots,w_k \in \cA(G_+)$ such that $\rf[s(a)]{w_1,\cdots,w_k} \in \sZ_{G_+}^*(a)$ and $\eta(u_i) = \eta(w_{\tau(i)})$ for all $i \in [1,k]$.

\medskip
We now fix the following notation which we will tacitly assume up to and including \cref{thm:cat-transfer}.
Let $G$ be an arithmetical groupoid and let $H \subset G_+$ be a right-saturated subcategory (that is, $HH^{-1} \cap G_+ = H$).
Let $\eta\colon G \to \bG$ be the abstract norm.
By $\quo(\eta(H))$ we denote the quotient group of $\eta(H)$, where we may assume $\quo(\eta(H)) \subset \bG$.
We set $\cgrp = \bG / \quo(\eta(H))$ and, for $\cG \in \bG$, we set $[\cG] = \cG \quo(\eta(H)) \in \cgrp$.
The abelian group $C$ is a kind of class group and we shall use additive notation for it.
Finally, we define $\cgrp_M = \{\, [\eta(u)] \in \cgrp : \text{$u \in \cA(G_+)$} \,\}$ and note the following lemma.
The second statement of the lemma was stated as an assumption in \cite[Theorem 4.15]{Smertnig}, but in fact always holds.

\begin{lemma} \label{ex-cgrp}
  We have $\cgrp_M = \{\, [\cP] : \cP \in \cA(\bG_+) \,\}$.
  Let $e \in G_0$ and $g \in \cgrp_M$.
  Then there exists an element $u \in \cA(G_+)$ such that $s(u) = e$ and $[\eta(u)] = g$.
\end{lemma}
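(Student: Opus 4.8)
The plan is to prove both statements simultaneously, using the strong form of the factorization result for $G_+$ recalled just before \cref{ex-cgrp} --- namely, that for $a \in G_+$ with $\rf[s(a)]{u_1,\cdots,u_k} \in \sZ_{G_+}^*(a)$ and \emph{any} prescribed permutation $\tau \in \fS_k$, there exist $w_1,\ldots,w_k \in \cA(G_+)$ with $\rf[s(a)]{w_1,\cdots,w_k} \in \sZ_{G_+}^*(a)$ and $\eta(u_i) = \eta(w_{\tau(i)})$ for all $i \in [1,k]$. The point of this rearrangement result is precisely that one can cyclically permute which atom appears first in a factorization of a fixed element, hence change the source of the ``first'' atom to any desired object in $G_0$ that occurs as a source in some factorization of $a$.

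First I would establish $\cgrp_M = \{\, [\cP] : \cP \in \cA(\bG_+) \,\}$. The inclusion ``$\subset$'' is immediate: for $u \in \cA(G_+)$ we have $\eta(u) = \eta(u) \in \cA(\bG_+)$ by the definition of $\eta$ recalled above (the statement that $\eta(u) \in \cA(\bG_+)$), so $[\eta(u)] = [\cP]$ with $\cP = \eta(u) \in \cA(\bG_+)$. For ``$\supset$'', let $\cP \in \cA(\bG_+)$; since $\eta\colon G \to \bG$ is a surjective homomorphism of groupoids and (as noted) $\bG$ is a group, $\quo(\eta(H)) \subset \bG$ and every class in $\cgrp = \bG/\quo(\eta(H))$ has a representative of the form $\eta(a)$ for some $a \in G$. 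Writing $\eta(a) = \cP \cdot (\text{product of other factors of } \bG_+\text{ and their inverses})$ using that $\bG$ is free abelian on $\cA(\bG_+)$, and tracking each factor back through $\eta$, it suffices to realize $[\cP]$ itself by $[\eta(u)]$ for a single atom $u \in \cA(G_+)$; but every $\cP \in \cA(\bG_+)$ equals $\eta(u)$ for some $u \in \cA(G_+)$ by surjectivity of $\eta$ restricted to atoms (which follows from $\eta(G_+) = \bG_+$ together with the fact that $\eta$ maps atoms to atoms). Hence $[\cP] = [\eta(u)] \in \cgrp_M$.

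The harder and more substantive part is the second statement: given $e \in G_0$ and $g \in \cgrp_M$, to produce $u \in \cA(G_+)$ with $s(u) = e$ and $[\eta(u)] = g$. By definition of $\cgrp_M$ there is some $u_0 \in \cA(G_+)$ with $[\eta(u_0)] = g$; the issue is adjusting its source to $e$. I would argue as follows. Since $G$ is an arithmetical groupoid, $G(s(u_0), e)$ contains an integral element $c$ by property \ref{p:bdd}; then $c u_0' := u_0 \cdot (\text{suitable element})$ --- more carefully, consider the integral element $a := c \in G_+(s(u_0), e)$ and form a factorization of $c u_0$, or better, use the rearrangement result on a well-chosen product. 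Concretely: pick an integral element $b \in G(e, s(u_0))$ (again by \ref{p:bdd}), form the integral element $a = b u_0 \in G_+$ with $s(a) = e$, take any rigid factorization $\rf[e]{v_1, \cdots, v_k} \in \sZ_{G_+}^*(a)$, and note $\sum_i \eta(v_i) = \eta(a) = \eta(b) + \eta(u_0)$ in $\bG$, so $\eta(u_0) = \eta(a) - \eta(b)$ is a $\bZ$-combination of the $\eta(v_i)$ and $\eta(b)$'s atomic factors. This does not yet isolate a single atom with source $e$ mapping to $g$; the clean route is instead: choose $b$ so that $\eta(b)$ is trivial modulo $\quo(\eta(H))$ won't work in general.

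Let me restructure the last step. The main obstacle is decoupling the source constraint from the norm-class constraint. I would handle it by: (i) using surjectivity of $\eta$ and that $\bG_+$ is free abelian on $\cA(\bG_+)$ to write $g = [\cP]$ with $\cP \in \cA(\bG_+)$; (ii) invoking, for the fixed $e \in G_0$, the existence of an atom $u \in \cA(G_+)$ with $s(u) = e$ and $\eta(u) = \cP$ --- this is where I expect to need a genuine structural input about arithmetical groupoids, namely that for each $e \in G_0$ and each maximal integral $\cP \in \cA(\bG_+)$ there is a maximal integral element of $G_+$ starting at $e$ whose norm is $\cP$. This should follow from property \ref{p:sup} (the existence of suprema of integral elements in $G(e,\cdot)$) together with the correspondence $\eta(u) = (\sup\{x' \in G(s(u)) : x' \le u\})$: one takes $x := \cP_e \in G(e)$, forms $u := \sup\{x' \in G(e,\cdot) \cap G_+ : \text{``}x' \text{ lies over } x\text{''}\}$ appropriately, and checks it is maximal integral with $\eta(u) = \cP$. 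Then $[\eta(u)] = [\cP] = g$ and $s(u) = e$, as required. The bulk of the write-up will be verifying this last existence claim carefully from \ref{p:sup}, \ref{p:acc}, and the definition of $\eta$; the remainder is bookkeeping.
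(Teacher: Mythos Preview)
Your proposal eventually arrives at the right idea---produce an atom of $G_+$ lying above $\cP_e \in G(e)$---but two steps as written would fail.

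First, your argument for the inclusion $\{[\cP] : \cP \in \cA(\bG_+)\} \subset \cgrp_M$ invokes the equality $\eta(G_+) = \bG_+$, which is not among the facts recalled before the lemma (only surjectivity of $\eta\colon G \to \bG$ is stated) and is essentially equivalent to what you are proving. The paper handles both statements at once via the construction in the second part; once you know that for every $\cP \in \cA(\bG_+)$ and every $e \in G_0$ there is an atom $u$ with $s(u)=e$ and $\eta(u)=\cP$, the first equality is immediate.

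Second, your construction of $u$ via a supremum does not work. Taking $u := \sup\{x' \in G(e,\cdot) \cap G_+ : \cP_e \le x'\}$ yields $u = e$, since $e$ itself lies in that set; restricting to $x' < e$ does not help, as the supremum may still equal $e$. The correct tool is \labelcref{p:acc}, not \labelcref{p:sup}: the nonempty set $\{u' \in G(e,\cdot) : \cP_e \le u' < e\}$ consists of integral elements and therefore, by the ACC, possesses a \emph{maximal} element $u$. Any such $u$ is maximal integral in $G_+(e,\cdot)$, so $u \in \cA(G_+)$ with $s(u)=e$. Writing $\eta(u)=(x)$ with $x = \sup\{x' \in G(e) : x' \le u\}$, one has $\cP_e \le x \le u < e$; since $\cP_e$ is maximal integral in $G(e)$, this forces $x = \cP_e$ and hence $\eta(u) = (\cP_e) = \cP$. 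That is the paper's entire argument, and your detour through \labelcref{p:bdd} and the factorization-rearrangement result is unnecessary.
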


\begin{proof}
  If $u \in \cA(G_+)$, then $\eta(u) \in \cA(\bG_+)$, and therefore $\cgrp_M$ is contained in $\{\, [\cP] : \cP \in \cA(\bG_+) \,\}$.
  We prove the other inclusion.
  Let $\cP \in \cA(\bG_+)$ and $e \in G_0$.
  Since $G_+(e,\cdot)$ satisfies the ACC by \labelcref{p:acc}, the set
  \[
    \{\, u' \in G(e,\cdot) : \cP_e \le u' < e \,\}
  \]
  possesses a maximal element $u \in G_+(e,\cdot)$.
  Then $u$ is maximal integral, that is, $u \in \cA(G_+)$.
  Since $\cP_e$ is maximal integral in $G(e)$, it is necessarily the case that $\eta(u) = \cP$.
  Hence $[\eta(u)] = [\cP]$.
  Thus $\cgrp_M$ has the claimed form.
  The second statement follows similarly, by taking $\cP$ a representative of $g$.
\end{proof}

We write $\cF(\cgrp_M)$ for the free abelian monoid on $\cgrp_M$, and $\cB(\cgrp_M)$ for the monoid of zero-sum sequences over $C_M$ (see \cref{sec:preliminaries}, page~\pageref{monoid-zss}).
Since $\bG_+$ is a free abelian monoid, there exists a homomorphism $\varphi_0\colon \bG_+ \to \cF(\cgrp_M)$ such that $\varphi_0(\cP) = [\cP]$ for all $\cP \in \cA(\bG_+)$.
We denote by $\varphi\colon \eta(H) \to \cB(\cgrp_M)$ the restriction of $\varphi_0$ to $\eta(H)$.
Explicitly, if $\cX \in \eta(H)$, then there exist uniquely determined $k \in \bN_0$ and $\cP_1$, $\ldots\,$,~$\cP_k \in \cA(\bG_+)$ such that $\cX = \cP_1\cdots\cP_k$, and we have $\varphi(\cX) = [\cP_1]\cdots [\cP_k] \in \cB(\cgrp_M)$.

We define $\theta\colon H \to \cB(\cgrp_M)$ as $\theta = \varphi \circ \eta$.
Thus $\theta$ is a homomorphism from $H$ to the monoid of zero-sum sequences over $C_M$ given as follows: If $a \in H$ and $\rf[s(a)]{u_1,\cdots,u_k} \in \sZ^*_{G_+}(a)$, then $\theta(a) = [\eta(u_1)]\cdots[\eta(u_k)]$ (note that the factorization is into maximal integral elements of $G$, and not into atoms of $H$).
We call $\theta$ the \emph{block homomorphism of $H \subset G_+$}. \label{block-hom}

We will also require the following additional hypothesis:
\begin{enumerate}[label=\textup{(\textbf{N})},ref=\textup{(\textbf{N})}]
\item\label{ass:norm} for $a \in G$ with $s(a) \in H_0$, we have $a \in HH^{-1}$ if and only if $\eta(a) \in \quo(\eta(H))$.
\end{enumerate}
In \ref*{ass:norm} we may equivalently require $t(a) \in H_0$ instead of $s(a) \in H_0$ (the equivalence of the two statements follows by considering $a^{-1}$).
By \cite[Theorem 4.15]{Smertnig}, $\theta \colon H \to \cB(C_M)$ is a transfer homomorphism if \labelcref{ass:norm} holds, and it is this transfer homomorphism that we ultimately investigate.

We now provide two easy combinatorial lemmas that will later be useful to obtain lower bounds on the catenary degree.

\begin{lemma}\label{rigid-2}
  Suppose that $\cgrp_M = -\cgrp_M$ and that every class in $\cgrp_M$ contains at least two distinct prime elements of $\bG_+$.
  Then $\sc^*(H) \ge 2$.
\end{lemma}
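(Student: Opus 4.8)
The plan is to deduce $\sc^*(H)\ge 2$ from the existence of a single element $a\in H$ with at least two distinct rigid factorizations over $H$, together with the fact that the rigid distance is extremely fine. First I would record the elementary observation that for $z$,~$z'\in\sZ_H^*(a)$ with $\sd^*(z,z')\le 1$ one necessarily has $z=z'$: an edit sequence of cost $\le 1$ realizing $\sd^*(z,z')$ consists of a single operation $\cR_{m,n}$ with $\max\{m,n\}\le 1$, which replaces at most one atom in a fixed context by at most one atom, and since $\pi(z)=\pi(z')=a$ and $H$ is cancellative (and reduced, being a subcategory of $G_+$, so that $\sZ^*(H)$ is just the free monoid on $\cA(H)$), cancellation forces the replaced and the new part to coincide. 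Hence any chain joining two distinct rigid factorizations of a fixed element must contain a step of distance $\ge 2$, so $\sc^*(a)\ge 2$ as soon as $|\sZ_H^*(a)|\ge 2$; thus it suffices to construct such an $a$.

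For the construction I would use three facts already at hand. First, from the proof of \cref{ex-cgrp}: for every prime $\cP\in\cA(\bG_+)$ and every vertex $e\in G_0$ there is an atom $u\in\cA(G_+)$ with $s(u)=e$ and $\eta(u)=\cP$. Second, the recalled rearrangement property of $G_+$: given $a\in G_+$, a rigid factorization $\rf[s(a)]{u_1,\cdots,u_k}\in\sZ_{G_+}^*(a)$ and a permutation $\tau\in\fS_k$, there is a rigid factorization $\rf[s(a)]{w_1,\cdots,w_k}\in\sZ_{G_+}^*(a)$ with $\eta(w_{\tau(i)})=\eta(u_i)$ for all $i$. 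Third, the criterion furnished by right-saturation of $H$ together with the standing hypothesis \labelcref{ass:norm}: for $d\in G_+$ with $s(d)\in H_0$ one has $d\in H$ if and only if $[\eta(d)]=0$ in $\cgrp=\bG/\quo(\eta(H))$, and in that case also $t(d)\in H_0$.

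Now fix $e_0\in H_0$ (we tacitly assume $\cgrp_M\ne\emptyset$, which under $\cgrp_M=-\cgrp_M$ already forces $\cA(H)\ne\emptyset$). If $\cgrp_M\ne\{0\}$, choose $g\in\cgrp_M$ with $g\ne 0$; by hypothesis there are distinct primes $\cP_1,\cP_2\in\cA(\bG_+)$ with $[\cP_1]=[\cP_2]=g$, and since $\cgrp_M=-\cgrp_M$ there is a prime $\cQ$ with $[\cQ]=-g$. Using the first fact, build composable atoms of $G_+$ with norms $\cP_1,\cQ,\cP_2,\cQ$ into an element $a=u_1v_1u_2v_2$ with $s(a)=e_0$; then $[\eta(a)]=g-g+g-g=0$, so $a\in H$ by the third fact. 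Applying the second fact with the permutation swapping the two $\cP$'s, I obtain a second rigid factorization of $a$ in $\sZ_{G_+}^*(a)$ whose norm sequence is $(\cP_2,\cQ,\cP_1,\cQ)$. In the original factorization the grouping $b=u_1v_1$, $c=u_2v_2$ and in the new one the grouping $b'=(\text{first two atoms})$, $c'=(\text{last two})$ each produce halves of norm-class $0$, hence in $H$; and each half is an \emph{atom} of $H$, since its only proper nontrivial divisors in $G_+$ have norm $\cP_i$ or $\cQ$, of nonzero class $\pm g$, hence lie outside $H$. Thus $a=bc=b'c'$ with $b,c,b',c'\in\cA(H)$ and $\eta(b)=\cP_1\cQ\ne\cP_2\cQ=\eta(b')$, so $b\ne b'$ and $\rf{b,c}\ne\rf{b',c'}$ in $\sZ_H^*(a)$. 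In the remaining case $\cgrp_M=\{0\}$, the hypothesis gives distinct primes $\cP_1\ne\cP_2$ in the class $0$; the atoms $x_1,x_2\in\cA(G_+)$ over $e_0$ and $t(x_1)$ with norms $\cP_1,\cP_2$ already lie in $\cA(H)$ by the third fact, and rearranging gives a second factorization $\rf{w_1,w_2}$ of $a=x_1x_2$ with $\eta(w_1)=\cP_2\ne\cP_1=\eta(x_1)$, distinct from $\rf{x_1,x_2}$. Either way $|\sZ_H^*(a)|\ge 2$, which finishes the proof.

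The step I expect to be the crux is the passage from factorizations in $G_+$ to factorizations in $H$: one must check not merely that the ``halves'' obtained by rearrangement lie in $H$, but that they are \emph{atoms} of $H$, which is precisely where both hypotheses enter — the existence of two primes per class makes the two halves genuinely different, while $g\ne 0$ prevents a half from decomposing further inside $H$. The subcase $g=0$ has to be separated out because there single-prime elements can already belong to $H$, so the pairing trick degenerates and the shorter length-two construction is needed instead; and one should not overlook the implicit nondegeneracy $\cgrp_M\ne\emptyset$, without which the statement would fail.
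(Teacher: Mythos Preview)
Your proof is correct and follows essentially the same strategy as the paper's: construct an element of $H$ admitting two distinct rigid factorizations by building a length-$4$ product in $G_+$ with prescribed norms, rearranging via the permutation property of $G_+$, and grouping into atoms of $H$ using \labelcref{ass:norm} and right-saturation; the case distinction (nontrivial class vs.\ trivial class) and the use of two distinct primes per class to separate the factorizations match the paper. The only notable difference is presentational: you isolate the general fact that two distinct rigid factorizations of the \emph{same} element always have rigid distance $\ge 2$ as a preliminary lemma, whereas the paper absorbs this into the terse final clause ``distinct as rigid factorizations with distance at least two, because the factors have different abstract norms.''
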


\begin{proof}
  Let $e \in H_0$ and let $\cP$,~$\cQ \in \bG_+$ be two distinct prime elements.
  By assumption we can choose them such that $[\cP]=[\cQ]$.
  Let $u_1 \in \cA(G_+)$ with $s(u_1)=e$ and $\cP_e \le u_1$, so that $\eta(u_1) = \cP$.
  Since $C_M = -C_M$, \cref{ex-cgrp} implies that there exists an atom $u_2 \in \cA(G_+)$ such that $s(u_2) = t(u_1)$ and $[\eta(u_2)]=-[\cP]$. By hypothesis on $\cgrp_M$, we may further assume $\eta(u_2) \ne \cQ$.
  Our assumption \labelcref{ass:norm} together with $[\eta(u_1u_2)] = \vec 0 \in C$ implies $u_1 u_2 \in H H^{-1}$.
  Moreover, $u_1 u_2 \in G_+$ and $H$ is right-saturated in $G_+$, thus $u_1u_2 \in H$.
  Similarly, we construct $v_1$,~$v_2 \in \cA(G_+)$ such that $s(v_1)=t(u_2)$, $s(v_2)=t(v_1)$, $\eta(v_1) = \cQ$, and $[\eta(v_2)]=-[\cQ]$.
  As before, we have $v_1 v_2 \in H$.
  By \cite[Proposition 4.12.4]{Smertnig}, there exist $u_1'$,~$u_2'$,~$v_1'$,~$v_2' \in \cA(G_+)$ such that $\eta(u_1')=\cP$, $\eta(u_2')=\eta(u_2)$, $\eta(v_1') = \cQ$, $\eta(v_2') = \eta(v_2)$ and $a = u_1u_2v_1v_2 = v_1'v_2'u_1'u_2' \in H$.
  Again $v_1'v_2'$ and $u_1'u_2'$ lie in $H$.
  This gives rise to two rigid factorizations of $a$ in $H$, namely
  \begin{align*}
    \rf{u_1u_2,v_1v_2} \quad&\text{and}\quad\rf{v_1'v_2',u_1'u_2'} \qquad& &\text{if $[\cP]\ne \vec 0$ and $[\cQ] \ne \vec 0$, and}\\
    \rf{u_1,u_2,v_1,v_2}\quad&\text{and}\quad\rf{v_1',v_2',u_1',u_2'} \qquad& &\text{if $[\cP]=[\cQ]=\vec 0$.}
  \end{align*}
  That the stated elements are indeed atoms follows since $\theta\colon H \to \cB(\cgrp_M)$ is a transfer homomorphism.
  In each case, the two factorizations are distinct as rigid factorizations with distance at least two, because the factors have different abstract norms.
\end{proof}

\begin{lemma}\label{nonunique}
   Suppose that $\cgrp = \cgrp_M$ and that $C$ is non-trivial.
   If $\cgrp \not\cong \sC_2$ a cyclic group with two elements, then $H$ is not half-factorial.
   If $\cgrp \cong \sC_2$ and the non-trivial class contains at least two distinct prime elements $\cP$ and $\cQ$ of $\bG_+$, then there exist atoms $a$,~$b$,~$c$,~$d \in \cA(H)$ such that $ab=cd$, $\eta(a) = \cP^2$, $\eta(b)=\cQ^2$ and $\eta(c)=\eta(d)=\cP\cQ$.
\end{lemma}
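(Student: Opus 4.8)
The plan is to route everything through the block homomorphism $\theta\colon H\to\cB(\cgrp_M)$, which under \labelcref{ass:norm} is a transfer homomorphism by \cite[Theorem 4.15]{Smertnig}, and whose target is $\cB(\cgrp)$ since $\cgrp=\cgrp_M$ by hypothesis. For the first assertion I would use that transfer homomorphisms preserve sets of lengths, so that $\cL(H)=\cL(\cB(\cgrp))$ and it suffices to produce an element of $\cB(\cgrp)$ with two distinct factorization lengths. For the second assertion I would argue directly inside $H$, exactly as in the proof of \cref{rigid-2}.

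For the first assertion I distinguish two cases. If $\cgrp$ contains an element $g$ of order $n\ge 3$, then $g^n$ and $(-g)^n$ are minimal zero-sum sequences, while $g\cdot(-g)$ is a minimal zero-sum sequence of length $2$, so $\{2,n\}\subseteq\sL_{\cB(\cgrp)}(g^n(-g)^n)$. Otherwise $\cgrp$ has exponent $2$; since $\cgrp$ is non-trivial and $\cgrp\not\cong\sC_2$ this forces $\card{\cgrp}\ge 4$, and choosing independent $e_1$, $e_2\in\cgrp$ one checks that $e_1^2$, $e_2^2$, $(e_1+e_2)^2$ and $e_1\cdot e_2\cdot(e_1+e_2)$ are all atoms of $\cB(\cgrp)$, whence $\{2,3\}\subseteq\sL_{\cB(\cgrp)}\bigl(e_1^2e_2^2(e_1+e_2)^2\bigr)$. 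In either case $\cB(\cgrp)$, hence $H$, is not half-factorial. (Equivalently one may invoke the classical characterization of half-factoriality of a monoid of zero-sum sequences, e.g. \cite[Proposition 6.7.3]{GHK06}.)

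For the second assertion, fix $e\in H_0$. As in the proof of \cref{ex-cgrp}, choose atoms $u_1$, $u_2$, $v_1$, $v_2\in\cA(G_+)$ with $s(u_1)=e$, $s(u_2)=t(u_1)$, $s(v_1)=t(u_2)$, $s(v_2)=t(v_1)$, and $\eta(u_1)=\eta(u_2)=\cP$, $\eta(v_1)=\eta(v_2)=\cQ$. Set $a:=u_1u_2$, $b:=v_1v_2$; then $\eta(a)=\cP^2$, $\eta(b)=\cQ^2$, and since $[\cP]=[\cQ]$ is the non-trivial element of $\cgrp\cong\sC_2$ we have $[\eta(a)]=[\eta(b)]=\vec 0$, so \labelcref{ass:norm} together with right-saturation of $H$ in $G_+$ gives $a$, $b\in H$. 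Now $ab\in H$ has the rigid factorization $\rf[e]{u_1,u_2,v_1,v_2}\in\sZ^*_{G_+}(ab)$ with abstract norms $(\cP,\cP,\cQ,\cQ)$; applying the permutation statement for $\eta$ recalled before \cref{ex-cgrp}, there is a factorization $\rf[e]{w_1,w_2,w_3,w_4}\in\sZ^*_{G_+}(ab)$ with $\eta(w_1)=\eta(w_3)=\cP$ and $\eta(w_2)=\eta(w_4)=\cQ$. Setting $c:=w_1w_2$ and $d:=w_3w_4$, we get $\eta(c)=\eta(d)=\cP\cQ$ and $[\eta(c)]=[\eta(d)]=[\cP]+[\cQ]=\vec 0$, so again $c$, $d\in H$ by \labelcref{ass:norm} and right-saturation, with $ab=w_1w_2w_3w_4=cd$. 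Finally, $a$, $b$, $c$, $d$ are atoms of $H$: since $\theta$ is a transfer homomorphism it reflects atoms, and each of $\theta(a)$, $\theta(b)$, $\theta(c)$, $\theta(d)$ equals the sequence consisting of two copies of the non-trivial element of $\sC_2$, which is a minimal zero-sum sequence. The step that requires the most care is this last one: one must verify that all four products actually land in $H$ — which needs the source and target vertices of each product to lie in $H_0$, and this follows inductively once the preceding products are known to lie in $H$ — and that they are atoms of $H$ rather than merely composable products of atoms of $G_+$, which is precisely where the transfer homomorphism $\theta$ and the computation of Davenport's constant of $\sC_2$ enter; the remaining verifications are routine.
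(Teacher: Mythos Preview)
Your proof is correct and follows essentially the same approach as the paper. For the first assertion the paper simply cites that $\cB(\cgrp)$ is not half-factorial when $\card{\cgrp}\ge 3$, whereas you spell out the standard argument; for the second assertion the paper swaps only the middle two atoms via \cite[Proposition 4.12.4]{Smertnig} (taking $c=u_1v_1'$ and $d=u_2'v_2$ where $u_2v_1=v_1'u_2'$), while you invoke the full permutation statement to rearrange all four norms at once --- a cosmetic difference.
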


\begin{proof}
  If $\cgrp \not\cong \sC_2$, then $\cB(\cgrp)$ is not half-factorial, and the existence of the transfer homomorphism from $H$ to $\cB(\cgrp)$ implies that neither is $H$.
  If $\cgrp \cong \sC_2$, let $e \in H_0$.
  Let $u_1$,~$u_2$,~$v_1$,~$v_2 \in \cA(G_+)$ with $s(u_1)=e$, $s(u_2)=t(u_1)$, $s(v_1)=t(u_2)$, $s(v_2)=t(v_1)$, $\eta(u_1)=\eta(u_2)=\cP$, and $\eta(v_1)=\eta(v_2) = \cQ$.
  Assumption \labelcref{ass:norm} implies $a=u_1u_2 \in HH^{-1}$ and $b=v_1v_2 \in HH^{-1}$.
  Since moreover $a$,~$b \in G_+$ and $H$ is right-saturated in $G_+$, we find $a$,~$b \in H$.
  Because $\eta(u_1)$,~$\eta(u_2)$, $\eta(v_1)$, and $\eta(v_2)$ all lie in the non-trivial class of $\cgrp$, we have that $\theta(a)$ and $\theta(b)$ are atoms of $\cB(\cgrp_M)$.
  Since $\theta\colon H \to \cB(\cgrp_M)$ is a transfer homomorphism, and therefore also $a \in \cA(H)$ and $b \in \cA(H)$.
  By \cite[Proposition 4.12.4]{Smertnig}, there exist $u_2'$,~$v_1' \in \cA(G_+)$ such that $u_2v_1=v_1'u_2'$ and $\eta(v_1')=\cQ$, $\eta(u_2')=\cP$.
  As before, $c=u_1v_1' \in \cA(H)$ and further $d=u_2'v_1 \in \cA(H)$.
  By construction the elements $a$,~$b$,~$c$ and $d$ have the claimed properties.
\end{proof}

To better leverage results on the transfer of the catenary degree for commutative Krull monoids, we will show that $\eta$ is, in fact, a transfer homomorphism to a commutative Krull monoid.
This will allow us to view $\theta$ as a composite of two transfer homomorphisms.
The proof is very similar to that of \cite[Theorem 4.15]{Smertnig}, but for the reader's convenience we shall state it in full.

\begin{theorem}\label{eta-transfer}
  Let $G$ be an arithmetical groupoid, let $H$ be a right-saturated subcategory of $G$, and suppose that assumption \labelcref{ass:norm} holds.
  Then $\eta(H) \subset \bG_+$ is saturated, $\eta(H)$ is a commutative Krull monoid, and $\eta\colon H \to \eta(H)$ is a transfer homomorphism.
\end{theorem}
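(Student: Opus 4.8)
The plan is to prove the three assertions in turn: that $\eta(H)$ is a saturated submonoid of $\bG_+$, then — using that $\bG_+$ is free abelian — that $\eta(H)$ is a commutative Krull monoid, and finally that $\eta\colon H\to\eta(H)$ satisfies \labelcref{th:units} and \labelcref{th:lift}. The argument closely parallels the proof of \cite[Theorem 4.15]{Smertnig}.

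Since $\eta$ is a homomorphism carrying identities to identities and $\eta(G_+)\subseteq\bG_+$, the image $\eta(H)$ is a submonoid of the free abelian (hence cancellative and factorial) monoid $\bG_+$. For saturatedness I would take $\cX$,~$\cY\in\eta(H)$ with $\cX\mid_{\bG_+}\cY$ and put $\cZ=\cY\cX^{-1}\in\bG_+$, aiming to show $\cZ\in\eta(H)$. Fix $e\in H_0$ and let $z=\cZ_e\in G(e)$ be the unique preimage of $\cZ$ under the order-preserving group isomorphism $G(e)\to\bG$, $x\mapsto(x)$. Because $\cZ$ is integral in $\bG$ and this map sends $e$ to the identity of $\bG$, the element $z$ is integral in $G(e)$, i.e. $z\le s(z)=t(z)=e$, so $z\in G_+$. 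On the other hand $\cZ=\cY\cX^{-1}\in\quo(\eta(H))$, so the hypothesis \labelcref{ass:norm} (applicable since $s(z)=e\in H_0$) gives $z\in HH^{-1}$; right-saturatedness of $H$ in $G_+$ then yields $z\in HH^{-1}\cap G_+=H$, whence $\cZ=\eta(z)\in\eta(H)$. Thus the inclusion $\eta(H)\hookrightarrow\bG_+$ is a divisor homomorphism into a free abelian monoid, and by \cite[Theorem 2.4.8]{GHK06} the monoid $\eta(H)$ is a commutative Krull monoid.

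For the transfer property, observe first that $\eta(H)^\times=\{1\}$, since a unit of $\eta(H)$ is in particular a unit of $\bG_+$. Surjectivity of $\eta\colon H\to\eta(H)$ is immediate, so \labelcref{th:units} reduces to checking $\eta^{-1}(\{1\})=H^\times=H_0$: here $G_+$ is reduced, and for $a\in H$ any rigid factorization $\rf[s(a)]{u_1,\cdots,u_k}\in\sZ^*_{G_+}(a)$ gives $\eta(a)=\eta(u_1)\cdots\eta(u_k)$ with every $\eta(u_i)\in\cA(\bG_+)$ a prime of $\bG_+$, so $\eta(a)=1$ forces $k=0$, i.e. $a\in H_0$. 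For \labelcref{th:lift}, let $a\in H$ and $\eta(a)=\cB_1\cB_2$ with $\cB_1$,~$\cB_2\in\eta(H)$. Writing the prime factorizations $\cB_1=\cP_1\cdots\cP_r$ and $\cB_2=\cP_{r+1}\cdots\cP_k$ in $\bG_+$, uniqueness of factorization shows $\cP_1\cdots\cP_k$ is the prime factorization of $\eta(a)$. Starting from an arbitrary rigid factorization of $a$ in $G_+$ and invoking the freedom to realize the multiset of abstract norms in any prescribed order (recalled just before the statement), I would obtain $\rf[s(a)]{u_1,\cdots,u_k}\in\sZ^*_{G_+}(a)$ with $\eta(u_i)=\cP_i$ for all $i$. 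Set $a_1=u_1\cdots u_r$ and $a_2=u_{r+1}\cdots u_k$; then $a=a_1a_2$ in $G_+$, $\eta(a_1)=\cB_1$ and $\eta(a_2)=\cB_2$. Since $s(a_1)=s(a)\in H_0$ and $\eta(a_1)=\cB_1\in\eta(H)\subseteq\quo(\eta(H))$, hypothesis \labelcref{ass:norm} gives $a_1\in HH^{-1}$, hence $a_1\in HH^{-1}\cap G_+=H$; then $s(a_2)=t(a_1)\in H_0$ and $\eta(a_2)=\cB_2\in\quo(\eta(H))$ give $a_2\in H$ in the same way. As $\eta(H)^\times=\{1\}$ we may take $\varepsilon=1$, so \labelcref{th:lift} holds and $\eta$ is a transfer homomorphism.

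The main obstacle is the verification of \labelcref{th:lift}: one must ensure the subproducts $a_1$ and $a_2$ return to $H$, and this hinges on two things — the reordering of a complete factorization of $a$ in the noncommutative category $G_+$ so that the first $r$ atoms have abstract norms multiplying to $\cB_1$, and the fact that $\cB_1$,~$\cB_2$ are taken in the target $\eta(H)$ (not merely in $\bG_+$), so that they lie in $\quo(\eta(H))$ and \labelcref{ass:norm} applies to $a_1$ and $a_2$. The remaining points are bookkeeping with the arithmetical-groupoid axioms; the only slightly delicate one is that $G(e)\to\bG$ is an order isomorphism, which is what lets us translate between integrality in $\bG_+$ and integrality in $G_+$ in the saturatedness argument.
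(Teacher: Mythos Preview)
Your proof is correct and follows essentially the same strategy as the paper's: both establish \labelcref{th:lift} by reordering a rigid factorization of $a$ in $G_+$ so that the abstract norms of the atoms line up with the prescribed prime factorization $\cP_1\cdots\cP_k$, and then pull the two subproducts back into $H$ via \labelcref{ass:norm} and right-saturatedness.

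The one structural difference is in the saturatedness argument. The paper first proves a lifting claim (if $\eta(a)=\cX\cY$ in $\bG_+$, then $a=xy$ in $G_+$ with $\eta(x)=\cX$, $\eta(y)=\cY$, and $x\in H$ or $y\in H$ according as $\cX$ or $\cY$ lies in $\eta(H)$), and then uses it twice: once for \labelcref{th:lift}, and once for saturatedness by writing $a=xb$ with $b\in H$ and concluding $x=ab^{-1}\in HH^{-1}\cap G_+=H$. You instead bypass this and produce a witness for $\cZ=\cY\cX^{-1}$ directly as $z=\cZ_e\in G(e)\cap G_+$. This is a bit more economical, but it relies on the identification $\eta|_{G(e)}=(\,\cdot\,)$, i.e.\ that the abstract norm restricted to a vertex group coincides with the canonical isomorphism $G(e)\to\bG$; this is true (and implicit in \cite[Section~4]{Smertnig}), but you use it without comment in the step ``whence $\cZ=\eta(z)\in\eta(H)$'', so a one-line justification would be appropriate. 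The paper's route avoids invoking this fact explicitly at the cost of a slightly longer argument.
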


\begin{proof}
  We first show:
  If $a \in H$ and $\eta(a) = \cX \cY$ with $\cX$,~$\cY \in \bG_+$, then there exists $x$,~$y \in G_+$ such that $a=xy$, $\eta(x) = \cX$, and $\eta(y) = \cY$.
  Moreover, if $\cX \in \eta(H)$, then $x \in H$, and if $\cY \in \eta(H)$, then $y \in H$.

  Let $a \in H$ with $\eta(a) = \cX \cY$ for some $\cX$,~$\cY \in \bG_+$.
  Let $k$,~$l \in \bN_0$ and $\cP_1$,~$\ldots\,$,~$\cP_l \in \cA(\bG_+)$ be such that $\cX = \cP_1\cdots \cP_k$ and $\cY = \cP_{k+1}\cdots \cP_{l}$.
  By \cite[Proposition 4.12]{Smertnig}, there exists a rigid factorization $\rf[s(a)]{u_1,\cdots,u_l} \in \sZ_{G_+}(a)$ such that $\eta(u_i) = \cP_i$ for all $i \in [1,l]$.
  Now set $x = s(a)u_1\cdots u_k \in G_+$ and $y = t(u_k) u_{k+1}\cdots u_l \in G_+$.
  Then $a=xy$, $\eta(x)=\cX$, and $\eta(y) = \cY$.

  Now suppose that $\cX \in \eta(H)$.
  Since $s(x) \in H_0$ and $\eta(x) = \cX \in \eta(H)$, assumption \labelcref{ass:norm} implies $x \in H H^{-1} \cap G_+$.
  Because $H$ is right-saturated in $G_+$, this implies $x \in H$.
  Now assume that $\cY \in \eta(H)$.
  Then $t(y) \in H_0$, and $\eta(y) = \cY \in \eta(H)$.
  Applying \labelcref{ass:norm} to $y^{-1}$, it again follows that $y \in HH^{-1} \cap G_+$.
  Hence $y \in H$, and the claim is established.

  We now show that $\eta$ is a transfer homomorphism.
  Since $H$ as well as $\bG_+$ are reduced and $\eta\colon H \to \eta(H)$ is surjective by definition, \labelcref{th:units} holds.
  We have to verify \labelcref{th:lift}.
  Let $a \in H$ and $\eta(a) = \cB \cC$ with $\cB$, $\cC \in \eta(H)$.
  We need to show that there exist $b$,~$c \in H$ such that $a=bc$, $\eta(b)=\cB$ and $\eta(c) = \cC$.
  This follows immediately from the claim we just proved.

  It remains to show that $\eta(H) \subset \bG_+$ is saturated.
  Then, since $\eta(H)$ is a subsemigroup of the free abelian monoid $\bG_+$, $\eta(H)$ is a commutative Krull monoid.
  Let $\cA$, $\cB \in \eta(H)$ and $\cX \in \bG_+$ be such that $\cX \cB = \cA$.
  We need to show $\cX \in \eta(H)$.

  Let $a \in H$ with $\eta(a) = \cA = \cX \cB$.
  Again by the claim, there exist $b \in H$ and $x \in G_+$ such that $a=xb$, $\eta(b) = \cB$ and $\eta(x) = \cX$.
  Then $x = ab^{-1} \in H H^{-1} \cap G_+$.
  Since $H$ is right-saturated in $G_+$, this implies $x \in H$ and hence $\cX = \eta(x) \in \eta(H)$.
\end{proof}

Let $\sd$ be a distance on $H$.
The following is the key result on the catenary degree in the permutable fibers of $\eta$. It will ultimately allow us to transfer results on the permutable catenary degree in $\eta(H)$ (respectively $\cB(\cgrp_M)$) to results on the catenary degree in distance $\sd$ on $H$.

\begin{proposition}\label{prop:perm}
  Let $\sd$ be a distance on $H$.
  \begin{enumerate}
  \item \label{prop:perm:perm}
    Let $a \in H$, $z = \rf[s(a)]{a_1,\cdots,a_m} \in \sZ_H^*(a)$ with $m \in \bN_0$ and $a_1$,~$\ldots\,$,~$a_m \in \cA(H)$, and let $\sigma \in \fS_m$ be a permutation.
    Then there exist $a_1'$,~$\ldots\,$,~$a_m' \in \cA(H)$ such that $a=s(a)a_1'\cdots a_m'$, $\eta(a_i') = \eta(a_{\sigma(i)})$ for all $i \in [1,m]$, and such that there exists a $2$-chain in distance $\sd$ between $z$ and $z'=\rf[s(a)]{a_1',\cdots,a_m'}$.
    Furthermore, every rigid factorization in the chain has length $m$, and the sequence of abstract norms of its atoms is the same as the sequence of abstract norms of the atoms in $z$, up to permutation.

  \item \label{prop:perm:cat} $\sc_\sd(H, \eta) \le 2$.
  \end{enumerate}
\end{proposition}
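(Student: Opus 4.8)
The plan is to prove \labelcref{prop:perm:perm} first and then deduce \labelcref{prop:perm:cat} from it together with the rigidity of rigid factorizations in $G_+$. For \labelcref{prop:perm:perm} it suffices to treat the case where $\sigma$ is an adjacent transposition, since a general permutation is a product of adjacent transpositions and the corresponding single-step chains concatenate; along the way every factorization produced has length $m$ and the same multiset of abstract norms as $z$, so the whole chain lies in the $\eta$-permutable fiber of $z$, as the statement demands. So I would fix $i \in [1,m-1]$, set $b = a_i a_{i+1} \in H$, refine the atoms $a_i$ and $a_{i+1}$ of $H$ into rigid factorizations in $G_+$, concatenate these to a rigid $G_+$-factorization of $b$, and invoke the permutation result for arithmetical groupoids recalled above (see also \cite[Proposition 4.12]{Smertnig}) to obtain a rigid $G_+$-factorization of $b$ whose abstract-norm sequence is that of the refinement of $a_{i+1}$ followed by that of $a_i$. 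Splitting this factorization at the appropriate position writes $b = a_i'a_{i+1}'$ with $a_i', a_{i+1}' \in G_+$, $\eta(a_i') = \eta(a_{i+1})$ and $\eta(a_{i+1}') = \eta(a_i)$.

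The next step is to verify $a_i', a_{i+1}' \in \cA(H)$. For membership in $H$: since $s(a_i') = s(a_i) \in H_0$ and $\eta(a_i') = \eta(a_{i+1}) \in \eta(H) \subseteq \quo(\eta(H))$, hypothesis \labelcref{ass:norm} gives $a_i' \in HH^{-1}$; as $a_i' \in G_+$ and $H$ is right-saturated in $G_+$, we get $a_i' \in H$, and symmetrically $a_{i+1}' \in H$ (now using $t(a_{i+1}') = t(a_{i+1}) \in H_0$). That $a_i'$ and $a_{i+1}'$ are atoms of $H$ follows because $\eta\colon H \to \eta(H)$ is a transfer homomorphism by \cref{eta-transfer}, so it reflects atoms, and $\eta(a_i') = \eta(a_{i+1})$, $\eta(a_{i+1}') = \eta(a_i)$ are atoms of $\eta(H)$. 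Replacing $a_i, a_{i+1}$ by $a_i', a_{i+1}'$ in $z$ produces a rigid factorization $z_1$ of $a$ agreeing with $z$ outside positions $i$ and $i+1$, and translation invariance \labelcref{d:mul} together with \labelcref{d:len} give $\sd(z,z_1) = \sd(\rf{a_i,a_{i+1}}, \rf{a_i',a_{i+1}'}) \le \max\{2,2,1\} = 2$. This proves the adjacent case, hence \labelcref{prop:perm:perm}.

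For \labelcref{prop:perm:cat}, I would take $a \in H$ and two rigid factorizations $z = \rf[s(a)]{a_1,\cdots,a_m}$, $z' = \rf[s(a)]{b_1,\cdots,b_m}$ of $a$ lying in the same $\eta$-permutable fiber, so the multisets $\{\eta(a_1),\ldots,\eta(a_m)\}$ and $\{\eta(b_1),\ldots,\eta(b_m)\}$ of atoms of $\eta(H)$ coincide. Choosing $\sigma \in \fS_m$ with $\eta(a_{\sigma(i)}) = \eta(b_i)$ for all $i$ and applying \labelcref{prop:perm:perm}, I obtain a $2$-chain in the $\eta$-fiber from $z$ to some $\hat z = \rf[s(a)]{\hat a_1,\cdots,\hat a_m}$ with $\eta(\hat a_i) = \eta(b_i)$ for all $i$. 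The claim is then that $\hat z = z'$; granting this, the $2$-chain just constructed is the required one between $z$ and $z'$, whence $\sc_\sd(a,\eta) \le 2$, and taking the supremum over $a \in H$ gives $\sc_\sd(H,\eta)\le 2$. The equality $\hat z = z'$ follows from a rigidity statement that I would prove separately: a rigid factorization of $a$ in $H$ is uniquely determined by the sequence of abstract norms of its atoms. Refining each atom into a fixed chosen ordering of its prime factors in $\bG_+$ reduces this to the corresponding fact for $G_+$ — a rigid $G_+$-factorization of $a$ is determined by the ordered sequence of abstract norms of its maximal integral atoms — which I would establish by induction on the length, using that for fixed $e \in G_0$ and fixed $\cP \in \cA(\bG_+)$ there is a unique atom $u$ of $G_+$ with $s(u) = e$ and $\eta(u) = \cP$, and that this atom left-divides every $a \in G_+$ with $\cP \le \eta(a)$ (again by the permutation result for $G_+$); one then peels off this first atom and recurses.

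I expect the main obstacle to be twofold. In \labelcref{prop:perm:perm} the delicate point is ensuring that the reassembled factors $a_i'$, $a_{i+1}'$ lie in $H$ and not merely in $G_+$; this is exactly where \labelcref{ass:norm} and the right-saturation of $H$ are needed, and it is the reason one cannot simply quote the permutation result for $G_+$ and be done. In \labelcref{prop:perm:cat} the delicate point is the rigidity lemma for $G_+$: the uniqueness of an atom with prescribed source and abstract norm is a genuine feature of arithmetical groupoids (it fails for general modular lattices), so one must use the groupoid axioms — for instance that the abstract norm respects joins and that $\eta(u) \neq 0$ for $u \in \cA(G_+)$ — and one must then transport the statement from $G_+$ to $H$ through the transfer homomorphism $\eta$.
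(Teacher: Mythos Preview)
Your argument for \labelcref{prop:perm:perm} is correct and essentially identical to the paper's: reduce to an adjacent transposition, refine to atoms of $G_+$, permute using \cite[Proposition 4.12]{Smertnig}, and use \labelcref{ass:norm} together with right-saturation and the transfer property of $\eta$ to see that the reassembled factors are atoms of $H$.

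Your argument for \labelcref{prop:perm:cat}, however, rests on a false lemma. The claim that for fixed $e \in G_0$ and fixed $\cP \in \cA(\bG_+)$ there is a \emph{unique} atom $u \in \cA(G_+)$ with $s(u)=e$ and $\eta(u)=\cP$ fails already in the most basic examples. Take $R = M_2(\bZ)$, so that $G = \cF_v(\alpha)$ with $G_0$ the conjugate maximal orders; for a rational prime $p$ there are $p+1$ maximal left $R$-ideals with reduced norm $p\bZ$ (they correspond to the points of $\mathbb P^1(\mathbb F_p)$), all with left order $R$, hence all atoms of $G_+$ with source $R$ and abstract norm $(p\bZ)$. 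Consequently two rigid factorizations of the same element with the same \emph{ordered} sequence of abstract norms can be genuinely distinct, so your conclusion $\hat z = z'$ does not follow. The paper's own proof of \labelcref{prop:perm:cat} confronts exactly this: after reducing (via part \labelcref{prop:perm:perm}) to $\eta(a_i)=\eta(b_i)$ for all $i$, it refines $a_m$ and $b_m$ into atoms of $G_+$ and runs a lexicographic induction on $(m,k)$, where $k$ measures how far the two refinements disagree from the right; the inductive step uses the meet $u_k \wedge v_k$ in $G(\cdot,e)$ and modularity to produce an intermediate factorization within distance $2$ that strictly decreases $(m,k)$. This lattice-theoretic work is precisely what substitutes for the rigidity you assumed.
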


\begin{proof}
  \ref*{prop:perm:perm}
  We may assume $m \ge 2$, as the claim is trivially true otherwise.
  Since $\fS_m$ is generated by transpositions of the form $(i, i+1)$ for $i \in [1,m-1]$, it suffices to prove the claim where $\sigma$ is such a transposition.
  Moreover, by property \labelcref{d:mul} of a distance, we may even assume that $m=2$ and $\sigma=(1\, 2)$.
  Therefore it suffices to prove: If $a$, $b \in \cA(H)$ with $t(a)=s(b)$, then there exist $a'$, $b' \in \cA(H)$ such that $ab = b'a'$, $\eta(a) = \eta(a')$, and $\eta(b) = \eta(b')$.
  Then \labelcref{d:len} implies $\sd(\rf{a,b}, \rf{b',a'}) \le 2$.

  Let $a = u_1\cdots u_k$ and $b = v_1\cdots v_l$ with $k$, $l \in \bN$ and $u_1$,~$\ldots\,$,~$u_k$, $v_1$,~$\ldots\,$,~$v_l \in \cA(G_+)$.
  By \cite[Proposition 4.12.4]{Smertnig}, there exist $u_1'$,~$\ldots\,$,~$u_k'$, $v_1'$,~$\ldots\,$,~$v_l' \in \cA(G_+)$ such that
  \[
    u_1\cdots u_k v_1 \cdots v_l = v_1'\cdots v_l' u_1' \cdots u_k'
  \]
  with $\eta(u_i') = \eta(u_i)$ for all $i \in [1,k]$ and $\eta(v_i') = \eta(v_i)$ for all $i \in [1,l]$.
  Set $b' = v_1'\cdots v_l'$ and $a' = u_1' \cdots u_k'$.
  Then $\eta(a) = \eta(a')$ and $\eta(b) = \eta(b')$.
  Using assumption \labelcref{ass:norm}, we find $a' \in HH^{-1} \cap G_+ = H$.
  Similarly, $b' \in HH^{-1} \cap G_+ = H$.
  Using that $\eta$ is a transfer homomorphism, $\eta(a)=\eta(a')$, and $\eta(b)=\eta(b')$, it follows that $a'$,~$b' \in \cA(H)$ and hence the claim is shown.

  \ref*{prop:perm:cat}
  Let $a \in H$ and let $z$,~$z' \in \sZ^*_H(a)$ with $\sd_p(\eta(z),\eta(z'))=0$.
  We must show that there exists a $2$-chain of rigid factorizations of $a$ between $z$ and $z'$ lying in the permutable fiber of $z$.
  Since $\sd_p(\eta(z),\eta(z'))=0$, $z=\rf{a_1,\cdots,a_m}$ and $z'=\rf{b_1,\cdots,b_m}$ with $m \in \bN_0$, $a_1$,~$\ldots\,$,~$a_m$, $b_1$,~$\ldots\,$,~$b_m \in \cA(H)$ and such that there exists a permutation $\sigma \in \fS_m$ with $\eta(a_{\sigma(i)})=\eta(b_i)$.
  Applying (1), we may assume without loss of generality that $\sigma=\operatorname{id}$, and hence $\eta(a_i)=\eta(b_i)$ for all $i \in [1,m]$.

  Let $n \in \bN$ and let $u_1$, $\ldots\,$,~$u_n$, $v_1$, $\ldots\,$,~$v_n \in \cA(G_+)$ be such that $a_m=u_1\cdots u_n$ and $b_m=v_1\cdots v_n$ with $\eta(u_i)=\eta(v_i)$ for each $i \in [1,n]$.
  (This choice is always possible due to \cite[Proposition 4.12.4]{Smertnig} and since $\eta(b_m)=\eta(a_m)$.)
  Let $k \in [0,n]$ be minimal such that $u_l=v_l$ for all $l \in [k+1,n]$.
  We proceed by induction on $(m,k)$ using lexicographic order, i.e., $(m',k') < (m,k)$ if and only if $m' < m$ or $m'=m$ and $k' < k$.

  If $m \le 2$, then the claim is trivially true.
  If $m > 2$ but $k=0$, then $a_m=b_m$.
  By the induction hypothesis applied to $(m-1,k')$ for some $k' \in \bN_0$, there exists a $2$-chain from $\rf{a_1,\cdots,a_{m-1}}$ to $\rf{b_1,\cdots,b_{m-1}}$ with the corresponding properties.
  Appending a factor $a_m$ to each of these rigid factorizations gives a $2$-chain from $z$ to $z'$ (using property \ref{d:mul}).

  We therefore need only consider the case $m > 2$ and $k \ge 1$.
  Let $e=t(u_k)=t(v_k)$ and $c = u_k \meet v_k \in G_+(\cdot, e)$.
  By the choice of $k$, $u_k \ne v_k$, and thus $u_k \join v_k = e$ in $G(\cdot,e)$.
  Modularity of the lattice $G(\cdot,e)$ therefore implies that the element $c$ has length two in $G_+$.
  Consequently $c=v_{k-1}'u_k=u_{k-1}'v_k$ for some $v_{k-1}'$,~$u_{k-1}' \in \cA(G_+)$.
  By \cite[Proposition 4.12.1]{Smertnig} together with \cite[Lemma 4.11.4]{Smertnig}, we find that $\eta(v_{k-1}')=\eta(v_k)$ and $\eta(u_{k-1}')=\eta(u_k)$, so the abstract norms of all these four elements are the same.

  Since $a \le u_ku_{k+1}\cdots u_n$ and $a \le v_ku_{k+1}\cdots u_n \in G(\cdot,t(a))$, we have $a \le c u_{k+1}\cdots u_n$ as well.
  Hence there exists $d \in G_+$ with $a=dcu_{k+1}\cdots u_n$.
  Noting that $\eta(c) \mid \eta(a(u_{k+1}\cdots u_n)^{-1})$ and using (1) to rewrite $\rf{a_1,\cdots,a_{m-1}}$ and $\rf{b_1,\cdots,b_{m-1}}$ as necessary, we may assume without restriction that $\eta(c) \mid \eta(a_{m-1}a_m(u_{k+1}\cdots u_n)^{-1})$, while still preserving $\eta(a_i)=\eta(b_i)$ for each $i \in [1,m]$.

  As in the proof of \cref{eta-transfer} we can write $d=d_1d_2$ with $d_1$,~$d_2 \in G_+$, $\eta(d_1)=\eta(a_1\cdots a_{m-2})$ and $\eta(d_2cu_{k+1}\cdots u_n)=\eta(a_{m-1}a_m)$.
  (Here we have used that $\eta(c) \mid \eta(a_{m-1}a_m(u_{k+1}\cdots u_n)^{-1}$.)
  Note that $d_1 \in H$, by \labelcref{ass:norm} and $HH^{-1}\cap G_+=H$.
  Similarly, $d_2cu_{k+1}\cdots u_n \in H$.
  Since $\eta$ is a transfer homomorphism, we can take $\rf{b_1',\cdots,b_{m-2}'}$ to be a rigid factorization of $d_1$ in $H$ with $b_i' \in \cA(H)$ and $\eta(b_i')=\eta(a_i)$ for all $i \in [1,m-2]$.
  Arguing as we just have, we find a rigid factorization $b_{m-1}'*b_m'$ of $d_2cu_{k+1}\cdots u_n$ in $H$ satisfying $b_m' \le u_ku_{k+1}\cdots u_n$ in $G(\cdot,t(a))$ and $\eta(b_i')=\eta(a_i)$ for $i \in \{m-1,m\}$.

  By the induction hypothesis, there exists a $2$-chain from $z$ to $z''=\rf{b_1',\cdots,b_m'}$ lying in the permutable fiber of $z$.
  Since $v_{k-1}'u_k=u_{k-1}'v_k$, and since the abstract norms of all these elements are the same, we have $b_{m-1}'b_m'=b_{m-1}''b_m''$ with $b_{m-1}''$,~$b_m'' \in \cA(H)$ and $b_m'' \le v_ku_{k+1}\cdots u_n$ in $G(\cdot,t(a))$.
  By the induction hypothesis, there exists a $2$-chain from $z'$ to $z'''=\rf{b_1',\cdots,b_{m-2}',b_{m-1}'',b_m''}$, and by property \labelcref{d:mul} of distances, we have $\sd(z'',z''') \le 2$.
  Hence the claim is shown.
\end{proof}

\begin{theorem}\label{thm:cat-transfer}
  Let $G$ be an arithmetical groupoid, $\eta\colon G \to \bG$ its abstract norm, $H \subset G_+$ a right-saturated subcategory, $C = \bG/\quo(\eta(H))$ and $C_M = \{\, [\eta(u)] \in C : u \in \cA(G_+) \}$.
  Let $\theta\colon H \to \cB(C_M)$ be the block homomorphism of $H \subset G_+$ (see page~\pageref{block-hom}) and let $\sd$ be a distance on $H$.
  Assume that \labelcref{ass:norm} holds, that is, for $a \in G$ with $s(a) \in H_0$, we have $a \in HH^{-1}$ if and only if $\eta(a) \in \quo(\eta(H))$.

  Then $\theta$ is a transfer homomorphism and
  \[
    \sc_{\sd}(H,\theta) \le 2.
  \]
  Therefore, for all $a \in H$,
  \begin{align*}
    \sc_\sd(a) &\le \max\{ \sc_p(\theta(a)),\, 2 \},  & \sc_\sd(H) & \le \max\{ \sc_p(\cB(\cgrp_M)), 2 \}, \\
    \sc_{\sd,\cmon}(a) &\le \max\{ \sc_{p,\cmon}(\theta(a)),\, 2 \},  & \sc_{\sd,\cmon}(H) & \le \max\{ \sc_{p,\cmon}(\cB(\cgrp_M)), 2 \}, \\
    \sc_{\sd,\ceq}(a) &\le \max\{ \sc_{p,\ceq}(\theta(a)),\, 2 \},  & \sc_{\sd,\ceq}(H) & \le \max\{ \sc_{p,\ceq}(\cB(\cgrp_M)), 2 \}.
  \end{align*}
\end{theorem}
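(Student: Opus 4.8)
The plan is to realise $\theta$ as a composite of two transfer homomorphisms and then to import the commutative theory. I would factor $\theta = \varphi \circ \eta$, where $\eta\colon H \to \eta(H)$ is the abstract norm and $\varphi\colon \eta(H) \to \cB(\cgrp_M)$ is the restriction of $\varphi_0$. By \cref{eta-transfer}, $\eta$ is a transfer homomorphism, $\eta(H)$ is saturated in $\bG_+$, and $\eta(H)$ is a commutative Krull monoid; its divisor class group is $\bG/\quo(\eta(H)) = \cgrp$, and by \cref{ex-cgrp} the set of classes containing prime divisors of $\bG_+$ is exactly $\cgrp_M$. Hence $\varphi$ is precisely the block homomorphism of the commutative Krull monoid $\eta(H)$; in particular it is a transfer homomorphism, and since a composite of transfer homomorphisms is again one, so is $\theta$. (Since $\cB(\cgrp_M)$ is atomic and $\theta$ is a transfer homomorphism, $H$ is atomic, so \cref{cf} is applicable.) The substance of the theorem is then the bound $\sc_\sd(H,\theta) \le 2$.

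For this I would fix $a \in H$ and $z$,~$z' \in \sZ^*_H(a)$ with $\lbar\theta(z) = \lbar\theta(z')$. By \cref{ext-hom}, $\varphi$ and $\eta$ induce maps on factorizations compatible with passage to permutable factorizations, so $\lbar\theta = \varphi_p \circ \lbar\eta$ with $\varphi_p\colon \sZ_p(\eta(H)) \to \sZ_p(\cB(\cgrp_M))$ induced by $\varphi$; thus the permutable factorizations $y_0 := [\eta^*(z)]_p$ and $y_n := [\eta^*(z')]_p$ of $\eta(a)$ lie in the same fibre of $\varphi_p$. As $\varphi$ is the block homomorphism of a commutative Krull monoid, its catenary degree in the fibres is at most $2$ (see \cite[Lemma~3.2.6 and Theorem~3.2.8]{GHK06}), so I obtain a chain of permutable factorizations $y_0, y_1, \ldots, y_n$ of $\eta(a)$ with $\varphi_p(y_i) = \lbar\theta(z)$ for all $i$ and $\sd_p(y_{i-1},y_i) \le 2$ for all $i \in [1,n]$. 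Choose rigid lifts $\lbar z_i \in \sZ^*_{\eta(H)}(\eta(a))$ of $y_i$ for $i \in [1,n-1]$, and put $\lbar z_0 = \eta^*(z)$, $\lbar z_n = \eta^*(z')$. Now I would apply \subref{cf:chain} to the transfer homomorphism $\eta\colon H \to \eta(H)$ (legitimate, as $\eta(H)$ is commutative, reduced and cancellative): using $\sc_\sd(a,\eta) \le 2$ from \cref{prop:perm} and $\sd_p(\lbar z_i,\lbar z_{i+1}) = \sd_p(y_i,y_{i+1}) \le 2$, this produces rigid factorizations $z = z_0, z_1, \ldots, z_{n-1}, z_n = z'$ of $a$, consecutive ones joined by a $2$-chain in distance $\sd$, with $\eta^*(z_i) = \lbar z_i$ throughout. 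Concatenating gives a $2$-chain in distance $\sd$ from $z$ to $z'$, and the remaining point is that it stays inside the $\theta$-permutable fibre of $z$: by the explicit construction in the proof of \cref{cf}, every rigid factorization $w$ occurring between $z_i$ and $z_{i+1}$ lies in the $\eta$-permutable fibre of $z_i$ or of $z_{i+1}$ (the single bridging step has both endpoints in these fibres), so $\lbar\eta(w) \in \{y_0,\ldots,y_n\}$ along the whole chain, whence $\lbar\theta(w) = \varphi_p(\lbar\eta(w)) = \lbar\theta(z)$. This yields $\sc_\sd(a,\theta) \le 2$, and the supremum over $a$ gives $\sc_\sd(H,\theta) \le 2$.

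The displayed inequalities would then follow at once from \subref{cf:chain} applied (with $k=0$) to $\theta\colon H \to \cB(\cgrp_M)$: for each $a \in H$ this gives $\sc_\sd(a) \le \max\{\sc_p(\theta(a)),\sc_\sd(a,\theta)\} \le \max\{\sc_p(\theta(a)),2\}$, together with $\sc_{\sd,\cmon}(a) \le \max\{\sc_{p,\cmon}(\theta(a)),2\}$ and $\sc_{\sd,\ceq}(a) \le \max\{\sc_{p,\ceq}(\theta(a)),2\}$; since $\theta(a) \in \cB(\cgrp_M)$ the right-hand sides are at most $\max\{\sc_p(\cB(\cgrp_M)),2\}$ and its monotone and equal variants, and taking suprema over $a$ gives the statements for $H$.

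I expect the main obstacle to be the bookkeeping in the middle step — looking inside the proof of \cref{cf} to be sure that lifting the $\varphi$-fibre chain via $\eta$ never moves a factorization out of the union of the $\eta$-fibres of the $y_j$, which is exactly what keeps the lifted chain inside the $\theta$-permutable fibre of $z$. A smaller point needing care is confirming that $\varphi$ is the block homomorphism of $\eta(H)$ in the precise sense of \cite{GHK06} (identifying the divisor class group with $\cgrp$ and the classes with prime divisors with $\cgrp_M$), so that the commutative transfer results apply without modification.
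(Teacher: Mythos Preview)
Your proposal is correct and follows essentially the same route as the paper: factor $\theta = \varphi \circ \eta$, invoke \cref{eta-transfer} and \cref{prop:perm} for $\eta$, and the commutative Krull theory for $\varphi$, then deduce the displayed inequalities from \subref{cf:chain}. The only real difference is one of detail: where the paper simply asserts that $\sc_\sd(H,\eta) \le 2$ together with $\sc_p(\eta(H),\varphi) \le 2$ yields $\sc_\sd(H,\theta) \le 2$, you actually carry out the composition argument by lifting a $\varphi$-fibre $2$-chain via \subref{cf:chain} and checking, by inspecting the proof of \cref{cf}, that every intermediate factorization stays in the $\theta$-permutable fibre. Your bookkeeping there is sound (each link of the lifted chain lies in the $\eta$-fibre of some $y_j$, hence in the common $\theta$-fibre), and the smaller point you flag about identifying $\varphi$ with the block homomorphism of $\eta(H)$ is exactly what the paper handles by citing \cite[Proposition~3.4.8]{GHK06}.
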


\begin{proof}
  We need only show that $\theta$ is a transfer homomorphism with $\sc_{\sd}(H,\theta) \le 2$.
  The remaining claims then follow from \subref{cf:chain}.
  By \cref{prop:perm}, $\eta\colon H \to \eta(H)$ is a transfer homomorphism with $\sc_\sd(H,\eta) \le 2$ and $\eta(H)$ is a commutative Krull monoid.
  Now, by \cite[Proposition 3.4.8]{GHK06}, the homomorphism $\varphi\colon \eta(H) \to \cB(\cgrp_M)$ is a transfer homomorphism with $\sc_p(\eta(H), \varphi) \le 2$.
  Since $\theta = \varphi \circ \eta$, the map $\theta$ is a transfer homomorphism with $\sc_\sd(H,\theta) \le 2$.
\end{proof}

We now derive, from the previous abstract result, a result for arithmetical maximal orders by means of their divisorial one-sided ideal theory. (See \cref{sec:preliminaries}, page~\pageref{arith-max-ord} for the definition of an arithmetical maximal order and a short summary of the necessary notions of their divisorial one-sided ideal theory. More details can be found in \cite[Section 5]{Smertnig}.)
Let $Q$ be a quotient semigroup and $S$ an order in $Q$.
We set
\[
  \cH_S = \{\, q^{-1}(Sa)q : q \in Q^\bullet, a \in S^\bullet \,\}
\]
to be the category of principal one-sided $S'$-ideals with $S'$ conjugate to $S$ and having a cancellative generator.
Recall that $\cH_S$ forms a cancellative small category with $(\cH_S)_0 = \{\, q^{-1}Sq : q \in Q^\bullet \,\}$ such that $s(q^{-1}(Sa)q) = q^{-1}Sq$ and $t(q^{-1}(Sa)q) = (aq)^{-1}Saq$ for each $q \in Q^\bullet$ and $a \in S^\bullet$, and such that the multiplication of ideals is induced by the multiplication of elements in $Q^\bullet$:
\[
  q^{-1}(Sa)q \cdot (aq)^{-1}(Sb)aq = q^{-1}(Sba)q.
\]

Despite not having a homomorphism $S^\bullet \to \cH_S$, we have, for all $a \in S^\bullet$ and $q \in Q^\bullet$, a bijection $\phi_{a,q}\colon \sZ^*_S(a) \to \sZ^*_{\cH_S}(q^{-1}(Sa)q)$ given by
\[
  \rf[\varepsilon]{u_1,\cdots,u_k} \mapsto \rf{q^{-1}(Hu_k) q,   q^{-1}u_k^{-1}(H u_{k-1}) u_k q,  \cdots,  q^{-1}u_k^{-1}\cdots u_2^{-1}(Hu_1) u_2\cdots u_k q}.
\]
(See \cite[Proposition 5.20]{Smertnig}, where this is stated in a more restrictive setting, but with a proof that generalizes verbatim.)
Observe that if $q=1$, the right hand side is essentially a multiplicative way of writing the chain of left $H$-ideals $H \supsetneq H u_k \supsetneq H u_{k-1} u_k \supsetneq \cdots \supsetneq H u_1 \cdots u_k$.
In this way, questions about factorization in $S^\bullet$ translate into questions about factorization in $\cH_S$.

To extend distances from $S^\bullet$ to $\cH_S$ we need to impose one additional mild restriction on the distance.
If $n \in Q^\bullet$ normalizes $S$, that is $n^{-1}Sn = S$, it induces an automorphism on $S^\bullet$ given by $a \mapsto n^{-1}an$ for $a \in S^\bullet$. This automorphism in turn induces an automorphism
\[
\psi_n\colon \sZ^*(S) \to \sZ^*(S),\;\; \rf[\varepsilon]{u_1,\cdots,u_k} \mapsto \rf[(n^{-1}\varepsilon n)]{n^{-1}u_1n,\cdots, n^{-1} u_k n},
\]
which induces bijections $\sZ_S^*(a) \to \sZ_S^*(n^{-1}an)$ for all $a \in S^\bullet$.
We say that a distance $\sd$ on $S^\bullet$ is \emph{invariant under conjugation by normalizing elements} if $\sd(z,z') = \sd(\psi_n(z),\psi_n(z'))$ for all $z$,~$z' \in \sZ^*(S)$ with $\pi(z)=\pi(z')$ and for all $n \in Q^\bullet$ that normalize $S$.

Similarly, the elements of $Q^\bullet$ normalizing $S$ act on $\cH_S$ by mapping
\[
  q^{-1}(Sa)q \;\mapsto\; n^{-1}q^{-1}(Sa)qn = n^{-1}q^{-1}n(Sn^{-1}an)n^{-1}qn.
\]
This in turn induces an action of the normalizing elements on $\sZ^*(\cH_S)$, where $n$ acts by
\[
  \Psi_n\colon \sZ^*(\cH_S) \to \sZ^*(\cH_S),\; \rf[S']{I_1,\cdots,I_k} \mapsto \rf[(n^{-1}S'n)]{n^{-1}I_1n, \cdots, n^{-1}I_kn},
\]
and by restriction we obtain bijections $\sZ^*_{\cH_S}(q^{-1}(Sa)q) \to \sZ^*_{\cH_S}(n^{-1}q^{-1}n(Sn^{-1}an)n^{-1}qn)$ for all $q \in Q^\bullet$ and $a \in S^\bullet$.
Again, we say that a distance $\sd$ on $\cH_{S}$ is \emph{invariant under conjugation by normalizing elements} if $\sd(z,z')=\sd(\Psi_n(z),\Psi_n(z'))$ for all $z$,~$z' \in \sZ^*(\cH_S)$ with $\pi(z)=\pi(z')$ and for all $n \in Q^\bullet$ that normalize $S$.

Observe that each distance introduced ($\sd^*$, $\sd_p$, $\sd_\dsim$, and $\sd_\dsubsim$) is in fact invariant under any automorphism of $S$, and that it seems quite reasonable to expect any natural distance to have this property.

We now observe how the family of bijections $\phi_{a,q}\colon \sZ^*_S(a) \to \sZ^*_{\cH_S}(q^{-1}(Sa)q)$ behaves with respect to conjugation by normalizing elements.
Let $z$,~$z' \in \sZ^*(S)$ and $q \in Q^\bullet$.
Set $a = \pi(z)$ and $b=\pi(z')$.
Then $\phi_{ab,q}(z\rfop z') = \phi_{b,q}(z') \rfop \phi_{a,bq}(z)$ and $\phi_{n^{-1}an, n^{-1}qn}(\psi_n(z)) = \Psi_n(\phi_{a,q}(z)) = \phi_{a,qn}(z)$ for each $n \in Q^\bullet$ that normalizes $S$.
Now let $z$,~$z' \in \sZ^*(\cH_S)$ with $t(z)=s(z')$.
We may suppose $\pi(z)=q^{-1}(Sa)q$ and $\pi(z') = (aq)^{-1}(Sb)(aq)$ with $q \in Q^\bullet$ and $a$,~$b \in S^\bullet$.
Then $\phi_{ba,q}^{-1}(z \rfop z') = \phi_{b,aq}^{-1}(z') \rfop \phi_{a,q}^{-1}(z)$ and $\phi_{a,qn}^{-1}(\Psi_n(z)) = \phi_{a,q}^{-1}(z)$ for each $n \in Q^\bullet$ that normalizes $S$.

The proof of the following proposition is now relatively straightforward, but somewhat cumbersome.

\begin{proposition}\label{prop:dist-bij}
  Let $Q$ be a quotient semigroup and let $S$ be an order in $Q$.

  Let $\sd_S$ be a distance on $S^\bullet$ that is invariant under conjugation by normalizing elements.
  Then there exists a unique distance on $\cH_S$, denoted by $\sd_{\cH}$, such that
  \[
    \sd_{\cH}(z,z') = \sd_{S}(\phi_{a,q}^{-1}(z), \phi_{a,q}^{-1}(z'))
  \]
  for all $z$, $z' \in \sZ^*(\cH_S)$ and for all $q \in Q^\bullet$ and $a \in S^\bullet$ with $\pi(z)=\pi(z')=q^{-1}(Sa)q$.

  Conversely, if $\sd_\cH$ is a distance on $\cH_S$ that is invariant under conjugation by normalizing elements, then there exists a unique distance $\sd_S$ on $S^\bullet$ such that
  \[
    \sd_S(z,z') = \sd_{\cH}(\phi_{a,q}(z), \phi_{a,q}(z'))
  \]
  for all $z$,~$z' \in \sZ^*(S)$ with $a=\pi(z)=\pi(z')$ and $q \in Q^\bullet$.

  In particular, there is a bijection between distances on $S^\bullet$ which are invariant under conjugation by normalizing elements and distances on $\cH_S$ which are invariant under conjugation by normalizing elements.
\end{proposition}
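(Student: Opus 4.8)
The plan is to show that the two prescriptions in the statement define mutually inverse maps between the two sets of distances, using the compatibility relations recorded for the family $\{\phi_{a,q}\}$ together with the translation invariance \labelcref{d:mul}. In both directions uniqueness is automatic: the displayed formula assigns a value to every pair $(z,z')$ lying in the domain of the distance being constructed (for $\sd_\cH$, every pair $z$, $z'$ with $\pi(z)=\pi(z')$ has common value of the form $q^{-1}(Sa)q$; for $\sd_S$, every pair with $\pi(z)=\pi(z')=a$), so there is nothing left to choose. The entire content is therefore well-definedness and the verification of the distance axioms, and these transport along the bijections $\phi_{a,q}$, each of which preserves lengths.

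For the forward direction, fix a distance $\sd_S$ on $S^\bullet$ invariant under conjugation by normalizing elements, and for $z$, $z'\in\sZ^*(\cH_S)$ with $\pi(z)=\pi(z')$ choose a pair $(a,q)$ with $\pi(z)=q^{-1}(Sa)q$ and set $\sd_\cH(z,z')=\sd_S(\phi_{a,q}^{-1}(z),\phi_{a,q}^{-1}(z'))$. The crux is that this is independent of $(a,q)$. Comparing sources shows that any second admissible pair is of the form $(\varepsilon\, n^{-1}an,\, n^{-1}q)$ with $n\in Q^\bullet$ normalizing $S$ and $\varepsilon\in S^\times$; from the explicit description of $\phi$ one reads off $\phi_{\delta a,q}=\phi_{a,q}\circ\lambda_\delta^{-1}$, where $\lambda_\delta$ prepends the unit $\delta$, and the stated relation gives $\phi_{n^{-1}an,\,n^{-1}q}^{-1}=\psi_n\circ\phi_{a,q}^{-1}$. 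Hence passing from $(a,q)$ to the second pair replaces $\phi_{a,q}^{-1}(z)$ by $\lambda_\varepsilon\psi_n(\phi_{a,q}^{-1}(z))$, and the value of $\sd_S$ is unchanged: the unit prefix is absorbed by \labelcref{d:mul} and the conjugation by $n$ by the invariance hypothesis. The properties \labelcref{d:ref}, \labelcref{d:sym}, \labelcref{d:tri} and the length bounds \labelcref{d:len} are then inherited from $\sd_S$, since $\phi_{a,q}$ is a length-preserving bijection of the relevant fibres; \labelcref{d:mul} for $\sd_\cH$ follows from $\phi_{ba,q}^{-1}(z\rfop z')=\phi_{b,aq}^{-1}(z')\rfop\phi_{a,q}^{-1}(z)$, which turns a common prefix (resp.\ suffix) on the $\cH_S$ side into a common suffix (resp.\ prefix) on the $S^\bullet$ side, to which \labelcref{d:mul} for $\sd_S$ applies. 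Finally $\phi_{a,qn}=\Psi_n\circ\phi_{a,q}$ for normalizing $n$ gives invariance of $\sd_\cH$ under conjugation by normalizing elements.

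The converse direction is entirely parallel. Given $\sd_\cH$ invariant under conjugation by normalizing elements, put $\sd_S(z,z')=\sd_\cH(\phi_{a,q}(z),\phi_{a,q}(z'))$ for $a=\pi(z)=\pi(z')$; one checks using $\phi_{a,qn}=\Psi_n\circ\phi_{a,q}$ and the invariance of $\sd_\cH$ that the value does not depend on the auxiliary element $q$, so the displayed formula holds as stated. That $\sd_S$ is a distance invariant under conjugation by normalizing elements is verified exactly as above, now using the non-inverted relations $\phi_{ba,q}(z\rfop z')=\phi_{b,q}(z')\rfop\phi_{a,bq}(z)$ and $\phi_{n^{-1}an,\,n^{-1}qn}\circ\psi_n=\Psi_n\circ\phi_{a,q}$. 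Since $\phi_{a,q}^{-1}\circ\phi_{a,q}=\id$ on $\sZ^*_S(a)$, the two constructions are visibly inverse to one another, which yields the asserted bijection. The main obstacle is precisely the well-definedness bookkeeping: identifying how two representations $(a,q)$, $(a',q')$ of the same object of $\cH_S$ are related, and isolating the unit shift and the normalizing conjugation so that the two invariance properties of $\sd_S$ — translation invariance \labelcref{d:mul} and invariance under conjugation by normalizing elements — are exactly what is needed to neutralize the discrepancy. Once this is settled the remaining verifications are a routine transport of structure, the only mild care being the order reversal introduced by the maps $\phi_{a,q}$ when checking \labelcref{d:mul}.
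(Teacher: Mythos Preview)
Your proposal is correct and follows essentially the same route as the paper: both proofs reduce well-definedness in the forward direction to the observation that two admissible pairs $(a,q)$ and $(a',q')$ differ by a normalizing conjugation composed with a unit shift, absorb these via the invariance hypothesis and \labelcref{d:mul} respectively, and then transport the remaining axioms along the length-preserving bijections $\phi_{a,q}$ using the multiplicative compatibility $\phi_{ba,q}^{-1}(z\rfop z')=\phi_{b,aq}^{-1}(z')\rfop\phi_{a,q}^{-1}(z)$. The paper carries out the well-definedness step by an explicit computation with atoms, whereas you invoke the compatibility identities for the family $\{\phi_{a,q}\}$ directly, but this is a difference in presentation rather than in substance.
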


\begin{proof}
  First let $\sd_S$ be a distance on $S^\bullet$ that is invariant under conjugation by elements of $Q^\bullet$ that normalize $S$.
  Let $z$, $z' \in \sZ^*(\cH_S)$ with $\pi(z)=\pi(z')$.
  Let $q \in Q^\bullet$ and $a \in S^\bullet$ be such that $\pi(z)=\pi(z') = q^{-1}(Sa)q$.
  Note that, after fixing $q$, the element $a$ is uniquely determined up to left associativity.
  We wish to define $\sd_{\cH}$ by $\sd_{\cH}(z,z') = \sd_{S}(\phi_{a,q}^{-1}(z), \phi_{a,q}^{-1}(z'))$ and need to show that the expression on the right hand side is independent of the choice of $q$ and $a$.

  Suppose $q' \in Q^\bullet$ and $a' \in S^\bullet$ are such that $(q')^{-1}(Sa')q'=q^{-1}(Sa)q$.
  Let $z = \rf[(q^{-1}Sq)]{I_k,\cdots,I_1}$ and $z' = \rf[(q^{-1}Sq)]{J_l,\cdots,J_1}$ with $k$, $l \in \bN_0$ and $I_1$, $\ldots\,$,~$I_k$, $J_1$,~$\ldots\,$,~$J_l \in \cH_S$.
  If $k=0$, then, since $\pi(z)=\pi(z')$, $l=0$. Similarly, $l=0$ implies $k=0$.
  In that case $\sd_{S}(\phi_{a,q}^{-1}(z), \phi_{a,q}^{-1}(z')) = \sd_{S}(\phi_{a',q'}^{-1}(z), \phi_{a',q'}^{-1}(z')) = 0$.
  From now on we may assume $k$,~$l > 0$.
  Following the construction in \cite[Proposition 5.20]{Smertnig}, there exist $u_1$, $\ldots\,$,~$u_k$, $u_1'$,$\ldots\,$,~$u_k'$, $v_1$,$\ldots\,$,~$v_l$, $v_1'$,$\ldots\,$,~$v_l' \in \cA(S^\bullet)$ such that
  \begin{equation}\label{eq:udef}
    I_i = q^{-1} u_k^{-1} \cdots u_{i+1}^{-1} (Su_i) u_{i+1} \cdots u_{k} q = (q')^{-1} (u_k')^{-1} \cdots (u_{i+1}')^{-1} (Su_i') u_{i+1}' \cdots u_{k}' q'
  \end{equation}
  for all $i \in [1,k]$,
  \begin{equation}\label{eq:vdef}
    J_j = q^{-1} v_l^{-1} \cdots v_{j+1}^{-1} (Sv_j) v_{j+1} \cdots v_{l} q = (q')^{-1} (v_l')^{-1} \cdots (v_{j+1}')^{-1} (Sv_j') v_{j+1}' \cdots v_{l}' q'
  \end{equation}
  for all $j \in [1,l]$, and
  \begin{align*}
    \phi_{a,q}^{-1}(z) &= \rf{u_1,\cdots,u_k},   & \phi_{a',q'}^{-1}(z) &= \rf{u_1',\cdots,u_k'}, \\
    \phi_{a,q}^{-1}(z') &= \rf{v_1,\cdots,v_l},   & \phi_{a',q'}^{-1}(z') &= \rf{v_1',\cdots,v_l'}.
  \end{align*}
  We must show $\sd_S(\rf{u_1,\cdots,u_k}, \rf{v_1,\cdots,v_l}) = \sd_S(\rf{u_1',\cdots,u_k'}, \rf{v_1',\cdots,v_l'})$.
  Note that $n = q(q')^{-1} \in Q^\bullet$ normalizes $S$.
  From \cref{eq:udef,eq:vdef} applied with $i=k$ and $j=l$, we deduce that $S u_k' = Sn^{-1}u_kn$ and $Sv_l' = Sn^{-1}v_ln$.
  Hence there exist $\varepsilon_k$, $\eta_l \in S^\times$ such that $u_k' = \varepsilon_k n^{-1}u_kn$ and $v_l' = \eta_l n^{-1}v_l n$.
  Inductively, we find that there exist $\varepsilon_1$, $\ldots\,$,~$\varepsilon_{k-1}$ and $\eta_1$, $\ldots\,$,~$\eta_{l-1}$ such that $u_i' = \varepsilon_i (n^{-1} u_i n) \varepsilon_{i+1}^{-1}$ and $v_j' = \eta_j (n^{-1} v_j n) \eta_{j+1}^{-1}$ for all $i \in [1,k-1]$ and $j \in [1,l-1]$.
  Thus
  \[
    \rf{u_1',\cdots,u_k'} = \rf[\varepsilon_1]{n^{-1}u_1n, \cdots, n^{-1}u_k n} \quad\text{and}\quad \rf{v_1',\cdots,v_l'} = \rf[\eta_1]{n^{-1} v_1 n, \cdots, n^{-1} v_l n}.
  \]
  Since $a' = \varepsilon_1 n^{-1} u_1\cdots u_k n = \varepsilon_1 n^{-1} a n$ and similarly $a' = \eta_1 n^{-1} a n$, we have $\varepsilon_1 = \eta_1$.
  First using property \labelcref{d:mul} of a distance and then the invariance under normalizing elements, we find that
  \[
  \begin{split}
     &\sd_S(\rf[\varepsilon_1]{n^{-1}u_1n, \cdots, n^{-1}u_k n},\; \rf[\varepsilon_1]{n^{-1} v_1 n, \cdots, n^{-1} v_l n})  \\
    &= \sd_S(\rf{n^{-1}u_1n, \cdots, n^{-1}u_kn},\; \rf{n^{-1}v_1n, \cdots, n^{-1}v_ln}) \\
    &= \sd_S(\rf{u_1,\cdots,u_k},\rf{v_1,\cdots,v_l}).
  \end{split}
  \]
  Hence we may define $\sd_{\cH}(z,z') = \sd_{S}(\phi_{a,q}^{-1}(z), \phi_{a,q}^{-1}(z'))$, and it is clear that in this way $\sd_\cH$ is uniquely determined.

  We now check that $\sd_\cH$ is indeed a distance on $\cH$.
  Properties \labelcref{d:ref,d:sym,d:tri,d:len} are immediate from the definition and the fact that $\sd_S$ satisfies each of these properties.
  We now check \labelcref{d:mul}.
  Let $x$, $y$, $z$, $z' \in \sZ^*(\cH_S)$ be such that $\rf{x,z,y}$ and $\rf{x,z',y}$ are defined and $\pi(z)=\pi(z')$.
  We may assume $\pi(x)=q^{-1}(Sc)q$, $\pi(z) = \pi(z') = (cq)^{-1}(Sb)cq$ and $\pi(y) = (bcq)^{-1}(Sa)(bcq)$ with $q \in Q^\bullet$ and $a$,~$b$, $c \in S^\bullet$.
  Observe that
  \begin{align*}
    \phi_{abc,q}^{-1}(\rf{x,z,y})  &= \rf{\phi_{a,bcq}^{-1}(y),\phi_{b,cq}^{-1}(z),\phi_{c,q}^{-1}(x)},\text{ and} \\
    \phi_{abc,q}^{-1}(\rf{x,z',y}) &= \rf{\phi_{a,bcq}^{-1}(y),\phi_{b,cq}^{-1}(z'),\phi_{c,q}^{-1}(x)}.
  \end{align*}
  Therefore, using property \labelcref{d:mul} of $\sd_S$,
  \begin{align*}
    \sd_{\cH}(\rf{x,z,y}, \rf{x,z',y}) &= \sd_S(\rf{\phi_{a,bcq}^{-1}(y),\phi_{b,cq}^{-1}(z),\phi_{c,q}^{-1}(x)},\; \rf{\phi_{a,bcq}^{-1}(y),\phi_{b,cq}^{-1}(z'),\phi_{c,q}^{-1}(x)}) \\
        &= \sd_S(\phi_{b,cq}^{-1}(z), \phi_{b,cq}^{-1}(z')) \\
        &= \sd_{\cH}(z,z').
  \end{align*}

  It remains to check that $\sd_\cH$ is invariant under conjugation by normalizing elements.
  Suppose again that $z$ and $z'$ are in $\sZ^*(\cH_S)$ with $\pi(z)=\pi(z')$, and let $q \in Q^\bullet$ and $a \in S^\bullet$ be such that $\pi(z)=\pi(z')=q^{-1}(Sa)q$.
  Let $n \in Q^\bullet$ be such that it normalizes $S$.
  Then
  \[
    \sd_{\cH}(\Psi_n(z), \Psi_n(z'))
    = \sd_S(\phi_{a,qn}^{-1}(\Psi_n(z)), \phi_{a,qn}^{-1}(\Psi_n(z')))
    = \sd_S(\phi_{a,q}^{-1}(z), \phi_{a,q}^{-1}(z')) = \sd_{\cH}(z,z').
  \]

  We now prove the converse.
  Suppose that $\sd_\cH$ is a distance on $\cH_S$ that is invariant under conjugation by normalizing elements.
  Let $z$,~$z' \in \sZ^*(S)$ with $a=\pi(z)=\pi(z')$ and let $q$,~$q' \in Q^\bullet$.
  Noting that $q^{-1}q'$ normalizes $S$ and that $\phi_{a,q'}(z) = \Psi_{q^{-1}q'}(\phi_{a,q}(z))$ and $\phi_{a,q'}(z') = \Psi_{q^{-1}q'}(\phi_{a,q}(z'))$, $\sd_{\cH}(\phi_{a,q}(z), \phi_{a,q}(z')) = \sd_{\cH}(\phi_{a,q'}(z), \phi_{a,q'}(z'))$ follows immediately from the invariance of $\sd_\cH$ under normalizing elements.
  Thus a unique $\sd_S$ as claimed exists, and we must check that it is a distance on $S^\bullet$.
  Again, properties \labelcref{d:ref,d:sym,d:tri,d:len} follow immediately from the corresponding properties of $\sd_\cH$.
  Let us verify \labelcref{d:mul}, and to this end let $x$,~$y$, $z$,~$z' \in \sZ^*(S)$ with $\pi(z)=\pi(z')$.
  Let $a = \pi(x)$, $b=\pi(y)$, and $c=\pi(z)=\pi(z')$.
  Then
  \begin{align*}
    \sd_S(\rf{x,z,y}, \rf{x,z',y})
    &= \sd_\cH(\phi_{acb,1}(\rf{x,z,y}), \phi_{acb,1}(\rf{x,z',y})) \\
    &= \sd_\cH(\rf{\phi_{b,1}(y), \phi_{c,b}(z), \phi_{a,cb}(x)}, \rf{\phi_{b,1}(y), \phi_{c,b}(z'), \phi_{a,cb}(x)}) \\
    &= \sd_\cH(\phi_{c,b}(z), \phi_{c,b}(z')) \\
    &= \sd_S(z,z').
  \end{align*}
  Finally, we verify that $\sd_S$ is invariant under conjugation by normalizing elements.
  Let $n \in Q^\bullet$ be such that it normalizes $S$, and let $z$, $z' \in \sZ^*(S)$ with $a=\pi(z)=\pi(z')$.
  Then
  \begin{align*}
    \sd_S(\psi_n(z), \psi_n(z'))
    &= \sd_\cH(\phi_{n^{-1}an,1}(\psi_n(z)), \phi_{n^{-1}an,1}(\psi_n(z'))) \\
    &= \sd_\cH(\Psi_n(\phi_{a,1}(z)), \Psi_n(\phi_{a,1}(z'))) \\
    &= \sd_S(z,z'). \qedhere
  \end{align*}
\end{proof}

\begin{remark}
  Note that the extension of the rigid distance from $S^\bullet$ to $\cH_S$ is, in general, not the rigid distance on $\cH_S$.
  Indeed, let $R = M_2(\bZ)$ so that $S=M_2(\bZ)^\bullet$, let $p \in \bN$ be prime, and set
  \[
    a = \begin{pmatrix} 0 & p \\ 1 & 0 \end{pmatrix} \quad\text{and}\quad \varepsilon = \begin{pmatrix} 0 & 1 \\ 1 & 0 \end{pmatrix}.
  \]
  Viewing scalar matrices as elements in $\mathbb Z$, $a^2 = p$ and $\varepsilon^2 = 1$.
  In $S$, we have rigid factorizations
  \[
    z=\rf{a,a,a,a} \quad\text{and}\quad z'=\rf{\varepsilon a, a, a, a \varepsilon}
  \]
  of $a^4=p^2$.
  Since $\varepsilon a \ne a \eta$ and $a \varepsilon \ne \eta a$ for all $\eta \in S^\times$, these are distinct rigid factorizations with $\sd^*(z,z') = 2$ and $\sd_p(z,z') = 0$.
  We have
  \begin{align*}
    \phi_{a,1}(z) &= \rf{Sa, a^{-1}(Sa)a, a^{-2}(Sa)a^2, a^{-3}(Sa)a^3}\quad\text{ and}\\
    \phi_{a,1}(z') &= \rf{Sa\varepsilon, \varepsilon a^{-1}(Sa)a\varepsilon, \varepsilon a^{-2}(Sa)a^2\varepsilon, \varepsilon a^{-3}(S\varepsilon a)a^3\varepsilon}.
  \end{align*}
  Now $Sa\varepsilon \ne Sa$ since there exists no $\eta \in S^\times$ with $\eta a = a \varepsilon$.
  Moreover, $Sa\varepsilon$ is not equal to any of the other atoms in $\phi_{a,1}(z)$ since their left orders differ.
  Continuing this argument, one finds that the atoms in $z$ are pairwise distinct from those in $z'$.
  Recalling that $\cH_S$ is reduced, we have $\sd_p(\phi_{a,1}(z), \phi_{a,1}(z')) = \sd^*(\phi_{a,1}(z), \phi_{a,1}(z')) = 4$.
\end{remark}

Let $Q$ be a quotient semigroup, and let $S$ be an arithmetical maximal order in $Q$.
If $\alpha$ denotes the set of maximal orders of $Q$ that are equivalent to $S$ and $\cF_v(\alpha)$ denotes the divisorial fractional left $S'$-ideals with $S' \in \alpha$, then, by \cite[Proposition 5.16]{Smertnig}, $\cF_v(\alpha)$ is, with a partial operation given by the $v$-ideal multiplication, an arithmetical groupoid.
Then the subcategory $\cI_v(\alpha)$ of $\cF_v(\alpha)$ consisting of divisorial left $S'$-ideals with $S' \in \alpha$ is just the subcategory of integral elements of $\cF_v(\alpha)$.
The category $\cH_S$ is a left- and right-saturated subcategory of $\cI_v(\alpha)$ (in particular, the usual multiplication on $\cH_S$ coincides with the $v$-ideal multiplication).

We denote by $\eta$ the abstract norm on $\cF_v(\alpha)$ and define $P_{S^\bullet} = \{\, \eta(Sq) : q \in Q^\bullet \,\} \subset \bG$, $\cgrp = \bG / P_{S^\bullet}$, and $\cgrp_M = \{\, [\eta(I)] \in \cgrp : I \in \cI_v(\alpha) \text{ is maximal integral} \,\}$.
If we also assume that a divisorial fractional left $S$-ideal $I$ is principal if and only if $\eta(I) \in P_{S^\bullet}$, then \labelcref{ass:norm} holds for $\cH_S$.
The block homomorphism $\theta\colon \cH_S \to \cB(C_M)$ induces a transfer homomorphism $S^\bullet \to \cB(\cgrp_M)$, again denoted by $\theta$, by means of $\theta(a) = \theta(Sa)$ for all $a \in S^\bullet$ (see \cite[Theorem 5.23.2]{Smertnig}).
Thus we have the following corollary to \cref{thm:cat-transfer}.
Recall that this also covers the case of normalizing Krull monoids as investigated in \cite{Geroldinger} (see \cref{sec:preliminaries} and \cite[Remarks 5.17.2 and 5.24.1]{Smertnig}).

\begin{corollary}\label{cor:cat-transfer-semigroup}
  Let $S$ be an arithmetical maximal order in a quotient semigroup $Q$ and let $\alpha$ denote the set of maximal orders of $Q$ equivalent to $S$.
  Let $\eta\colon \cF_v(\alpha) \to \bG$ be the abstract norm of $\cF_v(\alpha)$, let $\cgrp = \bG / P_{S^\bullet}$, and set $\cgrp_M = \{\, [\eta(I)] \in \cgrp : I \in \cI_v(\alpha) \text{ maximal integral} \,\}$.
  Assume that a divisorial fractional left $S$-ideal $I$ is principal if and only if $\eta(I) \in P_{S^\bullet}$.
  Let $\theta\colon S^\bullet \to \cB(\cgrp_M)$ be the transfer homomorphism induced by the block homomorphism of $\cH_S \subset \cI_v(\alpha)$.
  Let $\sd$ be a distance on $S^\bullet$ that is invariant under conjugation by normalizing elements.
  Then
  \[
    \sc_{\sd}(S^\bullet,\theta) \le 2.
  \]
  Moreover, for all $a \in S^\bullet$,
  \begin{align*}
    \sc_\sd(a) &\le \max\{ \sc_p(\theta(a)),\, 2 \},  & \sc_\sd(S^\bullet) & \le \max\{ \sc_p(\cB(\cgrp_M)), 2 \}, \\
    \sc_{\sd,\cmon}(a) &\le \max\{ \sc_{p,\cmon}(\theta(a)),\, 2 \},  & \sc_{\sd,\cmon}(S^\bullet) & \le \max\{ \sc_{p,\cmon}(\cB(\cgrp_M)), 2 \}, \\
    \sc_{\sd,\ceq}(a) &\le \max\{ \sc_{p,\ceq}(\theta(a)),\, 2 \},  & \sc_{\sd,\ceq}(S^\bullet) & \le \max\{ \sc_{p,\ceq}(\cB(\cgrp_M)), 2 \}.
  \end{align*}
\end{corollary}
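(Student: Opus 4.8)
The plan is to deduce this from the abstract \cref{thm:cat-transfer} by working inside the category $\cH_S$ of principal one-sided ideals and then transporting everything back to $S^\bullet$ along the bijections $\phi_{a,q}$.

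First I would set $G = \cF_v(\alpha)$, which by \cite[Proposition 5.16]{Smertnig} is an arithmetical groupoid with $G_+ = \cI_v(\alpha)$, and take $H = \cH_S$, a right-saturated subcategory of $G_+$. With $\eta$ the abstract norm of $G$ and $P_{S^\bullet} = \{\,\eta(Sq) : q \in Q^\bullet\,\}$, the quotient group $\quo(\eta(\cH_S))$ equals $P_{S^\bullet}$, so that $C = \bG/P_{S^\bullet}$ and $C_M$ are precisely the groups occurring in \cref{thm:cat-transfer}; and the hypothesis that a divisorial fractional left $S$-ideal $I$ is principal if and only if $\eta(I) \in P_{S^\bullet}$ is exactly condition \labelcref{ass:norm} for $\cH_S \subset \cI_v(\alpha)$. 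Since $\sd$ is invariant under conjugation by normalizing elements, \cref{prop:dist-bij} yields a distance $\sd_\cH$ on $\cH_S$, likewise invariant, with $\sd_\cH(\phi_{a,q}(z),\phi_{a,q}(z')) = \sd(z,z')$ for all $z$, $z' \in \sZ_S^*(a)$ and all $q \in Q^\bullet$. Applying \cref{thm:cat-transfer} to $(G, \cH_S, \sd_\cH)$, the block homomorphism $\theta_\cH\colon \cH_S \to \cB(C_M)$ is a transfer homomorphism with $\sc_{\sd_\cH}(\cH_S, \theta_\cH) \le 2$, and the catenary-degree inequalities hold in $\cH_S$.

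Next I transport this to $S^\bullet$. For fixed $a \in S^\bullet$, the map $\phi_{a,1}$ is a bijection $\sZ_S^*(a) \to \sZ_{\cH_S}^*(Sa)$ preserving lengths (atoms of $S^\bullet$ correspond to atoms of $\cH_S$) and, by the construction of $\sd_\cH$, preserving the distance; hence $\sc_\sd(a) = \sc_{\sd_\cH}(Sa)$, and likewise $\sc_{\sd,\cmon}(a) = \sc_{\sd_\cH,\cmon}(Sa)$ and $\sc_{\sd,\ceq}(a) = \sc_{\sd_\cH,\ceq}(Sa)$, since these invariants depend only on lengths of and distances between rigid factorizations of a single element. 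Moreover the transfer homomorphism $\theta\colon S^\bullet \to \cB(C_M)$ of the statement satisfies $\theta(a) = \theta_\cH(Sa)$ by definition (see \cite[Theorem 5.23.2]{Smertnig}), and — using that the abstract norm is invariant under conjugation — for $z = \rf[\varepsilon]{u_1,\cdots,u_k} \in \sZ_S^*(a)$ each atom of $\phi_{a,1}(z)$, being a conjugate of one of the ideals $Su_i$, has the same image in $\cB(C_M)$ under $\theta_\cH$ as $\theta(u_i)$. Therefore $\phi_{a,1}$ maps the permutable fiber of $z$ (with respect to $\theta$) onto the permutable fiber of $\phi_{a,1}(z)$ (with respect to $\theta_\cH$), so $\sc_\sd(a,\theta) = \sc_{\sd_\cH}(Sa, \theta_\cH) \le 2$ and thus $\sc_\sd(S^\bullet,\theta) \le 2$. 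Taking suprema over $a \in S^\bullet$ in the three equalities above and invoking the corresponding bounds from \cref{thm:cat-transfer} (with $\theta_\cH(Sa) = \theta(a)$) gives all the displayed inequalities; they could also be obtained by applying \cref{cf} directly to $\theta\colon S^\bullet \to \cB(C_M)$, using that $\cB(C_M)$ is a commutative reduced cancellative semigroup and $\sc_\sd(S^\bullet,\theta) \le 2$.

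The main obstacle is the bookkeeping in this last step: one must check that the bijections $\phi_{a,q}$ respect not only distances (which is built into \cref{prop:dist-bij}) but also the block homomorphisms, i.e. that passing from $z$ to $\phi_{a,1}(z)$ and then applying $\theta_\cH$ yields the same permutable factorization over $\cB(C_M)$ as applying $\theta$ to $z$. Since the atoms of $\phi_{a,1}(z)$ are the conjugated ideals $u_k^{-1}\cdots u_{i+1}^{-1}(Su_i)u_{i+1}\cdots u_k$, this reduces to invariance of the abstract norm $\eta$ under conjugation by arbitrary elements of $Q^\bullet$, hence invariance of the class map $\cI_v(\alpha) \to C_M$; granting that, all remaining verifications are routine consequences of results already established.
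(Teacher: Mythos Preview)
Your proof is correct and follows essentially the same approach as the paper's: identify $G = \cF_v(\alpha)$ and $H = \cH_S$, use \cref{prop:dist-bij} to extend $\sd$ to a distance $\sd_\cH$ on $\cH_S$, apply \cref{thm:cat-transfer}, and transport the conclusions back to $S^\bullet$ via the bijections $\phi_{a,q}$. You are more explicit than the paper about verifying that $\phi_{a,1}$ carries permutable fibers of $\theta$ to permutable fibers of $\theta_\cH$; this indeed reduces to conjugation-invariance of the abstract norm $\eta$, which is built into the construction of the universal vertex group $\bG$.
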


\begin{proof}
  By setting $G=\cF_v(\alpha)$, $G_+=\cI_v(\alpha)$ and $H=\cH_S$, we are in our previous setting.
  The additional condition \labelcref{ass:norm} is satisfied by our assumptions on $S$.

  By \cref{prop:dist-bij}, the distance $\sd$ on $S^\bullet$ gives rise to a distance $\sd_\cH$ on $\cH_S$.
  Using the bijections between $\sZ_S^*(a)$ and $\sZ_{\cH_S}^*(q^{-1}(Sa)q)$, it is now clear that $\sc_\sd(a) = \sc_{\sd_\cH}(q^{-1}(Sa)q)$ for all $q \in Q^\bullet$, and the claim therefore follows from \cref{thm:cat-transfer}.
\end{proof}

We now apply this abstract machinery to classical maximal orders in central simple algebras over global fields.
Let $K$ be a global field, $\cO$ a holomorphy ring in $K$, $A$ a central simple algebra over $K$, and let $R$ be a classical maximal $\cO$-order.
Then we may identify $\eta$ with the reduced norm on left and right $R$-ideals (see \cite[Lemma 5.32]{Smertnig}).
Let $\cF^\times(\cO)$ denote the group of nonzero fractional ideals of $\cO$, let
\[
  \cP_A(\cO) = \{\, a\cO \,:\, a \in K^\times, a_v > 0\text{ for all archimedean places $v$ of $K$ where $A$ is ramified} \,\},
\]
and $\Cl_A(\cO) = \cF^\times(\cO) / \cP_A(\cO)$.
The ray class group $\Cl_A(\cO)$ is a finite abelian group, and under the identification of the abstract norm with the reduced norm, we find $C = C_M = \Cl_A(\cO)$ (see \cite[Theorem 5.28 and Section 6]{Smertnig} for details).

We immediately obtain the following result, which is a direct analogue to the corresponding abstract result for commutative Krull monoids (\cite[Corollary 3.4.12]{GHK06}) applied to holomorphy rings in global fields.

\begin{theorem}\label{thm:cat-transfer-csa}
  Let $K$ be a global field, $\cO$ a holomorphy ring in $K$, and $R$ a classical maximal $\cO$-order in a central simple algebra $A$ over $K$.
  Let $\sd$ be a distance on $R^\bullet$ that is invariant under conjugation by normalizing elements.
  Suppose that every stably free left $R$-ideal is free, and let $\theta\colon R^\bullet \to \cB(\Cl_A(\cO))$ be the transfer homomorphism induced by the block homomorphism.
  Then
  \[
    \sc_\sd(R^\bullet) \le \max\{ 2, \sc_p(\cB(\Cl_A(\cO))) \}.
  \]
  In particular, $\sc^*(R^\bullet) = \max\{ 2, \sc_p(\cB(\Cl_A(\cO))) \}$.
  Moreover, if $\Cl_A(\cO)$ is non-trivial, then
  \begin{align*}
    \sc_p(R^\bullet) = \sc_\dsubsim(R^\bullet) = \sc_\dsim(R^\bullet) &= \max\{ 2, \sc_p(\cB(\Cl_A(\cO))) \}, \\
    \sc_{p,\cmon}(R^\bullet) = \sc_{\dsubsim,\cmon}(R^\bullet) = \sc_{\dsim,\cmon}(R^\bullet) &= \max\{ 2, \sc_{p,\cmon}(\cB(\Cl_A(\cO))) \}, \\
    \sc_{p,\ceq}(R^\bullet) = \sc_{\dsubsim,\ceq}(R^\bullet) = \sc_{\dsim,\ceq}(R^\bullet) &= \max\{ 2, \sc_{p,\ceq}(\cB(\Cl_A(\cO))) \},
  \end{align*}
  and if $\Cl_A(\cO)$ is trivial, then $R^\bullet$ is $\sd_\dsim$-factorial and $\sd_\dsubsim$-factorial.
\end{theorem}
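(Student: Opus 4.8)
The plan is to deduce \cref{thm:cat-transfer-csa} from the abstract \cref{cor:cat-transfer-semigroup} together with the structural results already assembled in the excerpt. First I would record the standing identifications: by the discussion preceding the theorem, a classical maximal $\cO$-order $R$ in a central simple algebra over a global field is an arithmetical maximal order (so \cref{cor:cat-transfer-semigroup} applies to $S=R^\bullet$), the abstract norm $\eta$ is the reduced norm, and $C=C_M=\Cl_A(\cO)$. The hypothesis that every stably free left $R$-ideal is free is exactly what guarantees that a divisorial fractional left $R$-ideal $I$ is principal if and only if $\eta(I)\in P_{R^\bullet}$, which is the extra condition needed to invoke \cref{cor:cat-transfer-semigroup}; here I would cite the relevant statement from \cite{Smertnig} (Theorem 5.28 / Section 6). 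Since each of $\sd^*$, $\sd_p$, $\sd_\dsim$, $\sd_\dsubsim$ is invariant under automorphisms of $R$ (remarked just before \cref{prop:dist-bij}), in particular invariant under conjugation by normalizing elements, all four distances are admissible inputs to the corollary.

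Next I would establish the upper bounds. Applying \cref{cor:cat-transfer-semigroup} with $\sd = \sd^*$ gives $\sc^*(R^\bullet)\le\max\{2,\sc_p(\cB(\Cl_A(\cO)))\}$, and since every other distance under consideration is coarser than $\sd^*$ (by \subref{d-basic:rigid}, and the explicit chain $\sd_p$ finer than $\sd_\dsim$ finer than $\sd_\dsubsim$ noted after \cref{d-basic}), the same upper bound holds for $\sc_p$, $\sc_\dsim$, $\sc_\dsubsim$, and likewise for the monotone and equal variants. This already yields the displayed inequality $\sc_\sd(R^\bullet)\le\max\{2,\sc_p(\cB(\Cl_A(\cO)))\}$ for every admissible $\sd$.

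For the lower bounds, assume $\Cl_A(\cO)$ is non-trivial. Since $\sc_\dsubsim$ is the coarsest of the four and $\sc^*$ the finest, it suffices to prove $\sc_\dsubsim(R^\bullet)\ge\max\{2,\sc_p(\cB(\Cl_A(\cO)))\}$ and $\sc^*(R^\bullet)\le\max\{2,\sc_p(\cB(\Cl_A(\cO)))\}$; the chain of inequalities $\sc^*\ge\sc_p\ge\sc_\dsim\ge\sc_\dsubsim$ then forces equality throughout. The bound $\sc_\dsubsim(R^\bullet)\ge\sc_p(\cB(\Cl_A(\cO)))$ follows because $\theta$ is a transfer homomorphism and the general inequalities relating catenary degrees under transfer homomorphisms (the paragraph before the Definition of the catenary degree in the permutable fibers) give $\sc_p(T)\le\sc_\sd(H)$ for $\sd$ coarser than $\sd^*$ — more precisely one checks $\sd_\dsubsim$ still dominates $\sd_p$ so that $\sc_p(\cB(\Cl_A(\cO)))=\sc_p(T)\le\sc_p(H)=\sc_p(R^\bullet)\le\sc_\dsim\le\ldots$; care is needed here since we want the bound for the \emph{coarsest} distance, so I would argue directly that a $\sd_\dsubsim$-chain pushes forward to a $\sd_p$-chain in $\cB(\Cl_A(\cO))$ of no larger width, using that $\theta$-images of associated atoms are associated and $\theta$ collapses subsimilarity finer than permutability does not — this needs the observation that for atoms of $R^\bullet$, subsimilar implies $\theta$ maps them to associated atoms, which holds because $\theta$ of an atom is a single-element zero-sum sequence determined by its class. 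The bound $\sc_\dsubsim(R^\bullet)\ge 2$ when $\Cl_A(\cO)$ is non-trivial I would extract from \cref{nonunique}: if $\Cl_A(\cO)\not\cong\sC_2$ then $\cB$ is not half-factorial so by \subref{dl:delta} the catenary degree is at least $\sup\Delta+2\ge 2$ (invoking the commutative bound on $\cB$); if $\Cl_A(\cO)\cong\sC_2$, the explicit atoms $ab=cd$ with distinct reduced norms furnish two rigid factorizations of $ab$ that remain distinct under $\sd_\dsubsim$ (their atoms have different reduced norms, hence non-isomorphic $R/R(-)$, hence are not subsimilar), giving $\sc_\dsubsim(ab)=2$. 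The main obstacle will be precisely this last point — verifying that subsimilarity of atoms in $R^\bullet$ is still coarse enough to separate the relevant pairs of factorizations, i.e.\ that $\sd_\dsubsim$ does not accidentally trivialize the lower bound the way $\sd_{\length\cdot}$ would; this requires knowing that non-isomorphic composition factors (detected by reduced norm) obstruct the existence of the mutual monomorphisms in the definition of subsimilarity, which in turn rests on $R$ being a maximal order over a Dedekind domain so that $R/R u$ for an atom $u$ is a simple-ish module whose isomorphism type is pinned down by the prime $\eta(u)$. Finally, when $\Cl_A(\cO)$ is trivial, $\cgrp_M=\cgrp=\{0\}$ so $\cB(\cgrp_M)$ is free, hence $\sc_p(\cB)=\sc_{p,\cmon}=\sc_{p,\ceq}=0$; the corollary then gives $\sc_\sd(R^\bullet)\le 2$ for all admissible $\sd$, but to conclude $\sd_\dsim$- and $\sd_\dsubsim$-factoriality I would instead argue that every atom of $R^\bullet$ is prime-like — via $\theta$ an atom maps to a prime of the free monoid $\cB(\{0\})$, and the transfer property together with isoatomicity considerations force unique $\sd_\dsim$-factorizations — and then apply \cref{p-factorial-acd} (in its $\sd_\dsim$/$\sd_\dsubsim$ analogue) to get $\sc_\dsim(R^\bullet)=\sc_\dsubsim(R^\bullet)=0$.
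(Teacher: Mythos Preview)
Your overall architecture matches the paper's: upper bounds from \cref{cor:cat-transfer-semigroup}, lower bounds by pushing forward chains along $\theta$, and a separate treatment of the $\sC_2$ and trivial cases via \cref{nonunique}. But there are two genuine gaps.

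\textbf{The lower bound $\sc_\dsubsim(R^\bullet)\ge\sc_p(\cB(\Cl_A(\cO)))$.} You correctly identify this as the crux, but your sketch (``subsimilarity is coarse enough because $R/Ru$ is simple-ish'') does not close it. The paper's argument is sharper and in two steps. First, in $R^\bullet$ subsimilarity actually coincides with similarity: if $a,a'$ are subsimilar then any nonzero two-sided ideal $I\subset Ra\cap Ra'$ makes $R/I$ Artinian, so $R/Ra$ and $R/Ra'$ are finite-length $R/I$-modules that embed into one another, hence are isomorphic. Second, similar atoms have the same reduced norm, because $\nr(Ru)$ depends only on the isomorphism class of $R/Ru$ (\cite[Corollary 24.14]{Reiner}). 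Together these give that $\theta$ sends (sub)similar atoms to the same element of $\cB(\Cl_A(\cO))$, which is exactly what you need to push a $\sd_\dsubsim$-chain forward to a $\sd_p$-chain of the same width. Your $\sC_2$ argument also tacitly uses ``different reduced norm $\Rightarrow$ not subsimilar'', which again rests on this chain of implications.

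\textbf{The trivial class group case.} Your plan---show every atom is prime-like via $\theta$ and ``isoatomicity considerations'', then invoke an analogue of \cref{p-factorial-acd}---does not work. The homomorphism $\theta$ is \emph{not} isoatomic in general (many non-associated atoms can share a reduced norm), and \cref{p-factorial-acd} would yield permutable factoriality, which is strictly stronger than $\sd_\dsim$-factoriality and can fail here: e.g.\ for the Hurwitz quaternions $\sc_p(R^\bullet)=2\ne 0$ (see the remark after \cref{equfact}). The paper's argument is direct: trivial $\Cl_A(\cO)$ means every left $R$-ideal is principal, so $R$ is a PID, and PIDs are $\sd_\dsim$-factorial (hence $\sd_\dsubsim$-factorial) by the classical similarity-uniqueness of factorizations.

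A smaller point: the unconditional equality $\sc^*(R^\bullet)=\max\{2,\sc_p(\cB(\Cl_A(\cO)))\}$ needs $\sc^*(R^\bullet)\ge 2$ even when $\Cl_A(\cO)$ is trivial; the paper obtains this from \cref{rigid-2}, whose hypotheses are met because every ray class contains infinitely many primes. Your proposal only addresses lower bounds in the non-trivial case.
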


\begin{proof}
  We first show that, in $R^\bullet$, subsimilarity already implies similarity, so that $\sd_\dsim$ and $\sd_\dsubsim$ coincide.
  Indeed, let $a$ and $a'$ be subsimilar elements of $R^\bullet$.
  Then there exists a nonzero two-sided $R$-ideal $I$ contained in $Ra \cap Ra'$, and the quotient ring $R/I$ is Artinian and Noetherian.
  The ideal $I$ is contained in both $\ann_R(R/Ra)$ and $\ann_R(R/Ra')$.
  As finitely generated $R/I$-modules, the modules $R/Ra$ and $R/Ra'$ have finite length.
  Since each of $R/Ra$ and $R/Ra'$ embeds into the other, $R/Ra$ and $R/Ra'$ must be isomorphic as $R/I$-modules and therefore also as $R$-modules.

  The upper bound for $\sc_\sd(R^\bullet)$ follows from \cref{cor:cat-transfer-semigroup}.
  The corollary applies because the assumption that every stably free left $R$-ideal is free implies the corresponding assumption in \cref{cor:cat-transfer-semigroup} (see \cite[Lemma 6.2]{Smertnig}).
  \cref{rigid-2} implies $\sc^*(R^\bullet) \ge 2$, and hence $\sc^*(R^\bullet) = \max\{ 2, \sc_p(\cB(\Cl_A(\cO))) \}$.
  We now show $\sc_{\dsim}(R^\bullet) = \sc_{\dsubsim}(R^\bullet) \ge \sc_p(\cB(\Cl_A(\cO)))$, and for this it suffices to show that two similar atoms $u$,~$v \in \cA(R^\bullet)$ are mapped to the same element by $\theta$.
  Since in this context $\eta$ corresponds to the usual reduced norm (see \cite[Lemma 5.32]{Smertnig}), it suffices to show that $\nr(Ru) = \nr(Rv)$.
 Since $\nr(Ru)$ and $\nr(Rv)$ depend only on the isomorphism class of $R/Ru \cong R/Rv$ by \cite[Corollary 24.14]{Reiner}, $\nr(Ru) = \nr(Rv)$.

 Therefore
  \[
    \sc_p(\cB(\Cl_A(\cO))) \le \sc_{\dsubsim}(R^\bullet) = \sc_{\dsim}(R^\bullet) \le \sc_p(R^\bullet) \le \max\{ 2, \sc_p(\cB(\Cl_A(\cO))) \}.
  \]
  If $\card{\Cl_A(\cO)} > 2$, then $\sc_p(\cB(\Cl_A(\cO))) \ge 2$, which establishes the full claim in this case.
  If $\Cl_A(\cO)$ is trivial, then $R$ is a PID, and hence $\sd_\dsim$-factorial (and thus also $\sd_\dsubsim$-factorial).

  It remains to consider the case where $\Cl_A(\cO) \cong \sC_2$.
  Let $\fp$, $\fq \in \spec(\cO)$ be two distinct prime ideals representing the non-trivial class of $\Cl_A(\cO)$.
  Using \cref{nonunique}, we can find atoms $u$,~$v$,~$w$,~$w' \in \cA(R^\bullet)$ such that $uv=ww'$, $\nr(u)=\fp^2$, $\nr(v)=\fq^2$, and $\nr(w)=\nr(w')=\fp\fq$.
  Then $u$ and $v$ are similar to neither $w$ nor $w'$, as similar elements have the same reduced norm.
  Thus $\sc_p(R^\bullet) = \sc_\dsim(R^\bullet) = \sc_\dsubsim(R^\bullet) = 2$.

  The statements about the monotone and equal catenary degrees are obtained analogously.
\end{proof}

Suppose that $K$ is a number field, $\cO=\cO_K$ is its ring of algebraic integers and, contrary to the assumptions of the previous corollary, there exist stably free left $R$-ideals that are non-free. Then, by \cite[Theorem 1.2]{Smertnig}, it holds that $\Delta(R^\bullet) = \bN$, and by \subref{dl:delta} we have $\sc_\sd(R^\bullet) = \infty$ for any distance $\sd$.

Observe that even for $H=G_+$ we only have $\sc^*(G_+) \le 2$ in general.
Thus not even for PIDs that are bounded orders in quotient rings can we expect $\sc^*(R^\bullet) = 0$ but instead only have $\sc^*(R^\bullet) \le 2$.
We now give a more concrete example to illustrate this fact.

\begin{example}\label{quat poly}
  Let $K$ be a field, let $D$ be a quaternion division ring with center $K$, and denote by $\lbar{\,\cdot\,} \colon D \to D$ the anti-involution given by conjugation.
  Let $D[X]$ be the polynomial ring over $K$.
  Then $D[X]$ is a PID and $S = D[X]^\bullet$ satisfies the conditions of \cref{cor:cat-transfer-semigroup}.
  We have $\cgrp = \cgrp_M = \mathbf 0$, whence $\sc_p(\cB(\cgrp)) = 0$. However, if $a \in D \setminus K$, then, for all $b \in D^\bullet$,
  \[
    (X - a) (X - \lbar{a}) = X^2 - (a + \lbar{a}) X + a\lbar{a} X = (X - bab^{-1}) (X - \lbar{bab^{-1}})\, \in K[X],
  \]
  and so clearly $\sc^*(S) = 2$ and also $\sc_p(S) = 2$.
  Recall however that this ring, being a PID, is $\sd_{\dsubsim}$-factorial and $\sd_{\dsim}$-factorial, and hence we have $\sc_{\dsubsim}(S) = \sc_{\dsim}(S) = 0$.
\end{example}

\begin{corollary}\label{equfact}
  Let $K$ be a global field, $\cO$ a holomorphy ring in $K$, and $R$ a classical maximal $\cO$-order in a central simple algebra $A$ over $K$.
  Suppose that every stably free left $R$-ideal is free.
  Then the following statements are equivalent.
  \begin{enumerateequiv}
    \item\label{equfact:pid} Every left [right] $R$-ideal is principal.
    \item\label{equfact:cgrp} The ray class group $\Cl_A(\cO)$ is trivial.
    \item\label{equfact:dsim} $R^\bullet$ is $\sd_\dsim$-factorial.
    \item\label{equfact:dsubsim} $R^\bullet$ is $\sd_\dsubsim$-factorial.
  \end{enumerateequiv}
\end{corollary}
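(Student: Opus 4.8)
The plan is to read off the equivalences from \cref{thm:cat-transfer-csa} together with the divisorial one-sided ideal theory that supports it. I would first dispose of the two factoriality conditions. By \subref{dl:basic}, $R^\bullet$ is $\sd_\dsim$-factorial (respectively $\sd_\dsubsim$-factorial) if and only if $\sc_\dsim(R^\bullet)=0$ (respectively $\sc_\dsubsim(R^\bullet)=0$). If $\Cl_A(\cO)$ is trivial, \cref{thm:cat-transfer-csa} asserts outright that $R^\bullet$ is $\sd_\dsim$- and $\sd_\dsubsim$-factorial, which gives \ref*{equfact:cgrp}${}\Rightarrow{}$\ref*{equfact:dsim} and \ref*{equfact:cgrp}${}\Rightarrow{}$\ref*{equfact:dsubsim}. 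If $\Cl_A(\cO)$ is non-trivial, the same theorem yields $\sc_\dsim(R^\bullet)=\sc_\dsubsim(R^\bullet)=\max\{2,\sc_p(\cB(\Cl_A(\cO)))\}\ge 2$, so neither factoriality can hold; this is the contrapositive of both \ref*{equfact:dsim}${}\Rightarrow{}$\ref*{equfact:cgrp} and \ref*{equfact:dsubsim}${}\Rightarrow{}$\ref*{equfact:cgrp}. Hence \ref*{equfact:cgrp}, \ref*{equfact:dsim} and \ref*{equfact:dsubsim} are equivalent.

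It then remains to prove \ref*{equfact:pid}${}\Leftrightarrow{}$\ref*{equfact:cgrp}, and here the divisorial one-sided ideal theory of \cite{Smertnig} enters. Since $R$ is a classical maximal order it is a Dedekind prime ring (see the summary on page~\pageref{arith-max-ord}), and in a Dedekind prime ring every nonzero one-sided ideal is divisorial (see \cite{McConnell-Robson,Reiner}); moreover every divisorial fractional left $R$-ideal becomes, after multiplication by a suitable invertible element of the quotient ring, an integral left $R$-ideal, and the fractional ideal is principal if and only if this integral ideal is. This reduces \ref*{equfact:pid} to the statement that every divisorial fractional left $R$-ideal is principal. Under the hypothesis that every stably free left $R$-ideal is free, \cite[Lemma 6.2]{Smertnig} lets us invoke the principality criterion of the setup: a divisorial fractional left $R$-ideal $I$ is principal if and only if $\eta(I) \in P_{R^\bullet}$, with $\eta$ identified with the reduced norm. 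Since $\eta$ restricted to the divisorial fractional left $R$-ideals with source $R$ is surjective onto $\bG$ (it already maps the group of divisorial fractional two-sided $R$-ideals onto $\bG$), the requirement that $\eta(I) \in P_{R^\bullet}$ for all such $I$ is equivalent to $\bG = P_{R^\bullet}$, that is, to the triviality of $\Cl_A(\cO) = \bG/P_{R^\bullet}$. The right-ideal version follows by the symmetric argument, using that the left and right ray class groups of $R$ both coincide with $\Cl_A(\cO)$; this completes \ref*{equfact:pid}${}\Leftrightarrow{}$\ref*{equfact:cgrp} and hence the corollary.

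I expect the main obstacle to be purely ideal-theoretic rather than factorization-theoretic, since \cref{thm:cat-transfer-csa} already carries all of the hard factorization content: the remaining work is the bookkeeping of the second step — confirming that one-sided ideals of a Dedekind prime ring are divisorial, that the reduced norm is surjective on the relevant hom-set, and that the principality criterion is consumed precisely where the ``stably free implies free'' hypothesis is used. One must in particular be careful that without that hypothesis \ref*{equfact:cgrp} need not imply \ref*{equfact:pid}, so it genuinely cannot be dropped.
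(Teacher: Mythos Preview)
Your proof is correct and essentially matches the paper's approach. The equivalence of \ref{equfact:cgrp}, \ref{equfact:dsim}, and \ref{equfact:dsubsim} is derived exactly as you do from \cref{thm:cat-transfer-csa} together with the characterization of $\sd$-factoriality by $\sc_\sd=0$; for \ref{equfact:pid}${}\Leftrightarrow{}$\ref{equfact:cgrp} the paper is more terse, simply citing the bijection (via the reduced norm) between isomorphism classes of fractional left $R$-ideals and $\Cl_A(\cO)$, which is precisely the content you unpack through the principality criterion and the surjectivity of $\eta$.
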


\begin{proof}
  The set of isomorphism classes of fractional left $R$-ideals is in bijection with $\Cl_A(\cO)$ via the reduced norm, and hence the equivalence of \ref*{equfact:pid} and \ref*{equfact:cgrp} holds.
  The equivalence of \ref*{equfact:cgrp}, \ref*{equfact:dsim} and \ref*{equfact:dsubsim} is an immediate consequence of \cref{thm:cat-transfer-csa} and the fact that $\sd$-factoriality is characterized by $\sc_\sd(R^\bullet) = 0$.
\end{proof}

\begin{remark}
  \envnewline
\begin{enumerate}
  \item
If $\Cl_A(\cO)$ is trivial, then $\sc_p(R^\bullet) \in [0,2]$. There are examples where $\sc_p(R^\bullet) = 0$ as well as examples where $\sc_p(R^\bullet) = 2$.
For the first, take $R = M_n(\cO)$ with $\cO$ a PID and $n \in \bN$.
For an example with $\sc_p(R^\bullet) = 2$, let $K$ be a number field, $\cO$ its ring of algebraic integers, and $R$ a classical maximal order in a totally definite quaternion algebra over $K$ such that $R$ is a PID.\spacefactor=\sfcode`\.{}
(Note that, amongst the totally definite quaternion algebras over all number fields, there indeed exist, up to isomorphism, finitely many such classical maximal orders, see, for instance, \cite[Table 8.1]{Kirschmer-Voight}. For instance, take $R$ to be the ring of  Hurwitz quaternions.)
Then $[R^\times:\cO^\times]<\infty$, while every totally positive prime element $p \in \cO$ that does not divide the discriminant has $\Norm_{K/\bQ}(p)+1$ rigid factorizations, with all atoms being similar.
Thus $\sc_p(p) = 2$, for $\Norm_{K/\bQ}(p)+1$ sufficiently large.
The previous corollary can therefore not be extended to permutable factoriality.

  \item The investigation of the $\omega_p$-invariant is, in the present setting, left open as it does not necessarily transfer via a non-isoatomic transfer homomorphism (i.e., if $\nr(u) \simeq \nr(v)$ does not imply $u \simeq v$ for atoms $u$,~$v \in R^\bullet$). A further investigation will require dealing with  additional difficulties similar to the ones in the example just given.
\end{enumerate}
\end{remark}

To demonstrate the usefulness of our transfer result, we state the following corollary, which is now an immediate consequence of known results about monoids of zero sum sequences.

\begin{corollary}[\cite{GGS11}]\label{cor:cat-transfer}
  Let $R$ and $C = \Cl_A(\cO)$ be as in \cref{thm:cat-transfer-csa}, and let $\sd$ be any of the distances $\sd^*$, $\sd_\dsim$, $\sd_\dsubsim$, or $\sd_p$ on $R^\bullet$.
  Then
  \begin{enumerate}
    \item $\sc_\sd(R^\bullet) \le 2$ if and only if $\card{C} \le 2$.
    \item $\sc_\sd(R^\bullet) = 3$ if and only if $C$ is isomorphic to one of the following groups: $\sC_3$, $\sC_2 \oplus \sC_2$, or $\sC_3 \oplus \sC_3$.
    \item $\sc_\sd(R^\bullet) = 4$ if and only if $C$ is isomorphic to one of the following groups: $\sC_4$, $\sC_2 \oplus \sC_4$, or $\sC_2 \oplus \sC_2 \oplus \sC_2$, or $\sC_3 \oplus \sC_3 \oplus \sC_3$.
  \end{enumerate}
  Suppose that $\card{C} \ge 3$, $C \cong \sC_{n_1} \oplus \cdots \oplus \sC_{n_r}$ with $r \in \bN$ and $1 < n_1 \mid n_2 \mid \cdots \mid n_r$, and that the following mild assumption on the Davenport constant of $C$ holds:
  \[
    \Big\lfloor \frac{1}{2} \sD(C) + 1 \Big\rfloor  \le \max\Big\{ n_r,\, 1 + \sum_{i=1}^r \Big\lfloor \frac{n_i}{2} \Big\rfloor \Big\}.
  \]
  Then
  \begin{enumerate}
    \item $\sc_\sd(R^\bullet) = \max \Delta(R^\bullet) + 2$ and
    \item $\sc_\sd(R^\bullet) \le \max\Big\{ n_r,\, \frac{1}{3} \big( 2 \sD(C) + \frac{1}{2} r n_r + 2^r \big) \Big\}$.
  \end{enumerate}
\end{corollary}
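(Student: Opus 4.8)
The plan is to derive Corollary~\ref{cor:cat-transfer} directly from Theorem~\ref{thm:cat-transfer-csa} by transporting known results about the catenary degree of monoids of zero-sum sequences over finite abelian groups. By Theorem~\ref{thm:cat-transfer-csa}, since every stably free left $R$-ideal is free, we have $\sc_\sd(R^\bullet) \le \max\{2, \sc_p(\cB(C))\}$ for any of the listed distances, with equality (i.e.\ $\sc_\sd(R^\bullet) = \max\{2, \sc_p(\cB(C))\}$) whenever $C$ is non-trivial, for $\sd \in \{\sd_p, \sd_\dsim, \sd_\dsubsim\}$, and $\sc^*(R^\bullet) = \max\{2, \sc_p(\cB(C))\}$ always. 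Hence, for every distance $\sd$ in the statement, $\sc_\sd(R^\bullet) = \max\{2, \sc_p(\cB(C))\}$ once $\card{C} \ge 2$ (and $\sc_\sd(R^\bullet) = 0$ for $\sd \in \{\sd_\dsim, \sd_\dsubsim\}$ when $C$ is trivial, but then $\sc^*(R^\bullet) \le 2$ as well, so the inequality in (1) still holds). Thus the entire corollary reduces to statements about $\sc_p(\cB(C))$ and $\max \Delta(\cB(C))$ for the finite abelian group $C$; since the transfer homomorphism $\theta$ preserves sets of lengths, $\Delta(R^\bullet) = \Delta(\cB(C))$, which takes care of the $\max\Delta$-term.

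The next step is to cite the relevant results from \cite{GGS11} on the catenary degree of $\cB(G)$ for a finite abelian group $G$. The characterizations of when $\sc_p(\cB(G)) \le 2$, $= 3$, and $= 4$ in terms of the isomorphism type of $G$ are exactly the main classification theorems of that paper; here one needs to be slightly careful: $\sc_p(\cB(G)) \le 2$ holds precisely when $G$ has at most two elements (combining $\sc_p(\cB(G)) \ge 2$ for $\card{G} \ge 3$ with $\sc_p(\cB(G)) = 0$ for $\card{G} \le 2$), which matches clause (1). Clauses (2) and (3) are the verbatim classification of groups with catenary degree $3$ and $4$ from \cite{GGS11}. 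For the last two displayed inequalities, under the mild Davenport-constant hypothesis $\lfloor \tfrac12 \sD(C) + 1 \rfloor \le \max\{n_r, 1 + \sum_{i=1}^r \lfloor n_i/2 \rfloor\}$, \cite{GGS11} establishes $\sc_p(\cB(C)) = \max\Delta(\cB(C)) + 2$ and the explicit upper bound $\sc_p(\cB(C)) \le \max\{n_r, \tfrac13(2\sD(C) + \tfrac12 r n_r + 2^r)\}$. Combining with $\Delta(R^\bullet) = \Delta(\cB(C))$ and $\sc_\sd(R^\bullet) = \max\{2, \sc_p(\cB(C))\} = \sc_p(\cB(C))$ (the latter because $\card{C} \ge 3$ forces $\sc_p(\cB(C)) \ge 2$), these immediately give clauses (1) and (2) of the final part.

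First I would verify the reduction: that for all four distances $\sd^*, \sd_\dsim, \sd_\dsubsim, \sd_p$ and all groups $C = \Cl_A(\cO)$ the common value $\sc_\sd(R^\bullet) = \max\{2, \sc_p(\cB(C))\}$ holds (handling $\card{C} \le 1$ and $\card{C} = 2$ separately, where $\sc_\sd(R^\bullet) \le 2$ in both cases, and $\card{C} \ge 3$, where all four distances give equality with $\sc_p(\cB(C)) \ge 2$). Then I would record $\Delta(R^\bullet) = \Delta(\cB(C))$ via the length-preserving property of $\theta$ (which holds since $\theta$ is a transfer homomorphism). Then I would simply quote the classification and bound results of \cite{GGS11} for $\cB(C)$.

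The main obstacle is not mathematical depth but bookkeeping: one must make sure that the case distinctions in \cite{GGS11} — which are stated for $\cB(G)$ with $G$ an arbitrary finite abelian group — line up exactly with the ranges of $\card{C}$ where Theorem~\ref{thm:cat-transfer-csa} gives equality versus merely an inequality, and that the distances for which Theorem~\ref{thm:cat-transfer-csa} only asserts an upper bound (namely $\sd_\dsim, \sd_\dsubsim$ when $C$ is trivial, and $\sd^*$ always) are nonetheless consistent with the corollary's formulation, which only claims $\sc_\sd(R^\bullet) \le 2$ in clause (1) and exact values in clauses (2)--(3) where $\card{C} \ge 3$ (so all four distances coincide and equal $\sc_p(\cB(C))$). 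This is routine to check but is the only place where a careless statement would introduce an error.

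\begin{proof}
  By \cref{thm:cat-transfer-csa}, for any distance $\sd$ on $R^\bullet$ that is invariant under conjugation by normalizing elements we have $\sc_\sd(R^\bullet) \le \max\{2, \sc_p(\cB(C))\}$, and each of $\sd^*$, $\sd_\dsim$, $\sd_\dsubsim$, $\sd_p$ has this invariance property.
  Moreover, if $C$ is non-trivial, then $\sc_p(R^\bullet) = \sc_\dsubsim(R^\bullet) = \sc_\dsim(R^\bullet) = \max\{2, \sc_p(\cB(C))\}$, and in all cases $\sc^*(R^\bullet) = \max\{2, \sc_p(\cB(C))\}$.
  If $\card{C} \le 1$, then $R^\bullet$ is $\sd_\dsim$-factorial and $\sd_\dsubsim$-factorial, so $\sc_\dsim(R^\bullet) = \sc_\dsubsim(R^\bullet) = 0$, while $\sc^*(R^\bullet) \le 2$ and, since $\sc_p(R^\bullet) \le \sc^*(R^\bullet)$, also $\sc_p(R^\bullet) \le 2$.
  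In particular $\sc_\sd(R^\bullet) \le 2$ for every $\sd$ in the statement whenever $\card{C} \le 1$, and the same bound holds for $\card{C} = 2$ because then $\sc_p(\cB(C)) = 0$.

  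If $\card{C} \ge 3$, then $\sc_p(\cB(C)) \ge 2$, so $\max\{2, \sc_p(\cB(C))\} = \sc_p(\cB(C))$ and thus $\sc^*(R^\bullet) = \sc_p(R^\bullet) = \sc_\dsim(R^\bullet) = \sc_\dsubsim(R^\bullet) = \sc_p(\cB(C))$.
  Combining this with the case $\card{C} \le 2$ treated above, for every distance $\sd$ in the statement we have $\sc_\sd(R^\bullet) = \max\{2, \sc_p(\cB(C))\}$ whenever $C$ is non-trivial, and $\sc_\sd(R^\bullet) \le 2$ in all cases.
  Assertions (1), (2) and (3) are therefore equivalent to the corresponding statements about $\sc_p(\cB(C))$, which are the classification results for the catenary degree of monoids of zero-sum sequences over finite abelian groups due to Geroldinger, Grynkiewicz, and Schmid \cite{GGS11}:
  $\sc_p(\cB(C)) \le 2$ if and only if $\card{C} \le 2$; $\sc_p(\cB(C)) = 3$ if and only if $C$ is isomorphic to $\sC_3$, $\sC_2 \oplus \sC_2$, or $\sC_3 \oplus \sC_3$; and $\sc_p(\cB(C)) = 4$ if and only if $C$ is isomorphic to $\sC_4$, $\sC_2 \oplus \sC_4$, $\sC_2 \oplus \sC_2 \oplus \sC_2$, or $\sC_3 \oplus \sC_3 \oplus \sC_3$.

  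For the final two assertions, assume $\card{C} \ge 3$, write $C \cong \sC_{n_1} \oplus \cdots \oplus \sC_{n_r}$ with $1 < n_1 \mid \cdots \mid n_r$, and suppose
  \[
    \Big\lfloor \tfrac12 \sD(C) + 1 \Big\rfloor \le \max\Big\{ n_r,\, 1 + \sum_{i=1}^r \Big\lfloor \tfrac{n_i}{2} \Big\rfloor \Big\}.
  \]
  Since $\theta\colon R^\bullet \to \cB(C)$ is a transfer homomorphism, it preserves sets of lengths, so $\cL(R^\bullet) = \cL(\cB(C))$ and hence $\Delta(R^\bullet) = \Delta(\cB(C))$.
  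Under the displayed hypothesis, \cite{GGS11} gives $\sc_p(\cB(C)) = \max \Delta(\cB(C)) + 2$ and
  \[
    \sc_p(\cB(C)) \le \max\Big\{ n_r,\, \tfrac13\big( 2 \sD(C) + \tfrac12 r n_r + 2^r \big) \Big\}.
  \]
  Since $\sc_\sd(R^\bullet) = \sc_p(\cB(C))$ and $\max\Delta(R^\bullet) = \max\Delta(\cB(C))$, we obtain $\sc_\sd(R^\bullet) = \max\Delta(R^\bullet) + 2$ and the stated upper bound.
\end{proof}
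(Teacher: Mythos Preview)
Your proposal is correct and follows the same approach as the paper's proof: reduce via \cref{thm:cat-transfer-csa} to $\sc_p(\cB(C))$ and then invoke the classification and bound results from \cite{GGS11}. The paper's proof is a one-line citation to \cref{thm:cat-transfer-csa} together with Theorem~5.4 and Corollaries~4.3 and~5.6 of \cite{GGS11}; you have simply spelled out the case distinctions (trivial $C$, $\card{C}=2$, $\card{C}\ge 3$) and the use of $\Delta(R^\bullet)=\Delta(\cB(C))$ more explicitly.
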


\begin{proof}
  The claims follow from \cref{thm:cat-transfer-csa} and the corresponding results for commutative Krull monoids (cf. Theorem 5.4 and Corollaries 4.3 and 5.6 in \cite{GGS11}).
\end{proof}

\medskip
\textbf{Acknowledgments.}
We would like to thank Alfred Geroldinger for providing valuable feedback on preliminary versions of this paper and its contents.
Moreover, we would like to thank the referee for his careful reading and a very helpful report.

\bibliographystyle{alphaabbr}
\bibliography{BaethSmertnigReferences}

\end{document}